\newtheorem{corollary}{Corollary}
\newtheorem{theorem}{Theorem}[section]
\newtheorem{lemma}[theorem]{Lemma}
\theoremstyle{definition}
\newtheorem{definition}[theorem]{Definition}
\newtheorem{example}[theorem]{Example}
\theoremstyle{remark}
\numberwithin{equation}{section}
\begin{document}

\title{Some useful inequalities for nabla tempered fractional calculus}


\author{Yiheng Wei}
\address{School of Mathematics, Southeast University,
Nanjing 210096, China}
\curraddr{}
\email{neudawei@seu.edu.cn}
\thanks{The work described in this paper was fully supported by the National Natural Science Foundation of China (62273092) and the National Key R$\&$D Project of China (2020YFA0714300).}

\author{Linlin Zhao}
\address{School of Business, Nanjing Audit University, Nanjing 211815, China}
\curraddr{}
\email{zll2016@nau.edu.cn}
\thanks{}

\author{Kai Cao}
\address{School of Mathematics, Southeast University,
Nanjing 210096, China}
\curraddr{}
\email{kcao@seu.edu.cn}
\thanks{}

\author{Jinde Cao}
\address{School of Mathematics, Southeast University,
Nanjing 210096, China; Yonsei Frontier Lab, Yonsei University, Seoul 03722, South Korea}
\curraddr{}
\email{jdcao@seu.edu.cn}
\thanks{}
\subjclass[2020]{Primary 26A33, Secondary 39A70, 34A08, 97H30, 35B51}

\date{}

\dedicatory{}

\begin{abstract}
This paper gives particular emphasis to the nabla tempered fractional calculus, which involves the multiplication of the rising function kernel with tempered functions, and provides a more flexible alternative with considerable promise for practical applications. Some remarkable inequalities for such nabla fractional calculus are developed and analyzed, which greatly enrich the mathematical theory of nabla tempered fractional calculus. Numerical results confirm the validity of the developed properties once again, which also reveals that the introduction of tempered function provides high value and huge potential.
\end{abstract}

\maketitle




\section{Introduction}\label{Section1}
Fractional calculus, as a useful analytical toolbox, has attracted an increasing attention from scholars \cite{Diethelm:2022NODY}. Due to its nonlocal property, fractional calculus play a key role in diverse application of science and engineering, such as diffusion model \cite{Jin:2022JAM}, compartment model \cite{Angstmann:2021SR}, automatic control \cite{Padula:2020JCO}, neural network \cite{Pang:2019JSC}, etc. Especially, tempered fractional calculus, which introduces an extra tempered function, has many merits, and henceforth many researchers have deployed themselves to explore valuables results for such class of fractional calculus. A huge work has been done for this subject \cite{Sabzikar:2015JCP,Almeida:2019JCAM,Fernandez:2020JCAM,Mali:2022MMA}, which makes a positive and profound impact.

For the continuous time case, many diffusion models involving tempered fractional calculus were established and the solvability of the resulting equation becomes important and difficult. To achieve this aim, a lot of work has been carried out. A Chebyshev pseudospectral scheme was developed to discretize the space-time tempered fractional diffusion equation in \cite{Hanert:2014JSC}. A time discretization method was established for approximating the mild solution of the tempered fractional Feynman--Kac equation in \cite{Deng:2018JNA}. An efficient and stable finite difference scheme was proposed for solving space tempered fractional diffusion equations in \cite{Guo:2018JSC}. The tempered L\'{e}vy flights were introduced to process the anomalous diffusion problem in \cite{Deng:2018MMS}. The equivalence between the tempered fractional derivative operator and the Hadamard finite-part integral was first proved and then the fractional linear multistep method was extended to the tempered fractional integral and derivative operators in \cite{Guo:2019JSC}. A class of tempered fractional neural networks was proposed and the conditions for attractivity and Mittag--Leffler stability were provided in \cite{Gu:2021Fractals}.

For the discrete time case, the research is just in its infancy and some properties has been explored preliminarily. The memory effect of delta tempered fractional calculus was investigated and applied to image processing \cite{Abdeljawad:2020Optik}. The tempered fractional derivative on an isolated time scale was defined and a new method was presented based on the time scale theory for numerical discretization in \cite{Fu:2021Fractals}. A general definition for nabla discrete time tempered fractional calculus was presented in \cite{Ferreira:2021NODY}. The tempered function was chosen as the nonzero case instead of the discrete exponential function, which greatly enrich the potential of the tempered fractional calculus. Compared with the continuous time case, the discrete time case performs better in  computing, storage, transport, etc. and it has greater potential in the digital era. Though the study on discrete time tempered fractional calculus is still in sufficient, a proliferation of results reported on discrete time fractional calculus \cite{Cheng:2011Book,Ostalczyk:2015Book,Goodrich:2015Book} could give us a lot of helpful inspiration and reference.

The basic arithmetic and equivalence relations of fractional difference and fractional sum were discussed in \cite{Cheng:2011Book,Ostalczyk:2015Book,Goodrich:2015Book,Wei:2019JCND}. The monotonicity of fractional difference was explored in  \cite{Eloe:2019BKMS,Goodrich:2015Book,Almeida:2019RMJM}. The comparison principles were developed in \cite{Wyrwas:2015Kybe,Baleanu:2017CNSNS,Wu:2017AMC,Liu:2018DSA,Liu:2019TJM,Liu:2021Filomat}, which are dominant in the stability analysis of discrete time fractional order systems and derive fascinating consequences. The fractional difference inequalities were developed in \cite{Baleanu:2017CNSNS,Wu:2017AMC,Wei:2021NODY}, and the research was not comprehensive and some new inequalities were still expected, including the non-convex case, the Cauchy inequality, the Jensen inequality and the H\"{o}lder inequality, etc. Firstly, some similar properties like the classical case can be checked for the tempered case. Secondly, efforts can be made for the unsolved problems of the classical case. Thirdly, this work could try to produce some new remarkable results.

Since there is still a gap in the literature concerning nabla tempered fractional calculus, the main purpose of this study is to investigate the nabla tempered fractional calculus further and derive some fundamental properties which are also the main contributions of this work, including \textsf{i) the monotonicity of nabla tempered fractional difference is explored; ii) three comparison principles for nabla tempered fractional order systems are developed; iii) a series of tempered fractional difference inequalities are derived; and iv) many tempered fractional sum inequalities are built.} Notably, it is not an easy task to generalize the existing properties to the new field, since the introduction of the tempered function brings some unexpected difficulty and damage some accustomed properties. Furthermore, it is more difficult to discover some innovative valuable properties.

The remainder of this paper is organized as follows. Section \ref{Section2} presents some preliminaries on the classical nabla fractional calculus and the tempered case. Section \ref{Section3} is devoted to deriving the main results. Section \ref{Section4} provides three numerical examples to verify the elaborated theoretical results. Finally, this paper ends in Section \ref{Section5} with some concluding remarks.

\section{Preliminaries}\label{Section2}
In this section, the basic definitions for nabla fractional calculus and nabla tempered fractional calculus are presented.

For $x: \mathbb{N}_{a+1-n} \to \mathbb{R}$, its $n$-th nabla difference is defined by
\begin{equation}\label{Eq2.1}
{\textstyle \nabla^{n} x(k):=\sum_{i=0}^{n}(-1)^{i}\big(\begin{smallmatrix}
n \\
i
\end{smallmatrix}\big) x(k-i),}
\end{equation}
where $n \in \mathbb{Z}_{+}$, $k \in \mathbb{N}_{a+1}:=\{a+1,a+2,\cdots\}$, $a \in \mathbb{R}$, $\left(\begin{smallmatrix}p \\ q\end{smallmatrix}\right):=\frac{\Gamma(p+1)}{\Gamma(q+1) \Gamma(p-q+1)}$ is the generalized binomial coefficient and $\Gamma(\cdot)$ is the Gamma function.

For $x: \mathbb{N}_{a+1} \to \mathbb{R}$, its $\alpha$-th Gr\"{u}nwald--Letnikov difference/sum is defined by \cite{Ostalczyk:2015Book,Atici:2021FDC}
\begin{equation}\label{Eq2.2}
{\textstyle {}_{a}^{\rm G} \nabla_{k}^{\alpha} x(k):=\sum_{i=0}^{k-a-1}(-1)^{i}\big(\begin{smallmatrix}
\alpha \\
i
\end{smallmatrix}\big) x(k-i),}
\end{equation}
where $\alpha \in \mathbb{R}$, $k \in \mathbb{N}_{a+1}$ and $a \in \mathbb{R}$. When $\alpha>0$, ${ }_{a}^{\rm G} \nabla_{k}^{\alpha} x(k)$ represents the difference operation. When $\alpha<0$, ${}_{a}^{\rm G} \nabla_{k}^{\alpha} x(k)$ represents the sum operation including the fractional order case and the integer order case. Specially, ${}_{a}^{\rm G} \nabla_{k}^{0} x(k)=x(k)$. Even though $\alpha=n \in \mathbb{Z}_{+}$, ${ }_{a}^{\rm G} \nabla_{k}^{\alpha} x(k) \not \equiv \nabla^{n} x(k)$ for all $k \in \mathbb{N}_{a+1}$.

Defining the rising function $p\overline {^q} := \frac{\Gamma(p+q)}{\Gamma(p)}$, $p\in\mathbb{Z}_+$, $q\in\mathbb{R}$, (\ref{Eq2.2}) can rewritten as
\begin{equation}\label{Eq2.3}
{\textstyle
\begin{array}{rl}
{ }_{a}^{\mathrm{G}} \nabla_{k}^{\alpha} x(k)=&\hspace{-6pt}\sum_{i=0}^{k-a-1}(-1)^{i} \frac{\Gamma(1+\alpha)}{\Gamma(i+1) \Gamma(1+\alpha-i)} x(k-i) \\
=&\hspace{-6pt}\sum_{i=0}^{k-a-1} \frac{\Gamma(i-\alpha)}{\Gamma(i+1) \Gamma(-\alpha)} x(k-i) \\
=&\hspace{-6pt}\sum_{i=0}^{k-a-1} \frac{(i+1)^{\overline{-\alpha-1}}}{\Gamma(-\alpha)} x(k-i) \\
=&\hspace{-6pt}\sum_{i=a+1}^{k} \frac{(k-i+1)^{\overline{-\alpha-1}}}{\Gamma(-\alpha)} x(i),
\end{array}}
\end{equation}
where $\Gamma(\theta) \Gamma(1-\theta)=\frac{\pi}{\sin (\pi \theta)}$, $\theta \in \mathbb{R}$ is adopted.

From the previous definitions, the $\alpha$-th Riemann--Liouville fractional difference and Caputo fractional difference for $x: \mathbb{N}_{a+1-n} \to \mathbb{R}$, $\alpha \in(n-1, n)$, $n \in \mathbb{Z}_{+}$, $k \in \mathbb{N}_{a+1}$ and $a \in \mathbb{R}$ are defined by \cite{Goodrich:2015Book}
\begin{equation}\label{Eq2.4}
{\textstyle
{}_{a}^{\mathrm{R}} \nabla_{k}^{\alpha} x(k):=\nabla^n{}_{a}^{\rm G} \nabla_{k}^{\alpha-n} x(k), }
\end{equation}
\begin{equation}\label{Eq2.5}
{\textstyle
{ }_{a}^{\mathrm{C}} \nabla_{k}^{\alpha} x(k):={ }_{a}^{\rm G} \nabla_{k}^{\alpha-n} \nabla^{n} x(k) .}
\end{equation}
On this basis, the following properties hold.

\begin{lemma}\label{Lemma2.1}\cite{Goodrich:2015Book,Wei:2019JCND}
For any function $x: \mathbb{N}_{a+1-n} \to \mathbb{R}$, $n \in \mathbb{Z}_{+}$, $a \in \mathbb{R}$, one has
\begin{equation}\label{Eq2.6}
{\textstyle
{ }_{a}^{\rm R} \nabla_{k}^\alpha{}_ a^{ \rm G} \nabla_{k}^{-\alpha} x(k)={ }_{a}^{\rm C} \nabla_{k}^{\alpha}{}_a^{ \rm G} \nabla_{k}^{-\alpha} x(k)=x(k),}
\end{equation}
\begin{equation}\label{Eq2.7}
{\textstyle
{ }_{a}^{\rm G} \nabla_{k}^{-\alpha} {}_a^{\rm R} \nabla_{k}^{\alpha} x(k)=x(k)-\sum_{i=0}^{n-1} \frac{(k-a) \overline{^{\alpha-i-1}}}{\Gamma(\alpha-i)}[{ }_{a}^{\rm R} \nabla_{k}^{\alpha-i-1} x(k)]_{k=a},
}
\end{equation}
\begin{equation}\label{Eq2.8}
{\textstyle
{ }_{a}^{\rm G} \nabla_{k}^{-\alpha}{ }_{a}^{\rm C} \nabla_{k}^{\alpha} x(k)=x(k)-\sum_{i=0}^{n-1} \frac{(k-a) \overline{^i}}{i !}[\nabla^{i} x(k)]_{k=a} , }
\end{equation}
where $k \in \mathbb{N}_{a+1}$, $\alpha \in(n-1, n)$.
\end{lemma}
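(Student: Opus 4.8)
The plan is to reduce all three identities to three elementary facts about the Gr\"{u}nwald--Letnikov operator ${}_{a}^{\rm G}\nabla_{k}^{\cdot}$, each obtained by directly manipulating the single-sum representation \eqref{Eq2.3} (throughout, a function on $\mathbb{N}_{a+1}$ is read as $0$ at indices $\le a$, which is exactly the empty-sum convention behind \eqref{Eq2.2}). \emph{First} I would establish the index (semigroup) law ${}_{a}^{\rm G}\nabla_{k}^{\mu}\,{}_{a}^{\rm G}\nabla_{k}^{\nu}x(k)={}_{a}^{\rm G}\nabla_{k}^{\mu+\nu}x(k)$ for all real $\mu,\nu$ and all $k\in\mathbb{N}_{a+1}$: writing each factor as a finite convolution via \eqref{Eq2.3}, interchanging the two sums and substituting $m=i+j$, the coefficient of $x(k-m)$ becomes $\sum_{j=0}^{m}(-1)^{j}\binom{\mu}{j}(-1)^{m-j}\binom{\nu}{m-j}=(-1)^{m}\binom{\mu+\nu}{m}$ by the Chu--Vandermonde identity (a polynomial identity in $\mu,\nu$, hence valid for all real orders), and summing over $m$ rebuilds ${}_{a}^{\rm G}\nabla_{k}^{\mu+\nu}x$. \emph{Second}, one application of Pascal's rule $\binom{\gamma}{i}+\binom{\gamma}{i-1}=\binom{\gamma+1}{i}$ (after a shift of index) gives $\nabla\,{}_{a}^{\rm G}\nabla_{k}^{\gamma}x(k)={}_{a}^{\rm G}\nabla_{k}^{\gamma+1}x(k)$, hence $\nabla^{n}\,{}_{a}^{\rm G}\nabla_{k}^{\gamma}x={}_{a}^{\rm G}\nabla_{k}^{\gamma+n}x$; combined with \eqref{Eq2.4} this also gives ${}_{a}^{\rm R}\nabla_{k}^{\alpha}x={}_{a}^{\rm G}\nabla_{k}^{\alpha}x$. \emph{Third}, I would prove the discrete fundamental-theorem (Taylor) identity ${}_{a}^{\rm G}\nabla_{k}^{-n}\nabla^{n}x(k)=x(k)-\sum_{i=0}^{n-1}\frac{(k-a) \overline{^{i}}}{i!}[\nabla^{i}x(k)]_{k=a}$ by induction on $n$: for $n=1$ it is the telescoping sum $\sum_{s=a+1}^{k}\nabla x(s)=x(k)-x(a)$, and the inductive step peels one summation off using the index law and the hockey-stick identity $\sum_{s=a+1}^{k}\frac{(s-a) \overline{^{i}}}{i!}=\frac{(k-a) \overline{^{i+1}}}{(i+1)!}$.

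Granting these, \eqref{Eq2.6} is immediate. For the Riemann--Liouville factor, ${}_{a}^{\rm R}\nabla_{k}^{\alpha}\,{}_{a}^{\rm G}\nabla_{k}^{-\alpha}x=\nabla^{n}\,{}_{a}^{\rm G}\nabla_{k}^{\alpha-n}\,{}_{a}^{\rm G}\nabla_{k}^{-\alpha}x=\nabla^{n}\,{}_{a}^{\rm G}\nabla_{k}^{-n}x={}_{a}^{\rm G}\nabla_{k}^{0}x=x$ (index law with $(\alpha-n)+(-\alpha)=-n$, then the Pascal shift), and for the Caputo factor, ${}_{a}^{\rm C}\nabla_{k}^{\alpha}\,{}_{a}^{\rm G}\nabla_{k}^{-\alpha}x={}_{a}^{\rm G}\nabla_{k}^{\alpha-n}\nabla^{n}\,{}_{a}^{\rm G}\nabla_{k}^{-\alpha}x={}_{a}^{\rm G}\nabla_{k}^{\alpha-n}\,{}_{a}^{\rm G}\nabla_{k}^{n-\alpha}x=x$. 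Identity \eqref{Eq2.8} follows the same way: ${}_{a}^{\rm G}\nabla_{k}^{-\alpha}\,{}_{a}^{\rm C}\nabla_{k}^{\alpha}x={}_{a}^{\rm G}\nabla_{k}^{-\alpha}\,{}_{a}^{\rm G}\nabla_{k}^{\alpha-n}\nabla^{n}x={}_{a}^{\rm G}\nabla_{k}^{-n}\nabla^{n}x$ by the index law ($(-\alpha)+(\alpha-n)=-n$), which the discrete Taylor identity turns into exactly the claimed right-hand side.

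The one genuinely delicate identity is \eqref{Eq2.7}, because in ${}_{a}^{\rm G}\nabla_{k}^{-\alpha}\,{}_{a}^{\rm R}\nabla_{k}^{\alpha}x={}_{a}^{\rm G}\nabla_{k}^{-\alpha}\nabla^{n}\,{}_{a}^{\rm G}\nabla_{k}^{\alpha-n}x$ the integer differences sit \emph{outside} the fractional sum and cannot be collapsed directly. My plan is to first prove a summation-by-parts rule ${}_{a}^{\rm G}\nabla_{k}^{-\beta}\nabla y(k)={}_{a}^{\rm G}\nabla_{k}^{-\beta+1}y(k)-\frac{(k-a) \overline{^{\beta-1}}}{\Gamma(\beta)}\,y(a)$, $\beta>0$ (Abel summation applied to \eqref{Eq2.3}), and then apply it $n$ times with $y={}_{a}^{\rm G}\nabla_{k}^{\alpha-n}x$. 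The main term becomes ${}_{a}^{\rm G}\nabla_{k}^{n-\alpha}\,{}_{a}^{\rm G}\nabla_{k}^{\alpha-n}x=x$ by the index law, and the remainder is $-\sum_{j=0}^{n-1}\frac{(k-a) \overline{^{\alpha-n+j}}}{\Gamma(\alpha-n+j+1)}[\nabla^{j}y(k)]_{k=a}$. It then remains to recognize $[\nabla^{j}y(k)]_{k=a}=[\nabla^{j}\,{}_{a}^{\rm G}\nabla_{k}^{\alpha-n}x(k)]_{k=a}=[{}_{a}^{\rm R}\nabla_{k}^{\alpha-(n-j)}x(k)]_{k=a}$ (Pascal shift and \eqref{Eq2.4}) and to reindex by $i=n-1-j$, which turns the remainder into precisely $\sum_{i=0}^{n-1}\frac{(k-a) \overline{^{\alpha-i-1}}}{\Gamma(\alpha-i)}[{}_{a}^{\rm R}\nabla_{k}^{\alpha-i-1}x(k)]_{k=a}$. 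I expect this last step --- tracking which boundary term carries which rising-factorial prefactor and $\Gamma$-normalization through the iterated summation by parts, and the reindexing --- to be the only real obstacle; everything else is the index law plus Pascal's rule. As a cross-check (and an equally valid alternative proof), one can pass to the nabla Laplace transform, under which \eqref{Eq2.6}--\eqref{Eq2.8} all collapse to the algebraic identity $s^{\mp\alpha}s^{\pm\alpha}=1$ together with the standard transforms of ${}_{a}^{\rm R}\nabla_{k}^{\alpha}$ and ${}_{a}^{\rm C}\nabla_{k}^{\alpha}$.
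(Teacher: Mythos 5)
Your proposal is correct, but note that the paper offers no proof of Lemma \ref{Lemma2.1} to compare against: it is quoted as a known preliminary with citations to \cite{Goodrich:2015Book,Wei:2019JCND}. Judged on its own terms, your argument is sound and self-contained. The three ingredients you isolate --- the index law ${}_{a}^{\rm G}\nabla_{k}^{\mu}{}_{a}^{\rm G}\nabla_{k}^{\nu}={}_{a}^{\rm G}\nabla_{k}^{\mu+\nu}$ via Chu--Vandermonde (a polynomial identity, so valid for all real orders), the Pascal shift $\nabla\,{}_{a}^{\rm G}\nabla_{k}^{\gamma}x={}_{a}^{\rm G}\nabla_{k}^{\gamma+1}x$, and the discrete Taylor formula for ${}_{a}^{\rm G}\nabla_{k}^{-n}\nabla^{n}$ --- do reduce \eqref{Eq2.6} and \eqref{Eq2.8} to one-line computations, and your iterated summation-by-parts for \eqref{Eq2.7} produces exactly the boundary terms $\frac{(k-a)\overline{^{\alpha-i-1}}}{\Gamma(\alpha-i)}[{}_{a}^{\rm R}\nabla_{k}^{\alpha-i-1}x(k)]_{k=a}$ after the reindexing $i=n-1-j$; I checked the prefactors and the identification $[\nabla^{j}\,{}_{a}^{\rm G}\nabla_{k}^{\alpha-n}x]_{k=a}=[{}_{a}^{\rm R}\nabla_{k}^{\alpha-(n-j)}x]_{k=a}$ (which follows from \eqref{Eq2.4} since $\alpha-n+j\in(j-1,j)$), and they come out right. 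It is worth remarking that your single-step summation-by-parts rule ${}_{a}^{\rm G}\nabla_{k}^{-\beta}\nabla y(k)={}_{a}^{\rm G}\nabla_{k}^{-\beta+1}y(k)-\frac{(k-a)\overline{^{\beta-1}}}{\Gamma(\beta)}y(a)$ is precisely the computation the paper itself performs later in \eqref{Eq3.116}--\eqref{Eq3.118} when proving Theorem \ref{Theorem3.18}, so your route is fully consistent with the authors' toolkit. Two small points to make explicit in a final write-up: (i) all compositions tacitly extend ${}_{a}^{\rm G}\nabla_{k}^{\gamma}x$ by $0$ for $k\le a$ (the empty-sum convention you state up front), which is what makes the Pascal shift valid at $k=a+1$; and (ii) under that same convention the initial values $[{}_{a}^{\rm R}\nabla_{k}^{\alpha-i-1}x(k)]_{k=a}$ appearing in \eqref{Eq2.7} are computed from the empty-sum extension, so the formula should be read as the general identity with prescribed initial data, exactly as in the cited sources.
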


By introducing a tempered function $w: \mathbb{N}_{a+1} \to \mathbb{R} \backslash\{0\}$, the concept of nabla fractional calculus can be extended further.

For $x: \mathbb{N}_{a+1} \to \mathbb{R}$, its $\alpha$-th Gr\"{u}nwald--Letnikov tempered difference/sum is defined by \cite{Ferreira:2021NODY}
\begin{equation}\label{Eq2.9}
{\textstyle
{ }_{a}^{\rm G} \nabla_{k}^{\alpha, w(k)} x(k):=w^{-1}(k)_{a}^{\rm G} \nabla_{k}^{\alpha}[w(k) x(k)],
}
\end{equation}
where $\alpha \in \mathbb{R}, k \in \mathbb{N}_{a+1}, a \in \mathbb{R}$ and $w: \mathbb{N}_{a+1} \to \mathbb{R} \backslash\{0\}$.

The $n$-th nabla tempered difference, the $\alpha$-th Riemann--Liouville tempered fractional difference and Caputo tempered fractional difference of $x: \mathbb{N}_{a+1-n} \to \mathbb{R}$ can be defined by
\begin{equation}\label{Eq2.10}
{\textstyle
\nabla^{n, w(k)} x(k):=w^{-1}(k) \nabla^{n}[w(k) x(k)],}
\end{equation}
\begin{equation}\label{Eq2.11}
{\textstyle
{ }_{a}^{\mathrm{R}} \nabla_{k}^{\alpha, w(k)} x(k):=w^{-1}(k){}_{a}^{\mathrm{R}} \nabla_{k}^{\alpha}[w(k) x(k)],}
\end{equation}
\begin{equation}\label{Eq2.12}
{\textstyle
{ }_{a}^{\rm C} \nabla_{k}^{\alpha, w(k)} x(k):=w^{-1}(k){}_{a}^{\rm C} \nabla_{k}^{\alpha}[w(k) x(k)],}
\end{equation}
respectively, where $\alpha \in(n-1, n), n \in \mathbb{Z}_{+}, k \in \mathbb{N}_{a+1}, a \in \mathbb{R}$ and $w: \mathbb{N}_{a+1} \to \mathbb{R} \backslash\{0\}$. On this basis, the following relationships hold
\begin{equation}\label{Eq2.13}
{\textstyle
{ }_{a}^{\rm R} \nabla_{k}^{\alpha, w(k)} x(k)=\nabla^{n, w(k)}{}_{a}^{\rm G} \nabla_{k}^{\alpha-n, w(k)} x(k),}
\end{equation}

\begin{equation}\label{Eq2.14}
{\textstyle
{ }_{a}^{\rm C} \nabla_{k}^{\alpha, w(k)} x(k)={ }_{a}^{\rm G} \nabla_{k}^{\alpha-n, w(k)} \nabla^{n, w(k)} x(k) .}
\end{equation}

The equivalent condition of $w: \mathbb{N}_{a+1} \to \mathbb{R} \backslash\{0\}$ is finite nonzero. In this work, when $w(k)=(1-\lambda)^{k-a}$, $\lambda \in \mathbb{R} \backslash\{1\}$, the operations $\nabla^{n, w(k)} x(k),{ }_{a}^{\rm G} \nabla_{k}^{\alpha, w(k)} x(k)$, ${ }_{a}^{\rm R} \nabla_{k}^{\alpha, w(k)} x(k),{ }_{a}^{\rm C} \nabla_{k}^{\alpha, w(k)} x(k)$ could be abbreviate as $\nabla^{n, \lambda} x(k),{ }_{a}^{\rm G} \nabla_{k}^{\alpha, \lambda} x(k),{ }_{a}^{\rm R} \nabla_{k}^{\alpha, \lambda} x(k)$, ${ }_{a}^{\rm C} \nabla_{k}^{\alpha, \lambda} x(k)$, respectively. Notably, this special case is different from the one in \cite{Ferreira:2021NODY}, which facilitates the use and analysis. \textsf{Compared to existing results, the tempered function $w(k)$ is no longer limited to the exponential function, which makes this work more general and practical.}

By using the linearity, the following lemma can be derived immediately, which is simple while useful for understanding such fractional calculus.
\begin{lemma}\label{Lemma2.2}
For any function $x: \mathbb{N}_{a+1-n} \to \mathbb{R}$, $n \in \mathbb{Z}_{+}$, $a \in \mathbb{R}$, finite nonzero $w(k)$, $\alpha\in(n-1,n)$, $k \in \mathbb{N}_{a+1}$, $\lambda \in\mathbb{R}\backslash\{0\}$, one has
\begin{equation}\label{Eq2.15}
{\textstyle
\left\{ \begin{array}{rl}
{\nabla ^{n,w(k)}}x(k) =&\hspace{-6pt} {\nabla ^{n,\rho w(k)}}x(k),\\
{}_a^{\rm{G}}\nabla _k^{\alpha ,w(k)}x(k) =&\hspace{-6pt} {}_a^{\rm{G}}\nabla _k^{\alpha ,\lambda w(k)}x(k),\\
{}_a^{\rm{G}}\nabla _k^{ - \alpha ,w(k)}x(k) =&\hspace{-6pt} {}_a^{\rm{G}}\nabla _k^{ - \alpha ,\lambda w(k)}x(k),\\
{}_a^{\rm{R}}\nabla _k^{\alpha ,w(k)}x(k) =&\hspace{-6pt} {}_a^{\rm{R}}\nabla _k^{\alpha ,\lambda w(k)}x(k),\\
{}_a^{\rm{C}}\nabla _k^{\alpha ,w(k)}x(k) =&\hspace{-6pt} {}_a^{\rm{C}}\nabla _k^{\alpha ,\lambda w(k)}x(k).
\end{array} \right.}
\end{equation}
\end{lemma}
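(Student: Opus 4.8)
The plan is to argue directly from the defining relations \eqref{Eq2.9}--\eqref{Eq2.12} (together with \eqref{Eq2.10}) by exploiting the one structural feature they all share: each tempered operator has the form $w^{-1}(k)\,L[w(k)x(k)]$, where $L$ is one of the \emph{untempered} operators $\nabla^{n}$, ${}_a^{\rm G}\nabla_k^{\beta}$ (with $\beta=\alpha$ or $\beta=-\alpha$), ${}_a^{\rm R}\nabla_k^{\alpha}$, ${}_a^{\rm C}\nabla_k^{\alpha}$. The first step is therefore to record that every such $L$ is $\mathbb{R}$-linear: this is immediate for $\nabla^{n}$ and ${}_a^{\rm G}\nabla_k^{\beta}$ from the finite-sum formulas \eqref{Eq2.1} and \eqref{Eq2.2}--\eqref{Eq2.3}, whose coefficients do not depend on the argument sequence, and it then passes to ${}_a^{\rm R}\nabla_k^{\alpha}$ and ${}_a^{\rm C}\nabla_k^{\alpha}$ because by \eqref{Eq2.4}--\eqref{Eq2.5} these are compositions of linear operators.

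The second step is the actual computation, which is the same in all five cases. Fix a nonzero constant (written $\lambda$; the first line's $\rho$ is the same constant) and note first that $\lambda w(k)$ is again finite and nonzero, so all the operators with tempering function $\lambda w(k)$ are well defined. Then, using \eqref{Eq2.9} and the linearity of $L={}_a^{\rm G}\nabla_k^{\alpha}$,
\begin{equation*}
{\textstyle {}_a^{\rm G}\nabla_k^{\alpha,\lambda w(k)}x(k)=\big(\lambda w(k)\big)^{-1}{}_a^{\rm G}\nabla_k^{\alpha}\big[\lambda w(k)x(k)\big]=\lambda^{-1}w^{-1}(k)\,\lambda\,{}_a^{\rm G}\nabla_k^{\alpha}\big[w(k)x(k)\big]={}_a^{\rm G}\nabla_k^{\alpha,w(k)}x(k).}
\end{equation*}
Replacing $\alpha$ by $-\alpha$ gives the sum case, and running the identical one-line argument with \eqref{Eq2.10}, \eqref{Eq2.11}, \eqref{Eq2.12} in place of \eqref{Eq2.9} (i.e. with $L=\nabla^{n}$, $L={}_a^{\rm R}\nabla_k^{\alpha}$, $L={}_a^{\rm C}\nabla_k^{\alpha}$, respectively) yields the remaining four equalities in \eqref{Eq2.15}. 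No appeal to \eqref{Eq2.13}--\eqref{Eq2.14} is needed, though one may check afterwards that those relations are trivially consistent with the claim.

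There is essentially no hard part: the only points requiring a word of care are (i) making explicit that the scalar $\lambda$ commutes with $L$, for which the linearity established in the first step is exactly what is used, and (ii) observing that $\lambda\neq 0$ is precisely the hypothesis that keeps $\lambda w(k)$ finite nonzero (so that the operators on the right are defined) and that lets $\lambda^{-1}$ cancel the $\lambda$ produced by pulling the constant through $L$.
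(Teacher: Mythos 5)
Your proposal is correct and follows exactly the route the paper intends: the paper offers no written proof beyond the remark that the lemma "can be derived immediately by using the linearity," and your argument—writing each tempered operator as $w^{-1}(k)\,L[w(k)x(k)]$ for a linear untempered $L$ and cancelling the nonzero scalar $\lambda$—is precisely that derivation made explicit. Your side observations (that $\rho$ in the first line is the same constant as $\lambda$, and that $\lambda\neq 0$ is what keeps $\lambda w(k)$ admissible) are accurate and harmless additions.
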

Note that Lemma \ref{Lemma2.2} is indeed the scale invariance. When $\lambda=-1$, the sign of $w(k)$ is just reversed to $\lambda w(k)$. From this, one is ready to claim that if a property on tempered calculus holds for $w(k)>0$, it also holds for $w(k)<0$.

\section{Main Results}\label{Section3}
In this section, a series of inequalities concerning nabla tempered fractional calculus will be developed.

\begin{theorem}\label{Theorem3.18}
For any $\alpha\in(0,1)$, $w:\mathbb{N}_a\to\mathbb{R}\backslash\{0\}$ with $\frac{w(a)}{w(k)}>0$, $k\in\mathbb{N}_{a+1}$, $a\in\mathbb{R}$, one has
\begin{equation}\label{Eq3.114}
{\textstyle {}_a^{\rm C}\nabla _{k}^{\alpha,w(k)} x(k) \le {}_{a - 1}^{\hspace{9pt}\rm{R}}\nabla _{k}^{\alpha,w(k)} x(k) \le {}_a^{\rm R}\nabla _{k}^{\alpha,w(k)} x(k), x ( a  ) \ge 0,}
\end{equation}
\begin{equation}\label{Eq3.115}
{\textstyle {}_a^{\rm C}\nabla _{k}^{\alpha,w(k)} x(k) \ge {}_{a - 1}^{\hspace{9pt}\rm{R}}\nabla _{k}^{\alpha,w(k)} x(k) \ge {}_a^{\rm R}\nabla _{k}^{\alpha,w(k)} x(k), x ( a  ) \le 0.}
\end{equation}
\end{theorem}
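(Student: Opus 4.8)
The plan is to strip off the tempered factor, reduce to the untempered Riemann--Liouville/Caputo comparison, and then read everything off three explicit kernel identities. For the reduction: since $\frac{w(a)}{w(k)}>0$ for all $k\in\mathbb{N}_{a+1}$, the function $w$ keeps a constant sign on $\mathbb{N}_{a}$. By Lemma~\ref{Lemma2.2} (scale invariance, take $\lambda=-1$) each of ${}_a^{\mathrm{C}}\nabla_k^{\alpha,w(k)}$, ${}_{a-1}^{\mathrm{R}}\nabla_k^{\alpha,w(k)}$, ${}_a^{\mathrm{R}}\nabla_k^{\alpha,w(k)}$ is unchanged under $w\mapsto -w$, and this flip affects neither $\frac{w(a)}{w(k)}>0$ nor the sign condition on $x(a)$; hence I may assume $w(k)>0$ on $\mathbb{N}_{a}$. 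Put $y(k):=w(k)x(k)$, so that $y(a)$ and $x(a)$ have the same sign. By definitions \eqref{Eq2.11}--\eqref{Eq2.12} and their analogue based at $a-1$, each term in \eqref{Eq3.114}--\eqref{Eq3.115} equals $w^{-1}(k)$ times the corresponding untempered operator applied to $y$; multiplying by $w(k)>0$ reduces the theorem to the untempered statement
\[
{}_a^{\mathrm{C}}\nabla_k^{\alpha}y(k)\le{}_{a-1}^{\mathrm{R}}\nabla_k^{\alpha}y(k)\le{}_a^{\mathrm{R}}\nabla_k^{\alpha}y(k)\qquad\text{when }y(a)\ge0,
\]
with all inequalities reversed when $y(a)\le0$. (The middle operator is well defined from the data on $\mathbb{N}_{a}$: its inner Gr\"unwald--Letnikov sum runs over $i=a,\dots,k$.)

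\emph{Step 2 (kernel identities).} Writing ${}_a^{\mathrm{R}}\nabla_k^{\alpha}y(k)=\nabla\,{}_a^{\mathrm{G}}\nabla_k^{\alpha-1}y(k)$ and ${}_a^{\mathrm{C}}\nabla_k^{\alpha}y(k)={}_a^{\mathrm{G}}\nabla_k^{\alpha-1}\nabla y(k)$ and using the series form \eqref{Eq2.3} together with the difference rule $\nabla[(k-c)^{\overline{\mu}}]=\mu\,(k-c)^{\overline{\mu-1}}$, a short computation (applying $\nabla$ to a Gr\"unwald--Letnikov sum, plus one Abel summation in the Caputo term) gives ${}_a^{\mathrm{R}}\nabla_k^{\alpha}y(k)={}_a^{\mathrm{G}}\nabla_k^{\alpha}y(k)$ and, for $k\in\mathbb{N}_{a+1}$,
\[
{}_a^{\mathrm{R}}\nabla_k^{\alpha}y(k)-{}_a^{\mathrm{C}}\nabla_k^{\alpha}y(k)=\frac{(k-a)^{\overline{-\alpha}}}{\Gamma(1-\alpha)}\,y(a).
\]
Because shifting the base point from $a$ to $a-1$ in ${}_a^{\mathrm{G}}\nabla_k^{\alpha}$ merely appends the term with index $i=a$,
\[
{}_{a-1}^{\mathrm{R}}\nabla_k^{\alpha}y(k)-{}_a^{\mathrm{R}}\nabla_k^{\alpha}y(k)=\frac{(k-a+1)^{\overline{-\alpha-1}}}{\Gamma(-\alpha)}\,y(a),
\]
and adding these two, using $\Gamma(1-\alpha)=-\alpha\,\Gamma(-\alpha)$ and the rising-function recursion $(k-a)^{\overline{-\alpha}}-\alpha\,(k-a+1)^{\overline{-\alpha-1}}=(k-a+1)^{\overline{-\alpha}}$, one obtains
\[
{}_{a-1}^{\mathrm{R}}\nabla_k^{\alpha}y(k)-{}_a^{\mathrm{C}}\nabla_k^{\alpha}y(k)=\frac{(k-a+1)^{\overline{-\alpha}}}{\Gamma(1-\alpha)}\,y(a).
\]

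\emph{Step 3 (signs and conclusion).} For $\alpha\in(0,1)$ and $k\in\mathbb{N}_{a+1}$ we have $\Gamma(1-\alpha)>0$, $\Gamma(-\alpha)<0$, and each of $(k-a)^{\overline{-\alpha}}$, $(k-a+1)^{\overline{-\alpha}}$, $(k-a+1)^{\overline{-\alpha-1}}$ is strictly positive, being a ratio $\Gamma(\cdot)/\Gamma(\cdot)$ with both arguments positive (in the borderline case $k=a+1$ the arguments are $1-\alpha$ and $2-\alpha$). Hence the first and third identities force ${}_a^{\mathrm{R}}\nabla_k^{\alpha}y(k)-{}_a^{\mathrm{C}}\nabla_k^{\alpha}y(k)$ and ${}_{a-1}^{\mathrm{R}}\nabla_k^{\alpha}y(k)-{}_a^{\mathrm{C}}\nabla_k^{\alpha}y(k)$ to carry the sign of $y(a)$, while the second forces ${}_{a-1}^{\mathrm{R}}\nabla_k^{\alpha}y(k)-{}_a^{\mathrm{R}}\nabla_k^{\alpha}y(k)$ to carry the sign of $-y(a)$. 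When $y(a)\ge0$ this is precisely ${}_a^{\mathrm{C}}\nabla_k^{\alpha}y(k)\le{}_{a-1}^{\mathrm{R}}\nabla_k^{\alpha}y(k)\le{}_a^{\mathrm{R}}\nabla_k^{\alpha}y(k)$, i.e.\ \eqref{Eq3.114}; when $y(a)\le0$ every inequality reverses, giving \eqref{Eq3.115}. Undoing Step~1 completes the proof.

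\emph{Expected main obstacle.} The delicate point is the second kernel identity together with the meaning of the middle operator: one must check that the shifted base point $a-1$ does not in fact require values of $x$ below $a$, that the extra contribution is exactly $\frac{(k-a+1)^{\overline{-\alpha-1}}}{\Gamma(-\alpha)}y(a)$, and --- this is what actually yields the chain rather than just a pair of equalities --- that $\Gamma(-\alpha)<0$ whereas $\Gamma(1-\alpha)>0$, which is what pins ${}_{a-1}^{\mathrm{R}}\nabla_k^{\alpha}y(k)$ strictly between the Caputo and the Riemann--Liouville differences. The tempering reduction of Step~1 is, by contrast, routine once the constant sign of $w$ is noticed.
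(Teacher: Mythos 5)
Your proposal is correct and follows essentially the same route as the paper: the same two kernel identities (the base-shift relation ${}_{a-1}^{\mathrm{R}}\nabla_k^{\alpha}-{}_a^{\mathrm{R}}\nabla_k^{\alpha}$ with coefficient $\frac{(k-a+1)^{\overline{-\alpha-1}}}{\Gamma(-\alpha)}$ and the Riemann--Liouville/Caputo relation with coefficient $\frac{(k-a)^{\overline{-\alpha}}}{\Gamma(1-\alpha)}$, both applied to $z(k)=w(k)x(k)$), the same positivity of their sum, and the same sign analysis; your preliminary normalization $w>0$ via Lemma~\ref{Lemma2.2} is only a cosmetic variant of the paper's carrying the factor $\frac{w(a)}{w(k)}>0$ explicitly.
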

\begin{proof}
Let $z(k) := w(k)x(k)$. By using (\ref{Eq2.3}) and (\ref{Eq2.4}), one has
\begin{equation}\label{Eq3.116}
\begin{array}{rl}
{}_{a - 1}^{\hspace{9pt}\rm{R}}\nabla _k^\alpha z(k) =&\hspace{-6pt} \nabla _{a - 1}^{\hspace{9pt}\rm{G}}\nabla _k^{\alpha  - 1}z(k)\\
 =&\hspace{-6pt} \nabla \sum\nolimits_{j = a}^k {\frac{{ ( {k - j + 1}  )\overline {^{ - \alpha }} }}{{\Gamma  ( {1 - \alpha }  )}}z(j)} \\
 =&\hspace{-6pt} {}_a^{\rm R}\nabla _k^\alpha z(k) + \nabla \frac{{ ( {k - a + 1}  )\overline {^{ - \alpha }} }}{{\Gamma  ( {1 - \alpha }  )}}z ( a  )\\
 =&\hspace{-6pt} {}_a^{\rm R}\nabla _k^\alpha z(k) + \frac{{ ( {k - a + 1}  )\overline {^{ - \alpha  - 1}} }}{{\Gamma  ( { - \alpha }  )}}z ( a  ).
\end{array}
\end{equation}
Combining (\ref{Eq3.116}) and (\ref{Eq2.11}) yields
\begin{equation}\label{Eq3.117}
{\textstyle {}_{a - 1}^{\hspace{9pt}\rm{R}}\nabla _k^{\alpha ,w(k) }x(k) = {}_a^{\rm R}\nabla _k^{\alpha ,w(k) }x(k) + \frac{{ ( {k - a + 1}  )\overline {^{ - \alpha  - 1}} }}{{\Gamma  ( { - \alpha } )}}\frac{w(a)}{w(k)}x ( a  ).}
\end{equation}

From the definition, one has
\begin{equation}\label{Eq3.118}
{\textstyle \begin{array}{l}
{}_a^{\rm{R}}\nabla _k^{\alpha ,w(k)}x(k)\\
 = {\nabla ^{1,w(k)}}{}_a^{\rm{G}}\nabla _k^{\alpha  - 1,w(k)}x(k)\\
 = {w^{ - 1}}(k)\nabla \sum\nolimits_{j = a + 1}^k {\frac{{(k - j + 1)\overline {^{ - \alpha }} }}{{\Gamma ( {1 - \alpha }  )}}} w(j)x(j)\\
 ={w^{ - 1}}(k)\sum\nolimits_{j = a + 1}^k {\frac{{(k - j + 1)\overline {^{ - \alpha }} }}{{\Gamma  ( {1 - \alpha }  )}}\nabla } [w(j)x(j)] + \frac{{(k - a)\overline {^{ - \alpha }} }}{{\Gamma  ( {1 - \alpha } )}}\frac{{w(a)}}{{w(k)}}x(a)\\
 = {w^{ - 1}}(k)\sum\nolimits_{j = a + 1}^k {\frac{{(k - j + 1)\overline {^{ - \alpha }} }}{{\Gamma  ( {1 - \alpha }  )}}w(j){\nabla ^{1,w(j)}}} x(j) + \frac{{(k - a)\overline {^{ - \alpha }} }}{{\Gamma  ( {1 - \alpha }  )}}\frac{{w(a)}}{{w(k)}}x(a)\\
 ={w^{ - 1}}(k){}_a^{\rm{G}}\nabla _k^{\alpha  - 1,w(k)}{\nabla ^{1,w(k)}}x(k) + \frac{{(k - a)\overline {^{ - \alpha }} }}{{\Gamma  ( {1 - \alpha }  )}}\frac{{w(a)}}{{w(k)}}x(a)\\
 ={}_a^{\rm{C}}\nabla _k^{\alpha ,w(k)}x(k) + \frac{{(k - a)\overline {^{ - \alpha }} }}{{\Gamma  ( {1 - \alpha }  )}}\frac{{w(a)}}{{w(k)}}x(a).
\end{array}}
\end{equation}

Under given conditions, it is not difficult to obtain that $\frac{w(a)}{w(k)}>0$, $\frac{{ ( {k - a + 1} )\overline {^{ - \alpha  - 1}} }}{{\Gamma  ( { - \alpha }  )}}=\frac{\Gamma  ( { k-a-\alpha } )}{\Gamma  ( { - \alpha }  )\Gamma  ( { k-a+1}  )}<0$, $\frac{{ ( {k - a}  )\overline {^{ - \alpha }} }}{{\Gamma  ( {1 - \alpha }  )}}=\frac{\Gamma  ( { k-a-\alpha }  )}{\Gamma  ( {1 - \alpha }  )\Gamma  ( { k-a }  )}>0$, $k\in\mathbb{N}_{a+1}$. Besides, the following relationship holds
\begin{equation}\label{Eq3.119}
{\textstyle \frac{{ ( {k - a + 1}  )\overline {^{ - \alpha  - 1}} }}{{\Gamma  ( { - \alpha }  )}} + \frac{{ ( {k - a}  )\overline {^{ - \alpha }} }}{{\Gamma  ( {1 - \alpha }  )}} = \frac{{\Gamma ( {k - a - \alpha  + 1}  )}}{{\Gamma  ( {1 - \alpha }  )\Gamma  ( {k - a+1}  )}} > 0.}
\end{equation}
From this, the magnitude of ${}_a^{\rm C}\nabla _{k}^{\alpha,w(k)} x(k)$, ${}_{a - 1}^{\hspace{9pt}\rm{R}}\nabla _{k}^{\alpha,w(k)} x(k)$ and  ${}_a^{\rm R}\nabla _{k}^{\alpha,w(k)} x(k)$ mainly depend on the sign of $u ( a  )$. Thus, (\ref{Eq3.114}) and (\ref{Eq3.115}) establish immediately.
\end{proof}

When $\frac{w(a)}{w(k)}<0$ is adopted, the sign of inequality should reverse. Additionally, if $w(k)=1$, $k\in\mathbb{N}_{a+1}$, (\ref{Eq3.114}) reduces to \cite[Lemma 2.2]{Eloe:2019BKMS}. The condition $w:\mathbb{N}_a\to\mathbb{R}\backslash\{0\}$ with $\frac{w(a)}{w(k)}>0$ is equivalent to $w(k)>0$ or $w(k)<0$, $\forall k\in\mathbb{N}_a$.

\begin{theorem}\label{Theorem3.19}
If $\nabla^{1,w(k)} x(k)$ is nonnegative, then for any $\alpha\in(0,1)$, $w:\mathbb{N}_a\to\mathbb{R}\backslash\{0\}$ with $\frac{w(a)}{w(k)}>0$, $k\in\mathbb{N}_{a+1}$, $a\in\mathbb{R}$, one has ${}_a^{\rm C}\nabla _k^{\alpha,w(k)}x(k)$ is nonnegative. Besides, if $x(a)\ge0$, then ${}_a^{\rm R}\nabla _k^{\alpha,w(k)}x(k)$ is also nonnegative.
\end{theorem}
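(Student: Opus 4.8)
The plan is to recycle the computation already carried out in the proof of Theorem~\ref{Theorem3.18}. Its chain of identities \eqref{Eq3.118} does more than merely compare the three operators: its intermediate lines exhibit the Caputo tempered difference as an explicit weighted sum of first tempered differences. Concretely, applying \eqref{Eq2.14} with $n=1$ and then expanding the tempered sum operator by \eqref{Eq2.9} and \eqref{Eq2.3} gives
\[
{}_a^{\rm C}\nabla_k^{\alpha,w(k)}x(k)={}_a^{\rm G}\nabla_k^{\alpha-1,w(k)}\nabla^{1,w(k)}x(k)=\sum\nolimits_{j=a+1}^{k}\frac{(k-j+1)\overline{^{-\alpha}}}{\Gamma(1-\alpha)}\,\frac{w(j)}{w(k)}\,\nabla^{1,w(j)}x(j),
\]
so the first assertion reduces to a term-by-term sign check.

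First I would control the kernel: for $a+1\le j\le k$ we have $k-j+1\ge1$, hence $\frac{(k-j+1)\overline{^{-\alpha}}}{\Gamma(1-\alpha)}=\frac{\Gamma(k-j+1-\alpha)}{\Gamma(1-\alpha)\Gamma(k-j+1)}>0$, since all three Gamma values are evaluated at positive arguments when $\alpha\in(0,1)$ --- this is exactly the estimate already used in Theorem~\ref{Theorem3.18}. Next I would observe that the hypothesis $\frac{w(a)}{w(k)}>0$ for every $k\in\mathbb{N}_{a+1}$ forces $w$ to keep a constant sign on all of $\mathbb{N}_a$ (this is the equivalence recorded in the remark following Theorem~\ref{Theorem3.18}), so in fact $\frac{w(j)}{w(k)}>0$ for every pair of indices $j,k\in\mathbb{N}_a$. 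Combined with the standing assumption $\nabla^{1,w(j)}x(j)\ge0$ for $j\in\mathbb{N}_{a+1}$, every summand in the display is nonnegative, and therefore ${}_a^{\rm C}\nabla_k^{\alpha,w(k)}x(k)\ge0$.

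For the Riemann--Liouville statement I would feed this back into the identity obtained at the end of \eqref{Eq3.118},
\[
{}_a^{\rm R}\nabla_k^{\alpha,w(k)}x(k)={}_a^{\rm C}\nabla_k^{\alpha,w(k)}x(k)+\frac{(k-a)\overline{^{-\alpha}}}{\Gamma(1-\alpha)}\,\frac{w(a)}{w(k)}\,x(a).
\]
The first term is nonnegative by the preceding paragraph; in the second term $\frac{(k-a)\overline{^{-\alpha}}}{\Gamma(1-\alpha)}=\frac{\Gamma(k-a-\alpha)}{\Gamma(1-\alpha)\Gamma(k-a)}>0$ and $\frac{w(a)}{w(k)}>0$, so adding the hypothesis $x(a)\ge0$ makes the whole right-hand side nonnegative, i.e.\ ${}_a^{\rm R}\nabla_k^{\alpha,w(k)}x(k)\ge0$.

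Since essentially all of the analytic content is inherited from Theorem~\ref{Theorem3.18}, no step here is genuinely hard. The one point deserving attention is the upgrade from the pointwise hypothesis $\frac{w(a)}{w(k)}>0$ to the uniform statement $\frac{w(j)}{w(k)}>0$: it is this ratio appearing inside the sum --- not merely $w(a)/w(k)$ --- that governs the sign of the Caputo difference, so the single-sign property of $w$ is precisely what makes the term-by-term argument go through. Everything else is routine positivity bookkeeping.
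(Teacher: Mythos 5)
Your argument is correct and follows essentially the same route as the paper: you expand ${}_a^{\rm C}\nabla_k^{\alpha,w(k)}x(k)={}_a^{\rm G}\nabla_k^{\alpha-1,w(k)}\nabla^{1,w(k)}x(k)$ into the same weighted sum, check the sign of the kernel and of $w(j)/w(k)$ term by term, and then pass to the Riemann--Liouville case via the identity ${}_a^{\rm R}\nabla_k^{\alpha,w(k)}x(k)={}_a^{\rm C}\nabla_k^{\alpha,w(k)}x(k)+\frac{(k-a)\overline{^{-\alpha}}}{\Gamma(1-\alpha)}\frac{w(a)}{w(k)}x(a)$. Your explicit remark that the pointwise hypothesis $\frac{w(a)}{w(k)}>0$ upgrades to $\frac{w(j)}{w(k)}>0$ for all index pairs is a point the paper leaves implicit, but the substance of the proof is the same.
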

\begin{proof}
From the definition, one has
\begin{equation}\label{Eq3.120}
{\textstyle
\begin{array}{rl}
{}_a^{\rm{C}}\nabla _k^{\alpha ,w(k)}x(k) =&\hspace{-6pt} {}_a^{\rm{G}}\nabla _k^{\alpha  - 1,w(k)}{\nabla ^{1,w(k)}}x(k)\\
 =&\hspace{-6pt}\sum\nolimits_{j = a + 1}^k {\frac{{(k - j + 1)\overline {^{ - \alpha }} }}{{\Gamma  ( {1 - \alpha }  )}}} \frac{w(j)}{w(k)}{\nabla ^{1,w(j)}}x(j).
\end{array}}
\end{equation}

Since $\frac{w(j)}{w(k)}>0$, $\nabla^{1,w(k)} x(k)\ge0$, $k\in\mathbb{N}_{a+1}$, and $\frac{{(k - j + 1)\overline {^{ - \alpha }} }}{{\Gamma  ( {1 - \alpha }  )}} = \frac{{\Gamma (k - j + 1 - \alpha )}}{{\Gamma (k - j + 1)\Gamma  ( {1 - \alpha }  )}} > 0$, for any $\alpha\in(0,1)$, $j\in\mathbb{N}_{a+1}^k$, one has ${}_a^{\rm{C}}\nabla _k^{\alpha ,w(k)}x(k)\ge0$.

Under the given conditions, it is not difficult to obtain ${}_a^{\rm{C}}\nabla _k^{\alpha ,w(k)}x(k)\ge0$, $\frac{{ ( {k - a}  )\overline {^{ - \alpha }} }}{{\Gamma  ( {1 - \alpha }  )}}$ $= \frac{{\Gamma (k - a - \alpha )}}{{\Gamma (k - a)\Gamma  ( {1 - \alpha }  )}} > 0$, $\frac{w(a)}{w(k)}>0$, $\forall \alpha\in(0,1)$, $j\in\mathbb{N}_{a+1}^k$. If $x(a)\ge0$, the relationship ${}_a^{\rm R}\nabla _k^{\alpha ,w(k) }x(k) = {}_a^{\rm C}\nabla _k^{\alpha ,w(k) }x(k) + \frac{{ ( {k - a}  )\overline {^{ - \alpha }} }}{{\Gamma  ( {1 - \alpha }  )}}\frac{w(a)}{w(k)}x ( a  )$ will lead to the desired result ${}_a^{\rm{R}}\nabla _k^{\alpha ,w(k)}x(k)\ge0$ immediately.
\end{proof}

Theorem \ref{Theorem3.19} can be regarded as the generalization of subsection 3.18 of \cite{Goodrich:2015Book} and Theorem 21 of \cite{Almeida:2019RMJM}. Actually, the range of the order can be wider. If $\nabla^{n,w(k)} x(k)$ is nonnegative, then for any $\alpha\in(n-1,n)$, $n\in\mathbb{Z}_+$, $w:\mathbb{N}_a\to\mathbb{R}\backslash\{0\}$ with $\frac{w(a)}{w(k)}>0$, $k\in\mathbb{N}_{a+1}$, $a\in\mathbb{R}$, one has ${}_a^{\rm C}\nabla _k^{\alpha,w(k)}x(k)$ is nonnegative. It will be more interesting to construct the sufficient condition for $\nabla^{1, w(k)} x(k) \geq 0$ with the assumption ${ }_{a}^{\mathrm{C}} \nabla_{k}^{\alpha, w(k)} x(k) \geq 0$ or ${ }_{a}^{\mathrm{R}} \nabla_{k}^{\alpha, w(k)} x(k) \geq 0$

\begin{theorem}\label{Theorem3.20}
If ${}_a^{\rm C}\nabla _{k}^{\alpha,w(k)} x(k)\ge{}_a^{\rm C}\nabla _{k}^{\alpha,w(k)} y(k)$ with $x(a)=y(a)$, then for any  $\alpha\in(0,1)$, $w:\mathbb{N}_a\to\mathbb{R}\backslash\{0\}$ with $\frac{w(a)}{w(k)}>0$, $k\in\mathbb{N}_{a+1}$, $a\in\mathbb{R}$, one has $x(k)\ge y(k)$.
\end{theorem}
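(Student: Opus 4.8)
The plan is to reduce the statement to the comparison principle hidden in Theorem~\ref{Theorem3.19}. Set $e(k) := x(k) - y(k)$. By linearity of the Caputo tempered difference, the hypothesis ${}_a^{\rm C}\nabla _{k}^{\alpha,w(k)} x(k)\ge{}_a^{\rm C}\nabla _{k}^{\alpha,w(k)} y(k)$ becomes ${}_a^{\rm C}\nabla _{k}^{\alpha,w(k)} e(k)\ge 0$, and the condition $x(a)=y(a)$ gives $e(a)=0$. So the task is to show $e(k)\ge 0$ for all $k\in\mathbb{N}_{a+1}$, knowing only the sign of its Caputo tempered difference and that it starts at $0$.

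First I would invoke the identity derived inside the proof of Theorem~\ref{Theorem3.19} (equivalently, equation~(\ref{Eq3.118})):
\begin{equation*}
{\textstyle {}_a^{\rm R}\nabla _k^{\alpha ,w(k)}e(k) = {}_a^{\rm C}\nabla _k^{\alpha ,w(k)}e(k) + \frac{{(k - a)\overline {^{ - \alpha }} }}{{\Gamma  ( {1 - \alpha }  )}}\frac{w(a)}{w(k)}e(a).}
\end{equation*}
Since $e(a)=0$, the second term vanishes, so ${}_a^{\rm R}\nabla _k^{\alpha ,w(k)}e(k) = {}_a^{\rm C}\nabla _k^{\alpha ,w(k)}e(k)\ge 0$. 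Next I would apply the nabla tempered fractional sum operator ${}_a^{\rm G}\nabla_k^{-\alpha,w(k)}$ to both sides and use the inversion formula~(\ref{Eq2.7}) in its tempered form, i.e. the $n=1$ case of ${}_a^{\rm G}\nabla_k^{-\alpha,w(k)}{}_a^{\rm R}\nabla_k^{\alpha,w(k)}e(k) = e(k) - \frac{(k-a)\overline{^{\alpha-1}}}{\Gamma(\alpha)}\frac{w(a)}{w(k)}\big[{}_a^{\rm R}\nabla_k^{\alpha-1,w(k)}e(k)\big]_{k=a}$. The boundary term again involves a factor of $e(a)=0$ (after unwinding ${}_a^{\rm R}\nabla_k^{\alpha-1,w(k)}e(k)=\frac{w(a)}{w(k)}e(k)$ at $k=a$), so we are left with
\begin{equation*}
{\textstyle e(k) = {}_a^{\rm G}\nabla_k^{-\alpha,w(k)}\big[{}_a^{\rm C}\nabla_k^{\alpha,w(k)}e(k)\big] = \sum\nolimits_{j=a+1}^k \frac{(k-j+1)\overline{^{\alpha-1}}}{\Gamma(\alpha)}\frac{w(j)}{w(k)}\,{}_a^{\rm C}\nabla_j^{\alpha,w(j)}e(j).}
\end{equation*}
Every factor in each summand is nonnegative: $\frac{(k-j+1)\overline{^{\alpha-1}}}{\Gamma(\alpha)}=\frac{\Gamma(k-j+\alpha)}{\Gamma(k-j+1)\Gamma(\alpha)}>0$ for $\alpha\in(0,1)$, $\frac{w(j)}{w(k)}>0$ under the standing assumption $\frac{w(a)}{w(k)}>0$ (equivalently $w$ has constant sign on $\mathbb{N}_a$), and ${}_a^{\rm C}\nabla_j^{\alpha,w(j)}e(j)\ge 0$ by hypothesis. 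Hence $e(k)\ge 0$, which is the claim.

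The main obstacle I anticipate is making the two boundary-term cancellations airtight — in particular justifying the tempered inversion formula ${}_a^{\rm G}\nabla_k^{-\alpha,w(k)}{}_a^{\rm R}\nabla_k^{\alpha,w(k)} = \mathrm{id} - (\text{boundary})$ for the tempered operators, which is the $w$-conjugated version of~(\ref{Eq2.7}) and follows by writing everything in terms of $z(k)=w(k)e(k)$, applying the classical identity, and dividing by $w(k)$. Once that is in place, the argument is purely a sign count on the convolution kernel, so there is no further difficulty; the only care needed is to confirm that the $\alpha-1\in(-1,0)$ "fractional sum" ${}_a^{\rm G}\nabla_k^{\alpha-1,w(k)}$ evaluated at $k=a$ collapses to $\frac{w(a)}{w(a)}e(a)=e(a)=0$, killing the boundary contribution.
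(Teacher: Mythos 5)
Your proposal is correct and follows essentially the same route as the paper: both recover $x(k)-y(k)$ by applying the tempered fractional sum to the nonnegative difference of the Caputo differences (which the paper packages as a compensation sequence $c(k)\ge 0$) and note that the boundary term dies because $x(a)=y(a)$ while the kernel $\frac{(k-j+1)\overline{^{\alpha-1}}}{\Gamma(\alpha)}$ and the ratio $\frac{w(j)}{w(k)}$ are positive. Your detour through the Riemann--Liouville operator and (\ref{Eq2.7}) is an equivalent but slightly longer path than the paper's direct use of the Caputo inversion (\ref{Eq2.8}); the conclusion and the sign count are identical.
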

\begin{proof}
Following from ${}_a^{\rm C}\nabla _{k}^{\alpha,w(k)} x(k)\ge{}_a^{\rm C}\nabla _{k}^{\alpha,w(k)} y(k)$, there must exist a compensation sequence $c(k)\ge 0$ such that
\begin{equation}\label{Eq3.121}
{\textstyle
{}_a^{\rm C}\nabla _k^{\alpha ,w(k) }x(k) = {}_a^{\rm C}\nabla _k^{\alpha ,w(k) }y(k) + c(k).}
\end{equation}
Taking $\alpha$-th Gr\"{u}nwald--Letnikov tempered sum for both sides of (\ref{Eq3.121}) yields
\begin{equation}\label{Eq3.122}
{\textstyle
x(k) - \frac{w(a)}{w(k)}x ( a  ) = y(k) - \frac{w(a)}{w(k)}y ( a  ) + {}_a^{\rm{G}}\nabla _k^{ - \alpha,w(k) }c(k).}
\end{equation}
Since $x(a)=y(a)$, the key item is ${}_a^{\rm{G}}\nabla _k^{ - \alpha,w(k) }c(k)$. According to the nonnegativeness of $c(k)$ and the invariance of the sign of $w(k)$, one has ${}_a^{\rm{G}}\nabla _k^{ - \alpha,w(k) }c(k)\ge0$. As a result, $x(k)\ge y(k)$ is implied in (\ref{Eq3.122}).
\end{proof}

Notably, in Theorem \ref{Theorem3.20}, if $x(a)=y(a)$ is replaced by $x(a)\ge y(a)$, the conclusion still holds. If $x(a)=y(a)$ is replaced by ${[ {{}_a^{\rm R}\nabla _k^{\alpha  - 1,w(k)}x( k )} ]_{k = a}} ={[ {{}_a^{\rm R}\nabla _k^{\alpha  - 1,w(k)}y( k )} ]_{k = a}} $, the conclusion still holds. By using ${}_a^{\rm R}\nabla _k^{\alpha ,w(k) }x(k) = {}_a^{\rm C}\nabla _k^{\alpha ,w(k) }x(k) + \frac{{ ( {k - a} )\overline {^{ - \alpha }} }}{{\Gamma  ( {1 - \alpha }  )}}\frac{w(a)}{w(k)}x ( a  )$, one obtains that if ${}_a^{\rm C}\nabla _{k}^{\alpha,w(k)} x(k)\ge{}_a^{\rm C}\nabla _{k}^{\alpha,w(k)} y(k)$ is replaced by ${}_a^{\rm R}\nabla _{k}^{\alpha,w(k)} x(k)\ge{}_a^{\rm R}\nabla _{k}^{\alpha,w(k)} y(k)$, the conclusion still can be guaranteed. It is generally difficult to obtain the relationship of two sequences' fractional difference and therefore the corresponding inequalities are considered.

\begin{theorem}\label{Theorem3.21}
If ${}_a^{\rm C}\nabla _k^{\alpha,w(k)} x(k)\ge\mu x(k)+\gamma $, ${}_a^{\rm C}\nabla _k^{\alpha,w(k)} y(k)\le\mu y(k)+\gamma$, where $x(a)=y(a)$, then for any $\alpha\in(0,1)$, $w:\mathbb{N}_a\to\mathbb{R}\backslash\{0\}$ with $\frac{w(a)}{w(k)}>0$, $\mu<0$, $\gamma\in\mathbb{R}$, $k\in {\mathbb N}_{a+1}$, $a\in\mathbb{R}$, one has $x(k)\ge y(k)$.
\end{theorem}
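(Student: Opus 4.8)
The plan is to reduce the claim to a one‑sided estimate for the difference $e(k):=x(k)-y(k)$ and then close it by strong induction on $k$. First, as noted after Theorem~\ref{Theorem3.18}, the hypothesis on $w$ is equivalent to $w(k)>0$ for all $k\in\mathbb{N}_a$ or $w(k)<0$ for all $k\in\mathbb{N}_a$; by the scale invariance in Lemma~\ref{Lemma2.2} (take $\lambda=-1$) the case $w<0$ follows from the case $w>0$, so I may assume $w(k)>0$ throughout. Set $z(k):=w(k)e(k)$; since $x(a)=y(a)$ we have $e(a)=0$ and $z(a)=0$. Subtracting the two hypotheses and using that $w^{-1}(k)\,{}_a^{\rm C}\nabla_k^\alpha[w(k)\,\cdot\,]$ is linear, the constant $\gamma$ cancels and we obtain
\[
{}_a^{\rm C}\nabla_k^{\alpha,w(k)}e(k)\ \ge\ \mu\,e(k),\qquad e(a)=0 .
\]

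Next I would convert the Caputo tempered difference of $e$ into a convolution in $z$. Starting from identity~(\ref{Eq3.120}) applied to $e$ and using $\frac{w(j)}{w(k)}\nabla^{1,w(j)}e(j)=\frac1{w(k)}\big(z(j)-z(j-1)\big)$, a summation by parts (Abel transform) with $c_j:=\frac{(k-j+1)\overline{^{-\alpha}}}{\Gamma(1-\alpha)}$, together with $c_k=1$ and $z(a)=0$ to kill the boundary terms, yields
\[
{}_a^{\rm C}\nabla_k^{\alpha,w(k)}e(k)=\frac{1}{w(k)}\Big[z(k)+\sum_{j=a+1}^{k-1}(c_j-c_{j+1})\,z(j)\Big].
\]
The decisive computation is the sign of the ``history'' coefficients: for $a+1\le j\le k-1$ and $\alpha\in(0,1)$,
\[
c_j-c_{j+1}=\frac{-\alpha\,\Gamma(k-j-\alpha)}{\Gamma(1-\alpha)\,\Gamma(k-j+1)}<0 ,
\]
i.e. $\{c_j\}_{j=a+1}^{k}$ is a positive, strictly increasing sequence with terminal value $c_k=1$ — equivalently, the rising‑function convolution kernel is positive and decreasing.

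Then I would run strong induction on $k\in\mathbb{N}_a$ to prove $e(k)\ge0$, hence $z(k)\ge0$. The base case is $e(a)=0$. Assuming $z(j)\ge0$ for all $a\le j\le k-1$, solve the displayed identity for $z(k)$:
\[
z(k)=w(k)\,{}_a^{\rm C}\nabla_k^{\alpha,w(k)}e(k)-\sum_{j=a+1}^{k-1}(c_j-c_{j+1})\,z(j).
\]
Here $w(k)\,{}_a^{\rm C}\nabla_k^{\alpha,w(k)}e(k)\ge w(k)\,\mu\,e(k)=\mu\,z(k)$ because $w(k)>0$, and $-(c_j-c_{j+1})\,z(j)\ge0$ by the induction hypothesis and the sign of $c_j-c_{j+1}$; consequently $(1-\mu)\,z(k)\ge0$. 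Since $\mu<0$ gives $1-\mu>0$, we conclude $z(k)\ge0$ and therefore $e(k)=z(k)/w(k)\ge0$, that is $x(k)\ge y(k)$. (Equivalently one can argue by contradiction at the first index $k^{*}$ with $e(k^{*})<0$, where the same identity forces ${}_a^{\rm C}\nabla_{k^{*}}^{\alpha,w(k^{*})}e(k^{*})\le e(k^{*})<0$, contradicting ${}_a^{\rm C}\nabla_{k^{*}}^{\alpha,w(k^{*})}e(k^{*})\ge\mu e(k^{*})>0$.)

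The main obstacle is the middle step — producing the correct convolution form of ${}_a^{\rm C}\nabla_k^{\alpha,w(k)}e(k)$ and, above all, verifying that after the summation by parts the coefficient of the present value $z(k)$ is exactly $1$ while every coefficient attached to a past value $z(j)$ is strictly negative. Once this monotonicity of the rising‑function kernel is established, the induction is immediate, and the assumption $\mu<0$ is used only to guarantee $1-\mu>0$ (indeed $\mu<1$ would suffice), while the sign condition on $w$ enters solely to normalize to $w>0$ and to pass between $z(k)$ and $e(k)$.
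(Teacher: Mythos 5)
Your proof is correct, but it takes a genuinely different route from the paper's. The paper introduces nonnegative compensation sequences $c_1(k),c_2(k)$ that turn the two hypotheses into equalities, passes to $\hat x=wx$, $\hat y=wy$, and invokes the explicit solution formula of the nabla linear Caputo system, $\hat x(k)=\hat x(a)\mathcal{F}_{\alpha,1}(\mu,k,a)+\gamma w(k)\ast\mathcal{F}_{\alpha,\alpha}(\mu,k,a)+\hat c_1(k)\ast\mathcal{F}_{\alpha,\alpha}(\mu,k,a)$, concluding from the positivity of the discrete Mittag--Leffler kernels $\mathcal{F}_{\alpha,1},\mathcal{F}_{\alpha,\alpha}$ for $\mu<0$ (a fact the paper quotes rather than proves). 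You instead subtract the two hypotheses, reduce to ${}_a^{\rm C}\nabla_k^{\alpha,w(k)}e(k)\ge\mu e(k)$ with $e(a)=0$ for $e=x-y$, and close by a kernel-monotonicity plus strong-induction argument; your Abel-summation identity, the terminal value $c_k=1$, the sign computation $c_j-c_{j+1}=-\alpha\,\Gamma(k-j-\alpha)/\bigl(\Gamma(1-\alpha)\Gamma(k-j+1)\bigr)<0$, and the step $(1-\mu)z(k)\ge0$ all check out. What your route buys: it is entirely self-contained (it does not rest on the unproved positivity of $\mathcal{F}_{\alpha,\alpha}$, which is itself typically established by exactly this kind of first-crossing/induction argument), and it makes visible that the hypothesis $\mu<0$ can be weakened to $\mu<1$. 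What the paper's route buys: the convolution representation treats the initial value, the constant $\gamma$, and the compensation terms uniformly, and generalizes more readily to other forcing terms once the Mittag--Leffler positivity is granted.
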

\begin{proof}
For ${}_a^{\rm C}\nabla _k^{\alpha,w(k)} u(k)\ge\mu x(k)+\gamma $, there must exist a compensation sequence $c_1(k) \ge 0$ such that
\begin{equation}\label{Eq3.123}
{\textstyle
{}_a^{\rm C}\nabla _k^{\alpha,w(k)} x(k)= \mu x(k)+\gamma +c_1(k).}
\end{equation}

Letting $\hat x(k)=w(k)x(k)$, $\hat c_1(k)=w(k)c_1(k)$, then one has ${}_a^{\rm C}\nabla _k^{\alpha} \hat x(k)= \mu \hat x(k)+\gamma w(k)+\hat c_1(k)$. Therefore, by using the nabla linear system theory, it follows
\begin{equation}\label{Eq3.124}
{\textstyle
\begin{array}{rl}
\hat x(k) =&\hspace{-6pt} \hat x ( a  ){{\mathcal F}_{\alpha ,1}} ( {\mu,k,a}  )+ {\rm \gamma}w(k)\ast {{\mathcal F}_{\alpha ,\alpha}} ( {\mu,k,a}  )\\
 &\hspace{-6pt}+ \hat c_1(k) \ast {{\mathcal F}_{\alpha ,\alpha}} ( {\mu,k,a}  ).
\end{array}}
\end{equation}

In a similar way, letting $\hat y(k)=w(k)y(k)$, $\hat c_2(k)=w(k)c_2(k)$ where $c_2(k)$ is a nonnegative compensation sequence, then one has $\hat y(a)=\hat x(a)$ and ${}_a^{\rm C}\nabla _k^{\alpha} \hat y(k) = \mu \hat y(k)+\gamma w(k)-\hat c_2(k)$, which implies
\begin{equation}\label{Eq3.125}
{\textstyle
\begin{array}{rl}
\hat y(k) =&\hspace{-6pt} \hat y ( a  ){{\mathcal F}_{\alpha ,1}} ( {\mu,k,a}  )+ {\rm \gamma}w(k)\ast {{\mathcal F}_{\alpha ,\alpha}} ( {\mu,k,a}  )\\
&\hspace{-6pt}- \hat c_2(k) \ast {{\mathcal F}_{\alpha ,\alpha}} ( {\mu,k,a}  ).
\end{array}}
\end{equation}

Since ${{\mathcal F}_{\alpha ,1}} ( {\mu,k,a}  )>0$, ${{\mathcal F}_{\alpha ,\alpha}} ( {\mu,k,a} )>0$ hold for any $\alpha\in(0,1)$, $\mu<0$, $k\in {\mathbb N}_{a+1}$, $a\in\mathbb{R}$, when $w(k)>0$, one has $\hat c_1(k)\ge0$, $\hat c_2(k)\ge0$ and $\hat x(k)\ge\hat y(k)$, which implies $ x(k)\ge y(k)$. When $w(k)<0$, one has $\hat c_1(k)\le0$, $\hat c_2(k)\le0$ and $\hat x(k)\le\hat y(k)$. In a similar way, the desired result $ x(k)\ge y(k)$ follows. The case of $w(k)<0$ can also be proved by applying Lemma \ref{Lemma2.2}.
\end{proof}

To be more practical, the following nonlinear case is discussed.
\begin{theorem}\label{Theorem3.22}
If ${}_a^{\rm{C}}\nabla _k^{\alpha,w(k)} x(k) \ge  - \gamma  ( {x(k)}  )+h(k)$, ${}_a^{\rm{C}}\nabla _k^{\alpha,w(k)} y(k) \le  - \gamma  ( {y(k)}  )$ $+h(k)$, where $x(a)=y(a)$, $x(k), y(k) \ge 0$, then for any $\alpha\in(0,1)$, $w:\mathbb{N}_a\to\mathbb{R}\backslash\{0\}$ with $\frac{w(a)}{w(k)}>0$, locally Lipschitz class $\mathcal K$ function $\gamma$, bounded $h(k)$, $k\in {\mathbb N}_{a+1}$, $a\in\mathbb{R}$, one has $x(k) \ge y(k)$.
\end{theorem}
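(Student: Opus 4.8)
The plan is to argue by a first‑crossing contradiction, turning the differential‑type hypotheses into a pointwise sign statement at the hypothetical first index where $x$ drops below $y$, and then contradicting it with the explicit series form of the Caputo tempered difference. First I would use the scale invariance in Lemma~\ref{Lemma2.2} (take $\lambda=-1$) together with the fact that $\frac{w(a)}{w(k)}>0$ forces $w$ to keep a constant sign on $\mathbb N_a$, so without loss of generality $w(k)>0$ for all $k$; all five tempered operators are unchanged by this. Set $z(k):=w(k)\bigl(x(k)-y(k)\bigr)$, so $z(a)=0$, and note that $\;{}_a^{\rm C}\nabla_k^{\alpha,w(k)}u(k)=w^{-1}(k)\,{}_a^{\rm C}\nabla_k^{\alpha}[w(k)u(k)]$ is linear in $u$, hence
\[
{}_a^{\rm C}\nabla_k^{\alpha,w(k)}x(k)-{}_a^{\rm C}\nabla_k^{\alpha,w(k)}y(k)=w^{-1}(k)\,{}_a^{\rm C}\nabla_k^{\alpha}z(k).
\]

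Next I would suppose, for contradiction, that $x(k)\ge y(k)$ fails, and let $k^{\ast}\in\mathbb N_{a+1}$ be the least index with $x(k^{\ast})<y(k^{\ast})$; then $z(j)\ge0$ for $a\le j\le k^{\ast}-1$ while $z(k^{\ast})<0$. Subtracting the two assumed inequalities at $k=k^{\ast}$ (the $h(k^{\ast})$ terms cancel) gives
\[
w^{-1}(k^{\ast})\,{}_a^{\rm C}\nabla_{k^{\ast}}^{\alpha}z(k^{\ast})\ \ge\ \gamma\bigl(y(k^{\ast})\bigr)-\gamma\bigl(x(k^{\ast})\bigr)\ >\ 0,
\]
the last inequality because $\gamma$ is of class $\mathcal K$, hence strictly increasing with $\gamma(0)=0$, and $0\le x(k^{\ast})<y(k^{\ast})$; since $w(k^{\ast})>0$ this yields ${}_a^{\rm C}\nabla_{k^{\ast}}^{\alpha}z(k^{\ast})>0$. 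On the other hand, starting from ${}_a^{\rm C}\nabla_{k}^{\alpha}z(k)=\sum_{j=a+1}^{k}\frac{(k-j+1)\overline{^{-\alpha}}}{\Gamma(1-\alpha)}\nabla z(j)$, summation by parts together with $z(a)=0$ and $\frac{1\overline{^{-\alpha}}}{\Gamma(1-\alpha)}=1$ gives
\[
{}_a^{\rm C}\nabla_{k^{\ast}}^{\alpha}z(k^{\ast})=z(k^{\ast})+\sum_{j=a+1}^{k^{\ast}-1}\Bigl(\tfrac{(k^{\ast}-j+1)\overline{^{-\alpha}}}{\Gamma(1-\alpha)}-\tfrac{(k^{\ast}-j)\overline{^{-\alpha}}}{\Gamma(1-\alpha)}\Bigr)z(j).
\]
For $\alpha\in(0,1)$ the kernel $\frac{(m+1)\overline{^{-\alpha}}}{\Gamma(1-\alpha)}=\frac{\Gamma(m+1-\alpha)}{\Gamma(m+1)\Gamma(1-\alpha)}$ is strictly decreasing in the gap $m\ge0$ (because $\frac{(m+1)\overline{^{-\alpha}}}{m\overline{^{-\alpha}}}=\frac{m-\alpha}{m}<1$), so each bracket is negative; as $z(j)\ge0$ for $j\le k^{\ast}-1$ and $z(k^{\ast})<0$, the right‑hand side is $<0$, contradicting ${}_a^{\rm C}\nabla_{k^{\ast}}^{\alpha}z(k^{\ast})>0$. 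Hence no such $k^{\ast}$ exists and $x(k)\ge y(k)$ for all $k\in\mathbb N_{a+1}$.

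The main obstacle I expect is the sign bookkeeping in the summation‑by‑parts identity: one must verify that with $z(a)=0$ the boundary contributions vanish, that the leading weight equals $1$, and that for $\alpha\in(0,1)$ all kernel increments $\frac{(k-j+1)\overline{^{-\alpha}}}{\Gamma(1-\alpha)}-\frac{(k-j)\overline{^{-\alpha}}}{\Gamma(1-\alpha)}$ are negative, so that the ``history'' values $z(j)\ge0$ can only push ${}_a^{\rm C}\nabla_{k^{\ast}}^{\alpha}z(k^{\ast})$ below $z(k^{\ast})<0$. It is worth noting that this first‑crossing argument uses only that $\gamma$ is strictly increasing; neither the local Lipschitz property of $\gamma$ nor the boundedness of $h$ is strictly needed here. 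Those hypotheses become relevant in the alternative route, where one first constructs the solution $v$ of the auxiliary comparison equation ${}_a^{\rm C}\nabla_k^{\alpha,w(k)}v(k)=-\gamma(v(k))+h(k)$ with $v(a)=x(a)$ (its existence and global persistence on $\mathbb N_{a+1}$ being guaranteed by local Lipschitzness of $\gamma$ and boundedness of $h$), and then applies the above comparison twice to obtain $x(k)\ge v(k)\ge y(k)$; this mirrors the structure used for the linear case in Theorem~\ref{Theorem3.21}.
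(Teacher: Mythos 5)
Your proposal is correct and follows essentially the same route as the paper's own proof: reduce to $w(k)>0$ via Lemma \ref{Lemma2.2}, assume a first index where $x$ drops below $y$, apply summation by parts to the Caputo difference of $w(k)[x(k)-y(k)]$ to get one sign, and contradict it with the sign forced by subtracting the two hypotheses and the monotonicity of $\gamma$. The only cosmetic differences are that you absorb the paper's separate $k=a+1$ base step into the empty-sum case of the first-crossing argument and you correctly observe (as the paper's proof implicitly confirms) that the local Lipschitz and boundedness hypotheses are not actually used.
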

\begin{proof}
When $w(k)>0$, by combining (\ref{Eq2.12}) and the given conditions, one has
\begin{equation}\label{Eq3.126}
{\textstyle {}_a^{\rm{C}}\nabla _k^\alpha [w(k)x(k)] \ge  - w(k)\gamma (x(k)) + w(k)h(k), }
\end{equation}
\begin{equation}\label{Eq3.127}
{\textstyle {}_a^{\rm{C}}\nabla _k^\alpha [w(k)y(k)] \le  - w(k)\gamma (y(k)) + w(k)h(k). }
\end{equation}

By combining (\ref{Eq3.126}) and (\ref{Eq3.127}), one has
\begin{equation}\label{Eq3.128}
{\textstyle {}_a^{\rm{C}}\nabla _k^\alpha [w(k)y(k)-w(k)x(k)] \le  - w(k)[\gamma (y(k)) - \gamma (x(k))]. }
\end{equation}

By substituting $k=a+1$ into (\ref{Eq3.128}), one has $w(a+1)y(a+1)-w(a+1)x(a+1) \le  - w(a+1)[\gamma (y(a+1)) - \gamma (x(a+1))]$ which implies
\begin{equation}\label{Eq3.129}
{\textstyle y ( a+1  )+\gamma  ( {y ( a+1  )}  )\le x ( a+1  )+\gamma  ( {x ( a+1 )}  ). }
\end{equation}
The monotonicity of $f(z)=z+\gamma(z)$, $z\ge0$ results in $y ( a+1  )\le x ( a+1  )$.

Now, let us assume that there exist a constant $k_1\in\mathbb{N}_{a+1}$ such that
\begin{equation}\label{Eq3.130}
 \left\{ \begin{array}{l}
y(k) \le x(k),k \in \mathbb{N}_a^{{k_1} - 1},\\
y(k) > x(k), k=k_1.
\end{array}  \right.
\end{equation}

From (\ref{Eq3.130}), the definition of Caputo fractional difference and the formula of summation by parts give
\begin{equation}\label{Eq3.131}
\begin{array}{l}
\{{}_a^{\rm{C}}\nabla _k^\alpha [w(k)y(k)-w(k)x(k)]\}_{k = {k_1}}\\
 = \sum\nolimits_{j = a + 1}^{{k_1}} {\frac{{ ( {{k_1} - j + 1}  )\overline {^{ - \alpha }} }}{{\Gamma ( {1 - \alpha }  )}}\{ {\nabla [w(j)y(j)] - \nabla [w(j)x(j)]} \}} \\
 = \sum\nolimits_{j = a + 1}^{{k_1}} {\frac{{ ( {{k_1} - j + 1}  )\overline {^{ - \alpha }} }}{{\Gamma ( {1 - \alpha }  )}} \nabla \{w(j)[y(j)- x(j)] \}} \\
 = \frac{{ ( {{k_1} - j}  )\overline {^{ - \alpha }} }}{{\Gamma  ( {1 - \alpha }  )}}w(j) [ {y(j) - x(j)}  ]\big|_{j = a}^{j = {k_1}}\\
 \hspace*{12pt}+ \sum\nolimits_{j = a + 1}^{{k_1}} {\frac{{ ( {{k_1} - j + 1}  )\overline {^{ - \alpha  - 1}} }}{{\Gamma  ( { - \alpha }  )}}w(j) [ {y (j  ) - x (j  )}  ]} \\
 =  \sum\nolimits_{j = a + 1}^{{k_1-1}} {\frac{{ ( {{k_1} - j + 1}  )\overline {^{ - \alpha  - 1}} }}{{\Gamma  ( { - \alpha }  )}}w(j) [ {y (j  ) - x (j  )}  ]} \\
 \hspace*{12pt}+ w(k_1)[y(k_1)- x(k_1)]\\
 > 0,
\end{array}
\end{equation}
where $\frac{{0\overline {^{ - \alpha }} }}{{\Gamma  ( {1 - \alpha }  )}}=0$, $x(a)=y(a)$, $\frac{{ ( {{k_1} - j + 1}  )\overline {^{ - \alpha  - 1}} }}{{\Gamma  ( { - \alpha }  )}}=\frac{\Gamma  ( { k_1-j-\alpha}  )}{\Gamma  ( { k_1-j+1 }  )\Gamma  ( { - \alpha }  )}<0$, $i\in\mathbb{N}_{a+1}^{k_1-1}$ and $w(k)>0$, $k\in\mathbb{N}_{a+1}$ are adopted.

By using (\ref{Eq3.128}) and (\ref{Eq3.130}), one has
\begin{equation}\label{Eq3.132}
\begin{array}{l}
\{{}_a^{\rm{C}}\nabla _k^\alpha [w(k)y(k)-w(k)x(k)]\}_{k = {k_1}}\\
 \le  - w(k_1)[\gamma (y(k_1)) - \gamma (x(k_1))]\\
 \le 0,
\end{array}
\end{equation}
which contradicts (\ref{Eq3.131}). Consequently, $k_1$ does not exist. From this, it is not difficult to obtain the desired result $x(k) \ge y(k)$ for all $k\in {\mathbb N}_{a+1}$.

When $w(k)<0$, $\forall k\in {\mathbb N}_{a+1}$, the desired result $x(k) \ge y(k)$, $\forall k\in {\mathbb N}_{a+1}$ can also be obtained by applying Lemma \ref{Lemma2.2}.
\end{proof}

The comparison principle in Theorem \ref{Theorem3.23} - Theorem \ref{Theorem3.25} are inspired by Lemma 2.10 in \cite{Baleanu:2017CNSNS}, Lemma 3.14 in \cite{Liu:2021Filomat}, Lemma 3.1 in \cite{Liu:2019TJM}, Lemma 3.4 in \cite{Wu:2017AMC}, Lemma 2.16 in \cite{Wyrwas:2015Kybe}, Lemma 3.3 in \cite{Liu:2018DSA}, etc. Compared with the existing ones, the newly developed theorems are more practical.

\begin{theorem}\label{Theorem3.23}
If $W: \mathbb R^{n} \to \mathbb{R}$ is differentiable convex, $W ( {0}  )=0$, $V(x(k)):=w^{-1}(k)W(z(k))$, $z(k):=w(k)x(k)$ and $\frac{{\rm d} W ( {z(k)}  )}{{{\rm d} z(k)}}=\frac{{\rm d}V ( {x(k)}  ) }{{{\rm d} x(k)}}$ holds almost everywhere, then for any $\alpha\in(0,1)$, $w:\mathbb{N}_a\to\mathbb{R}\backslash\{0\}$ with $\frac{w(a)}{w(k)}>0$, $k \in {\mathbb N_{a + 1}}$, $a\in\mathbb{R}$, one has
\begin{equation}\label{Eq3.133}
{\textstyle {}_a^{\rm C}\nabla _{k}^{\alpha,w(k)} V ( {x(k)}  ) \le \frac{{{\rm d} V ( {x(k)} )}}{{{\rm d}  {x^{\rm{T}}}(k)}}{}_a^{\rm C}\nabla _{k}^{\alpha,w(k)} x(k),}
\end{equation}
\begin{equation}\label{Eq3.134}
{\textstyle {}_a^{\rm R}\nabla _{k}^{\alpha,w(k)} V ( {x(k)}  ) \le \frac{{{\rm d}  V ( {x(k)} )}}{{{\rm d}  {x^{\rm{T}}}(k)}}{}_a^{\rm R}\nabla _{k}^{\alpha,w(k)} x(k),}
\end{equation}
\begin{equation}\label{Eq3.135}
{\textstyle {}_a^{\rm G}\nabla _{k}^{\alpha,w(k)} V ( {x(k)}  ) \le \frac{{{\rm d}  V ( {x(k)} )}}{{{\rm d}  {x^{\rm{T}}}(k)}}{}_a^{\rm G}\nabla _{k}^{\alpha,w(k)} x(k).}
\end{equation}
\end{theorem}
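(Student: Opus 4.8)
Write $z(k):=w(k)x(k)$ and let $g^{\rm T}(k):=\frac{{\rm d}W(z(k))}{{\rm d}z^{\rm T}(k)}=\frac{{\rm d}V(x(k))}{{\rm d}x^{\rm T}(k)}$ (the equality being exactly the hypothesis, or the chain rule). The plan is to strip off the tempering factor and reduce (\ref{Eq3.133})--(\ref{Eq3.135}) to classical, non-tempered convexity inequalities for $W$. Since $w(k)V(x(k))=w(k)w^{-1}(k)W(z(k))=W(z(k))$, feeding this into (\ref{Eq2.12}), (\ref{Eq2.11}), (\ref{Eq2.9}) gives
\[ {}_a^{\rm C}\nabla_k^{\alpha,w(k)}V(x(k))=w^{-1}(k){}_a^{\rm C}\nabla_k^{\alpha}W(z(k)),\qquad {}_a^{\rm C}\nabla_k^{\alpha,w(k)}x(k)=w^{-1}(k){}_a^{\rm C}\nabla_k^{\alpha}z(k), \]
and the same with $\rm C$ replaced by $\rm R$ or $\rm G$. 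Cancelling the common factor $w^{-1}(k)$ (its sign being immaterial by the scale invariance of Lemma \ref{Lemma2.2}, so one may take $w(k)>0$), the three inequalities become ${}_a^{\rm C}\nabla_k^{\alpha}W(z(k))\le g^{\rm T}(k){}_a^{\rm C}\nabla_k^{\alpha}z(k)$ and its $\rm R$- and $\rm G$-analogues, for a convex differentiable $W$ with $W(0)=0$ and an arbitrary real sequence $z$.

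Next I would record the two consequences of convexity. Put $\phi(j):=W(z(j))-g^{\rm T}(k)z(j)$ and $\widetilde\phi(j):=\phi(j)-\phi(k)=\big(W(z(j))-W(z(k))\big)-g^{\rm T}(k)\big(z(j)-z(k)\big)$. The first-order characterisation of convexity at the point $z(k)$ gives $\widetilde\phi(j)\ge 0$ for all $j\in\mathbb N_a^k$, with $\widetilde\phi(k)=0$; the same inequality evaluated at the point $0$, together with $W(0)=0$, gives $\phi(k)=W(z(k))-g^{\rm T}(k)z(k)\le 0$. The first fact says $j=k$ minimises $\phi$; the second is the only use of $W(0)=0$, and it is exactly what the $\rm R$- and $\rm G$-versions will require.

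I would then expand via (\ref{Eq2.3}). For Caputo, ${}_a^{\rm C}\nabla_k^{\alpha}f(k)=\sum_{j=a+1}^{k}K_{k,j}\nabla f(j)$ with $K_{k,j}:=\frac{(k-j+1)^{\overline{-\alpha}}}{\Gamma(1-\alpha)}>0$, so the Caputo defect is $\sum_{j=a+1}^{k}K_{k,j}\nabla\widetilde\phi(j)$. For Riemann--Liouville (equivalently Gr\"unwald--Letnikov when $\alpha\in(0,1)$), ${}_a^{\rm R}\nabla_k^{\alpha}f(k)=\sum_{j=a+1}^{k}L_{k,j}f(j)$ with $L_{k,j}:=\frac{(k-j+1)^{\overline{-\alpha-1}}}{\Gamma(-\alpha)}$, where $L_{k,k}=1>0$, $L_{k,j}<0$ for $a+1\le j\le k-1$, and $\sum_{j=a+1}^{k}L_{k,j}={}_a^{\rm R}\nabla_k^{\alpha}[1]=\frac{(k-a)^{\overline{-\alpha}}}{\Gamma(1-\alpha)}>0$. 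Hence the $\rm R$/$\rm G$ defect equals $\sum_{j=a+1}^{k}L_{k,j}\widetilde\phi(j)+\phi(k)\sum_{j=a+1}^{k}L_{k,j}$: the first sum is $\le 0$ because the $j=k$ term vanishes and every other term is a negative coefficient times a nonnegative $\widetilde\phi(j)$, while the second is a nonpositive $\phi(k)$ times a positive total; this yields (\ref{Eq3.134}) and (\ref{Eq3.135}).

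The Caputo case is the one genuine obstacle, because convexity supplies only the pointwise sign $\widetilde\phi(j)\ge 0$, not a term-by-term control of the signed sum $\sum_{j}K_{k,j}\nabla\widetilde\phi(j)$. I would settle it by summation by parts,
\[ \sum_{j=a+1}^{k}K_{k,j}\nabla\widetilde\phi(j)=K_{k,k}\widetilde\phi(k)-K_{k,a+1}\widetilde\phi(a)+\sum_{j=a+1}^{k-1}\big(K_{k,j}-K_{k,j+1}\big)\widetilde\phi(j), \]
and then use that $(m)^{\overline{-\alpha}}=\Gamma(m-\alpha)/\Gamma(m)$ is decreasing in $m\ge 1$ for $\alpha\in(0,1)$ while $m=k-j+1$ decreases with $j$, so $K_{k,j}$ is nondecreasing in $j$ and $K_{k,j}-K_{k,j+1}\le 0$. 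Combined with $\widetilde\phi\ge 0$ and $\widetilde\phi(k)=0$, every term on the right is $\le 0$, hence ${}_a^{\rm C}\nabla_k^{\alpha}W(z(k))\le g^{\rm T}(k){}_a^{\rm C}\nabla_k^{\alpha}z(k)$, i.e.\ (\ref{Eq3.133}). The monotonicity of $K_{k,j}$ in $j$ — and with it the restriction $\alpha\in(0,1)$ — is precisely what powers this step; a slightly more careful Abel rearrangement along the same lines could be used to handle all three operators uniformly.
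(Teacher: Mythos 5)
Your argument is correct for the case $w(k)>0$, and your Caputo part is essentially identical to the paper's Part~1: your kernel $K_{k,j}$ is the paper's $f(j-1)$, your $\widetilde\phi(j)$ is the paper's $g(j)$, and your Abel rearrangement with $K_{k,j}-K_{k,j+1}\le 0$ is exactly the paper's summation by parts with $\nabla f(j)\ge 0$, $f(k)=0$, $g(k)=0$. Where you genuinely diverge is in the Riemann--Liouville and Gr\"unwald--Letnikov parts. The paper treats them separately: it derives the R case from the C case via the identity ${}_a^{\rm R}\nabla_k^\alpha z(k)={}_a^{\rm C}\nabla_k^\alpha z(k)+\frac{(k-a)^{\overline{-\alpha}}}{\Gamma(1-\alpha)}z(a)$, which produces the extra term $f(a)\bigl[W(z(k))-\frac{{\rm d}W(z(k))}{{\rm d}z^{\rm T}(k)}z(k)\bigr]$, and then handles the G case by a separate ``magnify-and-shrink'' manipulation of the kernel $\frac{(k-j+1)^{\overline{-\alpha-1}}}{\Gamma(-\alpha)}$. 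You instead observe that R and G coincide for $\alpha\in(0,1)$ on sequences supported on $\mathbb{N}_{a+1}$ and dispose of both at once by splitting $\sum_j L_{k,j}\phi(j)=\sum_j L_{k,j}\widetilde\phi(j)+\phi(k)\sum_j L_{k,j}$, using $L_{k,k}=1$, $L_{k,j}<0$ for $j<k$, and $\sum_j L_{k,j}>0$. Both routes rest on the same two convexity facts ($\widetilde\phi\ge 0$ from the supporting hyperplane at $z(k)$, and $\phi(k)\le 0$ from $W(0)=0$), so nothing new is needed; your version is simply more unified and makes transparent that $W(0)=0$ enters only through $\phi(k)\le 0$. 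One caveat you share with the paper: the dismissal of the case $w(k)<0$ via the scale-invariance remark after Lemma~\ref{Lemma2.2} is equally brisk in both treatments, since replacing $w$ by $-w$ also changes the function $W(\cdot)$ attached to a given $V$; neither your proposal nor the paper's proof elaborates this, so it is not a gap relative to the paper, but it is the one step I would not call fully routine.
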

\begin{proof}
Similarly, only the case of $w(k)>0$ is considered here. Owing to the differentiable convex condition, one has $W ( {z(j)}  ) - W ( {z(k)}  ) - \frac{{{\rm d} W ( {z(k)}  )}}{{{\rm d}  {z^{\rm{T}}}(k)}} [ {z(j) - z(k)}  ]\ge0$, $j\in\mathbb{N}_{a}^{k-1}$, $k\in\mathbb{N}_{a+1}$.

On the basis of these conclusions, the subsequent proof of this theorem will be divided into three parts.

$\blacktriangleright$ Part 1: the Caputo case

By using (\ref{Eq2.3}), (\ref{Eq2.12}) and the formula of summation by parts, it follows
\begin{equation}\label{Eq3.136}
\begin{array}{l}
w(k)[{}_a^{\rm C}\nabla _{k}^{\alpha,w(k)} V ( {x(k)}  ) - \frac{{{\rm d} V ( {x(k)}  )}}{{{\rm d}  {x^{\rm{T}}}(k)}}{}_a^{\rm C}\nabla _{k}^{\alpha,w(k)} x(k)]\\
={}_a^{\rm C}\nabla _k^\alpha W ( {z(k)}  ) - \frac{{{\rm d} W ( {z(k)}  )}}{{{\rm d}  {z^{\rm{T}}}(k)}}{}_a^{\rm C}\nabla _k^\alpha z(k)\\
 = {}_a^{\rm G}\nabla _k^{\alpha  - 1}\big[ {\nabla W ( {z(k)}  ) - \frac{{{\rm d} W ( {z(k)} )}}{{{\rm d}  {z^{\rm{T}}}(k)}}\nabla z(k)} \big]\\
 = \sum\nolimits_{j = a + 1}^k {\frac{{ ( {k - j + 1}  )\overline {^{ - \alpha }} }}{{\Gamma  ( {1 - \alpha }  )}}} \big[ {\nabla W ( {z(j)}  ) - \frac{{{\rm d} W ( {z(k)}  )}}{{{\rm d}  {z^{\rm{T}}}(k)}}\nabla z(j)} \big]\\
 = \sum\nolimits_{j = a + 1}^k {f ( {j - 1}  )\nabla g(j)} \\
 =  {f ( {j}  )g(j)}  |_{j = a}^{j = k} - \sum\nolimits_{j = a + 1}^k {\nabla f(j)g(j)},
\end{array}
\end{equation}
where $f(j) := \frac{{ ( {k - j}  )\overline {^{ - \alpha }} }}{{\Gamma  ( {1 - \alpha }  )}}$, $g(j): = W ( {z(j)}  ) - W ( {z(k)}  ) - \frac{{{\rm d} W ( {z(k)}  )}}{{{\rm d}  {z^{\rm{T}}}(k)}} [ {z(j) - z(k)}  ]$.

By using the property of rising function and taking first order difference with respect to the variable $j$, one has
$ \nabla f(j) =  - \frac{{ ( {k - j + 1}  )\overline {^{ - \alpha  - 1}} }}{{\Gamma  ( { - \alpha } )}}$. According to the sign of Gamma function, one has $\nabla f(j)\ge0$, $j\in\mathbb{N}_{a+1}^{k-1}$, $[\nabla f(j)]_{j=k}=-1$, $f ( a  )>0$ and $f(k)=0$. Due to the differentiable convex property of $W ( z(k)  )$ with regard to $z(k)$ (see \cite{Zhang:2022JO}), one obtains $g(j)\ge0$, $j\in\mathbb{N}_{a}^{k-1}$ and $g(k)=0$. Then the equation in (\ref{Eq3.139}) can be rewritten as
\begin{equation}\label{Eq3.137}
{\textstyle \begin{array}{l}
w(k)[{}_a^{\rm C}\nabla _{k}^{\alpha,w(k)} V ( {x(k)}  ) - \frac{{{\rm d} V ( {x(k)}  )}}{{{\rm d}  {x^{\rm{T}}}(k)}}{}_a^{\rm C}\nabla _{k}^{\alpha,w(k)} x(k)]\\
 =  - f(a)g ( a  ) - \sum\nolimits_{j = a + 1}^{k - 1} {\nabla f(j)g(j)}\\
 \le0.
\end{array}}
\end{equation}
Because of the positivity of $w(k)$, (\ref{Eq3.137}) implies (\ref{Eq3.133}) firmly.

$\blacktriangleright$ Part 2: the Riemann--Liouville case

By using the derived result in (\ref{Eq3.137}) and the property in (\ref{Eq2.11}), one has
\begin{equation}\label{Eq3.138}
\begin{array}{l}
w(k)[{}_a^{\rm R}\nabla _{k}^{\alpha,w(k)} V ( {x(k)}  ) - \frac{{{\rm d} V ( {x(k)}  )}}{{{\rm d}  {x^{\rm{T}}}(k)}}{}_a^{\rm R}\nabla _{k}^{\alpha,w(k)} x(k)]\\
={}_a^{\rm R}\nabla _k^\alpha W ( {z(k)}  ) - \frac{{{\rm d} W ( {z(k)}  )}}{{{\rm d}  {z^{\rm{T}}}(k)}}{}_a^{\rm R}\nabla _k^\alpha z(k)\\
={}_a^{\rm C}\nabla _k^\alpha W ( {z(k)}  ) - \frac{{{\rm d} W ( {z(k)}  )}}{{{\rm d}  {z^{\rm{T}}}(k)}}{}_a^{\rm C}\nabla _k^\alpha z(k)\\
   \hspace{12pt}+ \frac{{ ( {k - a}  )\overline {^{ - \alpha }} }}{{\Gamma  ( {1 - \alpha }  )}}W ( {z ( a  )}  )-\frac{{{\rm d} W ( {z(k)}  )}}{{{\rm d}  {z^{\rm{T}}}(k)}}\frac{{ ( {k - a} )\overline {^{ - \alpha }} }}{{\Gamma  ( {1 - \alpha }  )}}z ( a  )\\
 = - \sum\nolimits_{j = a + 1}^{k-1} {\nabla f(j)g(j)}+ f(a)[W ( {z(k)}  )-\frac{{{\rm d} W ( {z(k)} )}}{{{\rm d}  {z^{\rm{T}}}(k)}}z(k)].
\end{array}
\end{equation}

Letting $z(j)=0$ in $g(j)$, $W ( {0}  )=0$ gives that $W ( {z(k)} )-\frac{{{\rm d} W ( {x(k)}  )}}{{{\rm d}  {z^{\rm{T}}}(k)}}z(k)\le0$. By applying (\ref{Eq3.137}), (\ref{Eq3.138}) and $w(k)>0$, it follows
\begin{equation}\label{Eq3.139}
{\textstyle
{}_a^{\rm R}\nabla _{k}^{\alpha,w(k)} V ( {x(k)}  ) - \frac{{{\rm d} V ( {x(k)}  )}}{{{\rm d}  {x^{\rm{T}}}(k)}}{}_a^{\rm R}\nabla _{k}^{\alpha,w(k)} x(k)\le0.}
\end{equation}

$\blacktriangleright$ Part 3: the Gr\"{u}nwald--Letnikov case

Based on (\ref{Eq2.3}), one has
\begin{equation}\label{Eq3.140}
\textstyle{\begin{array}{l}
w(k)[{}_a^{\rm G}\nabla _{k}^{\alpha,w(k)} V ( {x(k)}  ) - \frac{{{\rm d} V ( {x(k)}  )}}{{{\rm d}  {x^{\rm{T}}}(k)}}{}_a^{\rm G}\nabla _{k}^{\alpha,w(k)} x(k)]\\
={}_a^{\rm G}\nabla _k^\alpha W ( {z(k)}  ) - \frac{{{\rm d} W ( {z(k)}  )}}{{{\rm d}  {z^{\rm{T}}}(k)}}{}_a^{\rm G}\nabla _k^\alpha z(k)\\
 = \sum\nolimits_{j = a + 1}^k {\frac{{(k - j + 1)\overline {^{ - \alpha  - 1}} }}{{\Gamma ( - \alpha )}}\big[ {W ( {z(j)}  ) - \frac{{{\rm d} W ( {z(k)}  )}}{{{\rm d}  {z^{\rm{T}}}(k)}}z(j)} \big].}
\end{array}}
\end{equation}

Since it is difficult to judge the sign of ${W ( {z(j)}  ) - \frac{{{\rm d} W ( {z(k)}  )}}{{{\rm d}  {z^{\rm{T}}}(k)}}z(j)}$, an effective way is to use the skills of magnifying and shrinking of inequality, transforming the unknown unfamiliar case into the known familiar case. As such, $W ( {0}  )=0$ and $W ( {z(k)}  )-\frac{{{\rm d} W ( {z(k)}  )}}{{{\rm d}  {z^{\rm{T}}}(k)}}z(k)\le0$ can be guaranteed. Since $\nabla \frac{{(k - j)\overline {^{ - \alpha }} }}{{\Gamma (1 - \alpha )}} =  - \frac{{(k - j + 1)\overline {^{ - \alpha  - 1}} }}{{\Gamma ( - \alpha )}}$, it becomes ${\sum\nolimits_{j = a + 1}^k {\frac{{(k - j + 1)\overline {^{ - \alpha  - 1}} }}{{\Gamma ( - \alpha )}}}  = \frac{{(k - a)\overline {^{ - \alpha }} }}{{\Gamma (1 - \alpha )}} \ge 0}$. Applying those properties in (\ref{Eq3.136}), the following desired results can be obtained
\begin{equation}\label{Eq3.141}
\begin{array}{l}
w(k)[{}_a^{\rm G}\nabla _{k}^{\alpha,w(k)} V ( {x(k)}  ) - \frac{{{\rm d} V ( {x(k)}  )}}{{{\rm d}  {x^{\rm{T}}}(k)}}{}_a^{\rm G}\nabla _{k}^{\alpha,w(k)} x(k)]\\
 \le \sum\nolimits_{j = a + 1}^k {\frac{{(k - j + 1)\overline {^{ - \alpha  - 1}} }}{{\Gamma ( - \alpha )}}\big[ {W ( {z(j)}  ) -\frac{{{\rm d} W ( {z(k)}  )}}{{{\rm d}  {z^{\rm{T}}}(k)}}z(j)} \big]} \\
  \hspace{12pt}- \frac{{(k - a)\overline {^{ - \alpha }} }}{{\Gamma (1 - \alpha )}}\big[ {W ( {z(k)}  ) - \frac{{{\rm d} W ( {z(k)}  )}}{{{\rm d}  {z^{\rm{T}}}(k)}}z(k)} \big]\\
 =  \sum\nolimits_{j = a + 1}^k {\frac{{(k - j + 1)\overline {^{ - \alpha  - 1}} }}{{\Gamma ( - \alpha )}}g (j)} \\
 =  - \sum\nolimits_{j = a + 1}^{k-1} {\nabla f(j)g(j)}- g(k)\\
 \le 0,
\end{array}
\end{equation}
which implies (\ref{Eq3.135}). Till now, the proof has been completed.
\end{proof}

Theorem \ref{Theorem3.23} is inspired by \cite[Theorem 1]{Chen:2017CTA} and \cite[Lemma 1]{Badri:2019CTA}. Note that the condition $W ( {0}  )=0$ is not necessary for the Caputo case. In Theorem \ref{Theorem3.23}, the considered function $W(z(k))$ is general and it can be Volterra function, power function, logarithmic function, integral upper limit function and tangent function, etc. For multivariate composite functions, the following corollary follows.

\begin{corollary}\label{Corollary3.2}
If $W: \mathbb R^{p} \times \mathbb R^{q} \to \mathbb{R}$ is differentiable convex, $W ( {0,0} )=0$,  $V(x(k)):=w^{-1}(k)W(\hat u(k),\hat v(k))$, $\hat u(k):=w(k)u(k)$, $\hat v(k):=w(k)v(k)$, $\frac{{\partial W ( {\hat u(k),\hat v(k)} )}}{{\partial \hat u(k)}}$ $=\frac{{\partial V ( {u(k),v(k)} )}}{{\partial u(k)}}$ and $\frac{{\partial W ( {\hat u(k),\hat v(k)} )}}{{\partial \hat u(k)}}=\frac{{\partial V ( {u(k),v(k)} )}}{{\partial v(k)}}$ hold almost everywhere, then for any $\alpha\in(0,1)$, $w:\mathbb{N}_a\to\mathbb{R}\backslash\{0\}$ with $\frac{w(a)}{w(k)}>0$, $k \in {\mathbb N_{a + 1}}$, $a\in\mathbb{R}$, one has
\begin{equation}\label{Eq3.142}
\textstyle{
\begin{array}{rl}
{}_a^{\rm C}\nabla _k^{\alpha,w(k)} V ( {u(k),v(k)}  ) \le&\hspace{-6pt} \frac{{\partial V ( {u(k),v(k)} )}}{{\partial u^{\rm T}(k)}}{}_a^{\rm C}\nabla _k^{\alpha,w(k)} u(k)\\
&\hspace{-6pt}+\frac{{\partial V ( {u(k),v(k)}  )}}{{\partial v^{\rm T}(k)}}{}_a^{\rm C}\nabla _k^{\alpha,w(k)} v(k),
\end{array}}
\end{equation}
\begin{equation}\label{Eq3.143}
\textstyle{
\begin{array}{rl}
{}_a^{\rm R}\nabla _k^{\alpha,w(k)} V ( {u(k),v(k)}  ) \le&\hspace{-6pt} \frac{{\partial V ( {u(k),v(k)} )}}{{\partial u^{\rm T}(k)}}{}_a^{\rm R}\nabla _k^{\alpha,w(k)} u(k)\\
&\hspace{-6pt}+\frac{{\partial V ( {u(k),v(k)}  )}}{{\partial v^{\rm T}(k)}}{}_a^{\rm R}\nabla _k^{\alpha,w(k)} v(k),
\end{array}}
\end{equation}
\begin{equation}\label{Eq3.144}
\textstyle{
\hspace{-6pt}\begin{array}{rl}
{}_a^{\rm G}\nabla _k^{\alpha,w(k)} V ( {u(k),v(k)}  ) \le&\hspace{-6pt} \frac{{\partial V ( {u(k),v(k)} )}}{{\partial u^{\rm T}(k)}}{}_a^{\rm G}\nabla _k^{\alpha,w(k)} u(k)\\
&\hspace{-6pt}+\frac{{\partial V ( {u(k),v(k)}  )}}{{\partial v^{\rm T}(k)}}{}_a^{\rm G}\nabla _k^{\alpha,w(k)} v(k).
\end{array}}
\end{equation}
\end{corollary}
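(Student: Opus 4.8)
The strategy is to reduce Corollary \ref{Corollary3.2} to Theorem \ref{Theorem3.23} by treating the pair $(u(k),v(k))$ as a single stacked vector. First I would set $x(k):=\big(\begin{smallmatrix}u(k)\\v(k)\end{smallmatrix}\big)\in\mathbb R^{p+q}$, so that $z(k):=w(k)x(k)=\big(\begin{smallmatrix}\hat u(k)\\\hat v(k)\end{smallmatrix}\big)$, and view $W:\mathbb R^{p+q}\to\mathbb R$ as a function of this single vector variable. With this identification, $W$ is still differentiable convex with $W(0)=0$, and the gradient splits blockwise as $\frac{{\rm d}W(z(k))}{{\rm d}z(k)}=\big(\frac{\partial W}{\partial\hat u(k)},\frac{\partial W}{\partial\hat v(k)}\big)$. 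The hypothesis $\frac{\partial W(\hat u(k),\hat v(k))}{\partial\hat u(k)}=\frac{\partial V(u(k),v(k))}{\partial u(k)}$ together with the analogous relation for $\hat v$ (the statement writes $\hat u$ a second time, but clearly $\frac{\partial W}{\partial\hat v}=\frac{\partial V}{\partial v}$ is intended) gives exactly $\frac{{\rm d}W(z(k))}{{\rm d}z(k)}=\frac{{\rm d}V(x(k))}{{\rm d}x(k)}$ almost everywhere, so all the hypotheses of Theorem \ref{Theorem3.23} hold for this $x$, $z$, $W$, $V$.

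Next I would simply invoke the three conclusions \eqref{Eq3.133}, \eqref{Eq3.134}, \eqref{Eq3.135} of Theorem \ref{Theorem3.23} for the stacked variable. Each right-hand side has the form $\frac{{\rm d}V(x(k))}{{\rm d}x^{\rm T}(k)}\,{}_a^{\bullet}\nabla_k^{\alpha,w(k)}x(k)$ with $\bullet\in\{{\rm C},{\rm R},{\rm G}\}$. Because the fractional operators act componentwise and are linear, ${}_a^{\bullet}\nabla_k^{\alpha,w(k)}x(k)=\big(\begin{smallmatrix}{}_a^{\bullet}\nabla_k^{\alpha,w(k)}u(k)\\[2pt]{}_a^{\bullet}\nabla_k^{\alpha,w(k)}v(k)\end{smallmatrix}\big)$, and the block form of the gradient makes the matrix--vector product expand as $\frac{\partial V}{\partial u^{\rm T}(k)}\,{}_a^{\bullet}\nabla_k^{\alpha,w(k)}u(k)+\frac{\partial V}{\partial v^{\rm T}(k)}\,{}_a^{\bullet}\nabla_k^{\alpha,w(k)}v(k)$. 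Likewise ${}_a^{\bullet}\nabla_k^{\alpha,w(k)}V(x(k))$ is literally ${}_a^{\bullet}\nabla_k^{\alpha,w(k)}V(u(k),v(k))$ by the definition $V(x(k))=w^{-1}(k)W(z(k))$. Substituting these identifications into \eqref{Eq3.133}--\eqref{Eq3.135} yields \eqref{Eq3.142}--\eqref{Eq3.144} respectively.

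There is no serious obstacle here; the corollary is genuinely a corollary. The only points requiring a word of care are (i) confirming that the blockwise decomposition of the total derivative of a jointly differentiable convex function of two grouped arguments is the concatenation of the two partial derivatives — standard multivariable calculus — and (ii) noting that the ``almost everywhere'' compatibility between $\nabla_{\!z}W$ and $\nabla_{\!x}V$ assumed in Theorem \ref{Theorem3.23} is supplied here in the two-block form, so one should remark that the second displayed hypothesis is to be read as $\frac{\partial W(\hat u(k),\hat v(k))}{\partial\hat v(k)}=\frac{\partial V(u(k),v(k))}{\partial v(k)}$. Beyond that, the proof is a one-line application of Theorem \ref{Theorem3.23} after the stacking reduction, and, as in that theorem, it suffices to argue the case $w(k)>0$ and then appeal to Lemma \ref{Lemma2.2} (scale invariance with $\lambda=-1$) for $w(k)<0$.

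\begin{proof}
Stack the variables: set $x(k):=\big[\begin{smallmatrix}u(k)\\ v(k)\end{smallmatrix}\big]\in\mathbb R^{p+q}$, so that $z(k):=w(k)x(k)=\big[\begin{smallmatrix}\hat u(k)\\ \hat v(k)\end{smallmatrix}\big]$ and $V(x(k))=w^{-1}(k)W(z(k))$ with $W:\mathbb R^{p+q}\to\mathbb R$ differentiable convex and $W(0)=0$. The total derivative of $W$ in the stacked variable decomposes blockwise as
\begin{equation}\label{Eq3.145}
{\textstyle \frac{{\rm d} W(z(k))}{{\rm d} z(k)}=\big[\, \frac{\partial W(\hat u(k),\hat v(k))}{\partial \hat u(k)}\ \ \frac{\partial W(\hat u(k),\hat v(k))}{\partial \hat v(k)}\,\big],}
\end{equation}
and, combining the two hypotheses $\frac{\partial W(\hat u(k),\hat v(k))}{\partial \hat u(k)}=\frac{\partial V(u(k),v(k))}{\partial u(k)}$ and $\frac{\partial W(\hat u(k),\hat v(k))}{\partial \hat v(k)}=\frac{\partial V(u(k),v(k))}{\partial v(k)}$, this equals $\frac{{\rm d} V(x(k))}{{\rm d} x(k)}$ almost everywhere. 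Hence $x(k)$, $z(k)$, $W$, $V$ satisfy all the assumptions of Theorem \ref{Theorem3.23}.

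Applying \eqref{Eq3.133} to this stacked data gives
\begin{equation}\label{Eq3.146}
{\textstyle {}_a^{\rm C}\nabla _{k}^{\alpha,w(k)} V ( {x(k)}  ) \le \frac{{{\rm d} V ( {x(k)} )}}{{{\rm d}  {x^{\rm{T}}}(k)}}{}_a^{\rm C}\nabla _{k}^{\alpha,w(k)} x(k).}
\end{equation}
Since the nabla tempered fractional operators act componentwise and are linear, ${}_a^{\rm C}\nabla _{k}^{\alpha,w(k)} x(k)=\big[\begin{smallmatrix}{}_a^{\rm C}\nabla _{k}^{\alpha,w(k)} u(k)\\[2pt]{}_a^{\rm C}\nabla _{k}^{\alpha,w(k)} v(k)\end{smallmatrix}\big]$, and by \eqref{Eq3.145} the right-hand side of \eqref{Eq3.146} equals $\frac{{\partial V ( {u(k),v(k)} )}}{{\partial u^{\rm T}(k)}}{}_a^{\rm C}\nabla _k^{\alpha,w(k)} u(k)+\frac{{\partial V ( {u(k),v(k)}  )}}{{\partial v^{\rm T}(k)}}{}_a^{\rm C}\nabla _k^{\alpha,w(k)} v(k)$, while the left-hand side is ${}_a^{\rm C}\nabla _k^{\alpha,w(k)} V ( {u(k),v(k)}  )$ by the definition of $V$. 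This is exactly \eqref{Eq3.142}. Replacing \eqref{Eq3.133} by \eqref{Eq3.134} and \eqref{Eq3.135} in the same argument yields \eqref{Eq3.143} and \eqref{Eq3.144}, respectively. The case $w(k)<0$ follows from the case $w(k)>0$ by Lemma \ref{Lemma2.2} with $\lambda=-1$.
\end{proof}
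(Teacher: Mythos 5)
Your proposal is correct and is exactly the argument the paper intends: the paper gives no separate proof of Corollary \ref{Corollary3.2}, stating only that it ``follows'' from Theorem \ref{Theorem3.23}, and your stacking of $(u,v)$ into a single vector $x\in\mathbb R^{p+q}$ followed by a blockwise expansion of the gradient and of the componentwise-acting fractional operators is the natural way to make that precise. Your observation that the second displayed hypothesis should read $\frac{\partial W(\hat u(k),\hat v(k))}{\partial\hat v(k)}=\frac{\partial V(u(k),v(k))}{\partial v(k)}$ correctly identifies a typo in the statement.
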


\textsf{In Theorem \ref{Theorem3.23}, $W: \mathbb{R}^n \to \mathbb{R}$ is assume to be differentiable convex. If it is convex but not differentiable, the following theorem can be developed.}

\begin{theorem}\label{Theorem3.24}
If $W: \mathbb R^{n} \to \mathbb{R}$ is convex, $W ( {0}  )=0$, $z(k):=w(k)x(k)$, $V(x(k)):=w^{-1}(k)W(w(k)x(k))$ and $\zeta(z(k))=\zeta(x(k))$ holds almost everywhere, then for any $\alpha\in(0,1)$, $w:\mathbb{N}_a\to\mathbb{R}\backslash\{0\}$ with $\frac{w(a)}{w(k)}>0$, $k \in {\mathbb N_{a + 1}}$, $a\in\mathbb{R}$, one has
\begin{equation}\label{Eq3.145}
{\textstyle {}_a^{\rm C}\nabla _k^{\alpha,w(k)} V ( {x(k)}  ) \le \zeta^{\rm T}(x(k)){}_a^{\rm C}\nabla _k^{\alpha,w(k)} x(k),}
\end{equation}
\begin{equation}\label{Eq3.146}
{\textstyle {}_a^{\rm R}\nabla _k^{\alpha,w(k)} V ( {x(k)}  ) \le \zeta^{\rm T}(x(k)){}_a^{\rm R}\nabla _k^{\alpha,w(k)} x(k),}
\end{equation}
\begin{equation}\label{Eq3.147}
{\textstyle {}_a^{\rm G}\nabla _k^{\alpha,w(k)} V ( {x(k)}  ) \le \zeta^{\rm T}(x(k)){}_a^{\rm G}\nabla _k^{\alpha,w(k)} x(k),}
\end{equation}
where $\zeta(z(k))\in\partial W(z(k))$ and $\zeta(x(k))\in\partial V(x(k))$.
\end{theorem}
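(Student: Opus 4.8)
The plan is to replicate the three–part argument in the proof of Theorem~\ref{Theorem3.23} almost verbatim, with the gradient $\frac{{\rm d}W(z(k))}{{\rm d}z^{\rm T}(k)}$ replaced everywhere by the transposed subgradient $\zeta^{\rm T}(z(k))$, $\zeta(z(k))\in\partial W(z(k))$. First, by the scale invariance in Lemma~\ref{Lemma2.2}, it suffices to treat $w(k)>0$. Writing $z(k):=w(k)x(k)$, since $W$ is convex, any fixed selection $\zeta(z(k))\in\partial W(z(k))$ satisfies the subgradient inequality
\begin{equation*}
g(j):=W(z(j))-W(z(k))-\zeta^{\rm T}(z(k))[z(j)-z(k)]\ge 0,\qquad j\in\mathbb{N}_a^{k-1},
\end{equation*}
together with $g(k)=0$. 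This plays exactly the role that the first-order convexity estimate played in Theorem~\ref{Theorem3.23}; differentiability was never used there beyond producing $g(j)\ge0$, so the whole machinery transfers.

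For the Caputo case I would reproduce the computation (\ref{Eq3.136})--(\ref{Eq3.137}): using (\ref{Eq2.3}), (\ref{Eq2.12}) and summation by parts with $f(j):=\frac{(k-j)\overline{^{-\alpha}}}{\Gamma(1-\alpha)}$,
\begin{equation*}
w(k)\big[{}_a^{\rm C}\nabla_k^{\alpha,w(k)}V(x(k))-\zeta^{\rm T}(x(k))\,{}_a^{\rm C}\nabla_k^{\alpha,w(k)}x(k)\big]=-f(a)g(a)-\sum_{j=a+1}^{k-1}\nabla f(j)\,g(j),
\end{equation*}
where the hypothesis $\zeta(z(k))=\zeta(x(k))$ is what lets one pass from $W,z$ to $V,x$. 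Since $\nabla f(j)=-\frac{(k-j+1)\overline{^{-\alpha-1}}}{\Gamma(-\alpha)}\ge0$ on $\mathbb{N}_{a+1}^{k-1}$, $f(a)>0$ and $g(j)\ge0$, the right-hand side is $\le0$, and dividing by $w(k)>0$ yields (\ref{Eq3.145}).

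For the Riemann--Liouville case I would use ${}_a^{\rm R}\nabla_k^\alpha z(k)={}_a^{\rm C}\nabla_k^\alpha z(k)+\frac{(k-a)\overline{^{-\alpha}}}{\Gamma(1-\alpha)}z(a)$, so the extra terms collapse, exactly as in (\ref{Eq3.138}), to $f(a)\big[W(z(k))-\zeta^{\rm T}(z(k))z(k)\big]$; evaluating the subgradient inequality at $z(j)=0$ and invoking $W(0)=0$ gives $W(z(k))-\zeta^{\rm T}(z(k))z(k)\le0$, whence (\ref{Eq3.146}). For the Gr\"{u}nwald--Letnikov case I would expand ${}_a^{\rm G}\nabla_k^\alpha W(z(k))-\zeta^{\rm T}(z(k))\,{}_a^{\rm G}\nabla_k^\alpha z(k)$ via (\ref{Eq2.3}), then add and subtract $\frac{(k-a)\overline{^{-\alpha}}}{\Gamma(1-\alpha)}\big[W(z(k))-\zeta^{\rm T}(z(k))z(k)\big]$ using $\sum_{j=a+1}^{k}\frac{(k-j+1)\overline{^{-\alpha-1}}}{\Gamma(-\alpha)}=\frac{(k-a)\overline{^{-\alpha}}}{\Gamma(1-\alpha)}\ge0$ together with $W(z(k))-\zeta^{\rm T}(z(k))z(k)\le0$, arriving at $-\sum_{j=a+1}^{k-1}\nabla f(j)g(j)-g(k)\le0$, which is (\ref{Eq3.147}).

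The hard part will be the bookkeeping around the set-valued nature of $\partial W$: one must fix a single (measurable) selection $\zeta(z(k))$ for each $k$, verify that $g(j)\ge0$ holds for this selection uniformly in $j\le k-1$ (it does, being a pointwise consequence of convexity), and confirm that the stipulated correspondence $\zeta(z(k))=\zeta(x(k))$ is compatible with $\zeta(z(k))\in\partial W(z(k))$ and $\zeta(x(k))\in\partial V(x(k))$ --- the non-smooth analogue of the chain-rule condition $\frac{{\rm d}W}{{\rm d}z}=\frac{{\rm d}V}{{\rm d}x}$ in Theorem~\ref{Theorem3.23}, which holds because $V(x(k))=w^{-1}(k)W(w(k)x(k))$ with $w(k)$ a nonzero scalar. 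Once the selection is pinned down, no step of Theorem~\ref{Theorem3.23} needs to be altered, since only the inequality $g(j)\ge0$ --- never differentiability --- was ever used there.
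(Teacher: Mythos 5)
Your proposal is correct and follows exactly the route the paper takes: the paper's own proof simply redefines $g(j) := W(z(j)) - W(z(k)) - \zeta^{\rm T}(z(k))[z(j)-z(k)]$, invokes the subgradient inequality to get $g(j)\ge 0$ with $g(k)=0$, and then declares that the three-part argument of Theorem~\ref{Theorem3.23} carries over unchanged. Your write-up is in fact more explicit than the paper's (which omits the repeated computation entirely), and your remarks on fixing a selection from $\partial W$ are a reasonable elaboration of the same idea.
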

\begin{proof}
The main idea of this proof is also to construct non-negative terms by using the properties of convex functions. Different from Theorem \ref{Theorem3.23}, letting $g(j): = W ( {z(j)}  ) - W ( {z(k)} ) - \zeta^{\rm T}(z(k)) [ {z(j) - z(k)}  ]$. By using the basic property of subgradient \cite{Zhang:2022JO}, one finds $g(j)\ge0$, $j\in{\mathbb N_{a}^k}$ and $g(k)=0$ when $w(k)>0$, which coincide with the property of $g(j)$ in Theorem \ref{Theorem3.23}. Along this way, the proof could be completed similarly. For simplity, it is omitted here.
\end{proof}

Theorem \ref{Theorem3.24} is inspired by \cite[Theorem 2]{Zhang:2015NAHS}, \cite[Lemma 10]{Munoz:2018TIMC} and \cite[Theorem 3, Theorem 7]{Wei:2021NODY}. When $W ( {z(k)}  )$ is differentiable, the subgradient $\zeta(z(k))$ is strengthened as the true gradient $\frac{{{\rm d} W ( {z(k)}  )}}{{{\rm d} z(k)}} $. Therefore, Theorem \ref{Theorem3.23} is a special case of Theorem \ref{Theorem3.24}. When $n=1$, $w(k)=1$, $V ( {x(k)}  )=|x(k)|$, it follows ${\mathop{\rm sgn}}  ( {x(k)}  )\in\partial V(x(k))$. At this point, the following corollary can be obtained.

\begin{corollary}\label{Corollary3.3}
For any $x: \mathbb{N}_{a+1} \to \mathbb{R}$, $\alpha\in(0,1)$, $w:\mathbb{N}_a\to\mathbb{R}\backslash\{0\}$ with $\frac{w(a)}{w(k)}>0$, $k \in {\mathbb N_{a + 1}}$, $a\in\mathbb{R}$, one has
\begin{equation}\label{Eq3.148}
{\textstyle {}_a^{\rm C}\nabla _k^{\alpha,w(k)} |x(k)| \le {\rm sgn}(x(k)){}_a^{\rm C}\nabla _k^{\alpha,w(k)} x(k),}
\end{equation}
\begin{equation}\label{Eq3.149}
{\textstyle {}_a^{\rm R}\nabla _k^{\alpha,w(k)} |x(k)| \le {\rm sgn}(x(k)){}_a^{\rm R}\nabla _k^{\alpha,w(k)} x(k),}
\end{equation}
\begin{equation}\label{Eq3.150}
{\textstyle {}_a^{\rm G}\nabla _k^{\alpha,w(k)} |x(k)| \le {\rm sgn}(x(k)){}_a^{\rm G}\nabla _k^{\alpha,w(k)} x(k).}
\end{equation}
\end{corollary}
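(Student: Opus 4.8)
The plan is to derive Corollary \ref{Corollary3.3} directly from Theorem \ref{Theorem3.24} by specializing the convex function to $W(z)=|z|$ on $\mathbb{R}$. First I would note that $W(z)=|z|$ is convex with $W(0)=0$, so the structural hypotheses of Theorem \ref{Theorem3.24} are satisfied; the only delicate feature is the non-differentiability at the origin, where the subdifferential is the whole interval $\partial W(0)=[-1,1]$.

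Next I would reduce to the case $w(k)>0$. The hypothesis $\frac{w(a)}{w(k)}>0$ together with $w(k)\neq0$ for all $k$ forces $w$ to keep a constant sign on $\mathbb{N}_a$, and by Lemma \ref{Lemma2.2} each of ${}_a^{\rm C}\nabla_k^{\alpha,w(k)}$, ${}_a^{\rm R}\nabla_k^{\alpha,w(k)}$ and ${}_a^{\rm G}\nabla_k^{\alpha,w(k)}$ is unchanged when $w$ is replaced by $-w$; hence there is no loss of generality in assuming $w(k)>0$ for every $k\in\mathbb{N}_a$. Under this normalization $z(k)=w(k)x(k)$ has the same sign as $x(k)$, and $V(x(k))=w^{-1}(k)W(w(k)x(k))=w^{-1}(k)|w(k)x(k)|=|x(k)|$, matching the left-hand sides of (\ref{Eq3.148})--(\ref{Eq3.150}).

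With $V(x(k))=|x(k)|$ I would choose the subgradient selection $\zeta(x(k))={\rm sgn}(x(k))$; since $\partial V(x(k))=\{{\rm sgn}(x(k))\}$ when $x(k)\neq0$ and $\partial V(0)=[-1,1]\ni 0={\rm sgn}(0)$, this is a genuine element of $\partial V(x(k))$ for every $k$, and the same selection gives ${\rm sgn}(z(k))\in\partial W(z(k))$. Because $w(k)>0$ yields ${\rm sgn}(z(k))={\rm sgn}(w(k)x(k))={\rm sgn}(x(k))$, the compatibility condition $\zeta(z(k))=\zeta(x(k))$ required by Theorem \ref{Theorem3.24} holds everywhere. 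Substituting $V(x(k))=|x(k)|$ and $\zeta(x(k))={\rm sgn}(x(k))$ into (\ref{Eq3.145})--(\ref{Eq3.147}) produces (\ref{Eq3.148})--(\ref{Eq3.150}) immediately. There is essentially no obstacle here beyond bookkeeping; the one point worth stating carefully is that ${\rm sgn}(\cdot)$ is a legitimate subgradient selection of $|\cdot|$ at its zero, which is precisely why the reduction to $w(k)>0$ (so that $V$ genuinely equals $|x(k)|$ rather than $-|x(k)|$) is carried out before invoking Theorem \ref{Theorem3.24}.
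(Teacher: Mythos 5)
Your proposal is correct and follows essentially the same route as the paper, which obtains Corollary \ref{Corollary3.3} by specializing Theorem \ref{Theorem3.24} to the convex function $W(z)=|z|$ with the subgradient selection ${\rm sgn}(\cdot)\in\partial|\cdot|$. If anything, you are more careful than the paper: its preceding remark only mentions the case $w(k)=1$ while the corollary is stated for general $w$ with $\frac{w(a)}{w(k)}>0$, and your reduction to $w(k)>0$ via Lemma \ref{Lemma2.2} together with the check that $V(x(k))=w^{-1}(k)|w(k)x(k)|=|x(k)|$ and $\zeta(z(k))=\zeta(x(k))={\rm sgn}(x(k))$ supplies exactly the bookkeeping the paper leaves implicit.
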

Notably, the complex condition of $W(z(k))$ in Theorem \ref{Theorem3.23} and Theorem \ref{Theorem3.24} has been removed in Corollary \ref{Corollary3.3}. It would be more practical to reduce the complexity of the given condition for previous theorems. Also, the scalar case can be extended to the vector case. Along this way, the fractional difference inequalities on $\|x(k)\|_1$ or $\|x(k)\|_\infty$ can be developed.

\textsf{If it is differentiable but not convex, three cases will be discussed, i.e., the concave function, the monotone function and the synchronous function. The latter two cases are neither convex nor concave.}
\begin{theorem}\label{Theorem3.25}
If $W: \mathbb R^{n} \to \mathbb{R}$ is differentiable concave, $W ( {0}  )=0$, $V(x(k)):=w^{-1}(k)W(z(k))$, $z(k):=w(k)x(k)$ and $\frac{{\rm d} W ( {z(k)}  )}{{{\rm d} z(k)}}=\frac{{\rm d}V ( {x(k)}  ) }{{{\rm d} x(k)}}$ holds almost everywhere, then for any $\alpha\in(0,1)$, $w:\mathbb{N}_a\to\mathbb{R}\backslash\{0\}$ with $\frac{w(a)}{w(k)}>0$, $k \in {\mathbb N_{a + 1}}$, $a\in\mathbb{R}$, one has
\begin{equation}\label{Eq3.151}
{\textstyle {}_a^{\rm C}\nabla _{k}^{\alpha,w(k)} V ( {x(k)}  ) \ge \frac{{{\rm d} V ( {x(k)} )}}{{{\rm d}  {x^{\rm{T}}}(k)}}{}_a^{\rm C}\nabla _{k}^{\alpha,w(k)} x(k),}
\end{equation}
\begin{equation}\label{Eq3.152}
{\textstyle {}_a^{\rm R}\nabla _{k}^{\alpha,w(k)} V ( {x(k)}  ) \ge \frac{{{\rm d}  V ( {x(k)} )}}{{{\rm d}  {x^{\rm{T}}}(k)}}{}_a^{\rm R}\nabla _{k}^{\alpha,w(k)} x(k),}
\end{equation}
\begin{equation}\label{Eq3.153}
{\textstyle {}_a^{\rm G}\nabla _{k}^{\alpha,w(k)} V ( {x(k)}  ) \ge \frac{{{\rm d}  V ( {x(k)} )}}{{{\rm d}  {x^{\rm{T}}}(k)}}{}_a^{\rm G}\nabla _{k}^{\alpha,w(k)} x(k).}
\end{equation}
\end{theorem}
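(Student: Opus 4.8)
The plan is to obtain Theorem~\ref{Theorem3.25} as a direct corollary of Theorem~\ref{Theorem3.23} by passing to the auxiliary function $\widetilde W:=-W$. Since $W:\mathbb R^{n}\to\mathbb R$ is differentiable concave with $W(0)=0$, the function $\widetilde W$ is differentiable convex with $\widetilde W(0)=0$. Put $z(k):=w(k)x(k)$ and $\widetilde V(x(k)):=w^{-1}(k)\widetilde W(z(k))$, so that $\widetilde V=-V$. The derivative-matching hypothesis transfers, because $\frac{{\rm d}\widetilde W(z(k))}{{\rm d}z(k)}=-\frac{{\rm d}W(z(k))}{{\rm d}z(k)}=-\frac{{\rm d}V(x(k))}{{\rm d}x(k)}=\frac{{\rm d}\widetilde V(x(k))}{{\rm d}x(k)}$ holds almost everywhere. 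Hence all the hypotheses of Theorem~\ref{Theorem3.23} are satisfied for the pair $(\widetilde W,\widetilde V)$.

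Applying inequalities (\ref{Eq3.133})--(\ref{Eq3.135}) of Theorem~\ref{Theorem3.23} to $\widetilde V$, and then using the linearity of the operators ${}_a^{\rm C}\nabla_k^{\alpha,w(k)}$, ${}_a^{\rm R}\nabla_k^{\alpha,w(k)}$, ${}_a^{\rm G}\nabla_k^{\alpha,w(k)}$ together with $\widetilde V=-V$ and $\frac{{\rm d}\widetilde V(x(k))}{{\rm d}x^{\rm T}(k)}=-\frac{{\rm d}V(x(k))}{{\rm d}x^{\rm T}(k)}$, each of the three estimates gets multiplied through by $-1$, so its direction reverses. This yields precisely (\ref{Eq3.151})--(\ref{Eq3.153}), i.e. the Caputo, Riemann--Liouville, and Gr\"{u}nwald--Letnikov inequalities with $\ge$ in place of $\le$.

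Equivalently, one may reproduce the proof of Theorem~\ref{Theorem3.23} verbatim with the single sign change coming from concavity: with $g(j):=W(z(j))-W(z(k))-\frac{{\rm d}W(z(k))}{{\rm d}z^{\rm T}(k)}[z(j)-z(k)]$, differentiable concavity of $W$ gives $g(j)\le0$ for $j\in\mathbb N_a^{k-1}$ with $g(k)=0$, and setting $z(j)=0$ together with $W(0)=0$ gives $W(z(k))-\frac{{\rm d}W(z(k))}{{\rm d}z^{\rm T}(k)}z(k)\ge0$, which is what the Riemann--Liouville and Gr\"{u}nwald--Letnikov parts require. Since the signs of $f(a)$, $\nabla f(j)$ and $\frac{(k-a)\overline{^{-\alpha}}}{\Gamma(1-\alpha)}$ are untouched, every summation identity in the proof of Theorem~\ref{Theorem3.23} carries over with $g$ negated, reversing the three conclusions. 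There is no genuine analytic obstacle here; the only thing demanding care is keeping the derivative-matching condition $\frac{{\rm d}W(z(k))}{{\rm d}z(k)}=\frac{{\rm d}V(x(k))}{{\rm d}x(k)}$ correctly aligned when passing between $W,V$ and $\widetilde W,\widetilde V$, so the proof is short and may simply be indicated rather than written out in full.
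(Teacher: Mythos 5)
Your proposal is correct and matches the paper's own proof, which likewise defines $g(j):=W(z(j))-W(z(k))-\frac{{\rm d}W(z(k))}{{\rm d}z^{\rm T}(k)}[z(j)-z(k)]$, observes that concavity flips its sign to $g(j)\le0$ with $g(k)=0$, and then reruns the argument of Theorem~\ref{Theorem3.23}. Your alternative packaging via $\widetilde W=-W$ is just a cleaner formalization of the same sign-reversal and is equally valid.
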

\begin{proof}
Define $f(j)$, $W(z(k))$ and $z(k)$ like the proof of Theorem \ref{Theorem3.23}. Let $g(j): = W ( {z(j)} ) - W ( {z(k)}  ) - \frac{{{\rm d} W ( {z(k)}  )}}{{{\rm d}  {z^{\rm{T}}}(k)}} [ {z(j) - z(k)}  ]$. When the convex condition is replaced by the concave condition, $g(j)\le0$, $j\in{\mathbb N_{a}^k}$ and $g(k)=0$. In a similar way, the inequalities in (\ref{Eq3.151}) - (\ref{Eq3.153}) can be derived.
\end{proof}

Notably, to remove the coupling and prettify the conclusion, the complicated conditions are introduced in Theorem \ref{Theorem3.23} - Theorem \ref{Theorem3.25}. In future study, a plain condition on $V(x(k))$ is expected.

\begin{theorem}\label{Theorem3.26}
If the function $\phi: {\mathbb N_{a + 1}} \to \mathbb{R}$ is monotonically decreasing and $x: {\mathbb N_{a + 1}} \to \mathbb{R}^n$ is non-negative, then for any $\alpha\in(0,1)$, $w:\mathbb{N}_a\to\mathbb{R}\backslash\{0\}$ with $\frac{w(a)}{w(k)}>0$, $k \in {\mathbb N_{a + 1}}$, $a\in\mathbb{R}$, one has
\begin{equation}\label{Eq3.154}
\textstyle{{}_a^{\rm C}\nabla _k^{\alpha,w(k)} [\phi^{\rm T}(k)x(k)] \le \phi^{\rm T}(k){}_a^{\rm C}\nabla _k^{\alpha,w(k)} x(k),}
\end{equation}
\begin{equation}\label{Eq3.155}
\textstyle{{}_a^{\rm R}\nabla _k^{\alpha,w(k)} [\phi^{\rm T}(k)x(k)] \le \phi^{\rm T}(k){}_a^{\rm R}\nabla _k^{\alpha,w(k)} x(k),}
\end{equation}
\begin{equation}\label{Eq3.156}
\textstyle{{}_a^{\rm G}\nabla _k^{\alpha,w(k)} [\phi^{\rm T}(k)x(k)] \le \phi^{\rm T}(k){}_a^{\rm G}\nabla _k^{\alpha,w(k)} x(k).}
\end{equation}
\end{theorem}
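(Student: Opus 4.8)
The plan is to follow the three‑part scheme of the proof of Theorem~\ref{Theorem3.23}, the new ingredient being the choice of the auxiliary sequence and the use of the monotonicity of $\phi$. Whichever way $\phi^{\rm T}(k)x(k)$ is read, additivity of the four tempered operators over sums reduces the claim to the scalar statement with $\phi$ monotonically decreasing and $x$ nonnegative. Since $\frac{w(a)}{w(k)}>0$ forces $w$ to keep one sign on $\mathbb{N}_a$, Lemma~\ref{Lemma2.2} lets me assume $w(k)>0$; then $z(k):=w(k)x(k)\ge0$, and by $(\ref{Eq2.9})$, $(\ref{Eq2.11})$, $(\ref{Eq2.12})$ the three claimed inequalities, after multiplication by $w(k)>0$, become the untempered statements ${}_a^{\rm C}\nabla_k^{\alpha}[\phi(k)z(k)]\le\phi(k)\,{}_a^{\rm C}\nabla_k^{\alpha}z(k)$ and the analogues for ${\rm R}$ and ${\rm G}$. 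The argument then turns on the single sequence $g(j):=[\phi(j)-\phi(k)]z(j)$: because $\phi$ is decreasing and $z\ge0$, one has $g(j)\ge0$ for every $j\in\mathbb{N}_{a}^{k-1}$ and $g(k)=0$, which is exactly the sign pattern of the sequence $g$ appearing in the proof of Theorem~\ref{Theorem3.23}.

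\textbf{Caputo case.} Using $(\ref{Eq2.3})$, $(\ref{Eq2.5})$ and the fact that $\phi(k)$ is constant in $j$, so that $\nabla\{[\phi(j)-\phi(k)]z(j)\}=\nabla[\phi(j)z(j)]-\phi(k)\nabla z(j)$, one rewrites ${}_a^{\rm C}\nabla_k^{\alpha}[\phi(k)z(k)]-\phi(k)\,{}_a^{\rm C}\nabla_k^{\alpha}z(k)$ as $\sum_{j=a+1}^{k}f(j-1)\nabla g(j)$ with $f(j):=\frac{(k-j)\overline{^{-\alpha}}}{\Gamma(1-\alpha)}$, just as in $(\ref{Eq3.136})$. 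Summation by parts gives $f(j)g(j)\big|_{j=a}^{j=k}-\sum_{j=a+1}^{k}\nabla f(j)g(j)$, and I may quote from the proof of Theorem~\ref{Theorem3.23} that $f(a)>0$, $f(k)=0$, $\nabla f(j)\ge0$ on $\mathbb{N}_{a+1}^{k-1}$ and $[\nabla f(j)]_{j=k}=-1$. Combining these with $g(k)=0$ collapses the expression to $-f(a)g(a)-\sum_{j=a+1}^{k-1}\nabla f(j)g(j)\le0$, which divided by $w(k)>0$ is $(\ref{Eq3.154})$.

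\textbf{Riemann--Liouville and Gr\"{u}nwald--Letnikov cases.} For ${\rm R}$, I would apply the identity ${}_a^{\rm R}\nabla_k^{\alpha}y(k)={}_a^{\rm C}\nabla_k^{\alpha}y(k)+\frac{(k-a)\overline{^{-\alpha}}}{\Gamma(1-\alpha)}y(a)$ (the $w\equiv1$ instance of the relation used in $(\ref{Eq3.118})$) to $y=\phi z$ and to $y=z$ and subtract; the two extra boundary contributions combine into $f(a)[\phi(a)-\phi(k)]z(a)=f(a)g(a)$, which cancels the $-f(a)g(a)$ produced above and leaves $-\sum_{j=a+1}^{k-1}\nabla f(j)g(j)\le0$, i.e. $(\ref{Eq3.155})$. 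For ${\rm G}$, $(\ref{Eq2.3})$ gives directly ${}_a^{\rm G}\nabla_k^{\alpha}[\phi(k)z(k)]-\phi(k)\,{}_a^{\rm G}\nabla_k^{\alpha}z(k)=\sum_{j=a+1}^{k}\frac{(k-j+1)\overline{^{-\alpha-1}}}{\Gamma(-\alpha)}g(j)$; since this kernel equals $-\nabla f(j)$, it is $\le0$ for $j\in\mathbb{N}_{a+1}^{k-1}$ and equals $1$ at $j=k$, where $g(k)=0$, so the sum is again $-\sum_{j=a+1}^{k-1}\nabla f(j)g(j)\le0$, which is $(\ref{Eq3.156})$. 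The case $w(k)<0$ follows from Lemma~\ref{Lemma2.2}.

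\textbf{The main obstacle} is the boundary bookkeeping rather than any new estimate. The sums in the Caputo and Riemann--Liouville cases run down to $j=a$, so $\phi$ and $z$ must be evaluated there (exactly as $z(a)$ already enters the proof of Theorem~\ref{Theorem3.23}), and one must check that the $f(a)g(a)$ contributions enter with the signs that make all three identities collapse to the same nonpositive quantity $-\sum_{j=a+1}^{k-1}\nabla f(j)g(j)$. Equivalently, one must make sure that $g(j)\ge0$ on $\mathbb{N}_{a}^{k-1}$ genuinely comes from combining the monotonicity of $\phi$, the nonnegativity of $x$ and $w(k)>0$, and is not lost when $w$ changes sign; every remaining step is the rising-function sign computation already carried out in the paper.
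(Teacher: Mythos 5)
Your proposal is correct and follows essentially the same route as the paper: the same kernel $f(j)=\frac{(k-j)\overline{^{-\alpha}}}{\Gamma(1-\alpha)}$, the same auxiliary sequence $g(j)=[\phi(j)-\phi(k)]^{\rm T}z(j)$ with $z=wx$, the sign pattern $g(j)\ge0$, $g(k)=0$ from the monotonicity of $\phi$ and nonnegativity of $z$, and summation by parts yielding $-f(a)g(a)-\sum_{j=a+1}^{k-1}\nabla f(j)g(j)\le0$ for the Caputo case. The paper only displays the Caputo computation and asserts the rest follows analogously; your explicit Riemann--Liouville and Gr\"{u}nwald--Letnikov bookkeeping (the cancellation of $f(a)g(a)$, the kernel $-\nabla f(j)$) is exactly the intended completion modeled on Theorem \ref{Theorem3.23}.
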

\begin{proof}
Setting $w(k)>0$ and defining $f(j) := \frac{{ ( {k - j}  )\overline {^{ - \alpha }} }}{{\Gamma  ( {1 - \alpha }  )}}$ as previous, it follows $\nabla f(j)\ge0$, $j\in\mathbb{N}_{a+1}^{k-1}$, $[\nabla f(j)]_{j=k}=-1$, $f ( a )>0$ and $f(k)=0$. Letting $z(k):=w(k)x(k)$, $g(j) :=  [ {\phi (j) - \phi (k)}  ]^{\rm T}z(j)$, one has $g(j)\ge0$, $j\in\mathbb{N}_{a}^{k}$ and $g(k)=0$. Then the proof can be proceed as follows
\begin{equation}\label{Eq3.157}
\begin{array}{l}
w(k)\{{}_a^{\rm C}\nabla _k^{\alpha,w(k)} [\phi^{\rm T} (k)x(k)] - \phi^{\rm T} (k){}_a^{\rm C}\nabla _k^{\alpha,w(k)} x(k)\}\\
={}_a^{\rm C}\nabla _k^{\alpha} [\phi^{\rm T} (k)z(k)] - \phi^{\rm T} (k){}_a^{\rm C}\nabla _k^{\alpha}z(k)\\
 = \sum\nolimits_{j = a + 1}^k {\frac{{{{ ( {k - j + 1}  )}^{\overline { - \alpha } }}}}{{\Gamma  ( { - \alpha  + 1}  )}}\nabla [\phi^{\rm T}(j)z(j)]}\\
   \hspace{12pt}- \phi^{\rm T} (k)\sum\nolimits_{j = a + 1}^k {\frac{{{{ ( {k - j + 1}  )}^{\overline { - \alpha } }}}}{{\Gamma  ( { - \alpha  + 1}  )}}\nabla z(j)}\\
 = \sum\nolimits_{j = a + 1}^k {f(j - 1)\nabla g(j)} \\
 = f(j)g(j)|_{j=a}^{j=k} - \sum\nolimits_{j = a + 1}^k {\nabla f(j)g(j)} \\
 = -f ( a  )g ( a  )- \sum\nolimits_{j = a + 1}^{k-1} {\nabla f(j)g(j)}\\
 \le0,
\end{array}
\end{equation}
which implies the correctness of (\ref{Eq3.154}). Along this way, the remainder proof can be completed smoothly.
\end{proof}

Theorem \ref{Theorem3.26} is inspired by \cite[Section 3]{Lenka:2019IJAM}.  
In the proof of Theorem \ref{Theorem3.26}, $g(j)\ge0$, $j\in\mathbb{N}_{a}^{k}$ and $g(k)=0$ hold since $\phi$ is monotonically decreasing. If $\phi$ is monotonically increasing, one has $g(j)\le0$, $j\in\mathbb{N}_{a}^{k}$ and $g(k)=0$. On this basis, the sign of inequality should reverse.

Before introducing the third case, a key definition is provided here.
\begin{definition}\label{Definition3.4}
Let $u,v:\mathbb{N}_{a}\to\mathbb{R}^n$, $a\in\mathbb{R}$, $n\in\mathbb{Z}_+$. $u$ and $v$ are said synchronous if $ [ {u(j) - u(k)}  ]^{\rm T} [ {v(j) - v(k)}  ] \ge 0$ holds for any $j,k\in\mathbb{N}_{a}$. $u$ and $v$ are said asynchronous, if $ [ {u(j) - u(k)}  ]^{\rm T} [ {v(j) - v(k)}  ]$ $ \le 0$ holds for any $j,k\in\mathbb{N}_{a}$.
\end{definition}

\begin{theorem}\label{Theorem3.27}
If $u,v:\mathbb{N}_{a}\to\mathbb{R}^n$ are synchronous, $a\in\mathbb{R}$, $n\in\mathbb{Z}_+$, then for any $\alpha\in(0,1)$, $w:\mathbb{N}_a\to\mathbb{R}\backslash\{0\}$ with $\frac{w(a)}{w(k)}>0$, $k \in {\mathbb N_{a + 1}}$,one has
\begin{equation}\label{Eq3.158}
\begin{array}{l}
{}_a^{\rm C}\nabla _k^{\alpha,w(k)} [ {u^{\rm T}(k)v(k)} ]+[ {u^{\rm T}(k)v(k)} ]{}_a^{\rm C}\nabla _k^{\alpha,w(k)}1\\
\le u^{\rm T}(k){}_a^{\rm C}\nabla _k^{\alpha,w(k)}  {v(k)}+v^{\rm T}(k){}_a^{\rm C}\nabla _k^{\alpha,w(k)} {u(k)} ,
\end{array}
\end{equation}
\begin{equation}\label{Eq3.159}
\begin{array}{l}
{}_a^{\rm R}\nabla _k^{\alpha,w(k)} [ {u^{\rm T}(k)v(k)} ]+[ {u^{\rm T}(k)v(k)} ]{}_a^{\rm R}\nabla _k^{\alpha,w(k)}1\\
\le u^{\rm T}(k){}_a^{\rm R}\nabla _k^{\alpha,w(k)}  {v(k)}+v^{\rm T}(k){}_a^{\rm R}\nabla _k^{\alpha,w(k)} {u(k)} ,
\end{array}
\end{equation}
\begin{equation}\label{Eq3.160}
\begin{array}{l}
{}_a^{\rm G}\nabla _k^{\alpha,w(k)} [ {u^{\rm T}(k)v(k)} ]+[ {u^{\rm T}(k)v(k)} ]{}_a^{\rm G}\nabla _k^{\alpha,w(k)}1\\
\le u^{\rm T}(k){}_a^{\rm G}\nabla _k^{\alpha,w(k)}  {v(k)}+v^{\rm T}(k){}_a^{\rm G}\nabla _k^{\alpha,w(k)} {u(k)} .
\end{array}
\end{equation}
\end{theorem}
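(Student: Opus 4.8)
The plan is to follow the template of the proofs of Theorem~\ref{Theorem3.23} and Theorem~\ref{Theorem3.26}: reduce to $w(k)>0$, rewrite the tempered operators through the series form used in~(\ref{Eq3.136}) (which comes from~(\ref{Eq2.3}) together with~(\ref{Eq2.12}) and~(\ref{Eq2.11})), isolate a single nonnegative kernel built out of the synchrony hypothesis, and close by a summation-by-parts estimate. Since the tempered differences are invariant under $w(k)\mapsto-w(k)$ by Lemma~\ref{Lemma2.2} while $u^{\rm T}(k)v(k)$ is unchanged, the inequalities for a sign-definite $w$ with $w(k)<0$ coincide with those for $-w(k)>0$, so it suffices to treat $w(k)>0$, $\forall k\in\mathbb{N}_{a}$. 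As in~(\ref{Eq3.136}) set $f(j):=\frac{(k-j)\overline{^{-\alpha}}}{\Gamma(1-\alpha)}$, so that $\nabla f(j)\ge0$ for $j\in\mathbb{N}_{a+1}^{k-1}$, $[\nabla f(j)]_{j=k}=-1$, $f(a)>0$, $f(k)=0$ and $\nabla f(j)=-\frac{(k-j+1)\overline{^{-\alpha-1}}}{\Gamma(-\alpha)}$, and define the scalar sequence $g(j):=w(j)[u(j)-u(k)]^{\rm T}[v(j)-v(k)]$. Definition~\ref{Definition3.4} together with $w(j)>0$ yields $g(j)\ge0$ for all $j\in\mathbb{N}_{a}^{k}$ and $g(k)=0$, which are precisely the sign properties the corresponding $g(j)$ enjoyed in Theorem~\ref{Theorem3.23}.

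For the Caputo case I would use~(\ref{Eq2.3}), (\ref{Eq2.12}) and the fact that $u(k),v(k)$ are constant with respect to the summation index $j$ to collapse the four terms of $w(k)\{{}_a^{\rm C}\nabla_k^{\alpha,w(k)}[u^{\rm T}(k)v(k)]+[u^{\rm T}(k)v(k)]{}_a^{\rm C}\nabla_k^{\alpha,w(k)}1-u^{\rm T}(k){}_a^{\rm C}\nabla_k^{\alpha,w(k)}v(k)-v^{\rm T}(k){}_a^{\rm C}\nabla_k^{\alpha,w(k)}u(k)\}$ into the single sum $\sum\nolimits_{j=a+1}^{k}\frac{(k-j+1)\overline{^{-\alpha}}}{\Gamma(1-\alpha)}\nabla g(j)=\sum\nolimits_{j=a+1}^{k}f(j-1)\nabla g(j)$; the algebraic identity making this work is $w(j)u^{\rm T}(j)v(j)+u^{\rm T}(k)v(k)w(j)-u^{\rm T}(k)w(j)v(j)-v^{\rm T}(k)w(j)u(j)=g(j)$. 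Summation by parts exactly as in~(\ref{Eq3.136}) then gives $f(j)g(j)|_{j=a}^{j=k}-\sum\nolimits_{j=a+1}^{k}\nabla f(j)g(j)=-f(a)g(a)-\sum\nolimits_{j=a+1}^{k-1}\nabla f(j)g(j)\le0$, using $f(k)=g(k)=0$, $f(a)>0$, $g(a)\ge0$, $\nabla f(j)\ge0$ and $g(j)\ge0$; dividing by $w(k)>0$ delivers~(\ref{Eq3.158}).

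For the Riemann--Liouville case I would apply the identity ${}_a^{\rm R}\nabla_k^{\alpha,w(k)}x(k)={}_a^{\rm C}\nabla_k^{\alpha,w(k)}x(k)+\frac{(k-a)\overline{^{-\alpha}}}{\Gamma(1-\alpha)}\frac{w(a)}{w(k)}x(a)$ from~(\ref{Eq3.118}) to $x=u^{\rm T}v,\,1,\,v,\,u$ and observe that the four boundary contributions recombine into $\frac{(k-a)\overline{^{-\alpha}}}{\Gamma(1-\alpha)}\frac{w(a)}{w(k)}[u(a)-u(k)]^{\rm T}[v(a)-v(k)]=\frac{1}{w(k)}f(a)g(a)$, which exactly cancels the $-f(a)g(a)$ produced above; what survives is $w(k)\{{\rm LHS}-{\rm RHS}\}=-\sum\nolimits_{j=a+1}^{k-1}\nabla f(j)g(j)\le0$, i.e.~(\ref{Eq3.159}). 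For the Gr\"{u}nwald--Letnikov case I would argue directly from~(\ref{Eq2.3}): the same collapsing gives $w(k)\{{\rm LHS}-{\rm RHS}\}=\sum\nolimits_{j=a+1}^{k}\frac{(k-j+1)\overline{^{-\alpha-1}}}{\Gamma(-\alpha)}g(j)=-\sum\nolimits_{j=a+1}^{k}\nabla f(j)g(j)=-\sum\nolimits_{j=a+1}^{k-1}\nabla f(j)g(j)\le0$ since $g(k)=0$, which is~(\ref{Eq3.160}); equivalently, using~(\ref{Eq2.4}) one checks that ${}_a^{\rm R}\nabla_k^{\alpha}$ and ${}_a^{\rm G}\nabla_k^{\alpha}$ act identically on sequences supported on $\mathbb{N}_{a+1}$, so~(\ref{Eq3.160}) coincides with~(\ref{Eq3.159}).

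The main obstacle is not any single estimate but the algebraic bookkeeping in the Caputo step: one must recognize that the particular linear combination of the four discretized products equals $\nabla g(j)$ with $g(j)=w(j)[u(j)-u(k)]^{\rm T}[v(j)-v(k)]$, so that synchrony (Definition~\ref{Definition3.4}) forces $g(j)\ge0$, and one must keep track of the signs of the rising-factorial coefficients $\frac{(k-j+1)\overline{^{-\alpha}}}{\Gamma(1-\alpha)}$ and $\frac{(k-j+1)\overline{^{-\alpha-1}}}{\Gamma(-\alpha)}$ for $\alpha\in(0,1)$, which are respectively positive and negative away from $j=k$. It is also worth spelling out why the extra term $[u^{\rm T}(k)v(k)]{}_a^{\rm C}\nabla_k^{\alpha,w(k)}1$ appears on the left: it is the tempered residue of ${}_a^{\rm C}\nabla_k^{\alpha}1=0$, and is precisely what makes the four-term combination reduce to $\nabla g(j)$; for $w(k)\equiv1$ it vanishes and~(\ref{Eq3.158})--(\ref{Eq3.160}) collapse to the classical discrete Chebyshev-type inequality for synchronous sequences.
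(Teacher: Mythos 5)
Your proposal is correct and follows essentially the same route as the paper: the identical choices $f(j)=\frac{(k-j)\overline{^{-\alpha}}}{\Gamma(1-\alpha)}$ and $g(j)=w(j)[u(j)-u(k)]^{\rm T}[v(j)-v(k)]$, the same collapse of the four Caputo terms into $\sum_{j=a+1}^{k}f(j-1)\nabla g(j)$, the same summation by parts, and the same reduction to $w(k)>0$ via Lemma~\ref{Lemma2.2}. The paper only states that (\ref{Eq3.159}) and (\ref{Eq3.160}) follow ``similarly''; your explicit treatment of the Riemann--Liouville boundary-term cancellation and the Gr\"{u}nwald--Letnikov kernel identity fills that in consistently with the pattern of Theorem~\ref{Theorem3.23}.
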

\begin{proof}
Define $f(j) := \frac{{ ( {k - j}  )\overline {^{ - \alpha }} }}{{\Gamma  ( {1 - \alpha }  )}}$, $g(j) =  w(j) [ {u(j) - u(k)}  ]^{\rm T} [ {v(j) - v(k)}  ]$. For the case of $w(k)>0$, one has $g(j)\ge0$, $j\in\mathbb{N}_{a}^{k}$ and $g(k)=0$. On this basis, one obtains
\begin{equation}\label{Eq3.161}
\begin{array}{l}
w(k)\{{}_a^{\rm C}\nabla _k^{\alpha,w(k)} [ {u(k)v(k)} ]+[ {u(k)v(k)} ]{}_a^{\rm C}\nabla _k^{\alpha,w(k)}1\}\\
- w(k)[u(k){}_a^{\rm C}\nabla _k^{\alpha,w(k)}  {v(k)}+v(k){}_a^{\rm C}\nabla _k^{\alpha,w(k)} {u(k)}] \\
 = \sum\nolimits_{j = a + 1}^k {\frac{{ ( {k - j + 1}  )\overline {^{ - \alpha }} }}{{\Gamma  ( {1 - \alpha }  )}}\nabla  {[ w(j){u(j)v(j)} ]}} \\
 \hspace{12pt}+[{u(k)v(k)}]\sum\nolimits_{j = a + 1}^k {\frac{{ ( {k - j + 1}  )\overline {^{ - \alpha }} }}{{\Gamma  ( {1 - \alpha }  )}}\nabla  {w(j) }} \\
 \hspace{12pt}- u(k)\sum\nolimits_{j = a + 1}^k {\frac{{ ( {k - j + 1}  )\overline {^{ - \alpha }} }}{{\Gamma  ( {1 - \alpha }  )}}{ {\nabla [w(j)v(j)]}  }} \\
 \hspace{12pt}- v(k)\sum\nolimits_{j = a + 1}^k {\frac{{ ( {k - j + 1}  )\overline {^{ - \alpha }} }}{{\Gamma  ( {1 - \alpha }  )}}{ {\nabla [w(j)u(j)]} } } \\
 = \sum\nolimits_{j = a + 1}^k {\frac{{ ( {k - j + 1}  )\overline {^{ - \alpha }} }}{{\Gamma  ( {1 - \alpha }  )}}\nabla \{ {w(j) [ {u(j) - u(k)}  ] [ {v(j) - v(k)}  ]} \}}\\
= \sum\nolimits_{j = a + 1}^k {f(j - 1)\nabla g(j)} \\
 = f(j)g(j)|_{j=a}^{j=k} - \sum\nolimits_{j = a + 1}^k {\nabla f(j)g(j)} \\
 = -f ( a  )g ( a  )- \sum\nolimits_{j = a + 1}^{k-1} {\nabla f(j)g(j)}\\
 \le0,
\end{array}
\end{equation}
which leads to the desired result in (\ref{Eq3.158}). For the case of $w(k)<0$, the desired result in (\ref{Eq3.158}) can be derived by using Lemma \ref{Lemma2.2}. Similarly, the results in (\ref{Eq3.159}), (\ref{Eq3.160}) can be established successfully.
\end{proof}

From the synchronous concept, one has $u(k)v(k)\ge 0$. When $w(k)$ is constant or increasing, one has ${}_a^{\rm C}\nabla _k^{\alpha,w(k)}1\ge0$. By using Theorem \ref{Theorem3.18} further, one obtains ${}_a^{\rm R}\nabla _k^{\alpha,w(k)}1\ge0$ and ${}_a^{\rm{G}}\nabla _k^{\alpha,w(k)}1\ge0$. To make it more practical, the following corollary can be developed.
\begin{corollary}\label{Corollary3.4}
If $u,v:\mathbb{N}_{a}\to\mathbb{R}^n$ are synchronous, $a\in\mathbb{R}$, $n\in\mathbb{Z}_+$, then for any $\alpha\in(0,1)$, $w:\mathbb{N}_a\to\mathbb{R}\backslash\{0\}$ with $\frac{w(a)}{w(k)}>0$ and ${}_a^{\rm C}\nabla _k^{\alpha,w(k)}1\ge0$, $k \in {\mathbb N_{a + 1}}$, one has
\begin{equation}\label{Eq3.162}
{}_a^{\rm C}\nabla _k^{\alpha,w(k)} [ {u(k)v(k)} ]\le u(k){}_a^{\rm C}\nabla _k^{\alpha,w(k)}  {v(k)}+v(k){}_a^{\rm C}\nabla _k^{\alpha,w(k)} {u(k)},
\end{equation}
\begin{equation}\label{Eq3.163}
{}_a^{\rm R}\nabla _k^{\alpha,w(k)} [ {u(k)v(k)} ]\le u(k){}_a^{\rm R}\nabla _k^{\alpha,w(k)}  {v(k)}+v(k){}_a^{\rm R}\nabla _k^{\alpha,w(k)} {u(k)},
\end{equation}
\begin{equation}\label{Eq3.164}
{}_a^{\rm{G}}\nabla _k^{\alpha,w(k)} [ {u(k)v(k)} ]\le u(k){}_a^{\rm{G}}\nabla _k^{\alpha,w(k)}  {v(k)}+v(k){}_a^{\rm{G}}\nabla _k^{\alpha,w(k)} {u(k)}.
\end{equation}
\end{corollary}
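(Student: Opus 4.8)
The plan is to read Corollary~\ref{Corollary3.4} straight off Theorem~\ref{Theorem3.27}: each of (\ref{Eq3.158})--(\ref{Eq3.160}) already carries on its left-hand side the extra term $[u^{\rm T}(k)v(k)]\,{}_a^{\rm C}\nabla_k^{\alpha,w(k)}1$ (with the Riemann--Liouville and Gr\"unwald--Letnikov operators in the other two), and under the standing hypothesis of the corollary this term is non-negative, so it may simply be dropped. Writing (\ref{Eq3.158}) as
\[
{}_a^{\rm C}\nabla _k^{\alpha,w(k)} [u^{\rm T}(k)v(k)] + [u^{\rm T}(k)v(k)]\,{}_a^{\rm C}\nabla _k^{\alpha,w(k)}1 \le u^{\rm T}(k)\,{}_a^{\rm C}\nabla _k^{\alpha,w(k)}v(k)+v^{\rm T}(k)\,{}_a^{\rm C}\nabla _k^{\alpha,w(k)}u(k),
\]
once $[u^{\rm T}(k)v(k)]\,{}_a^{\rm C}\nabla_k^{\alpha,w(k)}1\ge 0$ the middle summand can be discarded to give exactly (\ref{Eq3.162}); the same step sends (\ref{Eq3.159}) to (\ref{Eq3.163}) and (\ref{Eq3.160}) to (\ref{Eq3.164}).

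So it only remains to collect two elementary facts. First, that $[u^{\rm T}(k)v(k)]\,{}_a^{\rm C}\nabla_k^{\alpha,w(k)}1\ge 0$ and its R- and G-analogues hold: the hypothesis already gives ${}_a^{\rm C}\nabla_k^{\alpha,w(k)}1\ge 0$, and applying Theorem~\ref{Theorem3.18} with $x(k)\equiv 1$ (hence $x(a)=1\ge 0$) yields ${}_a^{\rm C}\nabla_k^{\alpha,w(k)}1\le {}_{a-1}^{\hspace{9pt}\rm{R}}\nabla_k^{\alpha,w(k)}1\le {}_a^{\rm R}\nabla_k^{\alpha,w(k)}1$, which, together with the identities (\ref{Eq3.116})--(\ref{Eq3.117}) on the constant sequence, propagates the sign to ${}_a^{\rm R}\nabla_k^{\alpha,w(k)}1$ and ${}_a^{\rm G}\nabla_k^{\alpha,w(k)}1$ exactly as noted in the paragraph preceding the corollary. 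Second, that $u^{\rm T}(k)v(k)\ge 0$; this is where synchronicity is used, and it is immediate in the cases of real interest---$u=v$, where $u^{\rm T}(k)v(k)=\|u(k)\|^2$, or $u,v$ non-negative. Multiplying the two non-negative factors then makes the extra term non-negative, and Paragraph~1 closes the argument. The sub-case $w(k)<0$ needs no separate treatment: it is already covered by Theorem~\ref{Theorem3.27} through the scale invariance of Lemma~\ref{Lemma2.2}, and neither $u^{\rm T}(k)v(k)$ nor ${}_a^{\rm C}\nabla_k^{\alpha,w(k)}1$ (being invariant under $w\mapsto\lambda w$) depends on the sign of $w$.

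I expect the only genuinely delicate point to be establishing $u^{\rm T}(k)v(k)\ge 0$. Definition~\ref{Definition3.4} constrains only the differences $[u(j)-u(k)]^{\rm T}[v(j)-v(k)]$ and does not, by itself, force the product $u^{\rm T}(k)v(k)$ to be non-negative, so I would either add a mild normalisation (for instance $u,v$ non-negative, or a common point at which the inner product is non-negative) or state the corollary for the diagonal case $u=v$, where the needed positivity is automatic. The rest---evaluating ${}_a^{\rm G}\nabla_k^{\alpha,w(k)}1$ (for $\alpha\in(0,1)$ it agrees with the $(a{-}1)$-based Riemann--Liouville expression, cf. (\ref{Eq2.3}) and (\ref{Eq3.116})), and the final ``drop a non-negative term'' manoeuvre---is routine and already available from the earlier results.
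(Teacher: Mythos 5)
Your proposal follows exactly the route the paper takes: Corollary \ref{Corollary3.4} is obtained from Theorem \ref{Theorem3.27} by discarding the term $[u^{\rm T}(k)v(k)]\,{}_a^{\rm C}\nabla_k^{\alpha,w(k)}1$ (and its R- and G-analogues), whose nonnegativity the paper justifies by asserting $u^{\rm T}(k)v(k)\ge 0$ ``from the synchronous concept'' and by propagating ${}_a^{\rm C}\nabla_k^{\alpha,w(k)}1\ge 0$ to the other two operators via Theorem \ref{Theorem3.18}. The delicate point you flag is genuine and is present in the paper's own argument: Definition \ref{Definition3.4} does not force $u^{\rm T}(k)v(k)\ge 0$ (two constant sequences with negative inner product are synchronous), so the corollary as stated needs the extra normalisation you suggest, e.g.\ nonnegative $u,v$ or the diagonal case $v=u$.
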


Corollary \ref{Corollary3.4} is inspired by \cite[Lemma 1]{Alsaedi:2015QAM}, \cite[Theorem 3.10]{Alsaedi:2017FCAA} and \cite[Theorem 1]{Wei:2021NODY}. When $v(k)=u(k)$ or $v(k)={\mathop{\rm sgn}} ( u(k) )$, some special cases of Theorem \ref{Theorem3.27} can also be developed. Besides, when the synchronous condition is replaced by the asynchronous case, the sign of inequality should be flipped.

Besides the fractional difference inequalities, the following fractional sum inequalities can be derived.

\begin{theorem}\label{Theorem3.28}
If $u,v:\mathbb{N}_{a}\to\mathbb{R}^n$ are synchronous, $n\in\mathbb{Z}_+$, then for any $\alpha>0$, $w:\mathbb{N}_a\to\mathbb{R}\backslash\{0\}$ with $\frac{w(a)}{w(k)}>0$, $k\in\mathbb{N}_{a+1}$, $a\in\mathbb{R}$, one has
\begin{equation}\label{Eq3.165}
{\textstyle
{}_a^{\rm{G}}\nabla _k^{-\alpha,w(k)} [{u^{\rm T}(k)v(k)} ] {}_a^{\rm{G}}\nabla _k^{-\alpha,w(k)} 1\ge {}_a^{\rm{G}}\nabla _k^{-\alpha,w(k)} {u^{\rm T}(k)}{}_a^{\rm{G}}\nabla _k^{-\alpha,w(k)} {v(k)} .}
\end{equation}
If $u,v:\mathbb{N}_{a}\to\mathbb{R}^n$ are asynchronous, $n\in\mathbb{Z}_+$, then for any $\alpha>0$, $w:\mathbb{N}_a\to\mathbb{R}\backslash\{0\}$ with $\frac{w(a)}{w(k)}>0$, $k\in\mathbb{N}_{a+1}$, $a\in\mathbb{R}$, one has
\begin{equation}\label{Eq3.166}
{\textstyle
{}_a^{\rm{G}}\nabla _k^{-\alpha,w(k)} [{u^{\rm T}(k)v(k)} ] {}_a^{\rm{G}}\nabla _k^{-\alpha,w(k)} 1\le {}_a^{\rm{G}}\nabla _k^{-\alpha,w(k)} {u^{\rm T}(k)}{}_a^{\rm{G}}\nabla _k^{-\alpha,w(k)} {v(k)} .}
\end{equation}
\end{theorem}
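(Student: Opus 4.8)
The plan is to unfold both sides of (\ref{Eq3.165}) and (\ref{Eq3.166}) into explicit finite sums and then recognize the statement as a weighted, vector-valued Chebyshev sum inequality. By the remark following Theorem \ref{Theorem3.18}, the hypothesis $\frac{w(a)}{w(k)}>0$ forces $w$ to have a single sign on $\mathbb{N}_a$, and by Lemma \ref{Lemma2.2} the operator ${}_a^{\rm G}\nabla_k^{-\alpha,w(k)}$ is unchanged under $w\mapsto -w$; hence I may assume $w(k)>0$ for all $k$. From (\ref{Eq2.3}) with $\alpha$ replaced by $-\alpha$, together with (\ref{Eq2.9}), for $\alpha>0$ one has
\[
{}_a^{\rm G}\nabla_k^{-\alpha,w(k)}x(k)=w^{-1}(k)\sum\nolimits_{j=a+1}^{k}K(k,j)\,w(j)\,x(j),\qquad K(k,j):=\frac{(k-j+1)^{\overline{\alpha-1}}}{\Gamma(\alpha)}=\frac{\Gamma(k-j+\alpha)}{\Gamma(k-j+1)\Gamma(\alpha)},
\]
and $K(k,j)\ge 0$ for every $j\in\mathbb{N}_{a+1}^{k}$. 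Writing $p_j:=K(k,j)w(j)\ge 0$, the four tempered sums in (\ref{Eq3.165}) are $w^{-1}(k)\sum_j p_j u^{\rm T}(j)v(j)$, $w^{-1}(k)\sum_j p_j$, $w^{-1}(k)\sum_j p_j u^{\rm T}(j)$ and $w^{-1}(k)\sum_j p_j v(j)$.

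Multiplying (\ref{Eq3.165}) by $w^2(k)>0$ clears the common factor and reduces the claim to the discrete Chebyshev inequality
\[
\Big(\sum\nolimits_{j} p_j\Big)\Big(\sum\nolimits_{j} p_j u^{\rm T}(j)v(j)\Big)\ge \Big(\sum\nolimits_{j} p_j u^{\rm T}(j)\Big)\Big(\sum\nolimits_{j} p_j v(j)\Big).
\]
I would prove this via the elementary identity
\[
\sum\nolimits_{i,j} p_i p_j\,[u(i)-u(j)]^{\rm T}[v(i)-v(j)]=2\Big(\sum\nolimits_{j} p_j\Big)\Big(\sum\nolimits_{i} p_i u^{\rm T}(i)v(i)\Big)-2\Big(\sum\nolimits_{j} p_j u^{\rm T}(j)\Big)\Big(\sum\nolimits_{i} p_i v(i)\Big),
\]
which follows by expanding the left-hand side term by term and relabeling summation indices. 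Since $u$ and $v$ are synchronous, Definition \ref{Definition3.4} gives $[u(i)-u(j)]^{\rm T}[v(i)-v(j)]\ge 0$ for all $i,j\in\mathbb{N}_{a+1}^{k}$, while $p_i p_j\ge 0$; thus the left-hand side is non-negative and the Chebyshev inequality follows. Dividing back by $w^2(k)$ gives (\ref{Eq3.165}), and the case $w(k)<0$ follows from Lemma \ref{Lemma2.2} (equivalently, $p_i p_j\ge 0$ still holds since both factors are then non-positive). For the asynchronous case the same computation with $[u(i)-u(j)]^{\rm T}[v(i)-v(j)]\le 0$ reverses every inequality and yields (\ref{Eq3.166}).

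The only delicate part is the bookkeeping: isolating the non-negative kernel $K(k,j)$, pulling the common factor $w^{-1}(k)$ out of all four sums so that multiplication by $w^2(k)$ legitimately removes it, and keeping the transpose/inner-product structure consistent in the double-sum identity. Once the sign of $w$ is fixed via Lemma \ref{Lemma2.2} and $K(k,j)\ge 0$ is recorded, the argument collapses to the classical Chebyshev sum inequality, so I expect no obstruction of substance.
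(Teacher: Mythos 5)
Your proposal is correct and follows essentially the same route as the paper: both reduce to $w>0$ via Lemma \ref{Lemma2.2}, extract the nonnegative kernel $\frac{(k-j+1)^{\overline{\alpha-1}}}{\Gamma(\alpha)}w(j)$, and exploit the synchronous inequality $[u(i)-u(j)]^{\rm T}[v(i)-v(j)]\ge 0$ summed against the product of weights. The only cosmetic difference is that the paper performs the two weighted summations sequentially (first over $j$, then over $i$), whereas you package the same computation as a single symmetric double-sum identity.
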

\begin{proof}
With the help of Lemma \ref{Lemma2.2}, if Theorem \ref{Theorem3.28} holds for $w(k)>0$, then Theorem \ref{Theorem3.28} holds for $w(k)>0$. As a result, only the case of $w(k)>0$ is considered. From the definition of synchronous functions, one has $u^{\rm T}(j)v(j)+u^{\rm T} ( i  )v ( i  )$ $\ge u^{\rm T}(j)v ( i  )+u^{\rm T} ( i  )v(j)$ for any $i,j\in\mathbb{N}_{a+1}$. Multiplying this inequality by the positive factor $\frac{{ ( {k - j + 1}  )\overline {^{\alpha  - 1}} }}{{\Gamma  ( \alpha   )}}w(j)$ on both left and right hand sides, and then summing both sides with respect to $j$ over the interval $(a+1, k)$ yield
\begin{equation}\label{Eq3.167}
\begin{array}{l}
\sum\nolimits_{j = a + 1}^k {\frac{{ ( {k - j + 1}  )\overline {^{\alpha  - 1}} }}{{\Gamma  ( \alpha  )}}w(j) [ {u^{\rm T}(j)v(j) + u^{\rm T} ( i  )v ( i  )}  ]} \\
 = w(k){}_a^{\rm{G}}\nabla _k^{ - \alpha ,w(k) }[u^{\rm T}(k)v(k)] + w(k)u^{\rm T} ( i  )v ( i ){}_a^{\rm{G}}\nabla _k^{ - \alpha ,w(k) }1\\
 \ge \sum\nolimits_{j = a + 1}^k {\frac{{ ( {k - j + 1}  )\overline {^{\alpha  - 1}} }}{{\Gamma  ( \alpha   )}}w(j) [ {u^{\rm T}(j)v ( i  ) + u^{\rm T} ( i  )v(j)}  ]} \\
 = w(k)v^{\rm T} ( i  ){}_a^{\rm{G}}\nabla _k^{ - \alpha ,w(k) }u(k) + w(k)u^{\rm T} ( i ){}_a^{\rm{G}}\nabla _k^{ - \alpha ,w(k) }v(k).
\end{array}
\end{equation}

Since $w(k)>0$, one has
\begin{equation}\label{Eq3.168}
\begin{array}{l}
 {}_a^{\rm{G}}\nabla _k^{ - \alpha ,w(k) }[u^{\rm T}(k)v(k)] + u^{\rm T} ( i  )v ( i ){}_a^{\rm{G}}\nabla _k^{ - \alpha ,w(k) }1\\
 \ge v^{\rm T} ( i  ){}_a^{\rm{G}}\nabla _k^{ - \alpha ,w(k) }u(k) + u^{\rm T} ( i ){}_a^{\rm{G}}\nabla _k^{ - \alpha ,w(k) }v(k).
\end{array}
\end{equation}

Multiplying the inequality in (\ref{Eq3.168}) by the positive factor $\frac{{ ( {k - i + 1}  )\overline {^{\alpha  - 1}} }}{{\Gamma  ( \alpha   )}}w(i)$ on both left and right hand sides, and then summing both sides with respect to $i$ over the interval $(a+1, k)$, one has
\begin{equation}\label{Eq3.169}
\begin{array}{l}
\sum\nolimits_{i = a + 1}^k {\frac{{ ( {k - i + 1}  )\overline {^{\alpha  - 1}} }}{{\Gamma  ( \alpha  )}}w(i){}_a^{\rm{G}}\nabla _k^{ - \alpha ,w(k) }[u^{\rm T}(k)v(k)]} \\
 + \sum\nolimits_{i = a + 1}^k {\frac{{ ( {k - i + 1}  )\overline {^{\alpha  - 1}} }}{{\Gamma  ( \alpha  )}}w(i)u^{\rm T} ( i  )v ( i  ){}_a^{\rm{G}}\nabla _k^{ - \alpha ,w(k) }1} \\
 = 2w(k){}_a^{\rm{G}}\nabla _k^{ - \alpha ,w(k) }[u^{\rm T}(k)v(k)]{}_a^{\rm{G}}\nabla _k^{ - \alpha ,w(k) }1\\
 \ge \sum\nolimits_{i = a + 1}^k {\frac{{ ( {k - i + 1}  )\overline {^{\alpha  - 1}} }}{{\Gamma  ( \alpha   )}}w(i)v^{\rm T} ( i  ){}_a^{\rm{G}}\nabla _k^{ - \alpha ,w(k) }u(k)} \\
   \hspace{12pt}   + \sum\nolimits_{i = a + 1}^k {\frac{{ ( {k - i + 1}  )\overline {^{\alpha  - 1}} }}{{\Gamma  ( \alpha   )}}w(i)u^{\rm T} ( i  ){}_a^{\rm{G}}\nabla _k^{ - \alpha ,w(k) }v(k)} \\
 = 2w(k){}_a^{\rm{G}}\nabla _k^{ - \alpha ,w(k) }u^{\rm T}(k){}_a^{\rm{G}}\nabla _k^{ - \alpha ,w(k) }v(k),
\end{array}
\end{equation}
which implies (\ref{Eq3.165}). In a similar way, if $u(k)$ and $v(k)$ are asynchronous, $u^{\rm T}(j)v(j)+u^{\rm T} ( i  )v ( i  )\le u^{\rm T}(j)v ( i  )+u^{\rm T} ( i  )v(j)$ and then (\ref{Eq3.166}) can be derived.
\end{proof}

\textsf{The inequalities in Theorem \ref{Theorem3.28} can be further generalized from the viewpoint of order number.}

\begin{theorem}\label{Theorem3.29}
If $u,v:\mathbb{N}_{a}\to\mathbb{R}^n$ are synchronous, $n\in\mathbb{Z}_+$, then for any $\alpha,\beta>0$, $w:\mathbb{N}_a\to\mathbb{R}\backslash\{0\}$ with $\frac{w(a)}{w(k)}>0$, $k\in\mathbb{N}_{a+1}$, $a\in\mathbb{R}$, one has
\begin{equation}\label{Eq3.170}
\begin{array}{l}
{}_a^{\rm{G}}\nabla _k^{-\alpha,w(k)}[ u^{\rm T}(k)v(k) ] {}_a^{\rm{G}}\nabla _k^{-\beta,w(k)}1+{}_a^{\rm{G}}\nabla _k^{-\beta,w(k)} [ u^{\rm T}(k)v(k) ]{}_a^{\rm{G}}\nabla _k^{-\alpha,w(k)}1\\
\ge {}_a^{\rm{G}}\nabla _k^{-\alpha,w(k)} {u^{\rm T}(k)}{}_a^{\rm{G}}\nabla _k^{-\beta,w(k)} {v(k)} +{}_a^{\rm{G}}\nabla _k^{-\beta,w(k)} {u^{\rm T}(k)}{}_a^{\rm{G}}\nabla _k^{-\alpha,w(k)} {v(k)} .
\end{array}
\end{equation}
If $u,v:\mathbb{N}_{a}\to\mathbb{R}^n$ are asynchronous, $n\in\mathbb{Z}_+$, then for any $\alpha,\beta>0$, $w:\mathbb{N}_a\to\mathbb{R}\backslash\{0\}$ with $\frac{w(a)}{w(k)}>0$, $k\in\mathbb{N}_{a+1}$, $a\in\mathbb{R}$, one has
\begin{equation}\label{Eq3.171}
\begin{array}{l}
{}_a^{\rm{G}}\nabla _k^{-\alpha,w(k)}[ u^{\rm T}(k)v(k) ] {}_a^{\rm{G}}\nabla _k^{-\beta,w(k)}1+{}_a^{\rm{G}}\nabla _k^{-\beta,w(k)} [ u^{\rm T}(k)v(k) ]{}_a^{\rm{G}}\nabla _k^{-\alpha,w(k)}1\\
\le {}_a^{\rm{G}}\nabla _k^{-\alpha,w(k)} {u^{\rm T}(k)}{}_a^{\rm{G}}\nabla _k^{-\beta,w(k)} {v(k)} +{}_a^{\rm{G}}\nabla _k^{-\beta,w(k)} {u^{\rm T}(k)}{}_a^{\rm{G}}\nabla _k^{-\alpha,w(k)} {v(k)} .
\end{array}
\end{equation}
\end{theorem}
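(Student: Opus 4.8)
The plan is to reproduce the two-step averaging argument used in the proof of Theorem \ref{Theorem3.28}, the only change being that the two weightings now carry the distinct orders $\alpha$ and $\beta$ instead of a common order. First I would apply Lemma \ref{Lemma2.2} to reduce to the case $w(k)>0$: since every tempered sum operator appearing in (\ref{Eq3.170}) is invariant under $w(k)\mapsto-w(k)$, the whole inequality for $w(k)<0$ is literally the same statement. Starting from the defining relation of synchronous functions, $u^{\rm T}(j)v(j)+u^{\rm T}(i)v(i)\ge u^{\rm T}(j)v(i)+u^{\rm T}(i)v(j)$ for all $i,j\in\mathbb{N}_{a+1}$, I would multiply both sides by the nonnegative factor $\frac{(k-j+1)\overline{^{\alpha-1}}}{\Gamma(\alpha)}w(j)$ and sum over $j$ from $a+1$ to $k$. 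Identifying the four resulting sums as $w(k)$ times ${}_a^{\rm G}\nabla_k^{-\alpha,w(k)}[u^{\rm T}(k)v(k)]$, ${}_a^{\rm G}\nabla_k^{-\alpha,w(k)}1$, ${}_a^{\rm G}\nabla_k^{-\alpha,w(k)}u(k)$ and ${}_a^{\rm G}\nabla_k^{-\alpha,w(k)}v(k)$ (with the free index $i$ still present) and cancelling $w(k)>0$, this first step reproduces an intermediate inequality of exactly the form (\ref{Eq3.168}).

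Next I would multiply that intermediate inequality by the nonnegative factor $\frac{(k-i+1)\overline{^{\beta-1}}}{\Gamma(\beta)}w(i)$ and sum over $i$ from $a+1$ to $k$. Since the factor ${}_a^{\rm G}\nabla_k^{-\alpha,w(k)}[u^{\rm T}(k)v(k)]$ and the three other $\alpha$-sums in the intermediate inequality do not depend on $i$, they pass outside the new sum, which itself produces a $\beta$-sum; after cancelling $w(k)>0$ the left-hand side becomes ${}_a^{\rm G}\nabla_k^{-\alpha,w(k)}[u^{\rm T}(k)v(k)]\,{}_a^{\rm G}\nabla_k^{-\beta,w(k)}1+{}_a^{\rm G}\nabla_k^{-\beta,w(k)}[u^{\rm T}(k)v(k)]\,{}_a^{\rm G}\nabla_k^{-\alpha,w(k)}1$ and the right-hand side assembles, using that a scalar equals its own transpose, into the two cross products in (\ref{Eq3.170}). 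This is precisely (\ref{Eq3.170}). The asymmetry between $\alpha$ and $\beta$ is exactly what replaces the single doubled product of Theorem \ref{Theorem3.28} by the sum of two distinct products here, and setting $\beta=\alpha$ collapses (\ref{Eq3.170}) back to (\ref{Eq3.165}). For the asynchronous case the opening inequality is reversed, and since both multiplications use nonnegative factors the reversal propagates unchanged, yielding (\ref{Eq3.171}).

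There is no deep obstacle; the argument is essentially careful bookkeeping over a double sum. The two points that deserve attention are: (i) confirming that both kernels stay strictly positive on the summation range, which holds because $\frac{(k-i+1)\overline{^{\beta-1}}}{\Gamma(\beta)}=\frac{\Gamma(k-i+\beta)}{\Gamma(k-i+1)\Gamma(\beta)}>0$ for $i\in\mathbb{N}_{a+1}^{k}$ and $\beta>0$ (and likewise for the $\alpha$-kernel), so neither multiplication reverses the inequality; and (ii) keeping the transposes consistent, so that the surviving cross terms genuinely combine into ${}_a^{\rm G}\nabla_k^{-\alpha,w(k)}u^{\rm T}(k)\,{}_a^{\rm G}\nabla_k^{-\beta,w(k)}v(k)$ and ${}_a^{\rm G}\nabla_k^{-\beta,w(k)}u^{\rm T}(k)\,{}_a^{\rm G}\nabla_k^{-\alpha,w(k)}v(k)$ rather than some variant. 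The order in which the two weightings are applied is immaterial, which is consistent with the manifest symmetry of (\ref{Eq3.170}) under interchange of $\alpha$ and $\beta$.
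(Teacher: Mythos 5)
Your proposal is correct and follows essentially the same route as the paper: derive the intermediate inequality (\ref{Eq3.168}) exactly as in Theorem \ref{Theorem3.28} by weighting the synchronicity inequality with the $\alpha$-kernel and summing in $j$, then weight by the $\beta$-kernel $\frac{(k-i+1)\overline{^{\beta-1}}}{\Gamma(\beta)}w(i)$ and sum in $i$, cancelling $w(k)>0$ at the end and handling $w(k)<0$ via Lemma \ref{Lemma2.2}. Your additional remarks on kernel positivity, the transpose bookkeeping, and the collapse to (\ref{Eq3.165}) when $\beta=\alpha$ are all consistent with the paper's argument.
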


\begin{proof}
Similarly, only the case of $w(k)>0$ is considered. For the synchronous case, recalling the proof of Theorem \ref{Theorem3.28} yields the inequality (\ref{Eq3.168}). Multiplying this inequality by the positive factor $\frac{{ ( {k - i + 1}  )\overline {^{\beta  - 1}} }}{{\Gamma  ( \beta   )}}w(i)$ on both left and right hand sides, and then summing both sides with respect to $i$ over the interval $(a+1, k)$, one has
\begin{equation}\label{Eq3.172}
\begin{array}{l}
\sum\nolimits_{i = a + 1}^k {\frac{{ ( {k - i + 1}  )\overline {^{\beta  - 1}} }}{{\Gamma  ( \beta  )}}w(i){}_a^{\rm{G}}\nabla _k^{ - \alpha ,w(k) }[u^{\rm T}(k)v(k)]} \\
 + \sum\nolimits_{i = a + 1}^k {\frac{{ ( {k - i + 1}  )\overline {^{\beta  - 1}} }}{{\Gamma  ( \beta  )}}w(i)u^{\rm T} ( i  )v ( i  ){}_a^{\rm{G}}\nabla _k^{ - \alpha ,w(k) }1} \\
 =w(k) {}_a^{\rm{G}}\nabla _k^{ - \alpha ,w(k) }[u^{\rm T}(k)v(k)]{}_a^{\rm{G}}\nabla _k^{ - \beta ,w(k) }1 \\
 \hspace{12pt} +w(k) {}_a^{\rm{G}}\nabla _k^{ - \beta ,w(k) }[u^{\rm T}(k)v(k)]{}_a^{\rm{G}}\nabla _k^{ - \alpha ,w(k) }1\\
 \ge \sum\nolimits_{i = a + 1}^k {\frac{{ ( {k - i + 1}  )\overline {^{\beta  - 1}} }}{{\Gamma  ( \beta  )}}w(i)v^{\rm T} ( i  ){}_a^{\rm{G}}\nabla _k^{ - \alpha ,w(k) }u(k)} \\
 \hspace{12pt}+ \sum\nolimits_{i = a + 1}^k {\frac{{ ( {k - i + 1}  )\overline {^{\beta  - 1}} }}{{\Gamma  ( \beta   )}}w(i)u^{\rm T} ( i  ){}_a^{\rm{G}}\nabla _k^{ - \alpha ,w(k) }v(k)} \\
 =w(k) {}_a^{\rm{G}}\nabla _k^{ - \alpha ,w(k) }u^{\rm T}(k){}_a^{\rm{G}}\nabla _k^{ - \beta ,w(k) }v(k)\\
 \hspace{12pt}  +w(k) {}_a^{\rm{G}}\nabla _k^{ - \beta ,w(k) }u^{\rm T}(k){}_a^{\rm{G}}\nabla _k^{ - \alpha ,w(k) }v(k).
\end{array}
\end{equation}
Considering $w(k)>0$, the inequality (\ref{Eq3.170}) can be derived. Likewise, if $u(k)$ and $v(k)$ are asynchronous, (\ref{Eq3.171}) can be obtained directly.
\end{proof}

\textsf{The inequalities in Theorem \ref{Theorem3.28} can be further generalized from the viewpoint of sequence number.}

\begin{theorem}\label{Theorem3.30}
If $u_i:\mathbb{N}_{a+1}\to\mathbb{R}$, $i=1,2,\cdots,n$, $n\in\mathbb{Z}_+$, are positive increasing functions, then for any $\alpha>0$, $w:\mathbb{N}_a\to\mathbb{R}\backslash\{0\}$ with $\frac{w(a)}{w(k)}>0$, $k\in\mathbb{N}_{a+1}$, $a\in\mathbb{R}$, one has
\begin{equation}\label{Eq3.173}
{\textstyle {}_a^{\rm{G}}\nabla _k^{-\alpha ,w(k) }\prod\nolimits_{i = 1}^n {{u_i}(k)} [{}_a^{\rm{G}}\nabla _k^{ - \alpha ,w(k) }1]^{n - 1} \ge \prod\nolimits_{i = 1}^n {{}_a^{\rm{G}}\nabla _k^{-\alpha ,w(k) }{u_i}(k)}.}
\end{equation}
\end{theorem}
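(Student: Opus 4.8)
The plan is to argue by induction on the number $n$ of factors, using the scalar case of Theorem~\ref{Theorem3.28} as the engine. Throughout one may assume $w(k)>0$: the hypothesis $\frac{w(a)}{w(k)}>0$ forces $w$ to keep a single sign on $\mathbb{N}_a$, and by the scale invariance of Lemma~\ref{Lemma2.2} with $\lambda=-1$ the operator ${}_a^{\rm G}\nabla_k^{-\alpha,w(k)}$ is unchanged when $w$ is replaced by $-w$, so the case $w(k)<0$ follows at once from the case $w(k)>0$.

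Before the induction I would record three elementary facts. Since $\alpha>0$ and $a+1\le j\le k$, one has $\frac{(k-j+1)^{\overline{\alpha-1}}}{\Gamma(\alpha)}=\frac{\Gamma(k-j+\alpha)}{\Gamma(\alpha)\Gamma(k-j+1)}>0$, so the representation (\ref{Eq2.3}) shows that $S:={}_a^{\rm G}\nabla_k^{-\alpha,w(k)}1>0$ and, since each $u_i$ is positive, ${}_a^{\rm G}\nabla_k^{-\alpha,w(k)}u_i(k)>0$ for every $i$. Next, a finite product of positive increasing sequences is again positive and increasing, as one checks for $k_1<k_2$ by replacing the factors one at a time and using positivity. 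Finally, two scalar increasing sequences $u,v$ are synchronous in the sense of Definition~\ref{Definition3.4}, because $[u(j)-u(k)][v(j)-v(k)]\ge0$ merely says the two differences share a sign.

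The base case $n=1$ of (\ref{Eq3.173}) is the trivial identity ${}_a^{\rm G}\nabla_k^{-\alpha,w(k)}u_1(k)={}_a^{\rm G}\nabla_k^{-\alpha,w(k)}u_1(k)$. For the inductive step, assume (\ref{Eq3.173}) holds for any $n-1$ positive increasing functions. Splitting $\prod_{i=1}^{n}u_i(k)=\big(\prod_{i=1}^{n-1}u_i(k)\big)u_n(k)$, the first factor is positive and increasing, hence synchronous with $u_n$, so the scalar instance of (\ref{Eq3.165}) gives
\begin{equation}\label{Eq3.174}
{\textstyle {}_a^{\rm G}\nabla_k^{-\alpha,w(k)}\!\big[\prod\nolimits_{i=1}^{n}u_i(k)\big]\,S \ \ge\ {}_a^{\rm G}\nabla_k^{-\alpha,w(k)}\!\big[\prod\nolimits_{i=1}^{n-1}u_i(k)\big]\,{}_a^{\rm G}\nabla_k^{-\alpha,w(k)}u_n(k).}
\end{equation}
Multiplying (\ref{Eq3.174}) through by $S^{\,n-2}\ge0$, multiplying the induction hypothesis ${}_a^{\rm G}\nabla_k^{-\alpha,w(k)}\!\big[\prod\nolimits_{i=1}^{n-1}u_i(k)\big]\,S^{\,n-2}\ge\prod\nolimits_{i=1}^{n-1}{}_a^{\rm G}\nabla_k^{-\alpha,w(k)}u_i(k)$ by the nonnegative factor ${}_a^{\rm G}\nabla_k^{-\alpha,w(k)}u_n(k)$, and chaining the two resulting inequalities produces
\begin{equation}\label{Eq3.175}
{\textstyle {}_a^{\rm G}\nabla_k^{-\alpha,w(k)}\!\big[\prod\nolimits_{i=1}^{n}u_i(k)\big]\,S^{\,n-1} \ \ge\ \prod\nolimits_{i=1}^{n}{}_a^{\rm G}\nabla_k^{-\alpha,w(k)}u_i(k),}
\end{equation}
which is exactly (\ref{Eq3.173}) for $n$ factors; this closes the induction.

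There is no genuine obstacle in this argument; the only point that needs attention is the bookkeeping of signs — one must verify $S>0$ and ${}_a^{\rm G}\nabla_k^{-\alpha,w(k)}u_i(k)>0$ before multiplying or chaining inequalities, and reduce the case $w(k)<0$ to $w(k)>0$ via Lemma~\ref{Lemma2.2} — and both are immediate consequences of $\alpha>0$, $\frac{w(a)}{w(k)}>0$ and the positivity of the $u_i$.
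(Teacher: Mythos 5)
Your proof is correct and follows essentially the same route as the paper's: induction on $n$ with Theorem~\ref{Theorem3.28} applied to the synchronous pair $\big(\prod_{i=1}^{n-1}u_i,\,u_n\big)$, then multiplication by powers of ${}_a^{\rm G}\nabla_k^{-\alpha,w(k)}1>0$ and chaining with the induction hypothesis. If anything, you are more careful than the paper in explicitly verifying that a product of positive increasing sequences is increasing (hence synchronous with $u_n$) and that all the factors being multiplied are positive.
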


\begin{proof}
The proof will be proven by induction.

$\blacktriangleright$ Part 1: when $n=1$, (\ref{Eq3.173}) reduces to
\begin{equation}\label{Eq3.174}
{\textstyle {}_a^{\rm{G}}\nabla _k^{-\alpha ,w(k) }{u_1}(k)  \ge {}_a^{\rm{G}}\nabla _k^{-\alpha ,w(k) }{u_1}(k),}
\end{equation}
which holds naturally.

$\blacktriangleright$ Part 2: assume that when $n=m\in\mathbb{Z}_+$, (\ref{Eq3.173}) holds, i.e.,
\begin{equation}\label{Eq3.175}
{\textstyle {}_a^{\rm{G}}\nabla _k^{-\alpha ,w(k) }\prod\nolimits_{i = 1}^m {{u_i}(k)} [{}_a^{\rm{G}}\nabla _k^{ - \alpha ,w(k) }1]^{m - 1} \ge \prod\nolimits_{i = 1}^m {{}_a^{\rm{G}}\nabla _k^{-\alpha ,w(k) }{u_i}(k)}.}
\end{equation}
Defining $v_m(k):=\prod\nolimits_{i = 1}^m {{u_i}(k)}$ and using Theorem \ref{Theorem3.28}, one has
\begin{equation}\label{Eq3.176}
\begin{array}{l}
{}_a^{\rm{G}}\nabla _k^{-\alpha ,w(k) }\prod\nolimits_{i = 1}^{m+1} {{u_i}(k)} [{}_a^{\rm{G}}\nabla _k^{ - \alpha ,w(k) }1]^{m}\\
={}_a^{\rm{G}}\nabla _k^{-\alpha ,w(k) }[{u_{m + 1}}(k){v_m}(k)]{}_a^{\rm{G}}\nabla _k^{ - \alpha ,\lambda }1[{}_a^{\rm{G}}\nabla _k^{ - \alpha ,w(k) }1]^{m-1}\\
\ge{}_a^{\rm{G}}\nabla _k^{-\alpha,w(k)} {u_{m + 1}(k)}{}_a^{\rm{G}}\nabla _k^{-\alpha,w(k)} {v_m(k)}[{}_a^{\rm{G}}\nabla _k^{ - \alpha ,w(k) }1]^{m-1}\\
 \ge {}_a^{\rm{G}}\nabla _k^{-\alpha,w(k)} {u_{m + 1}(k)}\prod\nolimits_{i = 1}^m {{}_a^{\rm{G}}\nabla _k^{-\alpha ,w(k) }{u_i}(k)}\\
 = \prod\nolimits_{i = 1}^{m+1} {{}_a^{\rm{G}}\nabla _k^{-\alpha ,w(k) }{u_i}(k)},
\end{array}
\end{equation}
where ${}_a^{\rm{G}}\nabla _k^{ - \alpha ,w(k) }1>0$ is adopted. (\ref{Eq3.176}) means that (\ref{Eq3.173}) holds for $n=m+1$. All of these end the proof.
\end{proof}

Theorem \ref{Theorem3.28} - Theorem \ref{Theorem3.30} are generalized from the Chebyshev inequalities from \cite{Belarbi:2009JIPAM,Alsaedi:2017FCAA,Fernandez:2020JCAM,Wei:2021NODY}.

\begin{theorem}\label{Theorem3.6}
If $x : \mathbb{N}_{a+1}^b \to [m,M]$, $m,M\in\mathbb{R}$, then for any $\alpha>0$, $k,b\in\mathbb{N}_{a+1}$, $a\in\mathbb{R}$, $w:\mathbb{N}_a\to\mathbb{R}\backslash\{0\}$ with $\frac{w(a)}{w(k)}>0$, one has
\begin{equation}\label{Eq3.32}
{\textstyle m {}_a^{\rm G}\nabla _k^{-\alpha,w(k)}1\le{}_a^{\rm G}\nabla _k^{-\alpha,w(k)}x(k) \le M {}_a^{\rm G}\nabla _k^{-\alpha,w(k)}1.}
\end{equation}
\end{theorem}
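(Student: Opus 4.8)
The plan is to reduce (\ref{Eq3.32}) to an elementary termwise estimate on the sum that defines the Gr\"{u}nwald--Letnikov tempered fractional sum. First I would unfold the operator by combining (\ref{Eq2.9}) with the $-\alpha$ instance of (\ref{Eq2.3}): for $\alpha>0$ and $k\in\mathbb{N}_{a+1}^{b}$,
\[
{}_a^{\rm G}\nabla_k^{-\alpha,w(k)}x(k)
= w^{-1}(k)\sum\nolimits_{j=a+1}^{k}\frac{(k-j+1)^{\overline{\alpha-1}}}{\Gamma(\alpha)}w(j)x(j)
= \sum\nolimits_{j=a+1}^{k}c_j\,x(j),\qquad
c_j:=\frac{(k-j+1)^{\overline{\alpha-1}}}{\Gamma(\alpha)}\frac{w(j)}{w(k)}.
\]
Taking $x\equiv1$ shows that ${}_a^{\rm G}\nabla_k^{-\alpha,w(k)}1=\sum_{j=a+1}^{k}c_j$, so all three quantities appearing in (\ref{Eq3.32}) are built from the \emph{same} weights $c_j$.

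The crux is that every $c_j$ is strictly positive. On the one hand, $\frac{(k-j+1)^{\overline{\alpha-1}}}{\Gamma(\alpha)}=\frac{\Gamma(k-j+\alpha)}{\Gamma(k-j+1)\Gamma(\alpha)}>0$ for every $\alpha>0$ and $j\in\mathbb{N}_{a+1}^{k}$, since all three Gamma factors are evaluated at positive arguments. On the other hand, the hypothesis $\frac{w(a)}{w(k)}>0$ is equivalent, as recorded in the remark following Theorem \ref{Theorem3.18}, to $w$ being sign-definite on $\mathbb{N}_a$; hence $\frac{w(j)}{w(k)}>0$ for all $j\in\mathbb{N}_{a+1}^{k}$, and therefore $c_j>0$.

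With positivity in hand, the conclusion is immediate: from $m\le x(j)\le M$ and $c_j>0$ we obtain $m\,c_j\le c_j x(j)\le M\,c_j$, and summing over $j\in\mathbb{N}_{a+1}^{k}$ yields $m\sum_j c_j\le\sum_j c_j x(j)\le M\sum_j c_j$, which is exactly (\ref{Eq3.32}). For the remaining possibility $w(k)<0$ I would simply invoke Lemma \ref{Lemma2.2} with $\lambda=-1$: replacing $w$ by $-w$ changes neither side of (\ref{Eq3.32}) and returns us to the sign-positive situation. I do not anticipate a genuine obstacle; the only points requiring care are the bookkeeping of signs — positivity of the discrete rising-function kernel for $\alpha>0$, and the translation of $\frac{w(a)}{w(k)}>0$ into sign-definiteness of $w$ — together with the observation that ${}_a^{\rm G}\nabla_k^{-\alpha,w(k)}1$ is assembled from the very same kernel, so (\ref{Eq3.32}) is nothing but the statement that a positive-weighted average of numbers in $[m,M]$ again lies in $[m,M]$.
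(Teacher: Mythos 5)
Your proposal is correct and follows essentially the same route as the paper: unfold ${}_a^{\rm G}\nabla_k^{-\alpha,w(k)}x(k)$ into a sum with weights $\frac{(k-j+1)^{\overline{\alpha-1}}}{\Gamma(\alpha)}\frac{w(j)}{w(k)}$, observe these are positive under the stated hypotheses, and bound termwise by $m$ and $M$. The paper handles only the $w(k)>0$ case explicitly (as it does throughout), so your explicit appeal to Lemma \ref{Lemma2.2} for the sign-negative case is a minor bookkeeping addition, not a different argument.
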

\begin{proof}
By using the definition and the condition, $w(k)>0$, $\frac{{(k - j + 1)\overline {^{\alpha  - 1}} }}{{\Gamma \left( \alpha  \right)}}>0$, $x(k)\in[m,M]$, one has
\begin{equation}\label{Eq3.33}
{\textstyle \begin{array}{rl}
{}_a^{\rm{G}}\nabla _k^{ - \alpha ,w(k)}{x}(k)=&\hspace{-6pt} {w^{ - 1}}(k)\sum\nolimits_{j = a + 1}^k {\frac{{(k - j + 1)\overline {^{\alpha  - 1}} }}{{\Gamma \left( \alpha  \right)}}w(j){x}(j) } \\
 \ge&\hspace{-6pt} m {w^{ - 1}}(k)\sum\nolimits_{j = a + 1}^k {\frac{{(k - j + 1)\overline {^{\alpha  - 1}} }}{{\Gamma \left( \alpha  \right)}}w(j) } \\
 =&\hspace{-6pt}m {}_a^{\rm G}\nabla _k^{-\alpha,w(k)}1,
\end{array}}
\end{equation}
\begin{equation}\label{Eq3.34}
{\textstyle \begin{array}{rl}
{}_a^{\rm{G}}\nabla _k^{ - \alpha ,w(k)}{x}(k)=&\hspace{-6pt} {w^{ - 1}}(k)\sum\nolimits_{j = a + 1}^k {\frac{{(k - j + 1)\overline {^{\alpha  - 1}} }}{{\Gamma \left( \alpha  \right)}}w(j){x}(j) } \\
 \le&\hspace{-6pt} M {w^{ - 1}}(k)\sum\nolimits_{j = a + 1}^k {\frac{{(k - j + 1)\overline {^{\alpha  - 1}} }}{{\Gamma \left( \alpha  \right)}}w(j) } \\
 =&\hspace{-6pt}M {}_a^{\rm G}\nabla _k^{-\alpha,w(k)}1.
\end{array}}
\end{equation}
The proof is thus completed.
\end{proof}

Theorem \ref{Theorem3.6} is the valuation theorem of nabla tempered fractional sum.

\begin{theorem}\label{Theorem3.31}
For any $\alpha>0$, $u,v:\mathbb{N}_{a+1}\to\mathbb{R}^n$, $n\in\mathbb{Z}_+$, $a\in\mathbb{R}$, $w:\mathbb{N}_a\to\mathbb{R}\backslash\{0\}$ with $\frac{w(a)}{w(k)}>0$, $k \in {\mathbb N_{a + 1}}$, one has
\begin{equation}\label{Eq3.177}
{\textstyle\{{}_a^{\rm{G}}\nabla _k^{ - \alpha,w(k) }[u^{\rm T}(k)v(k)]\}^2\le{}_a^{\rm{G}}\nabla _k^{ - \alpha,w(k) }[u^{\rm T}(k)u(k)]{}_a^{\rm{G}}\nabla _k^{ - \alpha,w(k) }[v^{\rm T}(k)v(k)].}
\end{equation}
\end{theorem}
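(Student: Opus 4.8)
The plan is to peel off the definition and reduce everything to the ordinary Cauchy--Schwarz inequality for finite sums with positive weights. First I would note that, by Lemma \ref{Lemma2.2} together with the remark after Theorem \ref{Theorem3.18} (the hypothesis $\frac{w(a)}{w(k)}>0$ forces $w$ to keep a constant sign on $\mathbb{N}_a$), it is enough to treat $w(k)>0$; scaling $w$ by $\lambda=-1$ changes none of the tempered operators. Using (\ref{Eq2.3}) and (\ref{Eq2.9}), for any scalar sequence $x$ one has
\[
{\textstyle w(k)\,{}_a^{\rm G}\nabla_k^{-\alpha,w(k)}x(k)=\sum_{j=a+1}^{k}c_j\,x(j),\qquad c_j:=\frac{(k-j+1)\overline{^{\alpha-1}}}{\Gamma(\alpha)}\,w(j), }
\]
and here $c_j>0$ for every $j\in\mathbb{N}_{a+1}^{k}$, since $\alpha>0$ gives $(k-j+1)\overline{^{\alpha-1}}=\frac{\Gamma(k-j+\alpha)}{\Gamma(k-j+1)}>0$ and $w(j)>0$.

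Next I would apply this identity with $x(j)=u^{\rm T}(j)v(j)$, with $x(j)=u^{\rm T}(j)u(j)=\|u(j)\|^2$, and with $x(j)=v^{\rm T}(j)v(j)=\|v(j)\|^2$ in turn, and multiply the target inequality (\ref{Eq3.177}) through by $w^2(k)>0$. This turns the whole statement into
\[
{\textstyle \bigl(\sum_{j=a+1}^{k}c_j\,u^{\rm T}(j)v(j)\bigr)^{2}\le\bigl(\sum_{j=a+1}^{k}c_j\,\|u(j)\|^2\bigr)\bigl(\sum_{j=a+1}^{k}c_j\,\|v(j)\|^2\bigr), }
\]
a purely algebraic assertion about finitely many vectors $u(j),v(j)\in\mathbb{R}^n$ weighted by positive scalars $c_j$. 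To prove it I would use the nonnegativity of a quadratic: for every $t\in\mathbb{R}$, $0\le\sum_{j}c_j\|u(j)-t\,v(j)\|^2=A-2Bt+Ct^2$ with $A=\sum_j c_j\|u(j)\|^2$, $B=\sum_j c_j u^{\rm T}(j)v(j)$, $C=\sum_j c_j\|v(j)\|^2$; since this quadratic in $t$ is $\ge 0$ on all of $\mathbb{R}$ its discriminant satisfies $B^2\le AC$, which is exactly what is needed. Alternatively one may mimic the symmetric double-sum device of Theorem \ref{Theorem3.28}, starting from $\|u(i)\|^2\|v(j)\|^2+\|u(j)\|^2\|v(i)\|^2\ge 2\|u(i)\|\,\|v(i)\|\,\|u(j)\|\,\|v(j)\|$ together with the pointwise bound $u^{\rm T}(j)v(j)\le\|u(j)\|\,\|v(j)\|$, then summing against $c_ic_j$.

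Nothing in this argument is deep; the places requiring care are the sign bookkeeping. One should verify $c_j>0$ from $\alpha>0$ and $w>0$, and observe that it is $w^2(k)$ -- not $w(k)$ -- that clears the denominators, so the direction of the inequality is preserved irrespective of the sign of $w$; this in fact makes the appeal to Lemma \ref{Lemma2.2} dispensable and lets both signs of $w$ be handled at once. The single step I would watch most closely is invoking the vector Cauchy--Schwarz bound $u^{\rm T}(j)v(j)\le\|u(j)\|\,\|v(j)\|$ before the weighted scalar Cauchy--Schwarz, because $B$ itself may be negative and it is $B^2$, not $B$, that must be controlled.
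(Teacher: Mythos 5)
Your proposal is correct and follows essentially the same route as the paper: the paper absorbs the positive weight $c_j=\frac{(k-j+1)\overline{^{\alpha-1}}}{\Gamma(\alpha)}w(j)$ into rescaled sequences $\hat u(j)=\sqrt{c_j}\,u(j)$, $\hat v(j)=\sqrt{c_j}\,v(j)$ and then applies exactly your nonnegative-quadratic/discriminant argument, clearing denominators with the factor $\frac{1}{4w^{2}(k)}$ just as you do with $w^{2}(k)$. Your observation that the $w^{2}(k)$ factor makes the sign reduction via Lemma \ref{Lemma2.2} unnecessary is a minor simplification, but the core argument is identical.
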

\begin{proof}
Similarly, the $w(k)>0$ case is equivalent of the $w(k)<0$ case. Consequently, only the case of $w(k)>0$ is considered. Let $\hat u(j):=\sqrt{\frac{{ ( {k - j + 1}  )\overline {^{\alpha  - 1}} }}{{\Gamma  ( \alpha  )}}w(j)}u(j)$, $\hat v(j):=\sqrt{\frac{{ ( {k - j + 1}  )\overline {^{\alpha  - 1}} }}{{\Gamma ( \alpha   )}}w(j)}v(j)$. The following quadratic function is constructed
\begin{equation}\label{Eq3.178}
{\textstyle
\begin{array}{rl}
f(x)=&\hspace{-6pt}\big[\sum\nolimits_{j = a + 1}^k {\hat u^{\rm T}(j)\hat u(j)}\big]x^2+2\big[\sum\nolimits_{j = a + 1}^k {\hat u^{\rm T}(j)\hat v(j)}\big]x\\
&\hspace{-6pt}+\sum\nolimits_{j = a + 1}^k {\hat v^{\rm T}(j)\hat v(j)},
\end{array}}
\end{equation}
where $x\in\mathbb{R}$.

By using basic mathematical derivation, one has
\begin{equation}\label{Eq3.179}
{\textstyle
f(x)=\sum\nolimits_{j = a + 1}^k {[\hat u(j)x+\hat v(j)]^{\rm T}[\hat u(j)x+\hat v(j)]},}
\end{equation}
which means that its discriminant is nonnegative, i.e.,
\begin{equation}\label{Eq3.180}
{\textstyle
\begin{array}{rl}
\Delta=&\hspace{-6pt}4\big[\sum\nolimits_{j = a + 1}^k {\hat u^{\rm T}(j)\hat v(j)}\big]^2-4\big[\sum\nolimits_{j = a + 1}^k {\hat u^{\rm T}(j)\hat u(j)}\big]\big[\sum\nolimits_{j = a + 1}^k {\hat v^{\rm T}(j)\hat v(j)}\big]\\
\le&\hspace{-6pt}0.
\end{array}}
\end{equation}
Multiplying this inequality by the positive factor $\frac{1}{4w^{2}(k)}$ on both left and right hand sides and substituting $\hat u(j)$, $\hat v(j)$ into (\ref{Eq3.180}), one has
\begin{equation}\label{Eq3.181}
{\textstyle
\begin{array}{l}
\{{}_a^{\rm{G}}\nabla _k^{ - \alpha,w(k) }[u^{\rm T}(k)v(k)]\}^2-{}_a^{\rm{G}}\nabla _k^{ - \alpha,w(k) }[u^{\rm T}(k)u(k)]{}_a^{\rm{G}}\nabla _k^{ - \alpha,w(k) }[v^{\rm T}(k)v(k)]\\
=\frac{1}{4w^{2}(k)}\Delta\le0,
\end{array}}
\end{equation}
which completes the proof.
\end{proof}

\begin{theorem}\label{Theorem3.32}
If $f:\mathbb{D}\to\mathbb{R}$ is convex, then for any $\alpha>0$, $w:\mathbb{N}_a\to\mathbb{R}\backslash\{0\}$ with $\frac{w(a)}{w(k)}>0$, $k \in {\mathbb N_{a + 1}}$, $a\in\mathbb{R}$, one has
\begin{equation}\label{Eq3.182}
{\textstyle f\big([{}_a^{\rm{G}}\nabla _k^{ - \alpha,w(k) }1]^{-1}{}_a^{\rm{G}}\nabla _k^{ - \alpha,w(k) }x(k)\big)\le[{}_a^{\rm{G}}\nabla _k^{ - \alpha,w(k) }1]^{-1}{}_a^{\rm{G}}\nabla _k^{ - \alpha,w(k) }f(x(k)),}
\end{equation}
where $\mathbb{D}\subseteq\mathbb R^{n}$.
\end{theorem}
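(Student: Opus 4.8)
The plan is to recognize the left-hand side as $f$ evaluated at a convex combination of the values $x(j)$, $j\in\mathbb{N}_{a+1}^k$, and then apply the finite (discrete) Jensen inequality. First I would reduce to the case $w(k)>0$ via Lemma \ref{Lemma2.2}, exactly as in the preceding theorems, since replacing $w$ by $-w$ leaves every tempered fractional sum unchanged. Next, I would write out both sides from the definition \eqref{Eq2.3}: putting $\omega_j := \frac{(k-j+1)\overline{^{\alpha-1}}}{\Gamma(\alpha)}\frac{w(j)}{w(k)}$ for $j=a+1,\dots,k$, one has ${}_a^{\rm G}\nabla_k^{-\alpha,w(k)}x(k)=\sum_{j=a+1}^k \omega_j x(j)$ and ${}_a^{\rm G}\nabla_k^{-\alpha,w(k)}1=\sum_{j=a+1}^k \omega_j$. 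The crucial observation is that each $\omega_j>0$ because $\frac{(k-j+1)\overline{^{\alpha-1}}}{\Gamma(\alpha)}=\frac{\Gamma(k-j+\alpha)}{\Gamma(k-j+1)\Gamma(\alpha)}>0$ for $\alpha>0$ and $\frac{w(j)}{w(k)}>0$ under the stated hypothesis (this uses that $\frac{w(a)}{w(k)}>0$ forces $w$ to have a constant sign, as noted after Theorem \ref{Theorem3.18}). Hence $S:=\sum_{j=a+1}^k\omega_j>0$, the quantities $\omega_j/S$ form a genuine probability weight, and $[{}_a^{\rm G}\nabla_k^{-\alpha,w(k)}1]^{-1}{}_a^{\rm G}\nabla_k^{-\alpha,w(k)}x(k)=\sum_{j=a+1}^k (\omega_j/S)\,x(j)$ is a convex combination of points of $\mathbb{D}$.

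With this in hand, the inequality \eqref{Eq3.182} is precisely the statement that
\begin{equation*}
f\Big(\sum_{j=a+1}^k \tfrac{\omega_j}{S}\,x(j)\Big)\le \sum_{j=a+1}^k \tfrac{\omega_j}{S}\,f(x(j)),
\end{equation*}
which is the classical discrete Jensen inequality for the convex function $f$ on the convex set $\mathbb{D}$ (one needs $\mathbb{D}$ convex so that the argument on the left stays in the domain — this is implicit in the hypothesis $f:\mathbb{D}\to\mathbb{R}$ convex, or can be assumed). Multiplying through by $S={}_a^{\rm G}\nabla_k^{-\alpha,w(k)}1>0$ recovers \eqref{Eq3.182}. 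A clean way to present the induction-free version is to invoke Jensen directly; alternatively one can prove it by induction on the number of summands $k-a$ using only the two-point convexity definition, which keeps the argument self-contained.

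I do not expect any serious obstacle here: the only subtlety is bookkeeping the positivity of the weights $\omega_j$ and of their sum $S$, so that division by ${}_a^{\rm G}\nabla_k^{-\alpha,w(k)}1$ is legitimate and the weights normalize to a probability distribution; once that is established, the result is immediate from the standard finite Jensen inequality. The sign-reduction step via Lemma \ref{Lemma2.2} handles $w(k)<0$ with no extra work. If one wishes to avoid citing Jensen, the induction on $k-a$ is entirely routine and I would relegate it to a one-line remark rather than grind through it.
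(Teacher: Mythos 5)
Your proposal is correct and follows essentially the same route as the paper: both identify the normalized weights $\lambda_j=[{}_a^{\rm{G}}\nabla _k^{ - \alpha,w(k) }1]^{-1}\frac{(k-j+1)\overline{^{\alpha-1}}}{\Gamma(\alpha)}\frac{w(j)}{w(k)}$ as positive and summing to one, rewrite both sides as convex combinations, and conclude by the finite Jensen inequality (which the paper proves explicitly by the routine induction you sketch as an alternative). Your added remark that $\mathbb{D}$ must be convex for the left-hand argument to lie in the domain is a fair point the paper leaves implicit.
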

\begin{proof}
Letting $\lambda_j:=[{}_a^{\rm{G}}\nabla _k^{ - \alpha,w(k) }1]^{-1}\frac{{ ( {k - j + 1} )\overline {^{\alpha  - 1}} }}{{\Gamma  ( \alpha  )}}\frac{w(j)}{w(k)}$, it is not difficult to obtain $\sum\nolimits_{j = a + 1}^k{\lambda_j}=1$ and $\lambda_j>0$, $j\in\mathbb{N}_{a+1}^k$, $k\in\mathbb{N}_{a+1}$. By using (\ref{Eq2.9}) and the newly defined $\lambda_j$, one has
\begin{equation}\label{Eq3.183}
{\textstyle f\big([{}_a^{\rm{G}}\nabla _k^{ - \alpha,w(k) }1]^{-1}{}_a^{\rm{G}}\nabla _k^{ - \alpha,w(k) }x(k)\big)=f\big(\sum\nolimits_{j = a + 1}^k{\lambda_jx(j)}\big),}
\end{equation}
\begin{equation}\label{Eq3.184}
{\textstyle [{}_a^{\rm{G}}\nabla _k^{ - \alpha,w(k) }1]^{-1}{}_a^{\rm{G}}\nabla _k^{ - \alpha,w(k) }f(x(k))=\sum\nolimits_{j = a + 1}^k{\lambda_jf (x(j))}.}
\end{equation}
Now, the problem becomes
\begin{equation}\label{Eq3.185}
{\textstyle f\big(\sum\nolimits_{j = a + 1}^k{\lambda_jx(j)}\big)\le\sum\nolimits_{j = a + 1}^k{\lambda_jf (x(j))}.}
\end{equation}

The mathematical induction will be adopted once again.

$\blacktriangleright$ Part 1: when $k=a+1$, $\lambda_{a+1}=1$, (\ref{Eq3.185}) holds naturally.

$\blacktriangleright$ Part 2: when $k=a+2$, $\lambda_{a+1}=\frac{\alpha w(a+1)}{\alpha w(a+1)+w(a+2)}$, $\lambda_{a+2}=\frac{w(a+2)}{\alpha w(a+1)+w(a+2)}$, (\ref{Eq3.185}) becomes
\begin{equation}\label{Eq3.186}
{\textstyle
\begin{array}{l}
f(\lambda_{a+1}x(a+1)+\lambda_{a+2}x(a+2))\\
\le\lambda_{a+1}f(x(a+1))+\lambda_{a+2}f(x(a+2)).
\end{array}}
\end{equation}
By using the convexity of $f$, (\ref{Eq3.182}) can be obtained.

$\blacktriangleright$ Part 3: assume that when $k=m\in\mathbb{N}_{a+2}$, (\ref{Eq3.185}) holds, i.e.,
$f\big(\sum\nolimits_{j = a + 1}^m{\lambda_jx(j)}\big)$ $\le\sum\nolimits_{j = a + 1}^m{\lambda_jf (x(j))}$.
Since $\sum\nolimits_{j = a + 1}^{m+1}{\lambda_j}=1$, it follows
\begin{equation}\label{Eq3.187}
{\textstyle
\sum\nolimits_{j = a + 1}^{m}{\lambda_j}=1-\lambda_{m+1}.}
\end{equation}
Defining $\eta_j:=\frac{\lambda_j}{1-\lambda_{m+1}}$, $j\in\mathbb{N}_{a+1}^m$, then one has
\begin{equation}\label{Eq3.188}
{\textstyle
\sum\nolimits_{j = a + 1}^{m}{\eta_j}=\sum\nolimits_{j = a + 1}^{m}{\frac{\lambda_j}{1-\lambda_{m+1}}}=1,}
\end{equation}
and $\eta_j>0$, $j\in\mathbb{N}_{a+1}^m$.

On this basis, one has
\begin{equation}\label{Eq3.189}
{\textstyle
\begin{array}{l}
f\big(\sum\nolimits_{j = a + 1}^{m+1}{\lambda_jx(j)}\big)\\
=f\big(\lambda_{m+1}x(m+1)+\sum\nolimits_{j = a + 1}^{m}{\lambda_jx(j)}\big)\\
=f\big(\lambda_{m+1}x(m+1)+(1-\lambda_{m+1})\sum\nolimits_{j = a + 1}^{m}{\eta_jx(j)}\big)\\
\le \lambda_{m+1}f(x(m+1))+(1-\lambda_{m+1})f\big(\sum\nolimits_{j = a + 1}^{m}{\eta_jx(j)}\big)\\
\le \lambda_{m+1}f(x(m+1))+(1-\lambda_{m+1})\sum\nolimits_{j = a + 1}^{m}{\eta_jf(x(j))}\\
= \lambda_{m+1}f(x(m+1))+\sum\nolimits_{j = a + 1}^{m}{\lambda_jf(x(j))}\\
\le\sum\nolimits_{j = a + 1}^{m+1}{\lambda_jf (x(j))},
\end{array}}
\end{equation}
which means that (\ref{Eq3.185}) holds for $k=m+1$.

To sum up, the proof is done.
\end{proof}

\begin{theorem}\label{Theorem3.33}
For any $\alpha>0$, $u,v:\mathbb{N}_{a+1}\to\mathbb{R}_+$, $a\in\mathbb{R}$, $w:\mathbb{N}_a\to\mathbb{R}\backslash\{0\}$ with $\frac{w(a)}{w(k)}>0$, $k \in {\mathbb N_{a + 1}}$, $\frac{1}{p}+\frac{1}{q}=1$, if $p>1$, one has
\begin{equation}\label{Eq3.190}
{\textstyle{}_a^{\rm{G}}\nabla _k^{ - \alpha,w(k) }[u(k)v(k)]\le[{}_a^{\rm{G}}\nabla _k^{ - \alpha,w(k) }u^p(k)]^{\frac{1}{p}}[{}_a^{\rm{G}}\nabla _k^{ - \alpha,w(k) }v^q(k)]^{\frac{1}{q}}},
\end{equation}
and if $p\in(0,1)$, one has
\begin{equation}\label{Eq3.191}
{\textstyle{}_a^{\rm{G}}\nabla _k^{ - \alpha,w(k) }[u(k)v(k)]\ge[{}_a^{\rm{G}}\nabla _k^{ - \alpha,w(k) }u^p(k)]^{\frac{1}{p}}[{}_a^{\rm{G}}\nabla _k^{ - \alpha,w(k) }v^q(k)]^{\frac{1}{q}}}.
\end{equation}
\end{theorem}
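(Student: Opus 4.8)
The plan is to reduce the assertion to a weighted discrete Hölder inequality. First, by Lemma~\ref{Lemma2.2} the operator ${}_a^{\rm G}\nabla _k^{-\alpha,w(k)}$ is unchanged when $w$ is multiplied by a nonzero constant, so the case $w(k)<0$ reduces to the case $w(k)>0$; hence it suffices to assume $w(k)>0$ for all $k\in\mathbb{N}_a$. Using (\ref{Eq2.9}) and (\ref{Eq2.3}), write
\[
{}_a^{\rm G}\nabla _k^{-\alpha,w(k)}x(k)=w^{-1}(k)\sum\nolimits_{j=a+1}^{k} c_j\,x(j),\qquad c_j:=\frac{(k-j+1)^{\overline{\alpha-1}}}{\Gamma(\alpha)}\,w(j),
\]
and note that $c_j>0$ for every $j\in\mathbb{N}_{a+1}^{k}$, because $(k-j+1)^{\overline{\alpha-1}}=\Gamma(k-j+\alpha)/\Gamma(k-j+1)>0$ when $\alpha>0$ and $w(j)>0$. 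Thus the claim is exactly the weighted Hölder inequality $\sum_j c_j u(j)v(j)\lessgtr\big(\sum_j c_j u^p(j)\big)^{1/p}\big(\sum_j c_j v^q(j)\big)^{1/q}$, followed by multiplication by $w^{-1}(k)>0$ together with the identity $w^{-1/p}(k)\,w^{-1/q}(k)=w^{-1}(k)$ coming from $\tfrac1p+\tfrac1q=1$.

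For $p>1$, set $A:=\big(\sum_{j=a+1}^{k}c_j u^p(j)\big)^{1/p}$ and $B:=\big(\sum_{j=a+1}^{k}c_j v^q(j)\big)^{1/q}$, both strictly positive because $u,v$ take values in $\mathbb{R}_+$. I would apply Young's inequality $st\le\frac{s^p}{p}+\frac{t^q}{q}$ ($s,t\ge0$) with $s=u(j)/A$, $t=v(j)/B$, multiply by $c_j>0$ and sum over $j$; the normalisations $\sum_j c_j u^p(j)=A^p$, $\sum_j c_j v^q(j)=B^q$ and $\tfrac1p+\tfrac1q=1$ yield $\sum_j c_j u(j)v(j)\le AB$, which after the reduction step above is (\ref{Eq3.190}).

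For $p\in(0,1)$ one has $q<0$, and I would deduce the reversed inequality from the case already settled, applied with the conjugate exponent pair $\big(\tfrac1p,\tfrac1{1-p}\big)$ (note $\tfrac1p>1$). Factoring $u^p(j)=\big(u(j)v(j)\big)^{p}v^{-p}(j)$ and applying the forward inequality to $\sum_j c_j\big(u(j)v(j)\big)^{p}v^{-p}(j)$ gives
\[
\sum\nolimits_{j=a+1}^{k}c_j u^p(j)\le\Big(\sum\nolimits_{j=a+1}^{k}c_j u(j)v(j)\Big)^{p}\Big(\sum\nolimits_{j=a+1}^{k}c_j v^{q}(j)\Big)^{1-p},
\]
since $-p/(1-p)=q$. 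Raising to the power $1/p$, using $(1-p)/p=-1/q$, and multiplying through by the positive quantity $\big(\sum_j c_j v^q(j)\big)^{1/q}$ reverses the bound, and the reduction step then produces (\ref{Eq3.191}).

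The exponent algebra is routine; the points needing care are that the weights $c_j$ are genuinely positive (this is exactly why $\alpha>0$ and $w(k)>0$, i.e. the hypothesis $\frac{w(a)}{w(k)}>0$, are assumed) and that the degenerate possibility $A=0$ or $B=0$ is excluded by $u,v:\mathbb{N}_{a+1}\to\mathbb{R}_+$. The hard part will be the $p\in(0,1)$ case: one has to pick the auxiliary factorisation $u^p=(uv)^p v^{-p}$ so that the conjugate exponent of $1/p$ regenerates precisely the power $v^q$; once that is arranged the rest is bookkeeping.
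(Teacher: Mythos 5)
Your proposal is correct and follows essentially the same route as the paper: both reduce (\ref{Eq3.190}) to the discrete H\"{o}lder inequality with positive weights $\frac{(k-j+1)^{\overline{\alpha-1}}}{\Gamma(\alpha)}w(j)$ (the paper absorbs the weights into $a_j,b_j$ via $p$-th and $q$-th roots, which is the same thing), and both obtain the $p\in(0,1)$ case from the forward case via the factorisation $u^p=(uv)^pv^{-p}$ with the conjugate pair $(1/p,\,1/(1-p))$. The only difference is cosmetic: you supply the Young-inequality proof of the weighted H\"{o}lder step, where the paper simply cites the discrete H\"{o}lder inequality.
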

\begin{proof}
Similarly, consider $w(k)>0$, $k \in {\mathbb N_{a + 1}}$. Let $a_j:=\sqrt[p]{\frac{{ ( {k - j + 1}  )\overline {^{\alpha  - 1}} }}{{\Gamma  ( \alpha  )}}w(j)}u(j)$, $b_j:=\sqrt[q]{\frac{{ ( {k - j + 1}  )\overline {^{\alpha  - 1}} }}{{\Gamma ( \alpha   )}}w(j)}v(j)$. It follows $a_j,b_j\ge0$, $\forall j\in\mathbb{N}_{a+1}^k$. Bearing this in mind, (\ref{Eq3.190}) can be expressed as
\begin{equation}\label{Eq3.192}
{\textstyle\sum\nolimits_{j = a + 1}^k{a_jb_j}\le[\sum\nolimits_{j = a + 1}^k{a_j^p}]^{\frac{1}{p}}[\sum\nolimits_{j = a + 1}^k{b_j^q}}]^{\frac{1}{q}},
\end{equation}
which is actually the discrete time H\"{o}lder inequality. (\ref{Eq3.191}) can be expressed as
\begin{equation}\label{Eq3.193}
{\textstyle\sum\nolimits_{j = a + 1}^k{a_jb_j}\ge[\sum\nolimits_{j = a + 1}^k{a_j^p}]^{\frac{1}{p}}[\sum\nolimits_{j = a + 1}^k{b_j^q}}]^{\frac{1}{q}}.
\end{equation}

When $p\in(0,1)$, one has $q=\frac{p}{p-1}$. Let $p':=\frac{1}{p}$, $q':=\frac{1}{1-p}$, it is not difficult to obtain $p',q'>1$ and $\frac{1}{p'}+\frac{1}{q'}=1$. By applying the discrete time H\"{o}lder inequality once again, it yields
\begin{equation}\label{Eq3.194}
{\textstyle
\begin{array}{rl}
\sum\nolimits_{j = a + 1}^k{a_j^p}=&\hspace{-6pt}\sum\nolimits_{j = a + 1}^k{({a_jb_j})^pb_j^{-p}}\\
\le&\hspace{-6pt}[\sum\nolimits_{j = a + 1}^k{({a_jb_j})^{pp'}}]^{\frac{1}{p'}}[\sum\nolimits_{j = a + 1}^k{b_j^{-pq'}}]^{\frac{1}{q'}}\\
\le&\hspace{-6pt}(\sum\nolimits_{j = a + 1}^k{{a_jb_j}})^{p}[\sum\nolimits_{j = a + 1}^k{b_j^{q}}]^{1-p}.
\end{array}}
\end{equation}

Calculating the $\frac{1}{p}$ power on both sides of (\ref{Eq3.194}) gives
\begin{equation}\label{Eq3.195}
{\textstyle
\begin{array}{rl}
[\sum\nolimits_{j = a + 1}^k{a_j^p}]^{\frac{1}{p}}\le&\hspace{-6pt}\sum\nolimits_{j = a + 1}^k{{a_jb_j}}[\sum\nolimits_{j = a + 1}^k{b_j^{q}}]^{\frac{1-p}{p}}\\
=&\hspace{-6pt}\sum\nolimits_{j = a + 1}^k{{a_jb_j}}[\sum\nolimits_{j = a + 1}^k{b_j^{q}}]^{-\frac{1}{q}},
\end{array}
}
\end{equation}
which implies (\ref{Eq3.193}).
\end{proof}

\begin{theorem}\label{Theorem3.34}
For any $\alpha>0$, $u,v:\mathbb{N}_{a+1}\to\mathbb{R}_+$, $a\in\mathbb{R}$, $w:\mathbb{N}_a\to\mathbb{R}\backslash\{0\}$ with $\frac{w(a)}{w(k)}>0$, $k \in {\mathbb N_{a + 1}}$, if $p\ge1$, one has
\begin{equation}\label{Eq3.196}
{\textstyle\{{}_a^{\rm G}\nabla _k^{ - \alpha,w(k) }[u(k)+v(k)]^p\}^{\frac{1}{p}}\le[{}_a^{\rm G}\nabla _k^{ - \alpha,w(k) }u^p(k)]^{\frac{1}{p}}+[{}_a^{\rm G}\nabla _k^{ - \alpha }v^p(k)]^{\frac{1}{p}}},
\end{equation}
and if $p\in(0,1)$, one has
\begin{equation}\label{Eq3.197}
{\textstyle\{{}_a^{\rm G}\nabla _k^{ - \alpha,w(k) }[u(k)+v(k)]^p\}^{\frac{1}{p}}\ge[{}_a^{\rm G}\nabla _k^{ - \alpha,w(k) }u^p(k)]^{\frac{1}{p}}+[{}_a^{\rm G}\nabla _k^{ - \alpha }v^p(k)]^{\frac{1}{p}}}.
\end{equation}
\end{theorem}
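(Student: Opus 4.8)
The plan is to mimic the reduction used in the proof of Theorem~\ref{Theorem3.33}: first invoke Lemma~\ref{Lemma2.2} (scale invariance) to reduce to the case $w(k)>0$ for all $k\in\mathbb{N}_{a+1}$, then recognize both sides as weighted $\ell^p$ quantities and appeal to the classical discrete Minkowski inequality. Concretely, set $c_j:=\frac{(k-j+1)^{\overline{\alpha-1}}}{\Gamma(\alpha)}w(j)>0$, so that ${}_a^{\rm G}\nabla_k^{-\alpha,w(k)}f(k)=w^{-1}(k)\sum_{j=a+1}^{k}c_jf(j)$. Putting $a_j:=c_j^{1/p}u(j)$ and $b_j:=c_j^{1/p}v(j)$ --- both nonnegative since $u,v:\mathbb{N}_{a+1}\to\mathbb{R}_+$ --- one has $c_j[u(j)+v(j)]^p=(a_j+b_j)^p$, $c_ju^p(j)=a_j^p$, $c_jv^p(j)=b_j^p$, and the common positive factor $\big(w^{-1}(k)\big)^{1/p}$ may be pulled out of every term. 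Hence (\ref{Eq3.196}) and (\ref{Eq3.197}) are respectively equivalent to
\[\Big[\sum\nolimits_{j=a+1}^{k}(a_j+b_j)^p\Big]^{1/p}\le\Big[\sum\nolimits_{j=a+1}^{k}a_j^p\Big]^{1/p}+\Big[\sum\nolimits_{j=a+1}^{k}b_j^p\Big]^{1/p},\quad p\ge1,\]
and the same inequality with the sign reversed for $p\in(0,1)$.

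Next I would establish this discrete Minkowski inequality from the discrete H\"{o}lder inequality already obtained as (\ref{Eq3.192})--(\ref{Eq3.193}) inside the proof of Theorem~\ref{Theorem3.33}. Split $\sum(a_j+b_j)^p=\sum a_j(a_j+b_j)^{p-1}+\sum b_j(a_j+b_j)^{p-1}$, apply H\"{o}lder with the conjugate pair $(p,q)$, $q=\tfrac{p}{p-1}$, to each sum, and use $(p-1)q=p$ to get $\sum a_j(a_j+b_j)^{p-1}\le[\sum a_j^p]^{1/p}[\sum(a_j+b_j)^p]^{1/q}$ together with the analogous bound for the $b$-term; adding the two, dividing by $[\sum(a_j+b_j)^p]^{1/q}$ (the case where this quantity vanishes being trivial), and using $1-\tfrac1q=\tfrac1p$ yields (\ref{Eq3.196}). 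For $p\in(0,1)$ the same bookkeeping goes through verbatim but with the \emph{reversed} H\"{o}lder inequality (\ref{Eq3.193}): here $q=\tfrac{p}{p-1}<0$ still satisfies $(p-1)q=p$, and dividing by the positive quantity $[\sum(a_j+b_j)^p]^{1/q}$ preserves the reversed inequality, producing (\ref{Eq3.197}).

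The main obstacle I expect is not the algebra of the reduction but controlling the degenerate and sign-sensitive points of the $p\in(0,1)$ branch: the negative exponents $p-1<0$ and $q<0$ force one to work with strictly positive $a_j,b_j$ (hence to read $\mathbb{R}_+$ as $(0,\infty)$, or to dispose separately of indices with $u(j)+v(j)=0$), and one must be careful that the reversed H\"{o}lder inequality is invoked in exactly the direction that, after the final division, still yields the reversed rather than the forward Minkowski inequality. Once these bookkeeping issues are settled, the forward case $p\ge1$ is entirely routine.
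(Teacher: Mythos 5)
Your proposal is correct and follows essentially the same route as the paper: reduce to $w(k)>0$ by scale invariance, substitute $a_j=c_j^{1/p}u(j)$, $b_j=c_j^{1/p}v(j)$ with $c_j=\frac{(k-j+1)^{\overline{\alpha-1}}}{\Gamma(\alpha)}w(j)>0$, and thereby convert both claims into the classical discrete Minkowski inequality and its reversal. The only differences are that the paper simply cites the discrete Minkowski inequality while you derive it from the H\"{o}lder inequalities (\ref{Eq3.192})--(\ref{Eq3.193}) and explicitly handle the degenerate points of the $p\in(0,1)$ branch; incidentally, your substitution with the $p$-th root on both $a_j$ and $b_j$ is the correct one, whereas the paper's proof carries over a $q$-th root in $b_j$ (and a stray $\sum_{j}b_j^{q}$ on the right-hand side of (\ref{Eq3.198})) from Theorem \ref{Theorem3.33}, which is evidently a typo.
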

\begin{proof}
Similarly, consider $w(k)>0$, $k \in {\mathbb N_{a + 1}}$. Let $a_j:=\sqrt[p]{\frac{{ ( {k - j + 1}  )\overline {^{\alpha  - 1}} }}{{\Gamma  ( \alpha  )}}w(j)}u(j)$, $b_j:=\sqrt[q]{\frac{{ ( {k - j + 1}  )\overline {^{\alpha  - 1}} }}{{\Gamma ( \alpha   )}}w(j)}v(j)$. It follows $a_j,b_j>0$, $\forall j\in\mathbb{N}_{a+1}^k$. Along this way, (\ref{Eq3.196}) can be expressed as
\begin{equation}\label{Eq3.198}
{\textstyle[\sum\nolimits_{j = a + 1}^k{(a_j+b_j)^p}]^{\frac{1}{p}}\le(\sum\nolimits_{j = a + 1}^k{a_j^p})^{\frac{1}{p}}+(\sum\nolimits_{j = a + 1}^k{b_j^q}})^{\frac{1}{q}},
\end{equation}
which is just the Minkonski inequality in discrete time domain. Similarly, (\ref{Eq3.197}) can be expressed as
\begin{equation}\label{Eq3.199}
{\textstyle[\sum\nolimits_{j = a + 1}^k{(a_j+b_j)^p}]^{\frac{1}{p}}\ge(\sum\nolimits_{j = a + 1}^k{a_j^p})^{\frac{1}{p}}+(\sum\nolimits_{j = a + 1}^k{b_j^q}})^{\frac{1}{q}}.
\end{equation}
The proof completes immediately.
\end{proof}

In a similar way, the number of sequences could be extended to finite.
\begin{corollary}\label{Corollary3.3.1}
For any $\alpha>0$, $u_i:\mathbb{N}_{a+1}\to\mathbb{R}_+$, $i=1,2,\cdots,n$, $n\in\mathbb{Z}_+$, $a\in\mathbb{R}$, $w:\mathbb{N}_a\to\mathbb{R}\backslash\{0\}$ with $\frac{w(a)}{w(k)}>0$, $k \in {\mathbb N_{a + 1}}$, if $p\ge1$, one has
\begin{equation}\label{Eq3.200}
{\textstyle\{{}_a^{\rm G}\nabla _k^{ - \alpha,w(k) }[\sum\nolimits_{i = 1}^n u_i(k)]^p\}^{\frac{1}{p}}\le\sum\nolimits_{i = 1}^n[{}_a^{\rm G}\nabla _k^{ - \alpha,w(k) }u_i^p(k)]^{\frac{1}{p}} },
\end{equation}
and if $p\in(0,1)$, one has
\begin{equation}\label{Eq3.201}
{\textstyle\{{}_a^{\rm G}\nabla _k^{ - \alpha,w(k) }[\sum\nolimits_{i = 1}^n u_i(k)]^p\}^{\frac{1}{p}}\ge\sum\nolimits_{i = 1}^n[{}_a^{\rm G}\nabla _k^{ - \alpha,w(k) }u_i^p(k)]^{\frac{1}{p}} }.
\end{equation}
\end{corollary}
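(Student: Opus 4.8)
The plan is to establish both (\ref{Eq3.200}) and (\ref{Eq3.201}) by induction on the number of sequences $n$, taking the two-sequence Minkowski inequalities (\ref{Eq3.196})--(\ref{Eq3.197}) of Theorem~\ref{Theorem3.34} as the base step and exploiting the associativity of the finite sum $\sum_{i=1}^{n}u_i(k)$ to collapse the $(m+1)$-term estimate onto the $m$-term one. For $n=1$ the two sides of each inequality are literally equal, so nothing is to be proved; for $n=2$ the statement is precisely Theorem~\ref{Theorem3.34}.

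For the inductive step I would assume, in the case $p\ge1$, that (\ref{Eq3.200}) already holds for $n=m$, set $U(k):=\sum_{i=1}^{m}u_i(k)$, observe that $U:\mathbb{N}_{a+1}\to\mathbb{R}_+$ is again a legitimate input for Theorem~\ref{Theorem3.34}, and then split $\sum_{i=1}^{m+1}u_i(k)=U(k)+u_{m+1}(k)$. Applying (\ref{Eq3.196}) to the pair $U(k),u_{m+1}(k)$ gives
\[
\{{}_a^{\rm G}\nabla _k^{ - \alpha,w(k) }[U(k)+u_{m+1}(k)]^p\}^{\frac{1}{p}}\le[{}_a^{\rm G}\nabla _k^{ - \alpha,w(k) }U^p(k)]^{\frac{1}{p}}+[{}_a^{\rm G}\nabla _k^{ - \alpha,w(k) }u_{m+1}^p(k)]^{\frac{1}{p}},
\]
and the induction hypothesis bounds $[{}_a^{\rm G}\nabla _k^{ - \alpha,w(k) }U^p(k)]^{1/p}$ by $\sum_{i=1}^{m}[{}_a^{\rm G}\nabla _k^{ - \alpha,w(k) }u_i^p(k)]^{1/p}$; adding back the last summand produces (\ref{Eq3.200}) for $n=m+1$. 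For $p\in(0,1)$ the argument is identical with every $\le$ replaced by $\ge$: (\ref{Eq3.197}) supplies the reversed two-term step and the reversed induction hypothesis the reversed bound on the partial-sum term, and the two $\ge$'s chain to give (\ref{Eq3.201}).

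I do not expect any genuine analytic difficulty here, since the real work --- the H\"{o}lder-type estimate underlying Minkowski --- is already packaged inside Theorem~\ref{Theorem3.34}. The only points that need to be checked carefully are (i) that the partial sum $U(k)$ still satisfies the hypotheses of Theorem~\ref{Theorem3.34}, namely positivity on $\mathbb{N}_{a+1}$, so that the two-term inequality may be re-applied, and (ii) that for $p\in(0,1)$ the chained inequalities all point the same direction, which they do because both the reverse Minkowski step and the reverse induction hypothesis reverse in the same sense. As in the earlier theorems one could first reduce to $w(k)>0$ via Lemma~\ref{Lemma2.2}, but this is unnecessary because Theorem~\ref{Theorem3.34} is already stated for every admissible tempered function $w$.
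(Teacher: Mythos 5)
Your proposal is correct. Note that the paper does not actually write out a proof of Corollary \ref{Corollary3.3.1}; it merely remarks that ``in a similar way, the number of sequences could be extended to finite,'' which most naturally points to repeating the proof of Theorem \ref{Theorem3.34} verbatim --- i.e.\ setting $a_j^{(i)}:=\sqrt[p]{\frac{(k-j+1)\overline{^{\alpha-1}}}{\Gamma(\alpha)}w(j)}\,u_i(j)$ and invoking the classical $n$-term discrete Minkowski inequality. Your route is slightly different: you induct on $n$, using the already-proved two-sequence tempered inequality (\ref{Eq3.196})--(\ref{Eq3.197}) as the engine, with $U(k)=\sum_{i=1}^{m}u_i(k)$ (still positive, so a legitimate input) as the collapsed argument. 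This buys a proof that never needs the $n$-term classical inequality as an external fact, at the cost of an induction; the paper's implicit route is a one-line reduction but silently assumes the $n$-term classical Minkowski inequality is available. Both arguments are complete and correct, and your observation that the reversed inequalities for $p\in(0,1)$ chain in the same direction is exactly the point that needs checking.
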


Theorem \ref{Theorem3.31} is the generalization of Cauchy inequality or Carlson inequality. Theorem \ref{Theorem3.32} is the generalization of Jensen inequality. Theorem \ref{Theorem3.33} is the generalization of H\"{o}lder inequality. Theorem \ref{Theorem3.34} is the generalization of Minkonski inequality. In a similar way, some integral like inequalities could be further developed for the nabla tempered fractional sum.


\section{Simulation study}\label{Section4}
In this section, six examples are provided to test the relation between theoretical and the simulated results.

\begin{example}\label{Example2}
To examine Theorem \ref{Theorem3.23} and Theorem \ref{Theorem3.24}, a new variable is introduced as $e(k)={ }_{a}^{\rm G} \nabla_{k}^{\alpha, w(k)} V(x(k))-\zeta^{\rm T}(x(k))_{a}^{\rm G} \nabla_{k}^{\alpha, w(k)} x(k)$. When $V(x(k))$ is differentiable, $\zeta(x(k))$ denotes the true gradient $\frac{{\rm d}V(x(k))}{{\rm d}x(k)}$. Setting $a=0$, $k\in\mathbb{N}_{a+1}^{100}$, $\alpha=0.01,0.02,\cdots,1$, the following four cases are considered.
\[\left\{ \begin{array}{ll}
{\rm{case}}\;1:\;V(x(k)) = {x^2}(k)\,,&\hspace{-6pt}x(k) = \sin (10k);\\
{\rm{case}}\;2:\;V(x(k)) = {x^2}(k)\,,&\hspace{-6pt}x(k) = \cos (10k);\\
{\rm{case}}\;3:\;V(x(k)) = |x(k)|\,,&\hspace{-6pt}x(k) = \sin (10k);\\
{\rm{case}}\;4:\;V(x(k)) = |x(k)|\,,&\hspace{-6pt}x(k) = \cos (10k).
\end{array} \right.\]

Since $[{ }_{a}^{\mathrm{G}} \nabla_{k}^{\alpha} x(k)]_{k=a+1}=x(a+1)$, $e(a+1)=-x^2(a+1)$ for case 1, case 2 and  $e(a+1)=0$ for case 3, case 4. Consequently, for each $\alpha$ the maximum of $e(k)$ is calculated with $k\in\mathbb{N}_{a+2}$ instead of $k\in\mathbb{N}_{a+1}$. Letting the tempered function be positive and linearly increasing as $w(k)=0.5(k-a)$, the simulated results are shown in Figure \ref{Fig4a}. Letting the tempered function be negative and linearly decreasing i.e., $w(k)=0.5(a-k)$, the corresponding simulated results are shown in Figure \ref{Fig4b}.  It is shown that $e(k)\le0$ holds for all the elaborated conditions, which confirms the developed inequalities. To demonstrate more details, the relationship between $W(z(k))$ and $z(k)$ is shown in Figure \ref{Fig5}. It can be found that when $w(k)>0$, $W(z(k))$ is the convex function of $z(k)$ on the whole for case 1, case 2 and $W(z(k))$ is the convex function of $z(k)$ for case 3, case 4. When $w(k)<0$, $W(z(k))$ is the convex function of $z(k)$ on the whole for case 1, case 2 and $W(z(k))$ is the convex function of $z(k)$ for case 3, case 4. All of these coincide with the theoretical analysis.

\begin{figure}[!htbp]
\centering
\setlength{\abovecaptionskip}{-2pt}
	\vspace{-10pt}
	\subfigtopskip=-2pt
	\subfigbottomskip=2pt
	\subfigcapskip=-10pt
\subfigure[$w(k)=0.5(k-a)$.]{
\begin{minipage}[t]{0.48\linewidth}
\includegraphics[width=1.0\hsize]{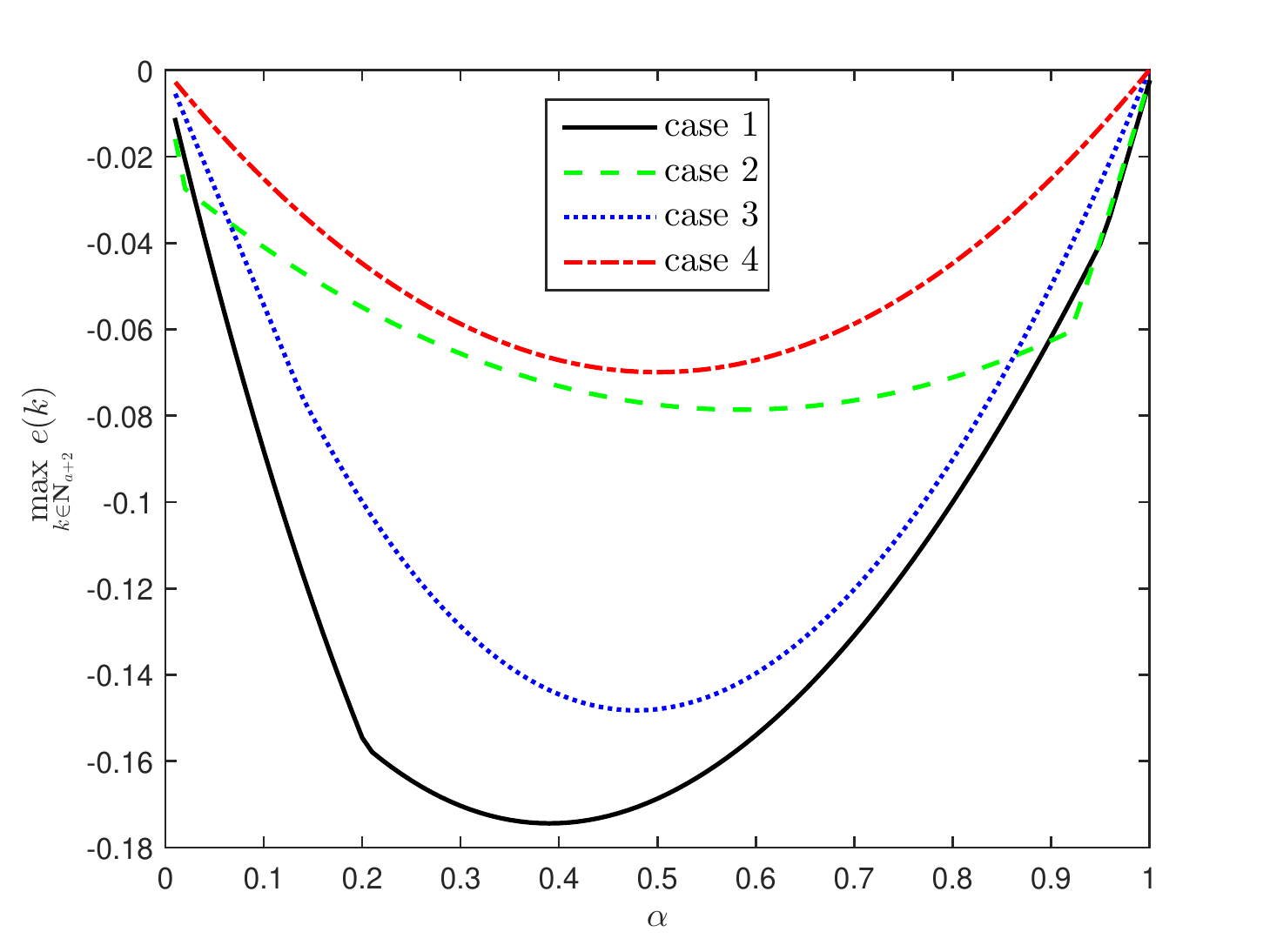}
\label{Fig4a}
\end{minipage}%
}
\subfigure[$w(k)=0.5(a-k)$.]{
\begin{minipage}[t]{0.48\linewidth}
\includegraphics[width=1.0\hsize]{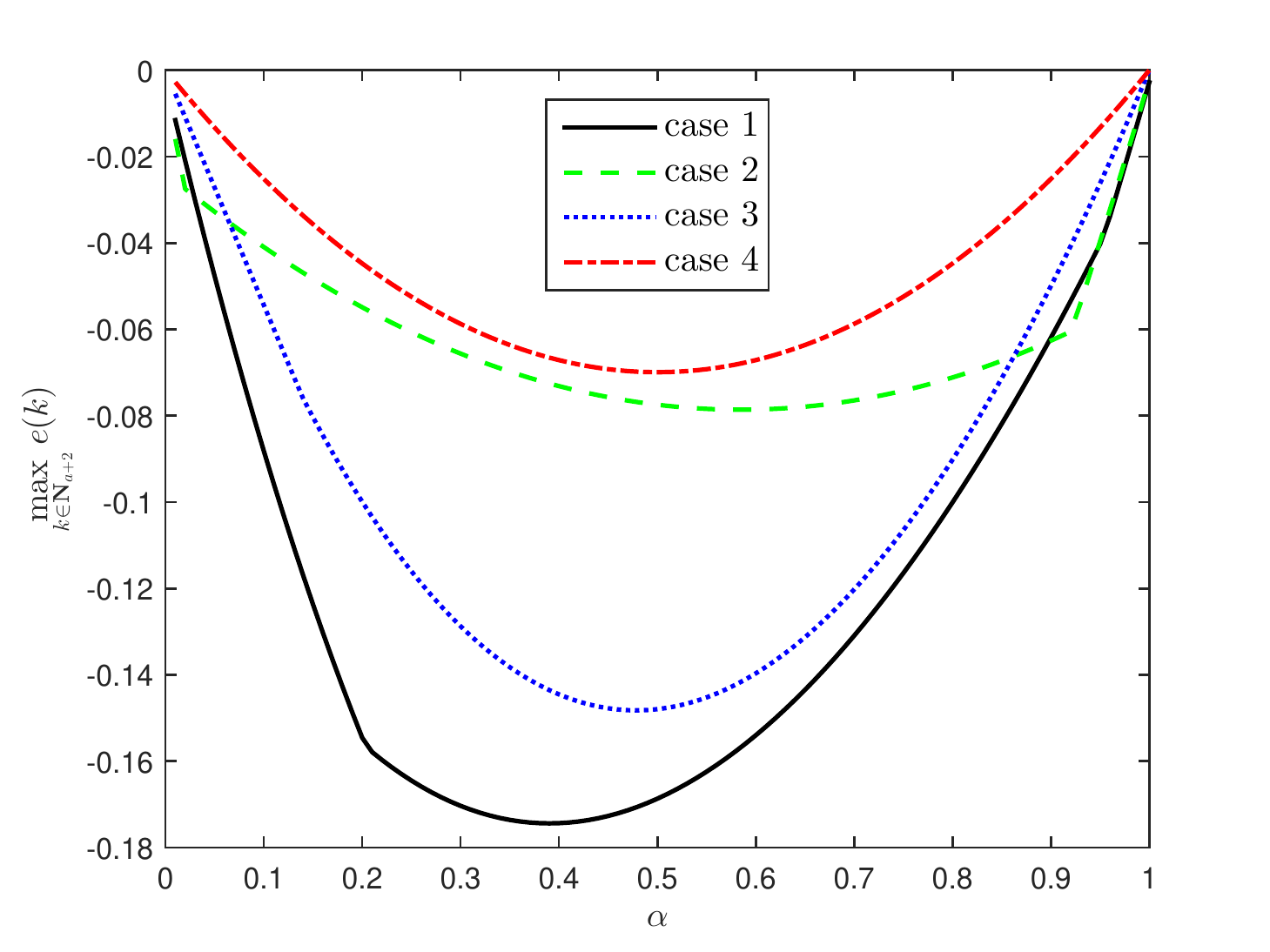}
\label{Fig4b}
\end{minipage}%
}
\centering
\caption{The evolution of the error with respect to the order.}
\label{Fig4}
\end{figure}
\begin{figure}[!htbp]
\centering
\setlength{\abovecaptionskip}{-2pt}
	\vspace{-10pt}
	\subfigtopskip=-2pt
	\subfigbottomskip=2pt
	\subfigcapskip=-10pt
\subfigure[$w(k)=0.5(k-a)$.]{
\begin{minipage}[t]{0.48\linewidth}
\includegraphics[width=1.0\hsize]{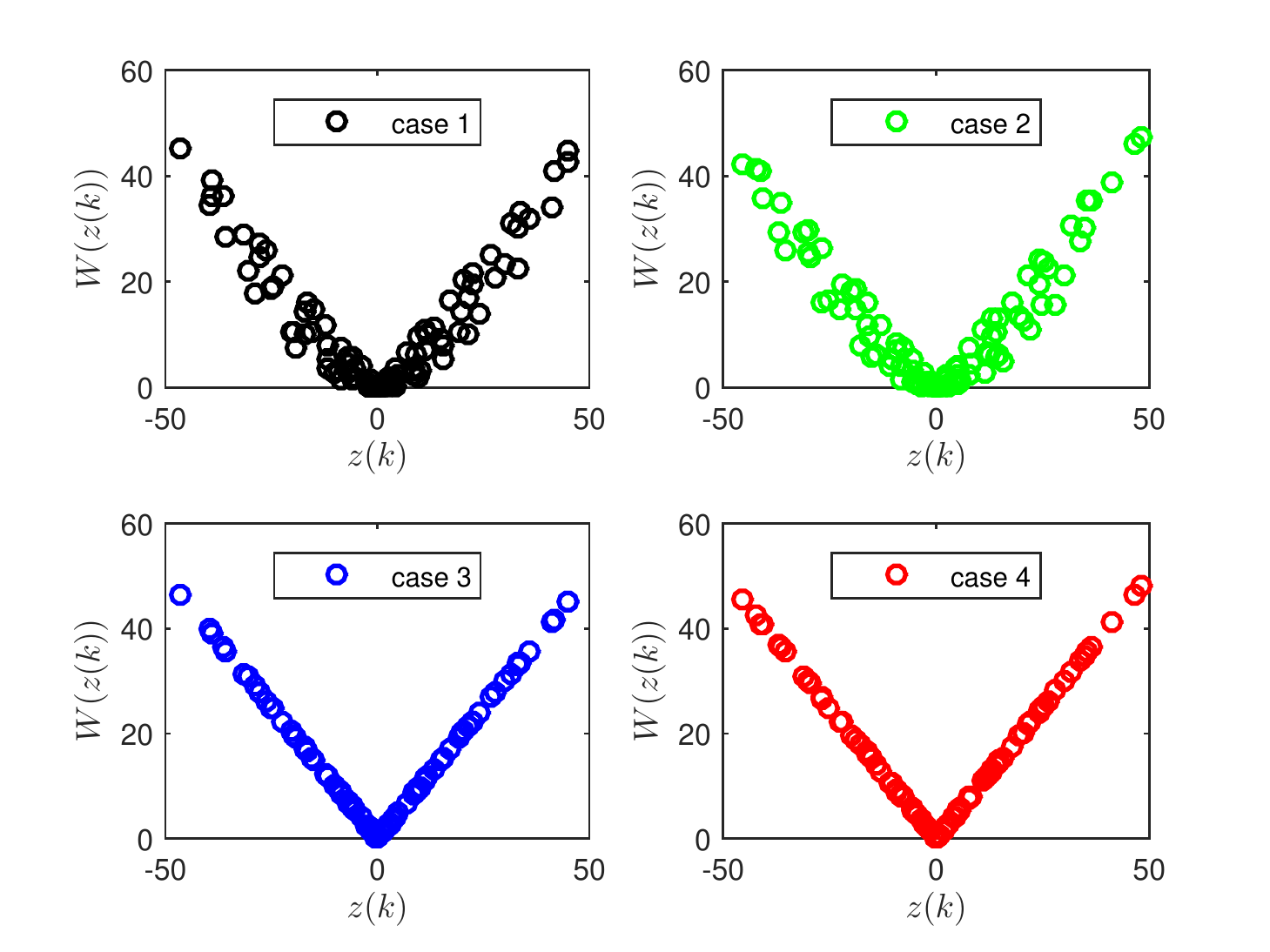}
\label{Fig5a}
\end{minipage}%
}
\subfigure[$w(k)=0.5(a-k)$.]{
\begin{minipage}[t]{0.48\linewidth}
\includegraphics[width=1.0\hsize]{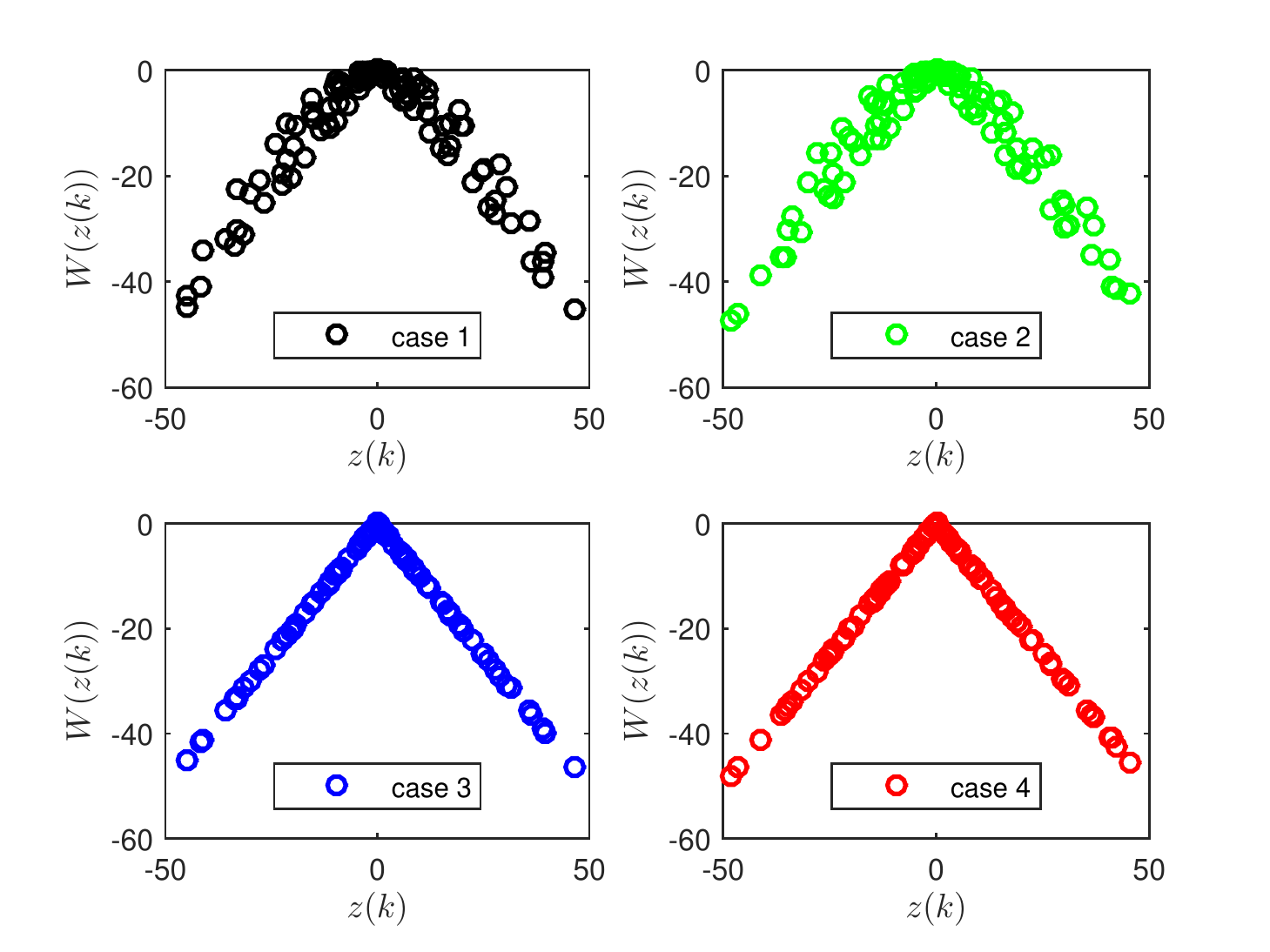}
\label{Fig5b}
\end{minipage}%
}
\centering
\caption{The evolution of $W(z(k))$ with respect to $z(k)$.}
\label{Fig5}
\end{figure}

Letting $w(k)=0.4^{k-a}+0.5$ and $w(k)=0.4^{a-k}+0.5$, the corresponding results are displayed in Figure \ref{Fig5} and Figure \ref{Fig6}, respectively.  Note that $w(k)=0.4^{k-a}+0.5$ is positive and exponentially decreasing. $w(k)=0.4^{a-k}+0.5$ is positive and exponentially increasing. Figure \ref{Fig6} shows that the maximum of $e(k)$ in the mentioned conditions are non-positive, which confirms the correctness of the developed fractional difference inequalities firmly. Figure \ref{Fig7a} shows that $W(z(k))$ is the convex function of $z(k)$ almost everywhere for case 1, case 2 and $W(z(k))$ is the convex function of $z(k)$ for case 3, case 4. Though the points are uneven dispersion in Figure \ref{Fig7b}, the convexity of $W(z(k))$ can be found from the trend closely.

\begin{figure}[!htbp]
\centering
\setlength{\abovecaptionskip}{-2pt}
	\vspace{-10pt}
	\subfigtopskip=-2pt
	\subfigbottomskip=2pt
	\subfigcapskip=-2pt
\subfigure[$w(k)=0.4^{k-a}+0.5$.]{
\begin{minipage}[t]{0.48\linewidth}
\includegraphics[width=1.0\hsize]{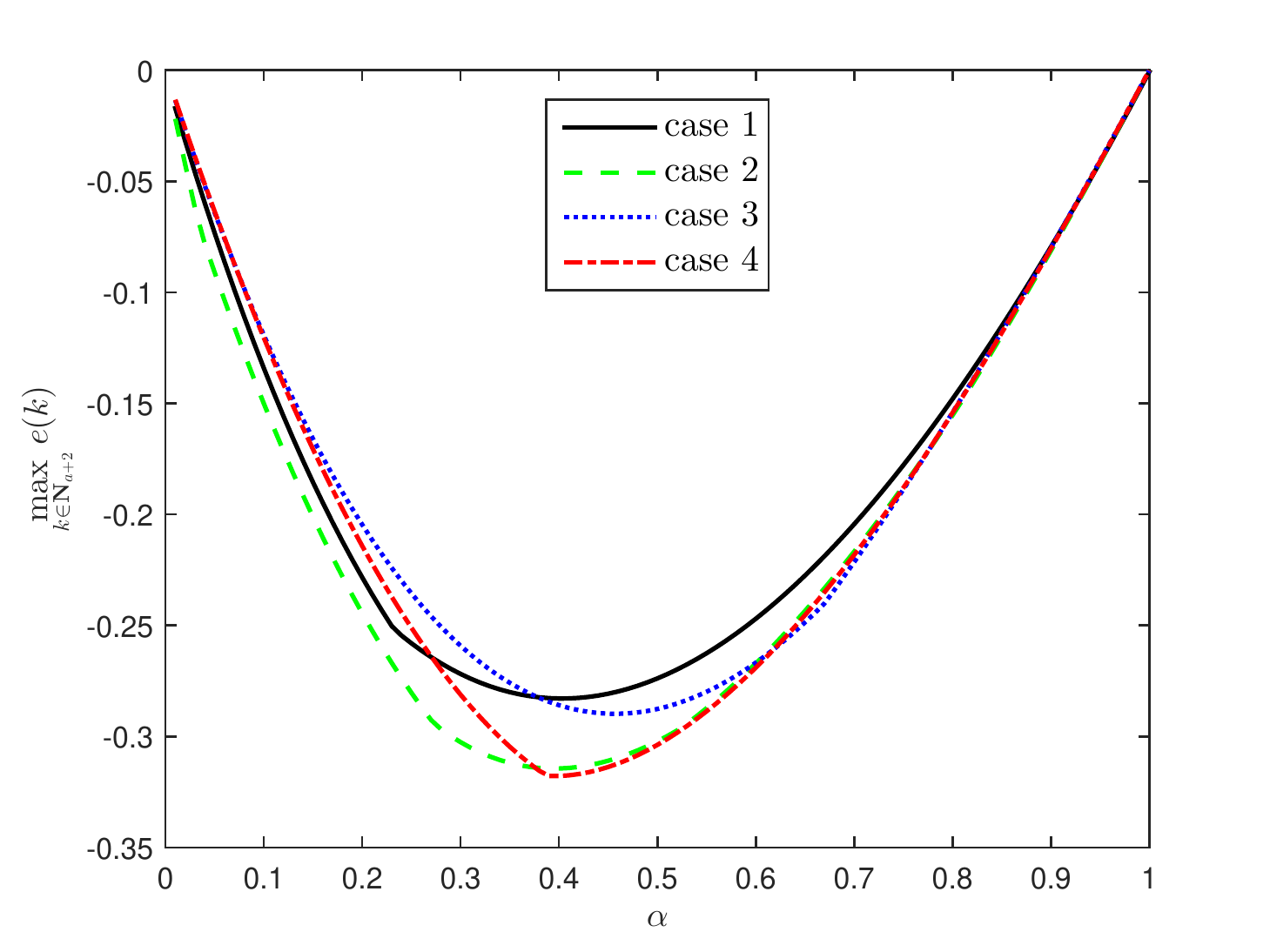}
\label{Fig6a}
\end{minipage}%
}
\subfigure[$w(k)=0.4^{a-k}+0.5$.]{
\begin{minipage}[t]{0.48\linewidth}
\includegraphics[width=1.0\hsize]{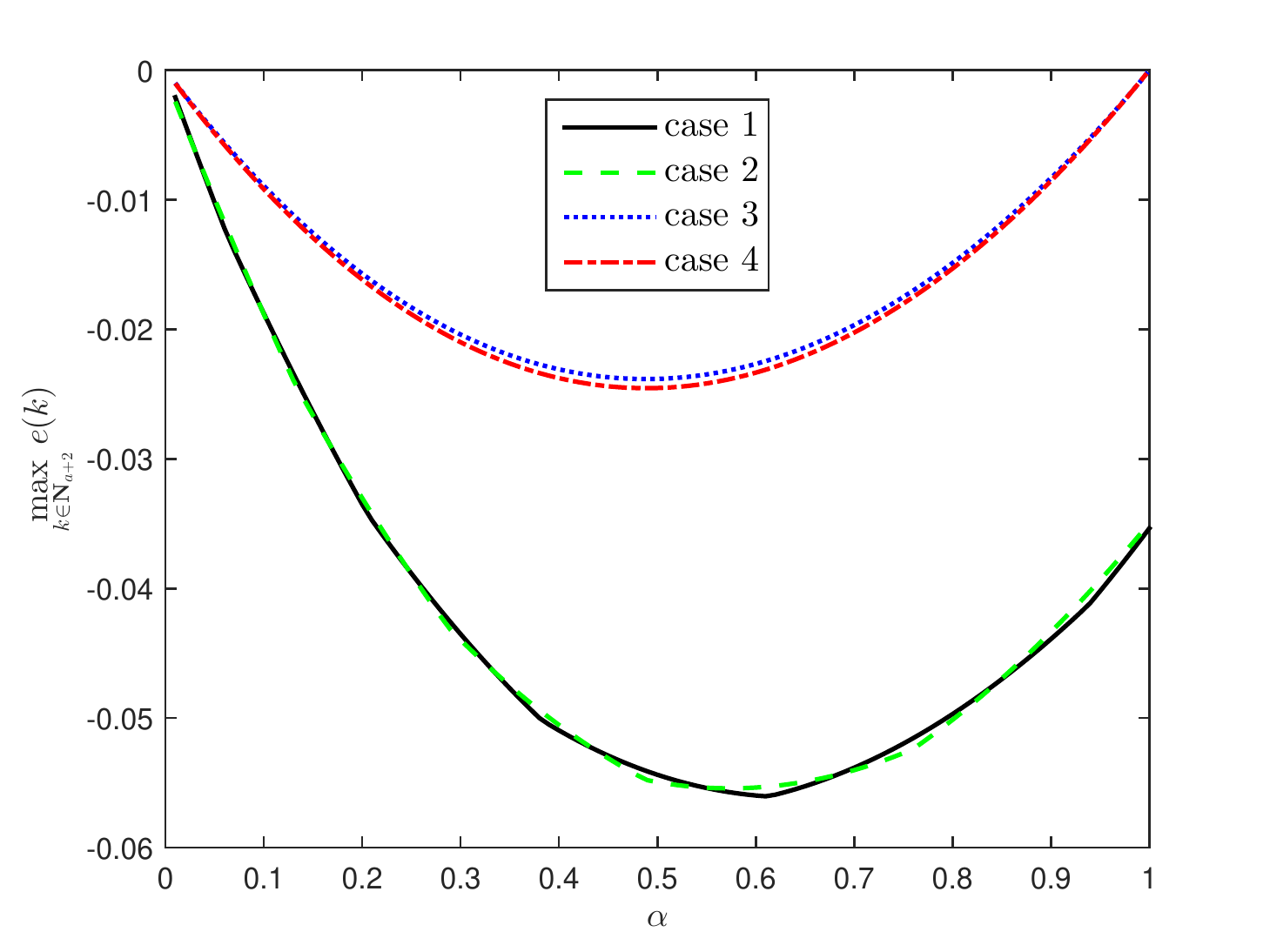}
\label{Fig6b}
\end{minipage}%
}
\centering
\caption{The evolution of the error with respect to the order.}
\label{Fig6}
\end{figure}
\begin{figure}[!htbp]
\centering
\setlength{\abovecaptionskip}{-2pt}
	\vspace{-10pt}
	\subfigtopskip=-2pt
	\subfigbottomskip=2pt
	\subfigcapskip=-2pt
\subfigure[$w(k)=0.4^{k-a}+0.5$.]{
\begin{minipage}[t]{0.48\linewidth}
\includegraphics[width=1.0\hsize]{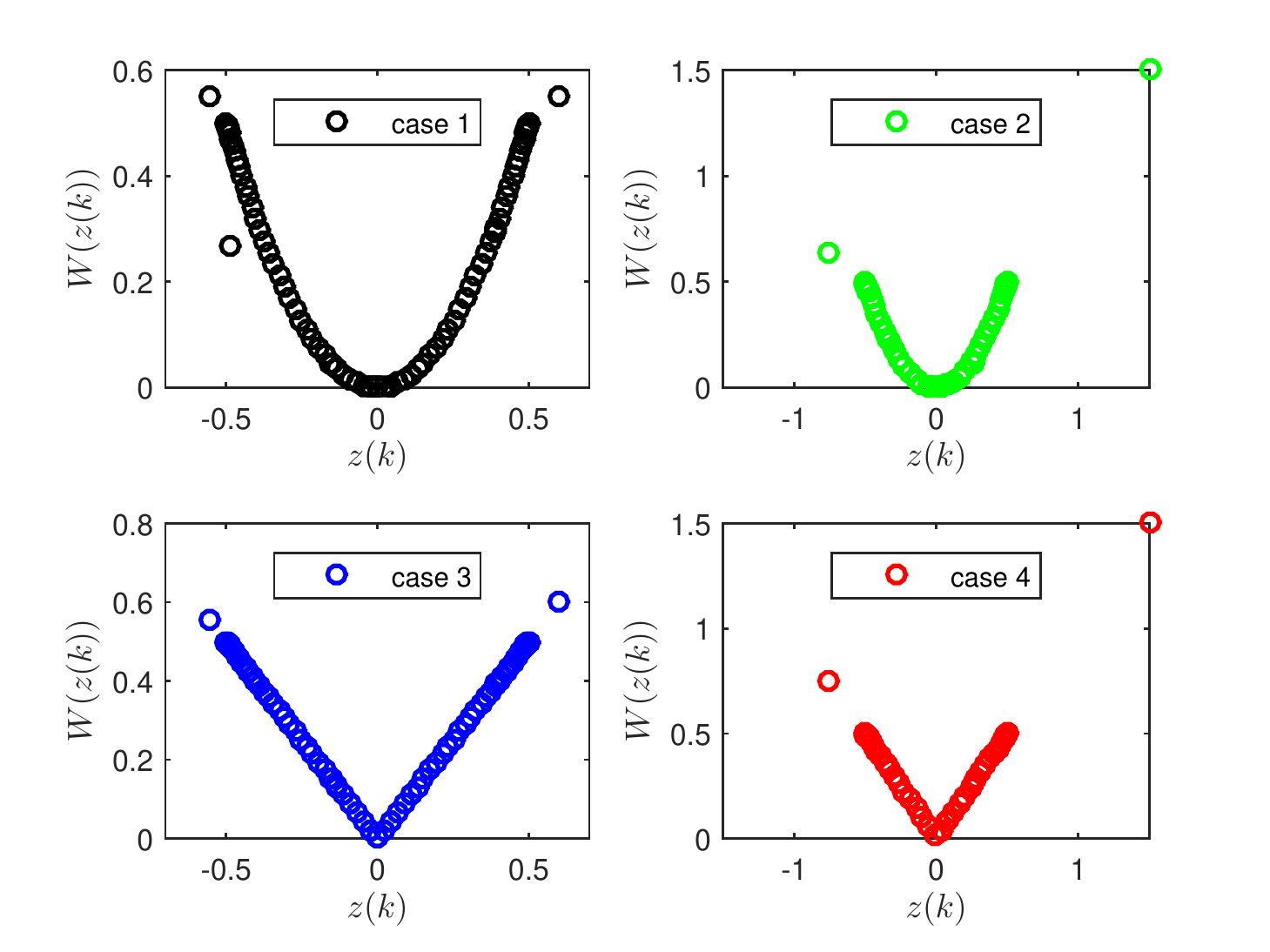}
\label{Fig7a}
\end{minipage}%
}
\subfigure[$w(k)=0.4^{a-k}+0.5$.]{
\begin{minipage}[t]{0.48\linewidth}
\includegraphics[width=1.0\hsize]{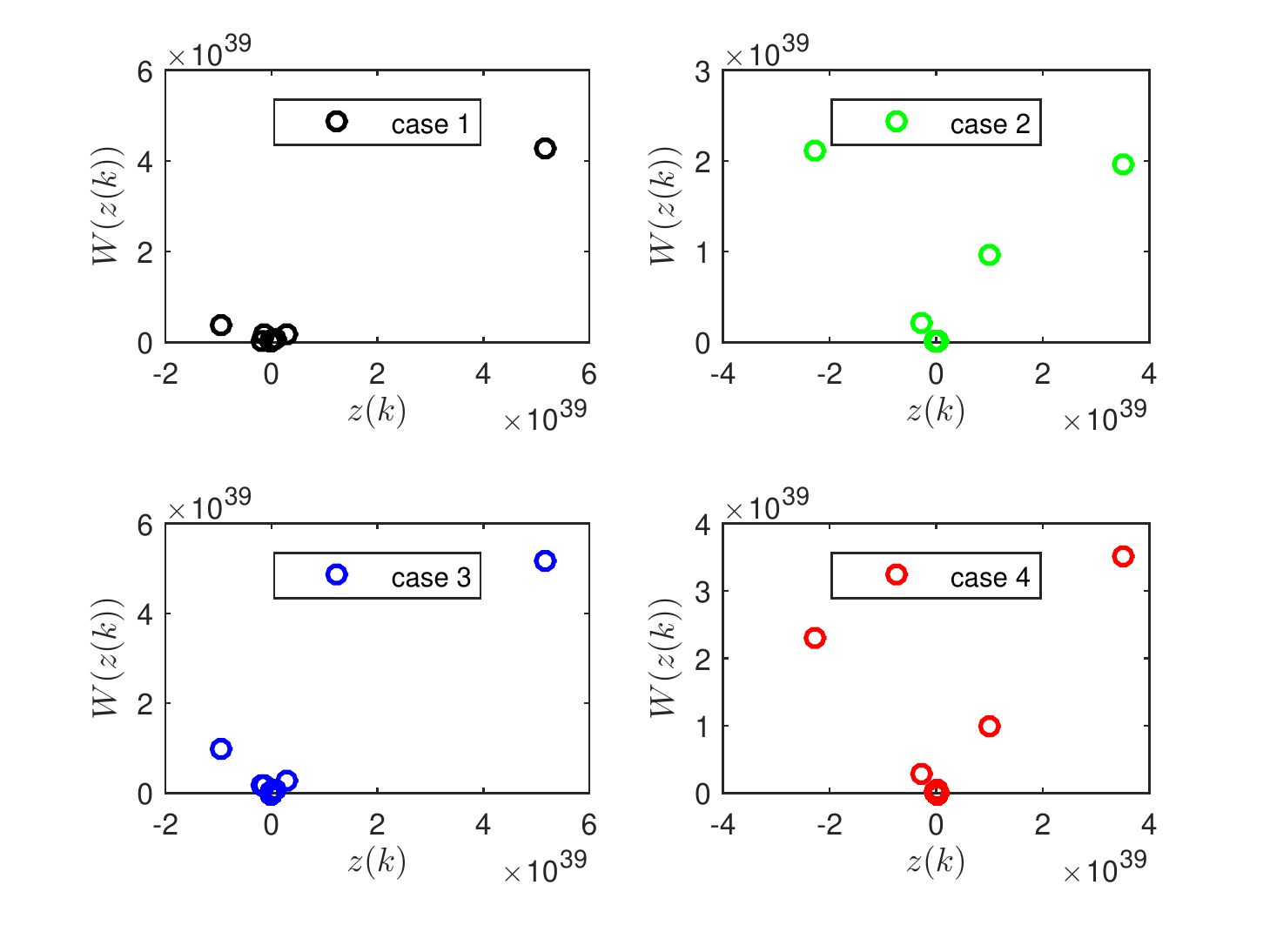}
\label{Fig7b}
\end{minipage}%
}
\centering
\caption{The evolution of $W(z(k))$ with respect to $z(k)$.}
\label{Fig7}
\end{figure}
\end{example}
\begin{example}\label{Example3}
To examine the Chebyshev fractional sum inequality, define $e(k):={}_a^{\rm G}\nabla _k^{-\alpha,w(k)} [ {u^{\rm T}(k)v(k)} ]{}_a^{\rm G}\nabla _k^{ - \alpha,w(k) }1 - {}_a^{\rm G}\nabla _k^{-\alpha,w(k)} {u^{\rm T}(k)}{}_a^{\rm G}\nabla _k^{-\alpha,w(k)} {v(k)}$. Setting $a=0$, $\alpha=0.01,0.02,\cdots,4$, considering different $w(k)$ and the following four cases
\[\left\{ \begin{array}{l}
{\rm{case}}\;1:\;u(k)=\sin (k-a), v(k)={\rm sgn}(\sin (k-a));\\
{\rm{case}}\;2:\;u(k)=\sin (k-a), v(k)=2\sin (k-a)+1;\\
{\rm{case}}\;3:\;u(k)=k-a, v(k)=(k-a)^2;\\
{\rm{case}}\;4:\;u(k)=\frac{1}{k-a}, v(k)=\frac{1}{(k-a)^2},
\end{array} \right.\]
the simulated results are shown in Figure \ref{Fig8} and Figure \ref{Fig9}.
\begin{figure}[!htbp]
  \centering
  \includegraphics[width=0.5\textwidth]{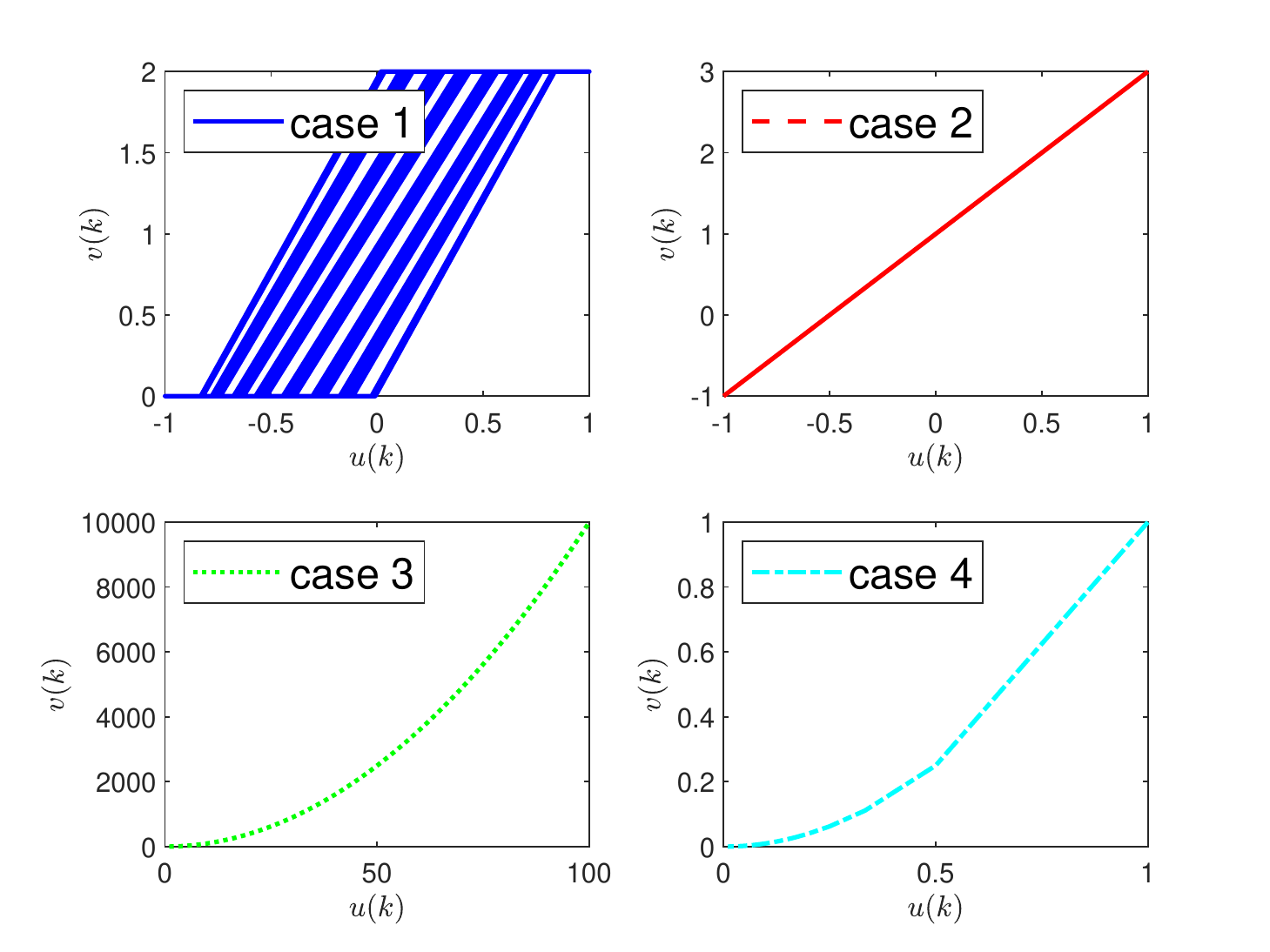}
  \vspace{-10pt}
  \caption{The diagram of $u(k)$ and $v(k)$.}\label{Fig8}
\end{figure}
\begin{figure}[!htbp]
	\centering
	\setlength{\abovecaptionskip}{-2pt}
	\vspace{-10pt}
	\subfigtopskip=-2pt
	\subfigbottomskip=2pt
	\subfigcapskip=-2pt
	\subfigure[$w(k) = {( - 1)^{k - a}} + 2$]{
	\begin{minipage}[t]{0.5\linewidth}
	\includegraphics[width=1\hsize]{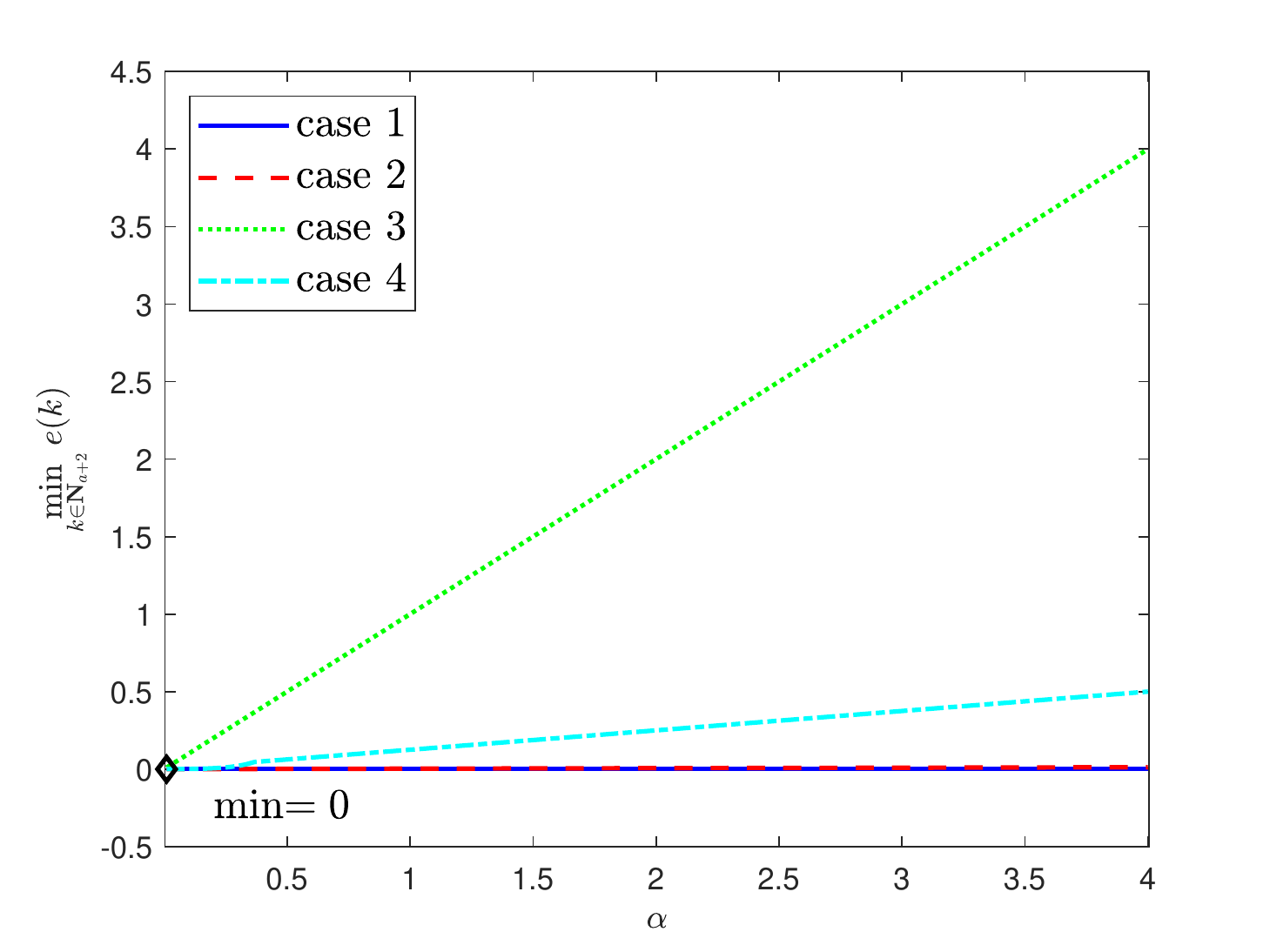}
	\label{Fig9a}
	\end{minipage}%
	}\hspace{-12pt}
	\subfigure[$w(k) = {( - 1)^{k - a}} - 2$]{
	\begin{minipage}[t]{0.5\linewidth}
	\includegraphics[width=1\hsize]{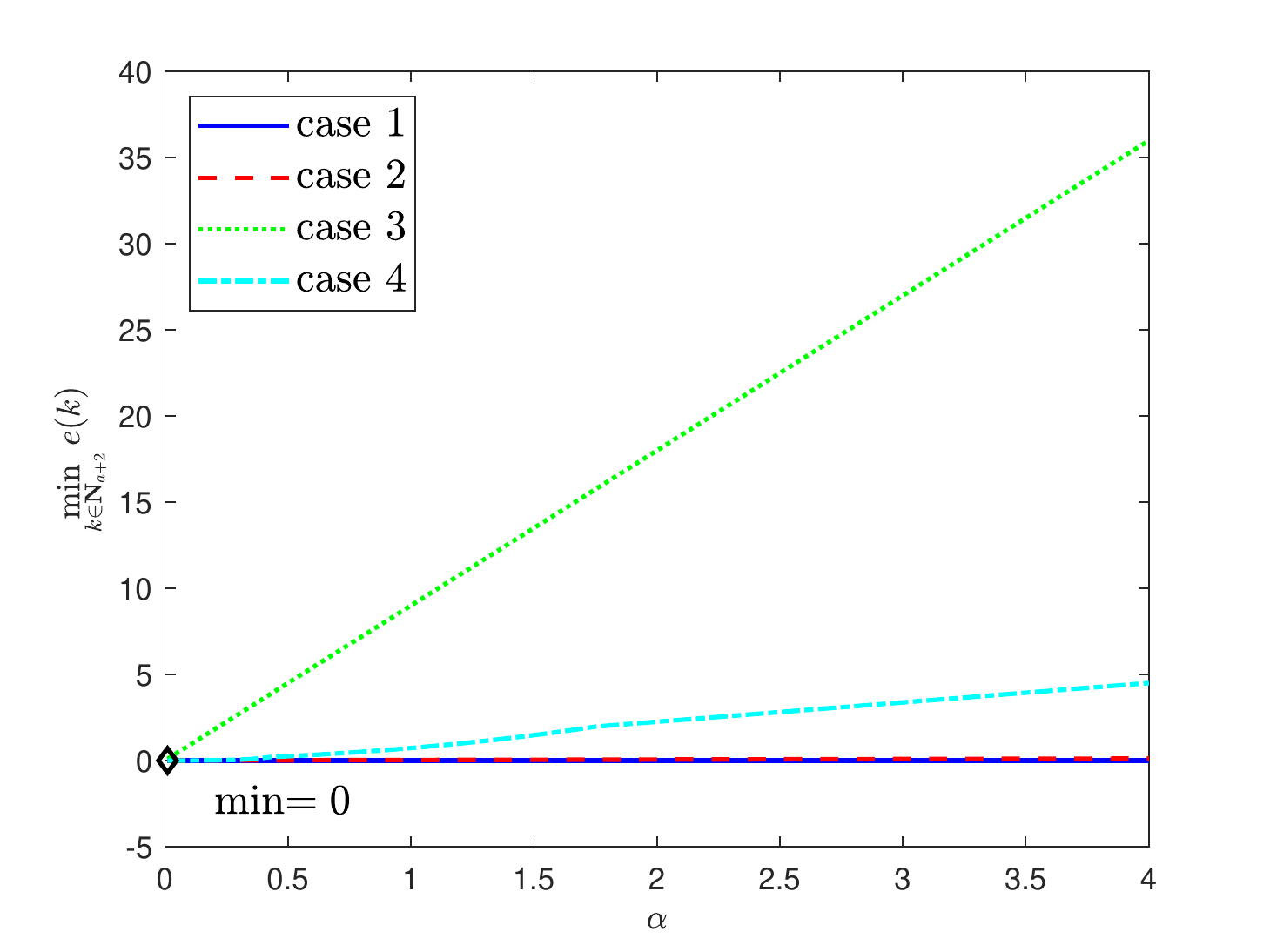}
	\label{Fig9b}
	\end{minipage}%
	}
    \subfigure[$w(k) = \sin (k - a) + 2$]{
	\begin{minipage}[t]{0.5\linewidth}
	\includegraphics[width=1\hsize]{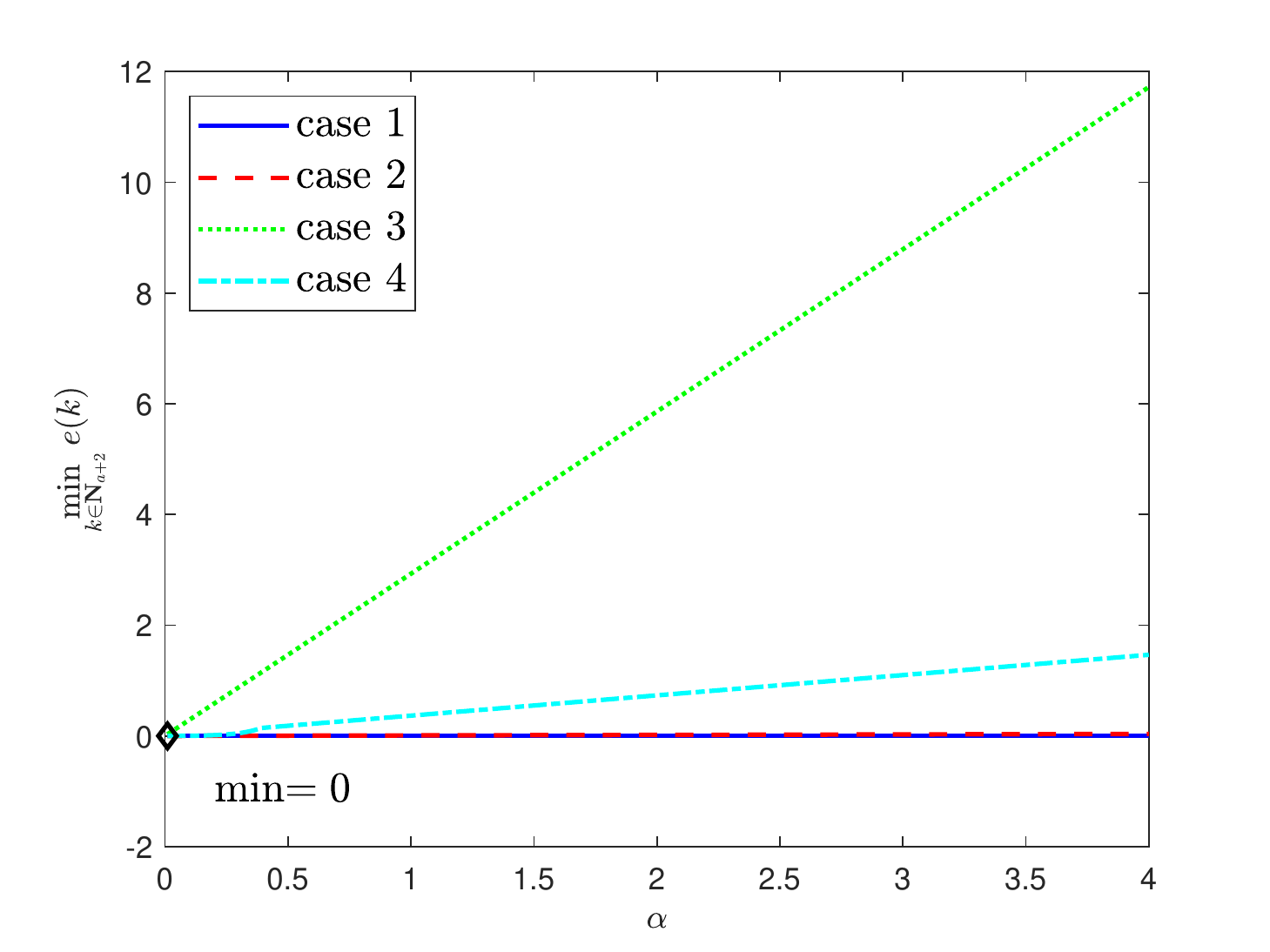}
	\label{Fig9c}
	\end{minipage}%
	}\hspace{-12pt}
	\subfigure[$w(k) = \sin (k - a) - 2$]{
	\begin{minipage}[t]{0.5\linewidth}
	\includegraphics[width=1\hsize]{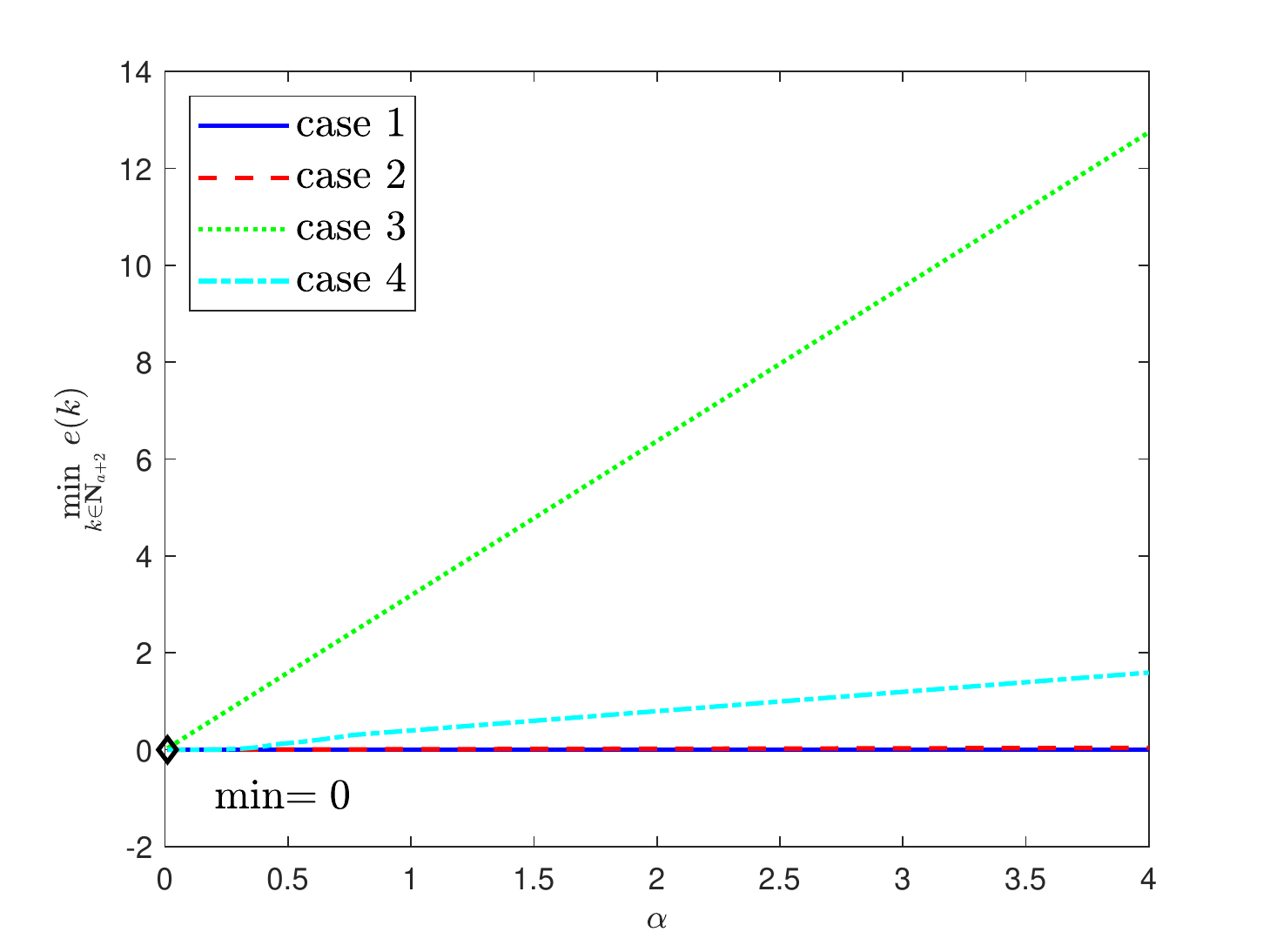}
	\label{Fig9d}
	\end{minipage}%
	}
	\centering
	\caption{The evolution of the error with respect to the order.}
	\label{Fig9}
\end{figure}

The synchronous property of $u(k)$ and $v(k)$ can be confirmed by Figure \ref{Fig8}. In particular, $u(k),v(k)$ are continuously oscillating in {\rm case 1} and {\rm case 2}. $u(k),v(k)$ are monotonically increasing in {\rm case 3}. $u(k),v(k)$ are monotonically decreasing in {\rm case 4}. In Figure \ref{Fig9a} and Figure \ref{Fig9c}, the tempered functions $w(k)$ are positive. In Figure \ref{Fig9b} and Figure \ref{Fig9d}, the tempered functions $w(k)$ are negative. From the quantitative analysis, it can be observed that the minimum of $e(k)$ in all four cases are $0$, which verifies the validity of Theorem \ref{Theorem3.28} firmly.
\end{example}

\begin{example}\label{Example4}
To examine the Cauchy fractional sum inequality, define $e(k):=\big\{{}_a^{\rm G} \nabla_{k}^{-\alpha,w(k)}[u(k) v(k)]\big\}^{2}-{ }_{a}^{\rm G} \nabla_{k}^{-\alpha,w(k)}[u^{2}(k)]{}_{a}^{\rm G} \nabla_{k}^{-\alpha,w(k)}[v^{2}(k)]$. Setting $a=0$, $\alpha=0.01,0.02,\cdots,4$ and considering the following four cases
\[\left\{ \begin{array}{l}
{\rm{case}}\;1:\;u(k)=\sin (10 k)+2, v(k)={\cos (10 k)}-2;\\
{\rm{case}}\;2:\;u(k)=\sin (10 k){(1-\lambda)^{a-k}}, v(k)={\cos (10 k)}{(1-\lambda)^{k-a}}, \lambda=2;\\
{\rm{case}}\;3:\;u(k)=\sin (10 k)+\nu_1, v(k)=\cos (10 k)+\nu_2, \nu_1,\nu_2=\texttt{randn}(\texttt{size}(k));\\
{\rm{case}}\;4:\;u(k)=\nu_1\sin (10 k), v(k)=\nu_2\cos (10 k), \nu_1,\nu_2=\texttt{randn}(\texttt{size}(k)),
\end{array} \right.\]
the simulated results are shown in Figure \ref{Fig10} and Figure \ref{Fig11}.
\begin{figure}[!htbp]
  \centering
  \includegraphics[width=0.5\textwidth]{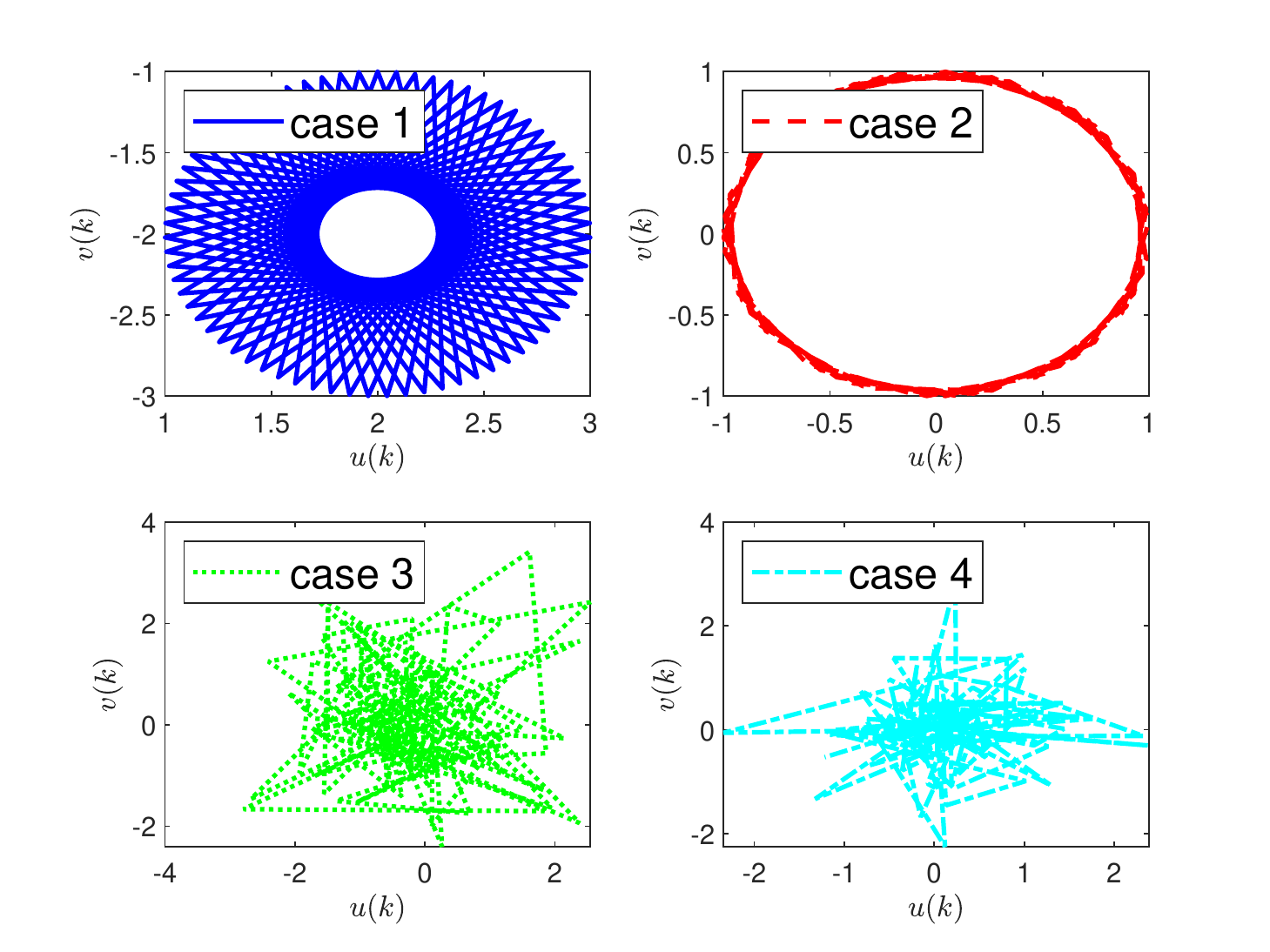}
  \vspace{-10pt}
  \caption{The diagram of $u(k)$ and $v(k)$.}\label{Fig10}
\end{figure}
\begin{figure}[!htbp]
	\centering
	\setlength{\abovecaptionskip}{-2pt}
	\vspace{-10pt}
	\subfigtopskip=-2pt
	\subfigbottomskip=2pt
	\subfigcapskip=-2pt
	\subfigure[$w(k) = {( - 1)^{k - a}} + 2$]{
	\begin{minipage}[t]{0.5\linewidth}
	\includegraphics[width=1\hsize]{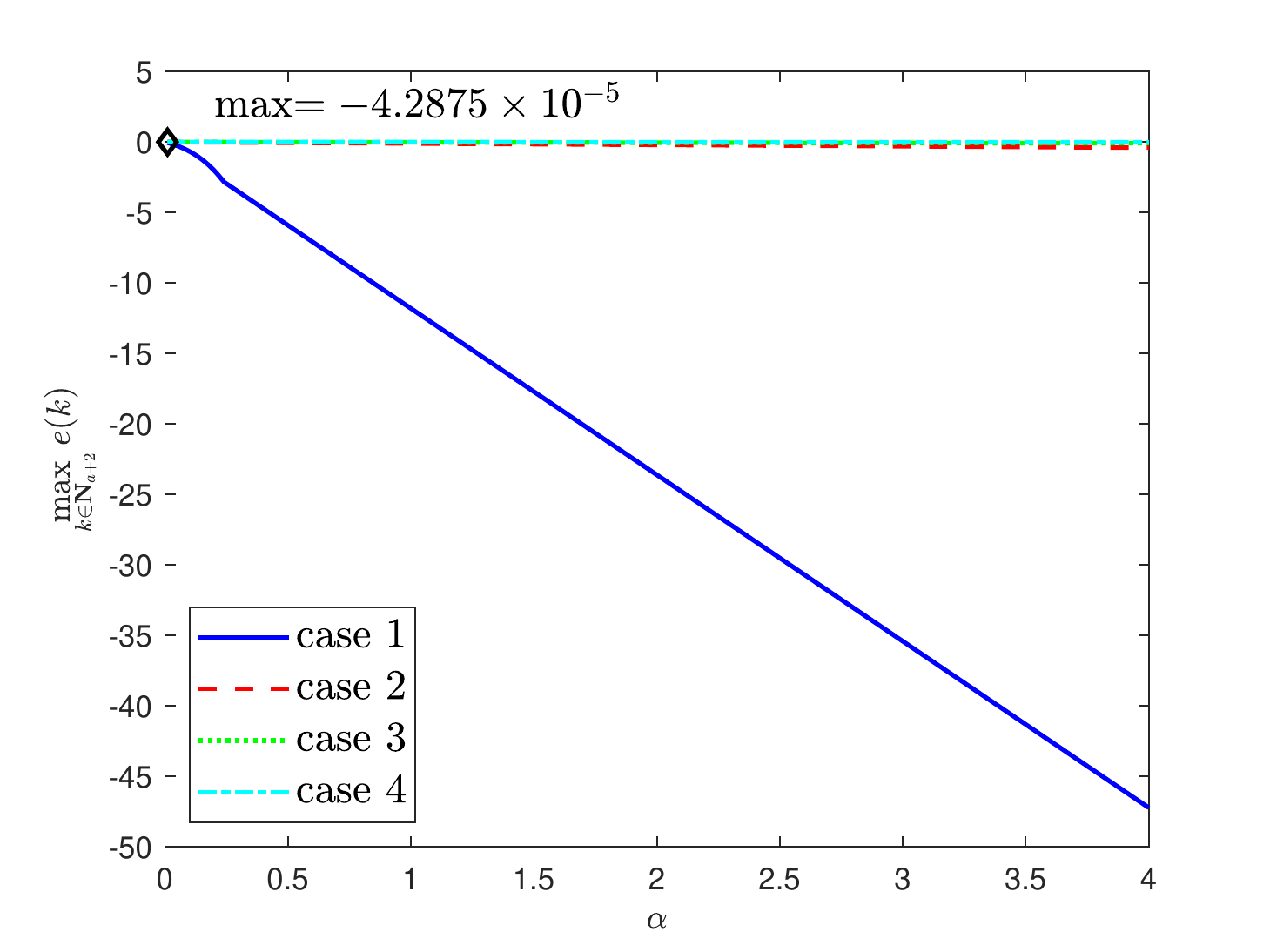}
	\label{Fig11a}
	\end{minipage}%
	}\hspace{-12pt}
	\subfigure[$w(k) = {( - 1)^{k - a}} - 2$]{
	\begin{minipage}[t]{0.5\linewidth}
	\includegraphics[width=1\hsize]{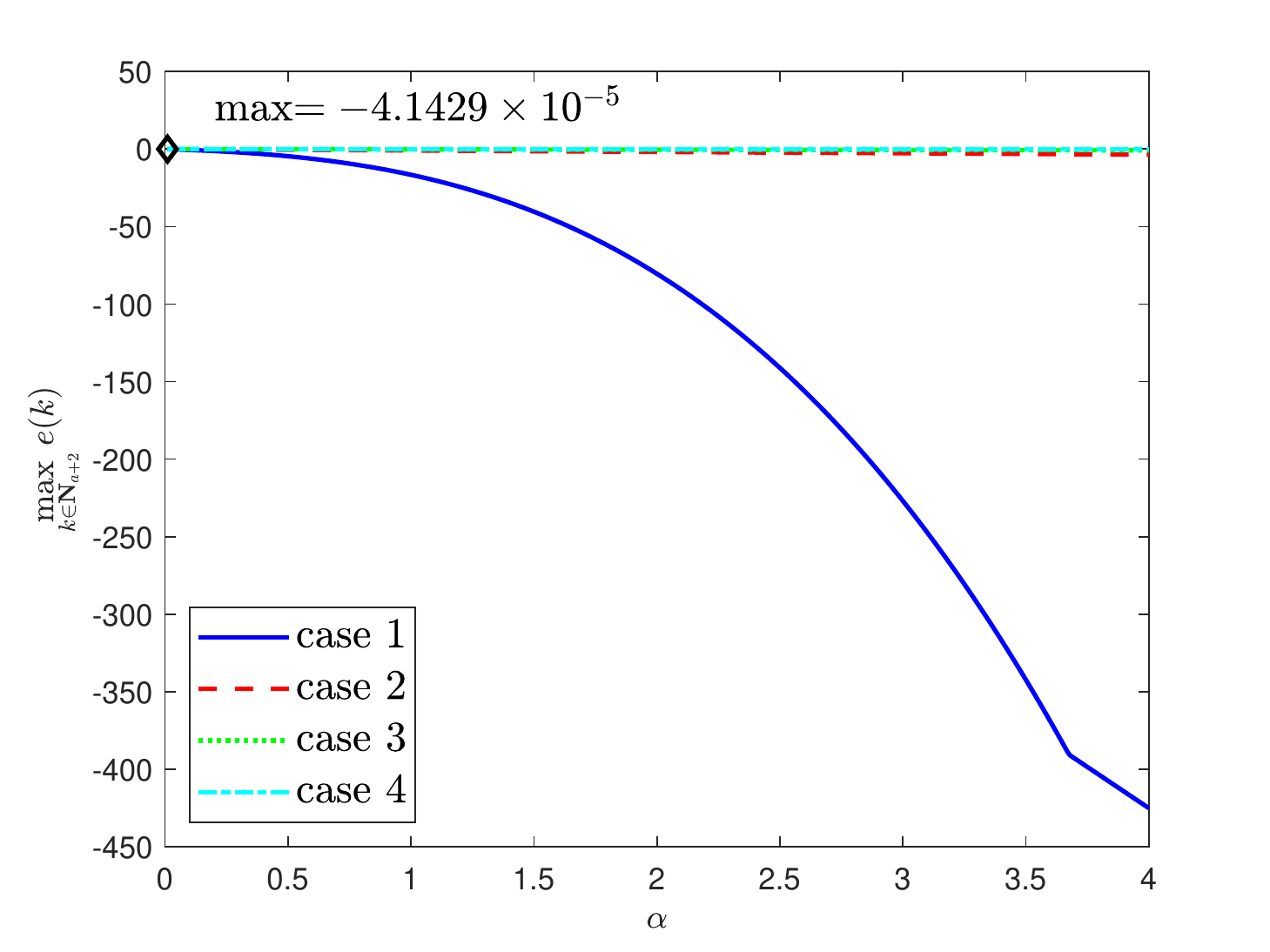}
	\label{Fig11b}
	\end{minipage}%
	}
    \subfigure[$w(k) = \sin (k - a) + 2$]{
	\begin{minipage}[t]{0.5\linewidth}
	\includegraphics[width=1\hsize]{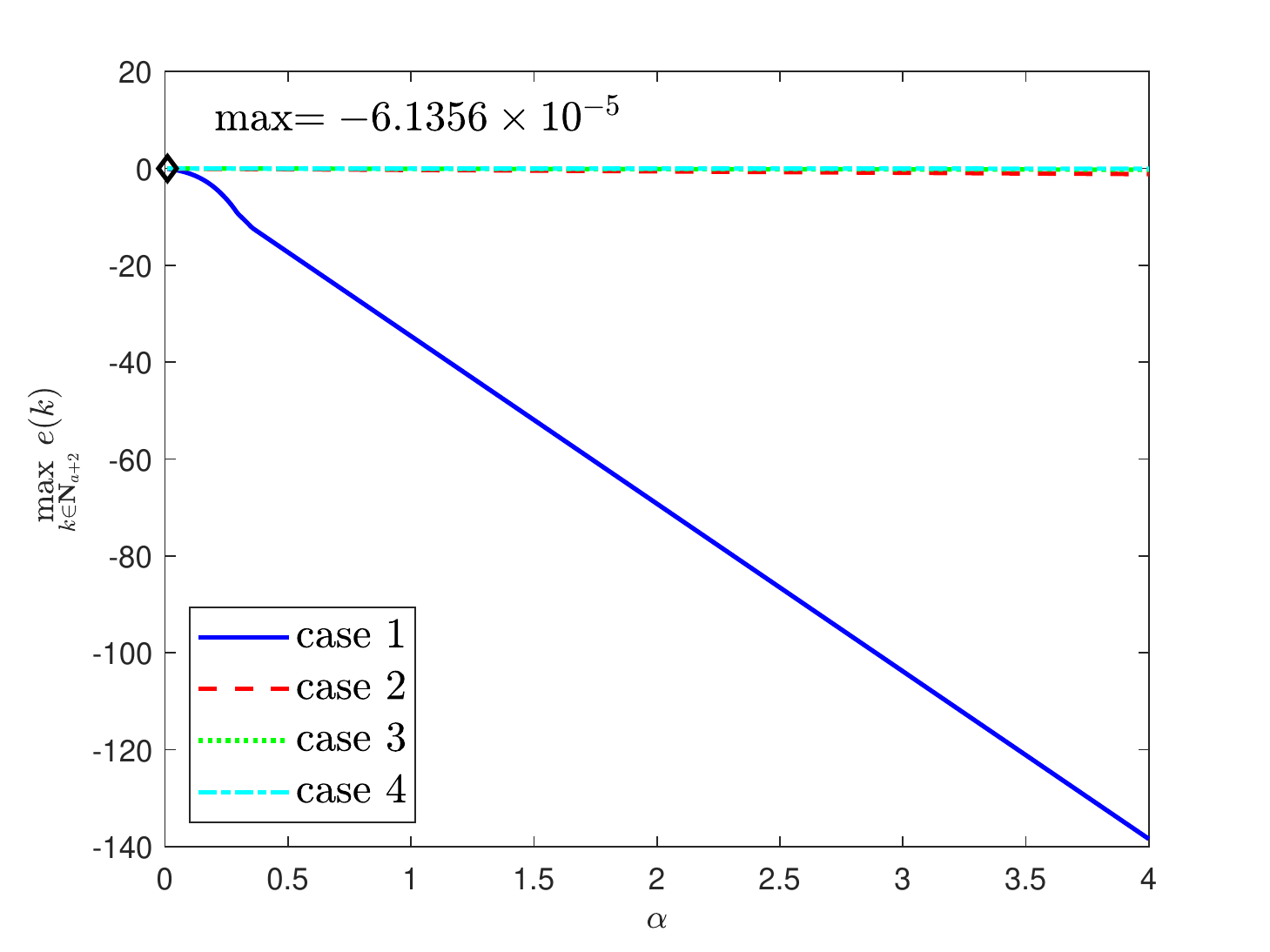}
	\label{Fig11c}
	\end{minipage}%
	}\hspace{-12pt}
	\subfigure[$w(k) = \sin (k - a) - 2$]{
	\begin{minipage}[t]{0.5\linewidth}
	\includegraphics[width=1\hsize]{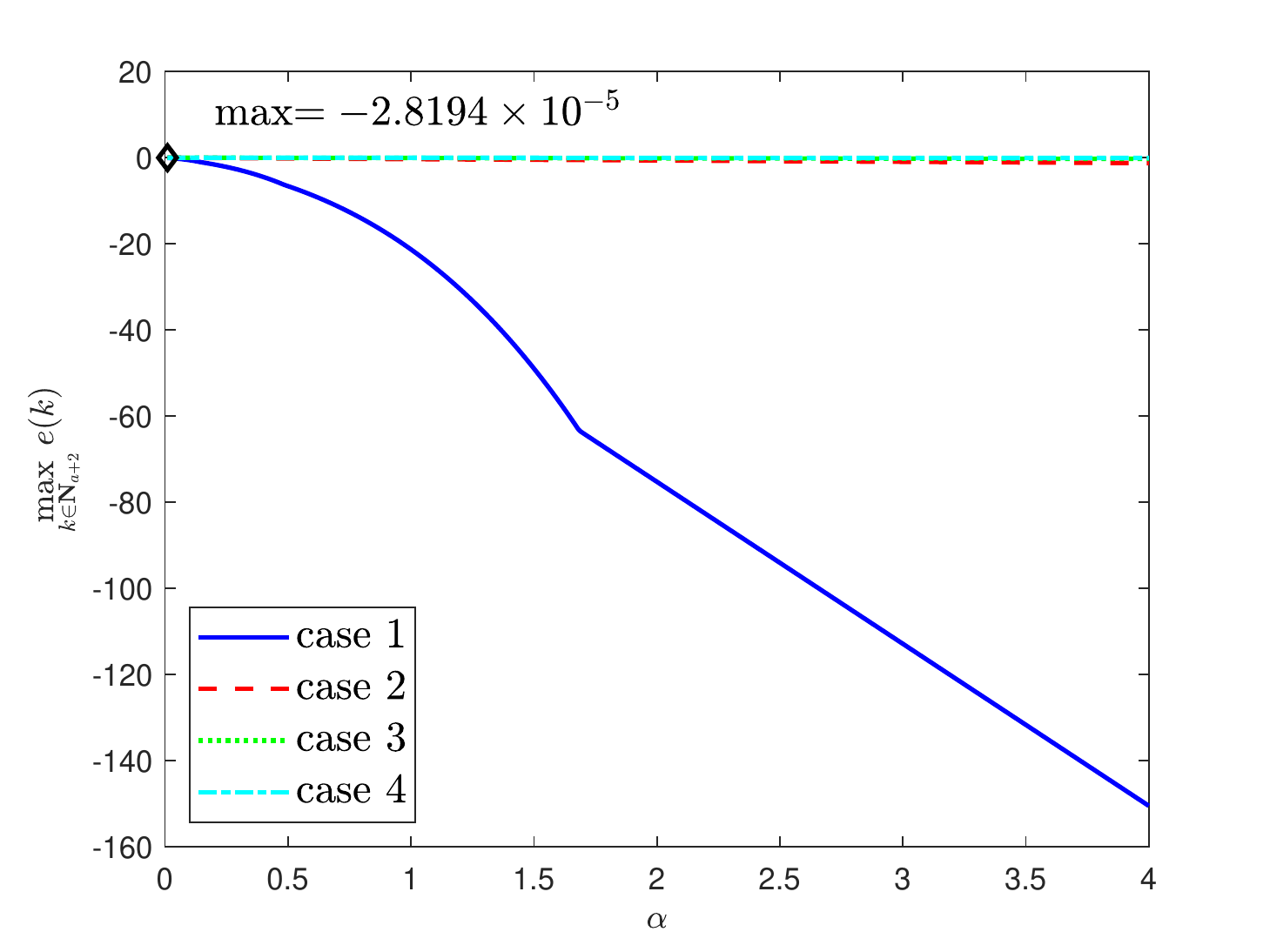}
	\label{Fig11d}
	\end{minipage}%
	}
	\centering
	\caption{The evolution of the error with respect to the order.}
	\label{Fig11}
\end{figure}

Notably, $u(k),v(k)$ are continuously oscillating in {\rm case 1} and {\rm case 2}. $u(k),v(k)$ are continuously oscillating functions combined with the additive white noise in {\rm case 3}. $u(k),v(k)$ are continuously oscillating functions combined with the multiplicative white noise in {\rm case 4}. The tempered functions $w(k)$ are selected as Example \ref{Example3}. It can be found that the maximum of $e(k)$ in all the mentioned cases are negative, which confirms the correctness of Theorem \ref{Theorem3.31} firmly. Since $\mathop {\lim }\limits_{\alpha  \to 0} { }_{a}^{\mathrm{G}} \nabla_{k}^{\alpha} x(k)=x(k)$, it follows $\mathop {\lim }\limits_{\alpha  \to 0} e(k)=0$ which coincides with the obtained simulated results. Besides, for given $u(k),v(k),w(k)$, with the increase of $\alpha$, the value of function $f(x)$ in (\ref{Eq3.178}) increases gradually. As a result, the discriminant $\Delta$ in (\ref{Eq3.180}) decreases gradually, which just matches the variation tendency of the plotted curves.
\end{example}

\begin{example}\label{Example5}
To examine the Jensen fractional sum inequality, define $e(k):=f\big([{}_a^{\rm G}\nabla _k^{ - \alpha,w(k) }1]^{-1}{}_a^{\rm G}\nabla _k^{ - \alpha,w(k) }x(k)\big)-[{}_a^{\rm G}\nabla _k^{ - \alpha ,w(k)}1]^{-1}{}_a^{\rm G}\nabla _k^{ - \alpha,w(k) }f(x(k))$. Setting $a=0$, $\alpha=0.01,0.02,\cdots,4$, $x(k)=-1:0.01:1$ and considering the following four cases
\[\left\{ \begin{array}{l}
{\rm{case}}\;1:\;f(x(k))=|x(k)|^{1.5};\\
{\rm{case}}\;2:\;f(x(k))=x^2(k);\\
{\rm{case}}\;3:\;f(x(k))={\rm e}^{0.5x(k)};\\
{\rm{case}}\;4:\;f(x(k))=\max\{|x(k)|^{1.5},x^2(k),{\rm e}^{0.5x(k)}\},
\end{array} \right.\]
the simulated results are shown in Figure \ref{Fig12} and Figure \ref{Fig13}.
\begin{figure}[!htbp]
  \centering
  \includegraphics[width=0.5\textwidth]{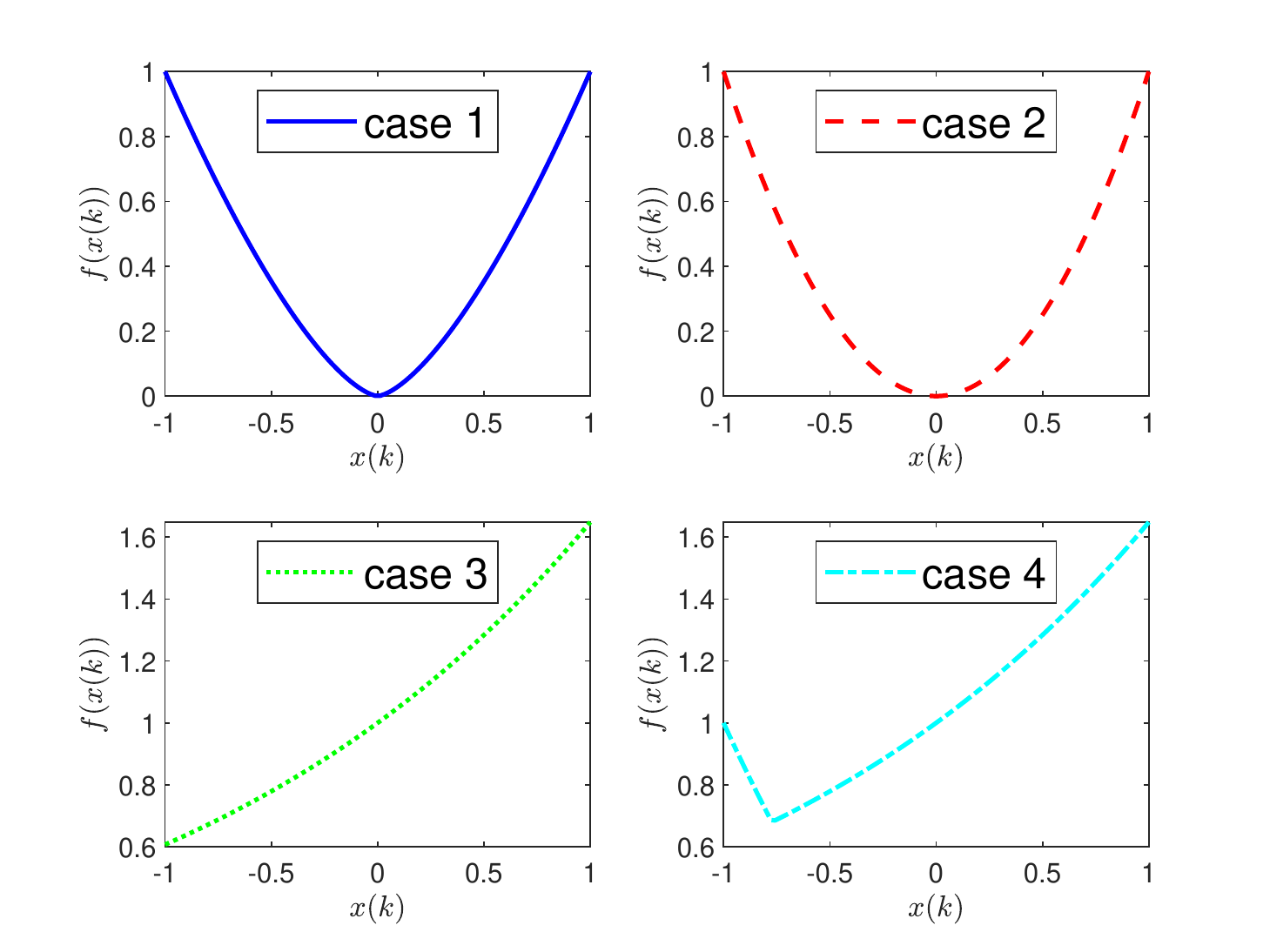}
  \caption{$f(x(k))$ with regard to $x(k)$. }\label{Fig12}
\end{figure}
\begin{figure}[!htbp]
	\centering
	\setlength{\abovecaptionskip}{-2pt}
	\vspace{-10pt}
	\subfigtopskip=-2pt
	\subfigbottomskip=2pt
	\subfigcapskip=-2pt
	\subfigure[$w(k) = {( - 1)^{k - a}} + 2$]{
	\begin{minipage}[t]{0.5\linewidth}
	\includegraphics[width=1\hsize]{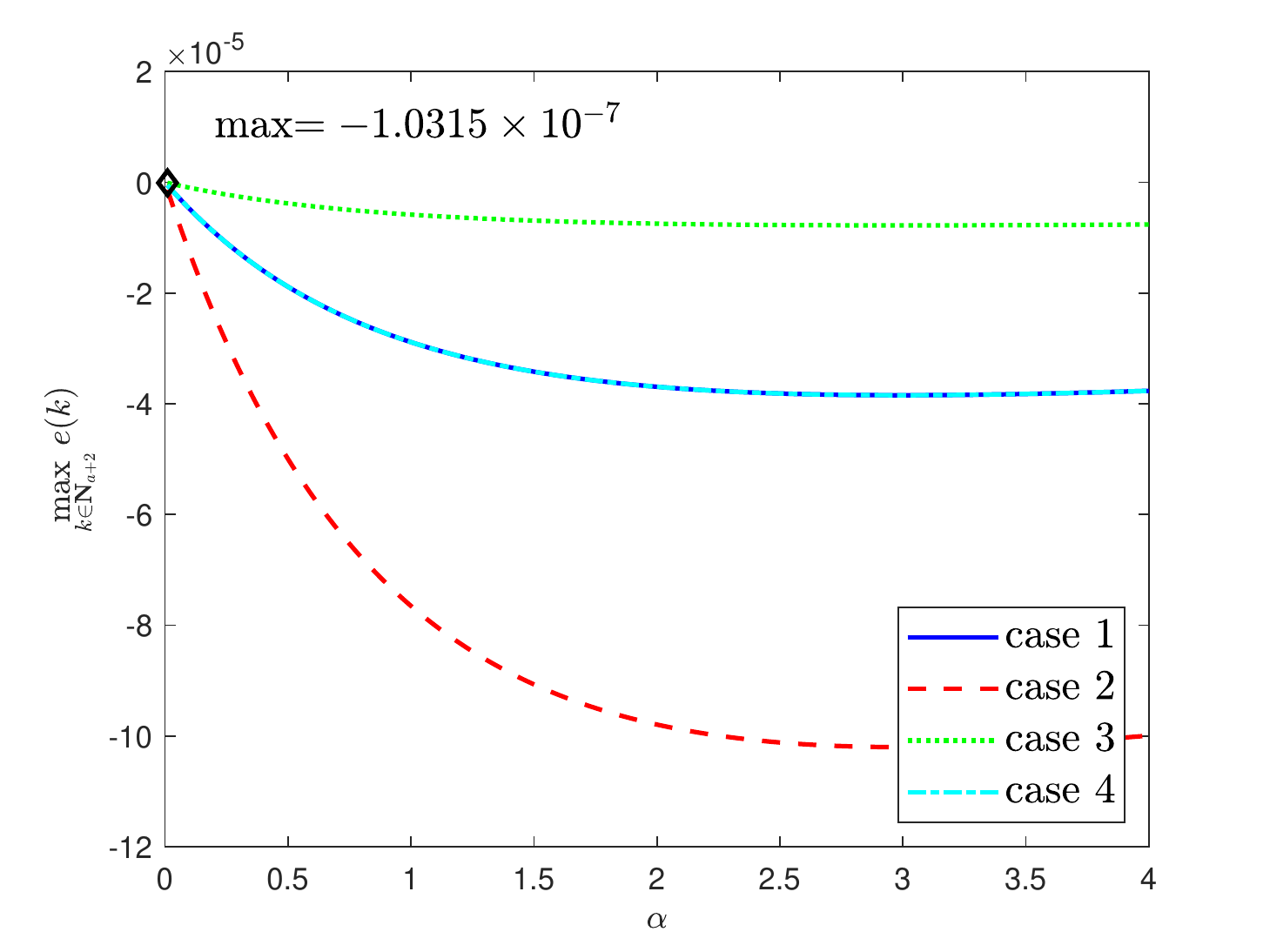}
	\label{Fig13a}
	\end{minipage}%
	}\hspace{-12pt}
	\subfigure[$w(k) = {( - 1)^{k - a}} - 2$]{
	\begin{minipage}[t]{0.5\linewidth}
	\includegraphics[width=1\hsize]{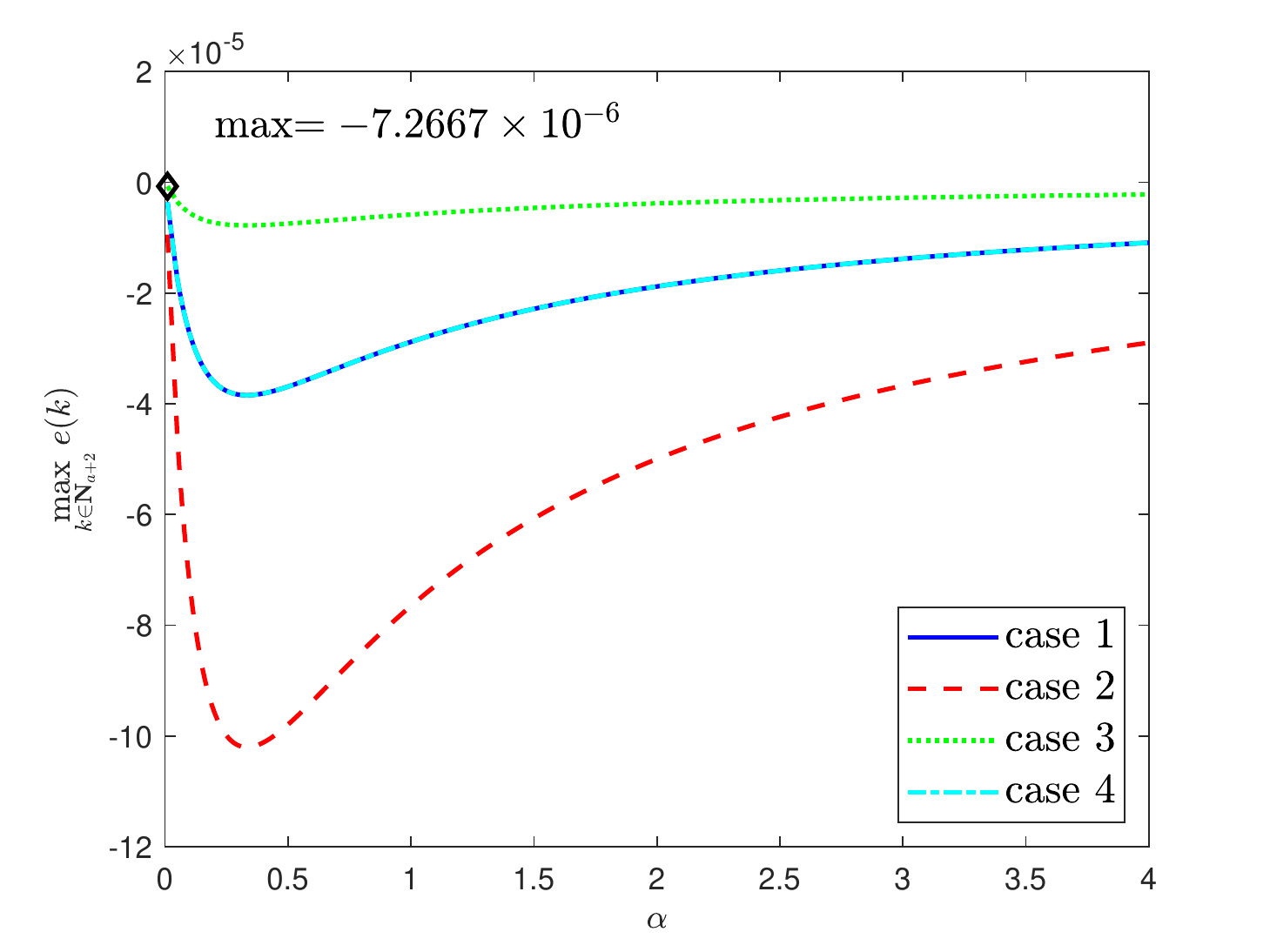}
	\label{Fig13b}
	\end{minipage}%
	}
    \subfigure[$w(k) = \sin (k - a) + 2$]{
	\begin{minipage}[t]{0.5\linewidth}
	\includegraphics[width=1\hsize]{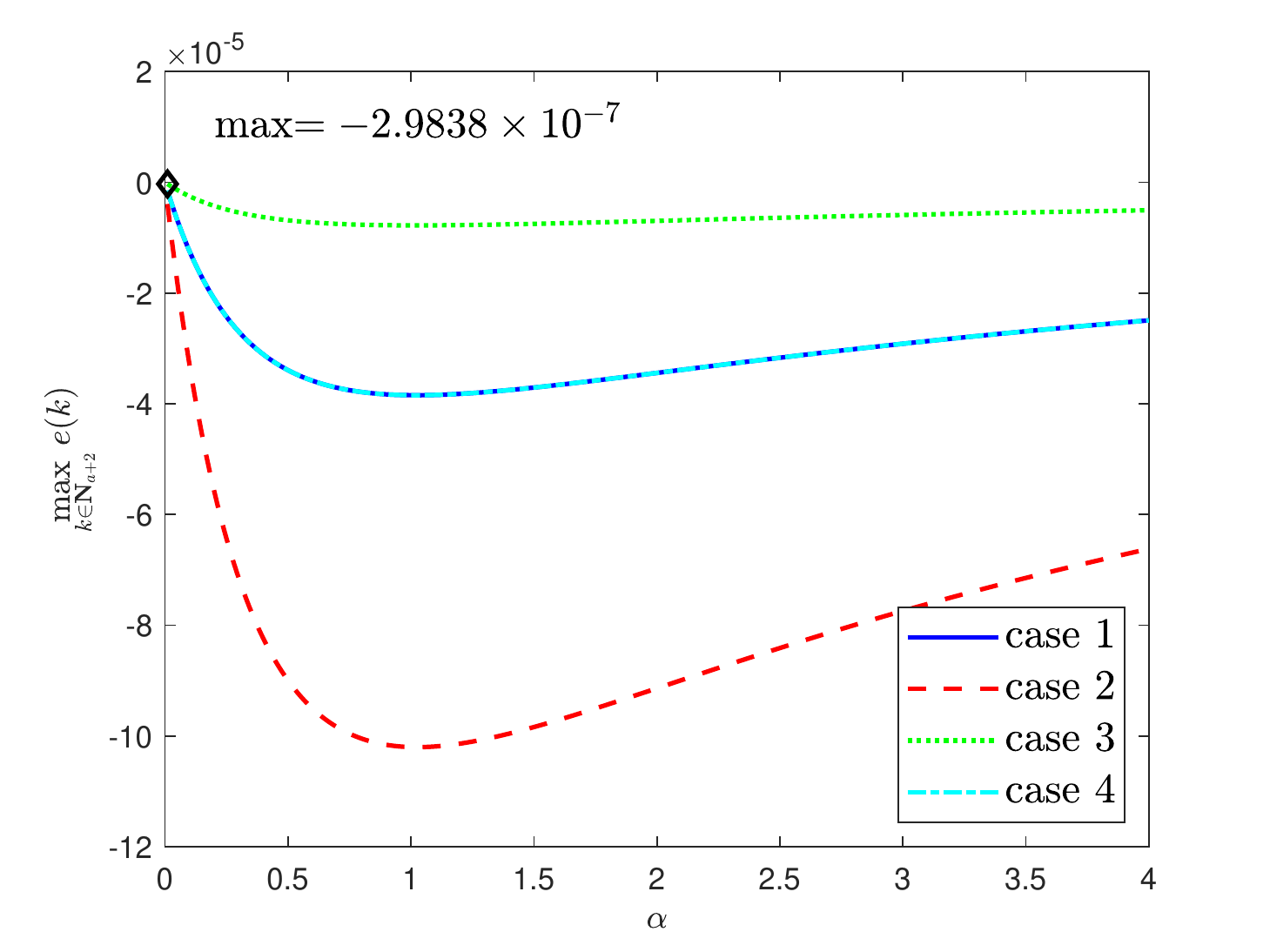}
	\label{Fig13c}
	\end{minipage}%
	}\hspace{-12pt}
	\subfigure[$w(k) = \sin (k - a) - 2$]{
	\begin{minipage}[t]{0.5\linewidth}
	\includegraphics[width=1\hsize]{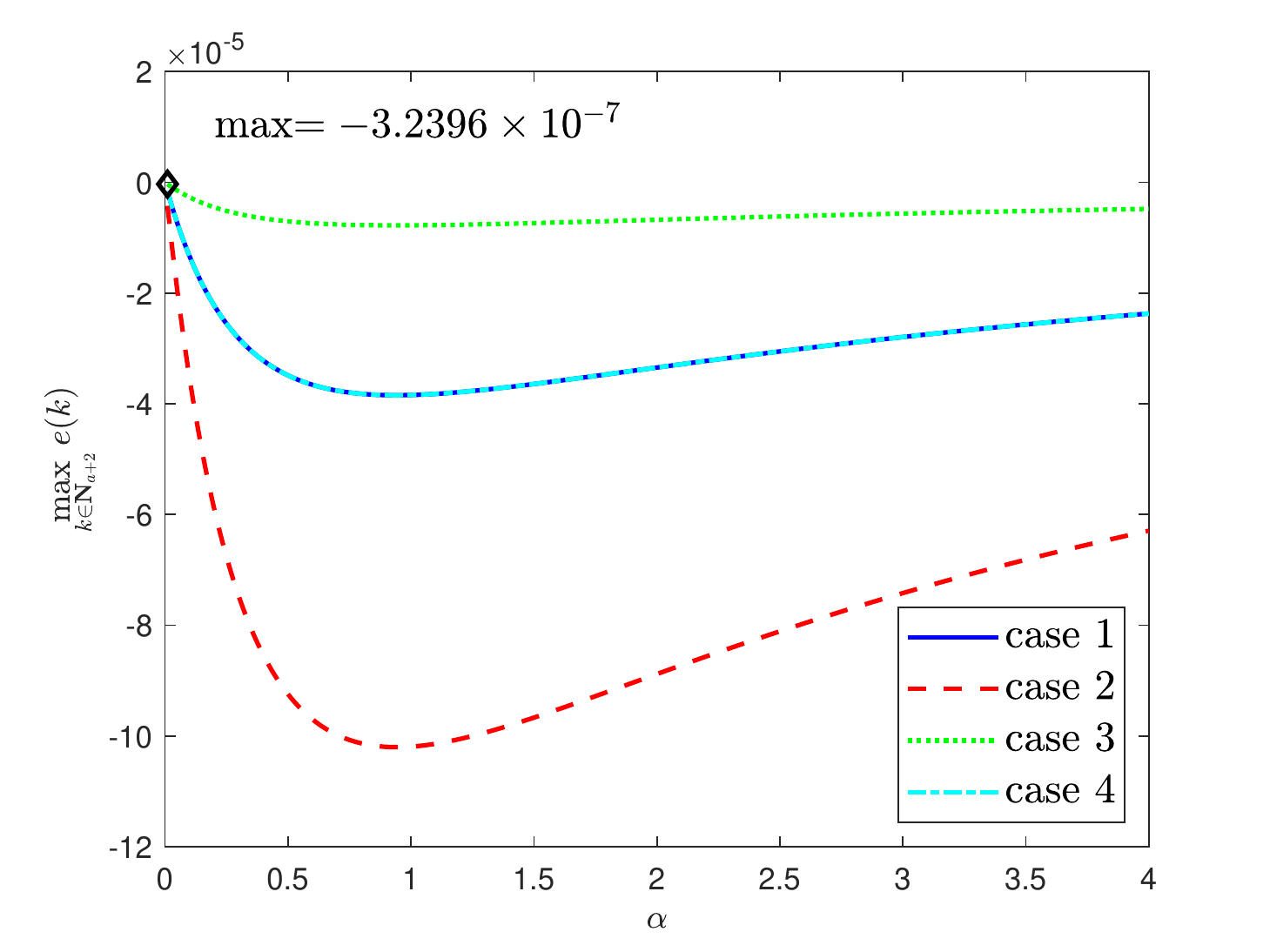}
	\label{Fig13d}
	\end{minipage}%
	}
	\centering
	\caption{The evolution of the error with respect to the order.}
	\label{Fig13}
\end{figure}

From Figure \ref{Fig12}, it can be found that all the mentioned four functions are all convex. The maximum of $e(k)$ in all the mentioned cases are negative, which coincides with the theoretical results in Theorem \ref{Theorem3.32} firmly.
\end{example}

\begin{example}\label{Example6}
To examine the H\"{o}lder fractional sum inequality, define $e(k):={}_a^{\rm G}\nabla _k^{ - \alpha,w(k) }[u(k)v(k)]-[{}_a^{\rm G}\nabla _k^{ - \alpha,w(k) }u^p(k)]^{\frac{1}{p}}[{}_a^{\rm G}\nabla _k^{ - \alpha,w(k) }v^q(k)]^{\frac{1}{q}}$. Setting $a=0$, $\alpha=0.01,0.02,\cdots,4$ and considering the following four cases
\[\left\{ \begin{array}{l}
{\rm{case}}\;1:\;u(k)=1+0.5\sin (10 k),v(k)=1+0.5\cos (10 k);\\
{\rm{case}}\;2:\;u(k)=2+(-1)^{k-a},v(k)=2-(-1)^{k-a};\\
{\rm{case}}\;3:\;u(k)=|\sin(10k)|+0.01,v(k)=|\cos(10k)|+0.01;\\
{\rm{case}}\;4:\;u(k)=|\texttt{randn}(\texttt{size}(k))|+0.01, v(k)=|\texttt{randn}(\texttt{size}(k))|+0.01,
\end{array} \right.\]
the simulated results are shown in Figure \ref{Fig14}-Figure \ref{Fig16}.
\begin{figure}[!htbp]
  \centering
  \includegraphics[width=0.5\textwidth]{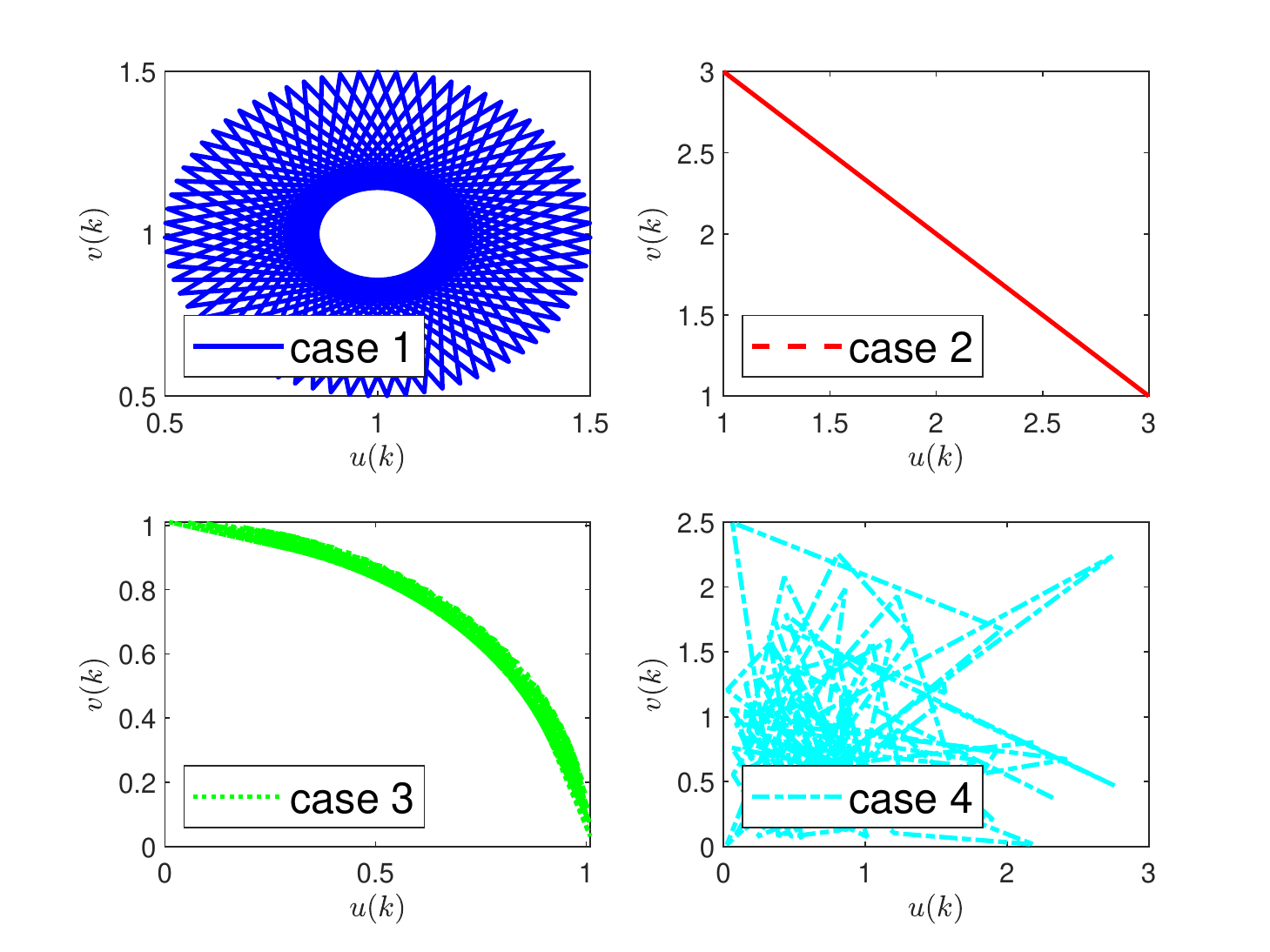}
  \caption{The diagram of $u(k)$ and $v(k)$}\label{Fig14}
\end{figure}
\begin{figure}[!htbp]
	\centering
	\setlength{\abovecaptionskip}{-2pt}
	\vspace{-10pt}
	\subfigtopskip=-2pt
	\subfigbottomskip=2pt
	\subfigcapskip=-2pt
	\subfigure[$w(k) = {( - 1)^{k - a}} + 2$]{
	\begin{minipage}[t]{0.5\linewidth}
	\includegraphics[width=1\hsize]{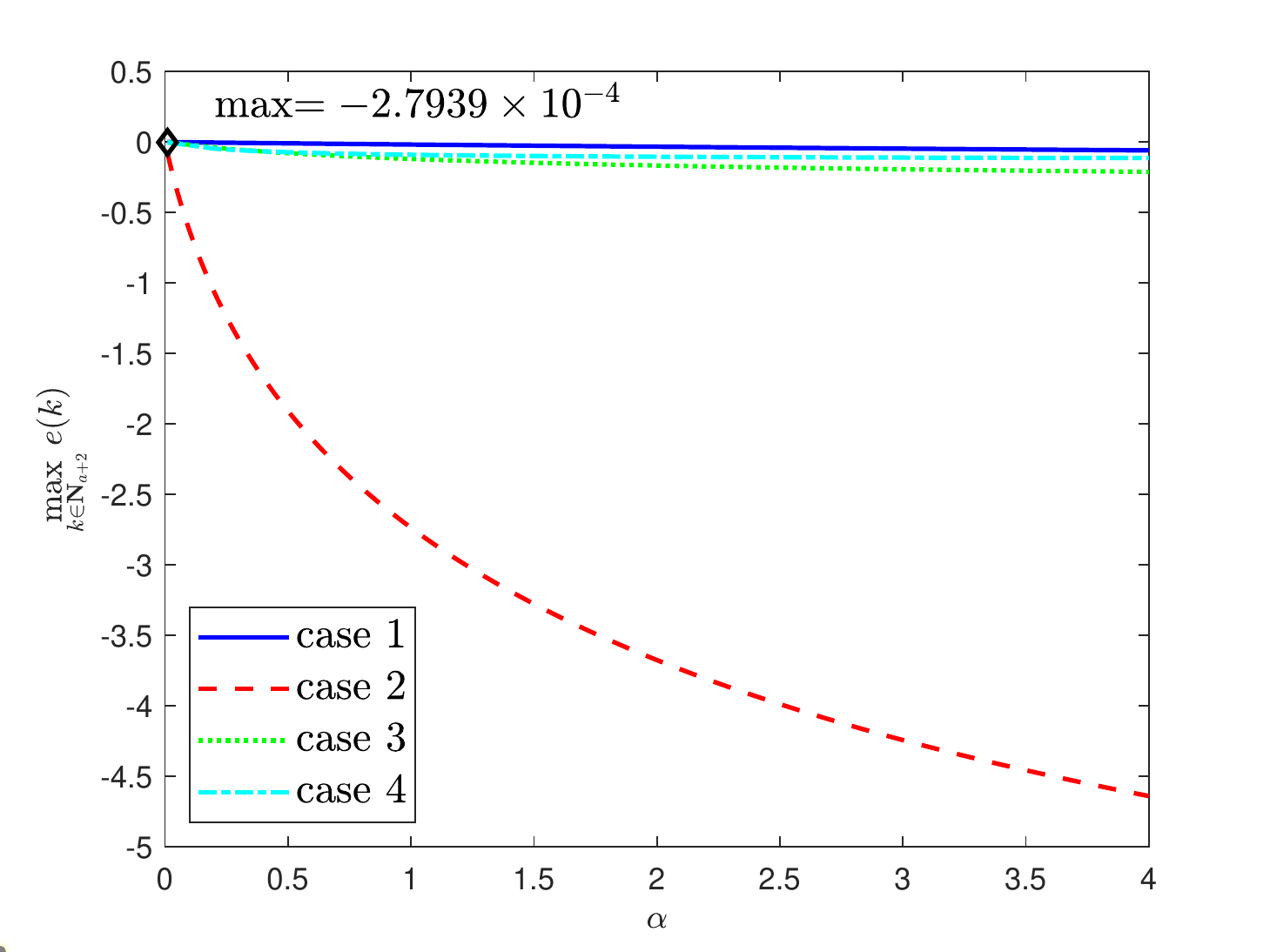}
	\label{Fig15a}
	\end{minipage}%
	}\hspace{-12pt}
	\subfigure[$w(k) = {( - 1)^{k - a}} - 2$]{
	\begin{minipage}[t]{0.5\linewidth}
	\includegraphics[width=1\hsize]{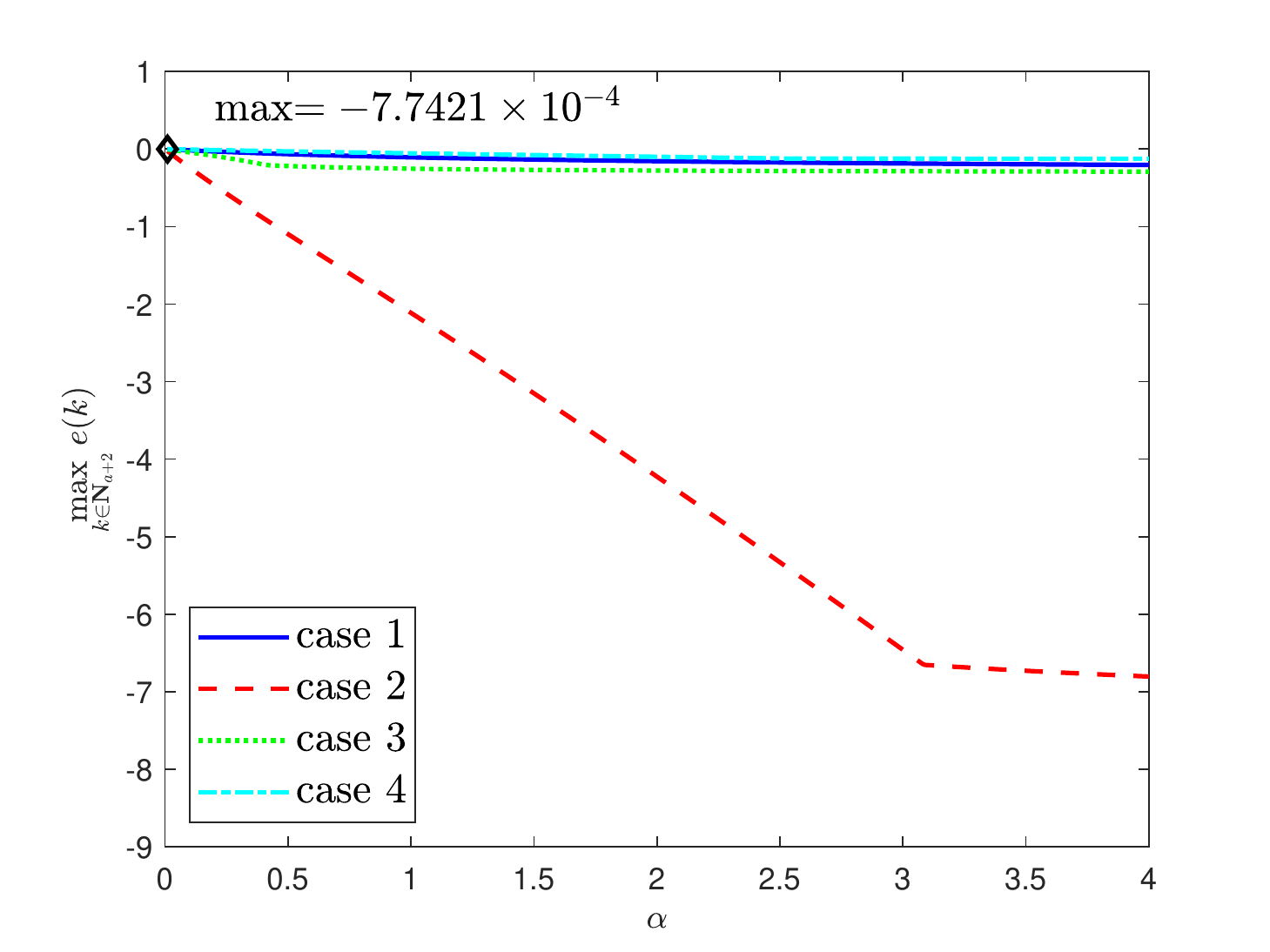}
	\label{Fig15b}
	\end{minipage}%
	}
    \subfigure[$w(k) = \sin (k - a) + 2$]{
	\begin{minipage}[t]{0.5\linewidth}
	\includegraphics[width=1\hsize]{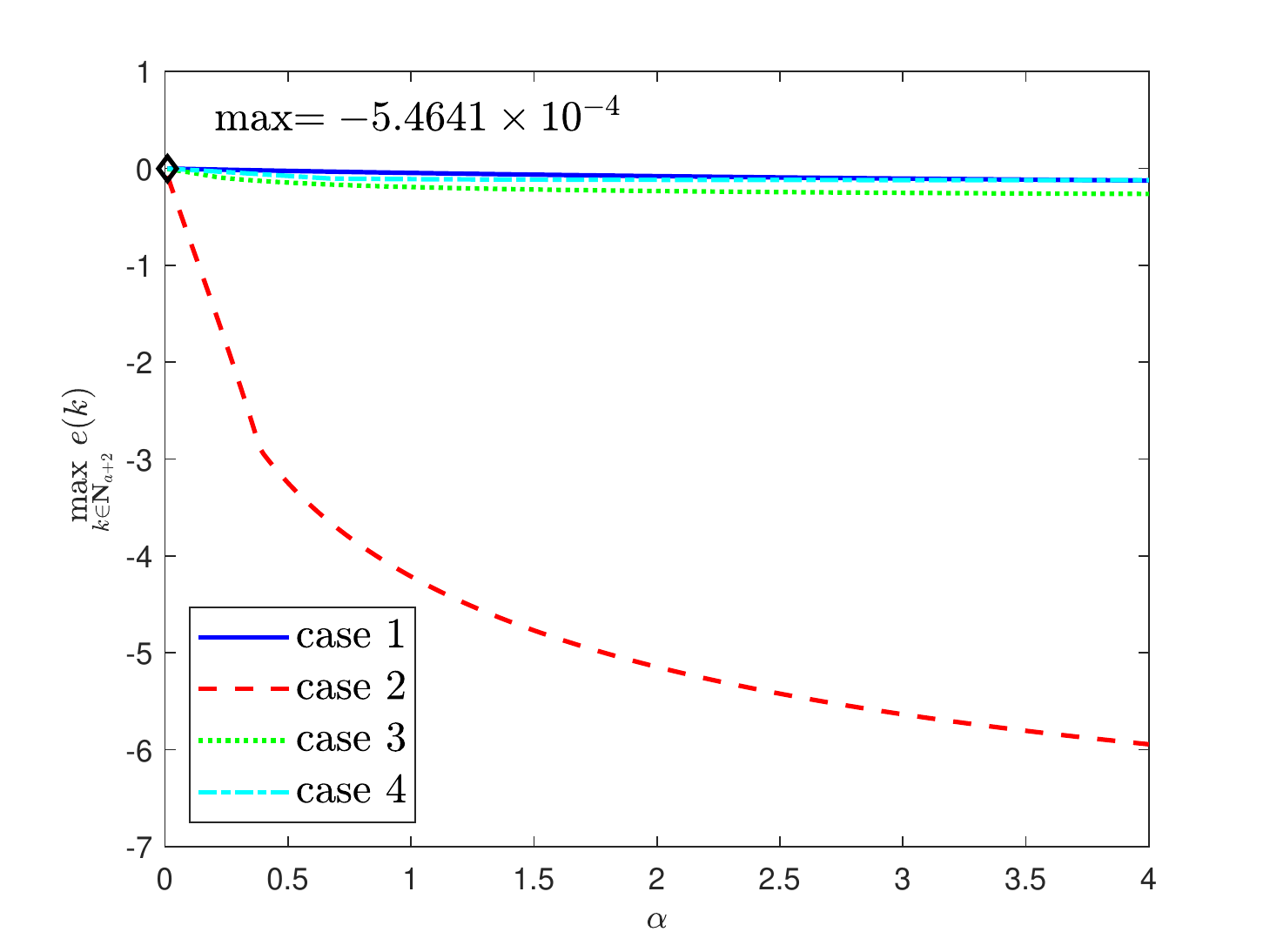}
	\label{Fig15c}
	\end{minipage}%
	}\hspace{-12pt}
	\subfigure[$w(k) = \sin (k - a) - 2$]{
	\begin{minipage}[t]{0.5\linewidth}
	\includegraphics[width=1\hsize]{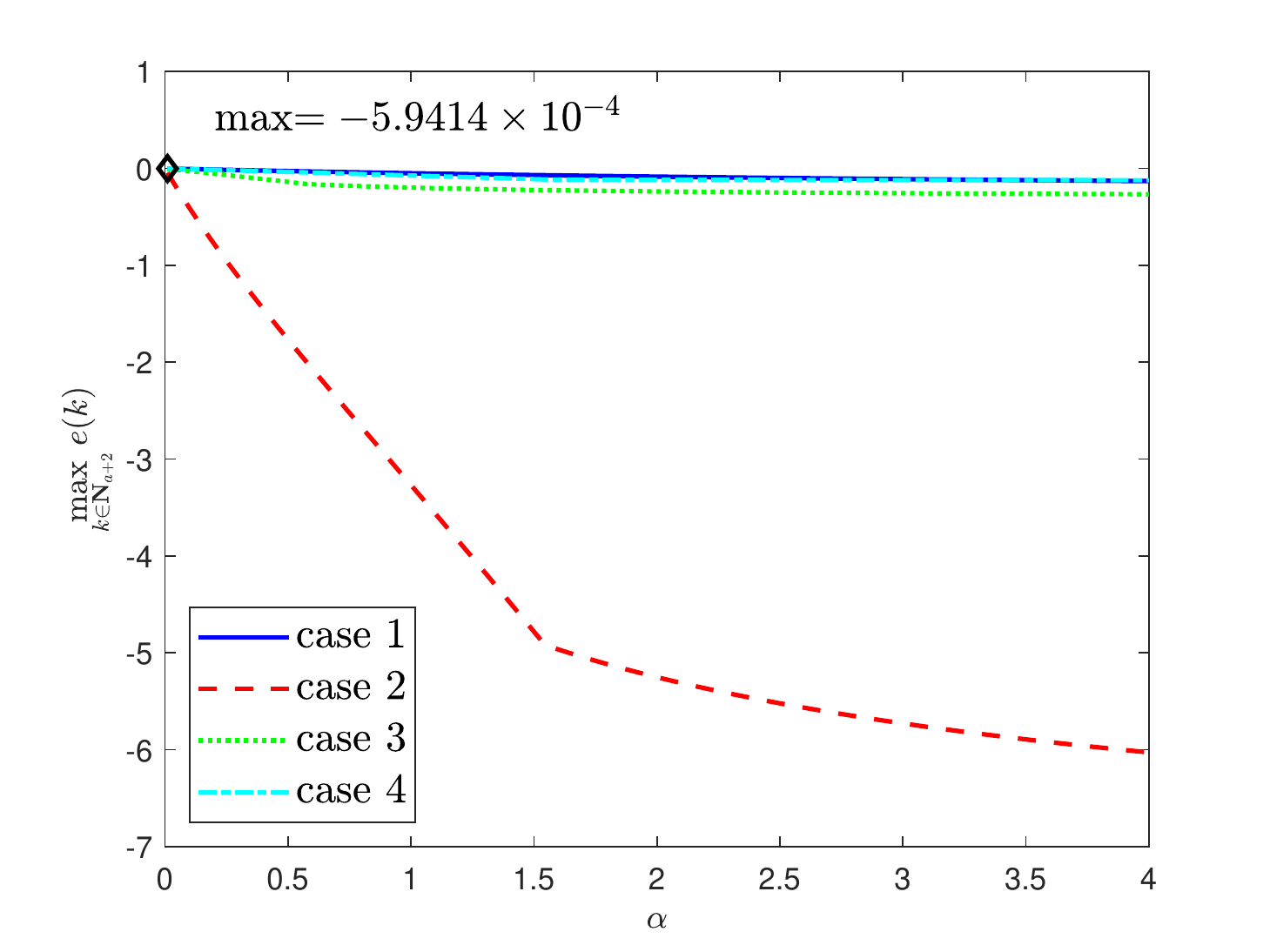}
	\label{Fig15d}
	\end{minipage}%
	}
	\centering
	\caption{The evolution of the error with respect to the order ($p=1.5$).}
	\label{Fig15}
\end{figure}
\begin{figure}[!htbp]
	\centering
	\setlength{\abovecaptionskip}{-2pt}
	\vspace{-10pt}
	\subfigtopskip=-2pt
	\subfigbottomskip=2pt
	\subfigcapskip=-2pt
	\subfigure[$w(k) = {( - 1)^{k - a}} + 2$]{
	\begin{minipage}[t]{0.5\linewidth}
	\includegraphics[width=1\hsize]{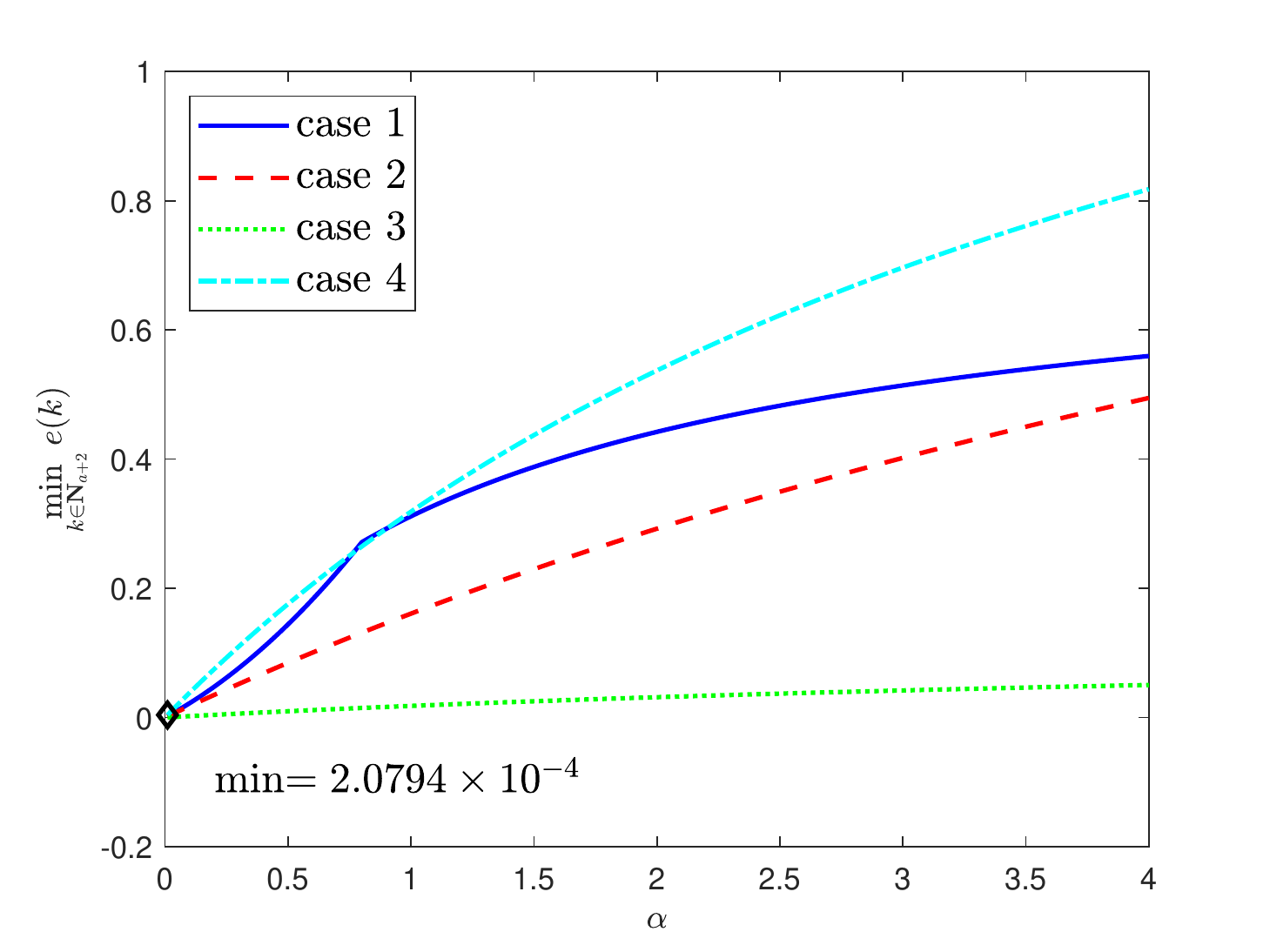}
	\label{Fig16a}
	\end{minipage}%
	}\hspace{-12pt}
	\subfigure[$w(k) = {( - 1)^{k - a}} - 2$]{
	\begin{minipage}[t]{0.5\linewidth}
	\includegraphics[width=1\hsize]{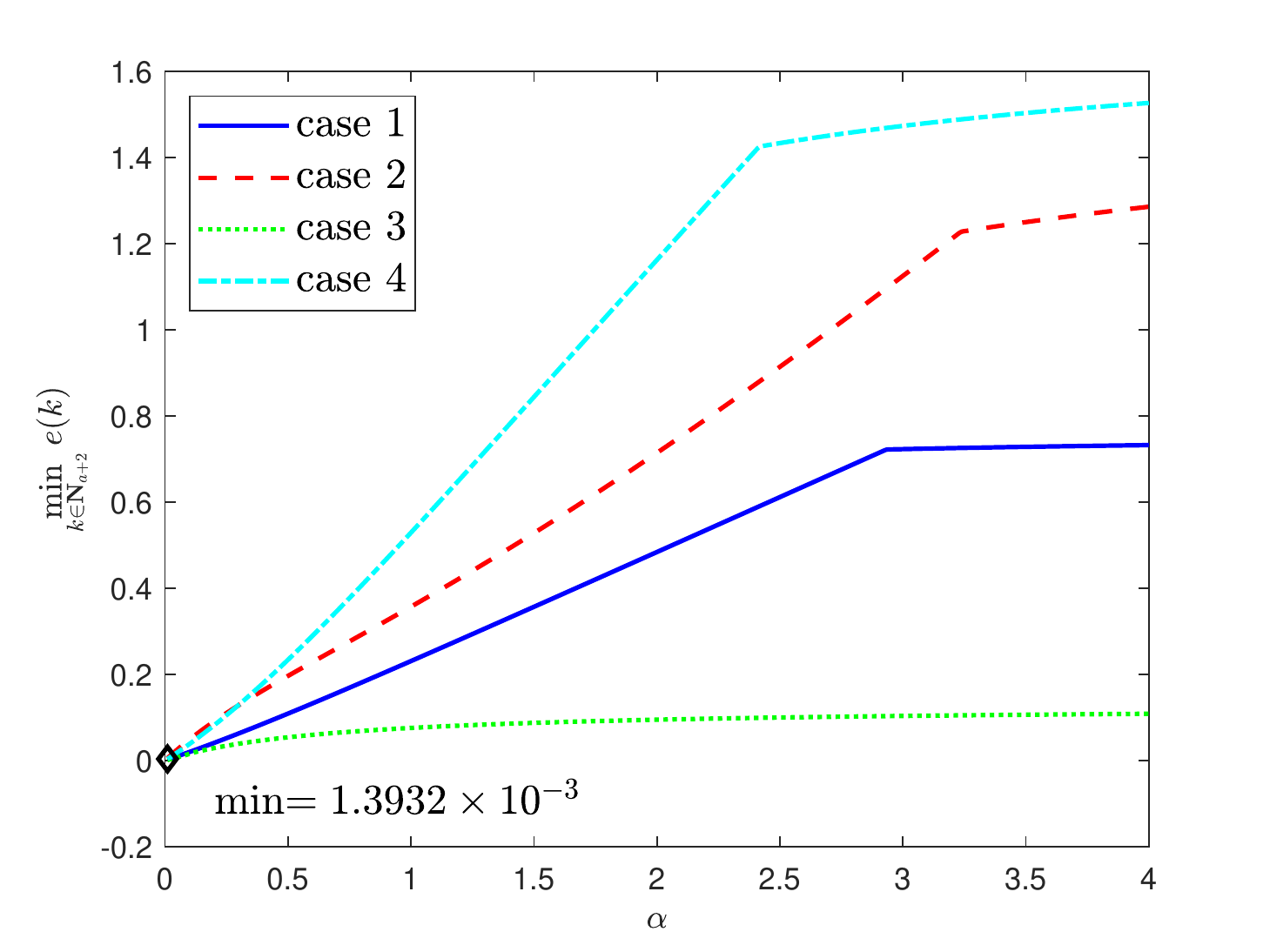}
	\label{Fig16b}
	\end{minipage}%
	}
    \subfigure[$w(k) = \sin (k - a) + 2$]{
	\begin{minipage}[t]{0.5\linewidth}
	\includegraphics[width=1\hsize]{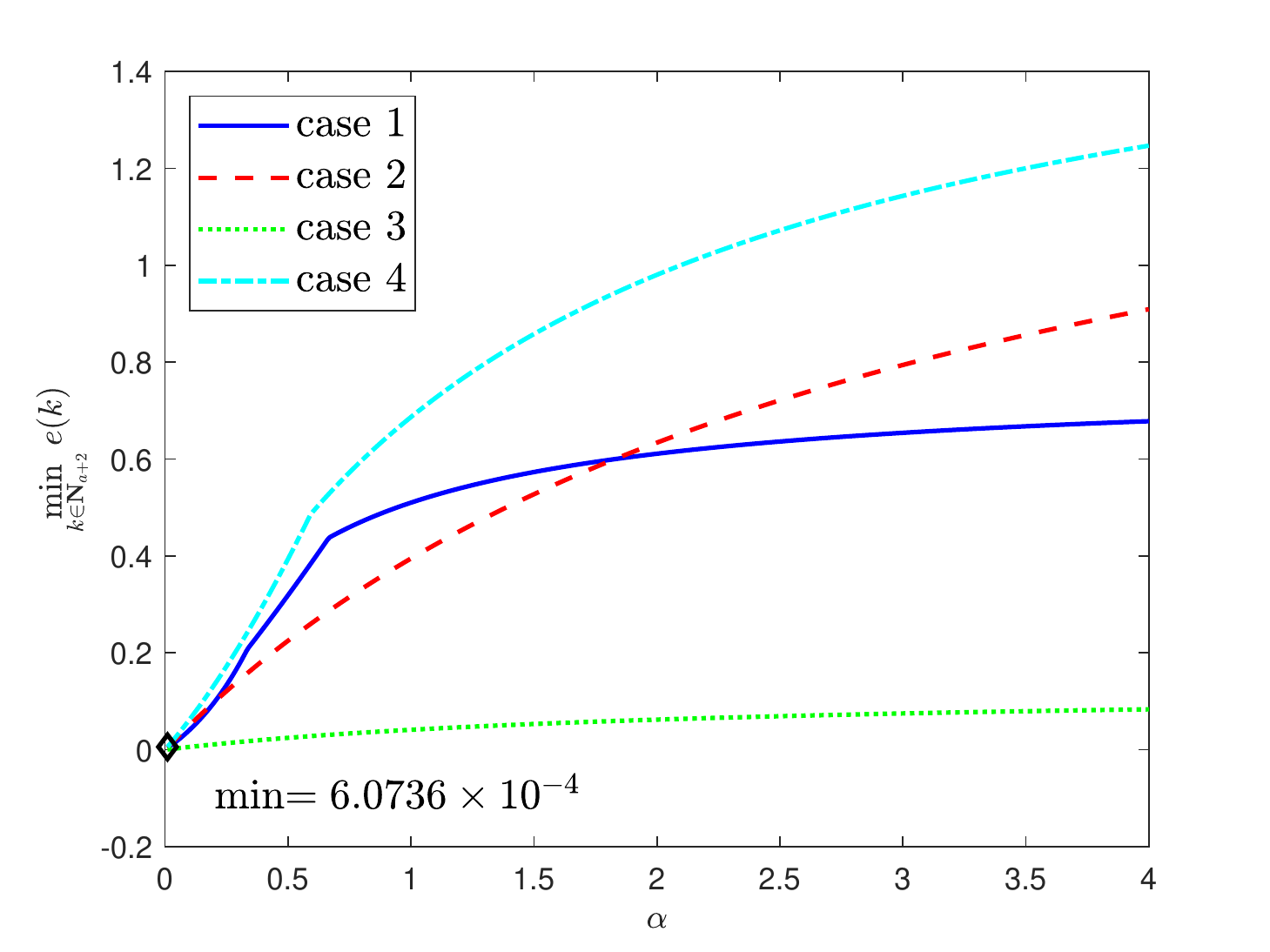}
	\label{Fig16c}
	\end{minipage}%
	}\hspace{-12pt}
	\subfigure[$w(k) = \sin (k - a) - 2$]{
	\begin{minipage}[t]{0.5\linewidth}
	\includegraphics[width=1\hsize]{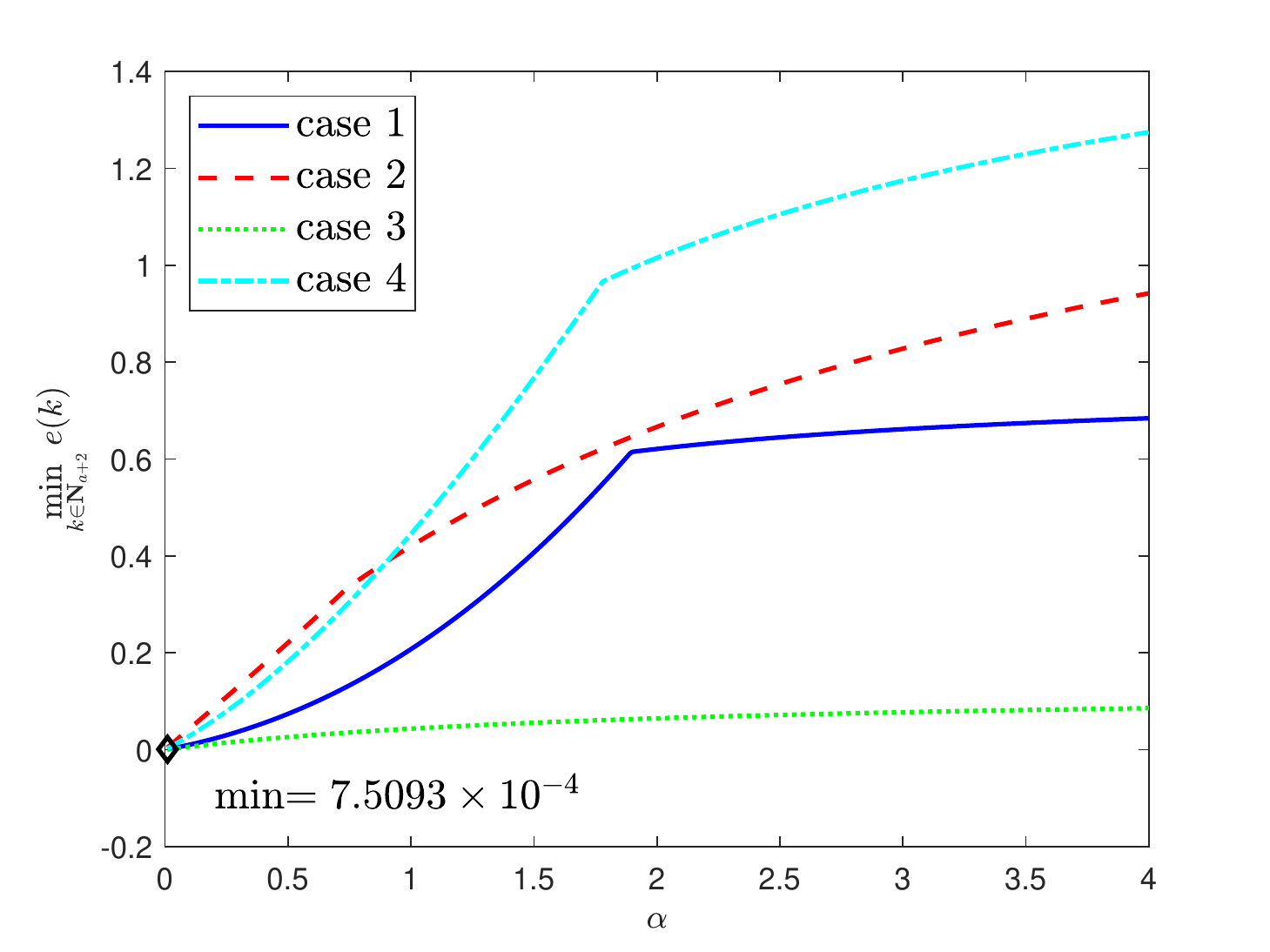}
	\label{Fig16d}
	\end{minipage}%
	}
	\centering
	\caption{The evolution of the error with respect to the order ($p=0.5$).}
	\label{Fig16}
\end{figure}

From the diagram in Figure \ref{Fig15}, it can be observed that $u(k),v(k)$ are positive, which satisfies the needed conditions. When $p=1.5$, the maximum of $e(k)$ is negative. With the increase of $\alpha$, $e(k)$ gets smaller and smaller. When $p=0.5$, the minimum of $e(k)$ is positive. With the increase of $\alpha$, $e(k)$ gets bigger and bigger. All of these illustrate the correctness of Theorem \ref{Theorem3.33}.
\end{example}

\begin{example}\label{Example7}
To examine the Minkonski fractional sum inequality, define $e(k):=\{{}_a^{\rm G}\nabla _k^{ - \alpha ,w(k)}[u(k)+v(k)]^p\}^{\frac{1}{p}}-[{}_a^{\rm G}\nabla _k^{ - \alpha,w(k) }u^p(k)]^{\frac{1}{p}}-[{}_a^{\rm G}\nabla _k^{ - \alpha,w(k) }v^p(k)]^{\frac{1}{p}}$. Setting $a=0$, $\alpha=0.01,0.02,\cdots,4$ and considering the mentioned four cases in Example \ref{Example6}, the simulated results are shown in Figure \ref{Fig17} and Figure \ref{Fig18}.
\begin{figure}[!htbp]
	\centering
	\setlength{\abovecaptionskip}{-2pt}
	\vspace{-10pt}
	\subfigtopskip=-2pt
	\subfigbottomskip=2pt
	\subfigcapskip=-2pt
	\subfigure[$w(k) = {( - 1)^{k - a}} + 2$]{
	\begin{minipage}[t]{0.5\linewidth}
	\includegraphics[width=1\hsize]{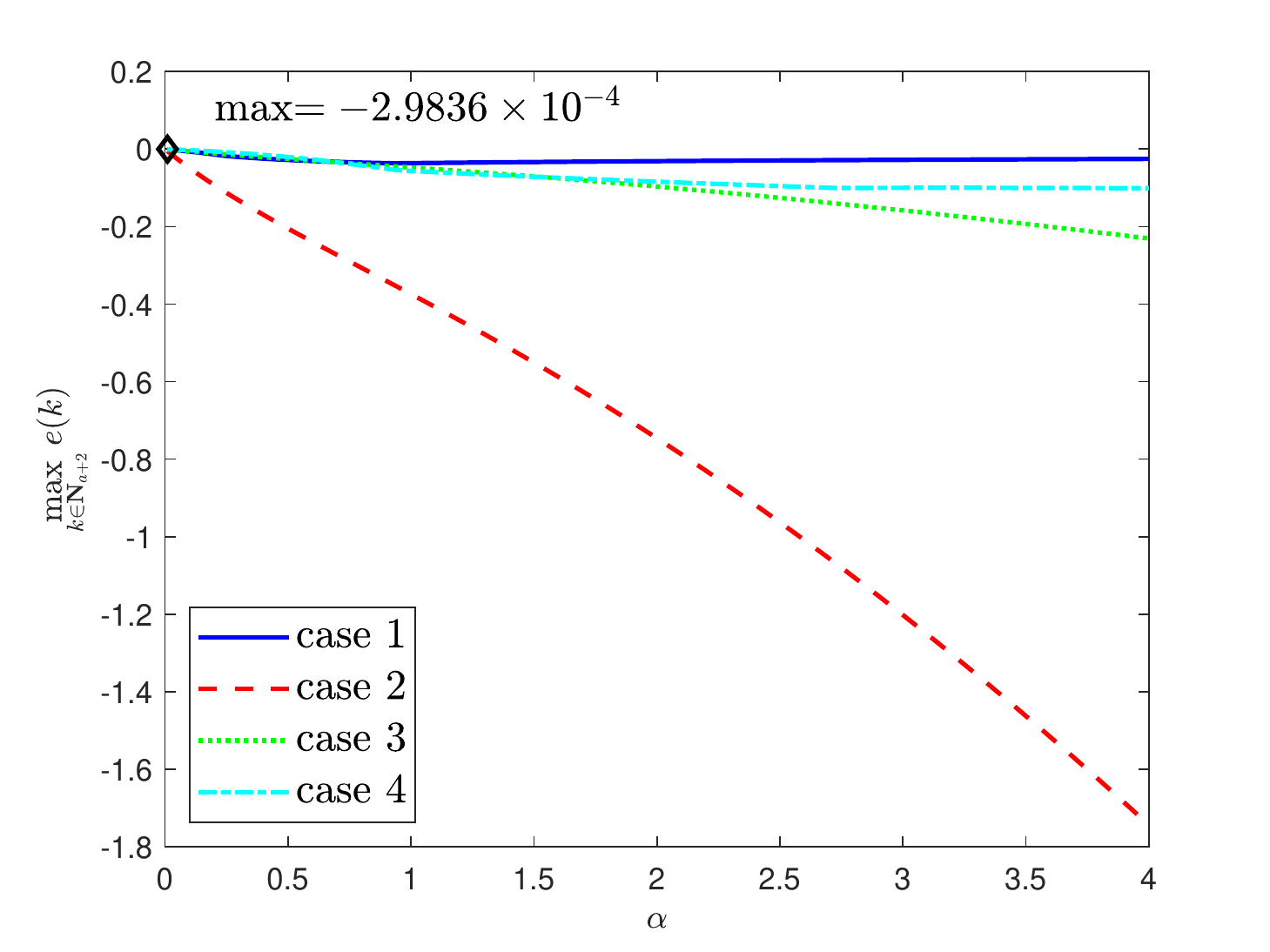}
	\label{Fig17a}
	\end{minipage}%
	}\hspace{-12pt}
	\subfigure[$w(k) = {( - 1)^{k - a}} - 2$]{
	\begin{minipage}[t]{0.5\linewidth}
	\includegraphics[width=1\hsize]{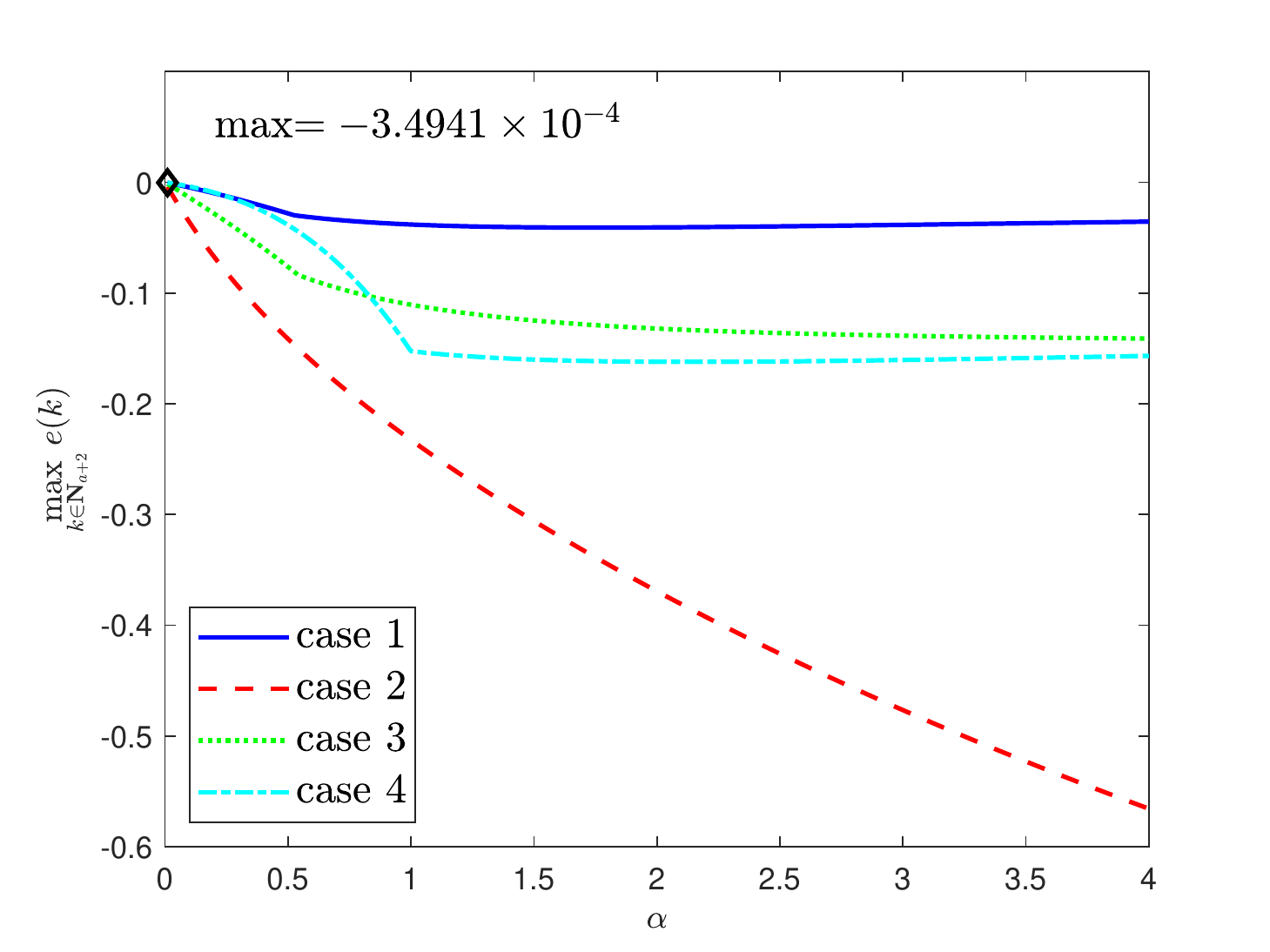}
	\label{Fig17b}
	\end{minipage}%
	}
    \subfigure[$w(k) = \sin (k - a) + 2$]{
	\begin{minipage}[t]{0.5\linewidth}
	\includegraphics[width=1\hsize]{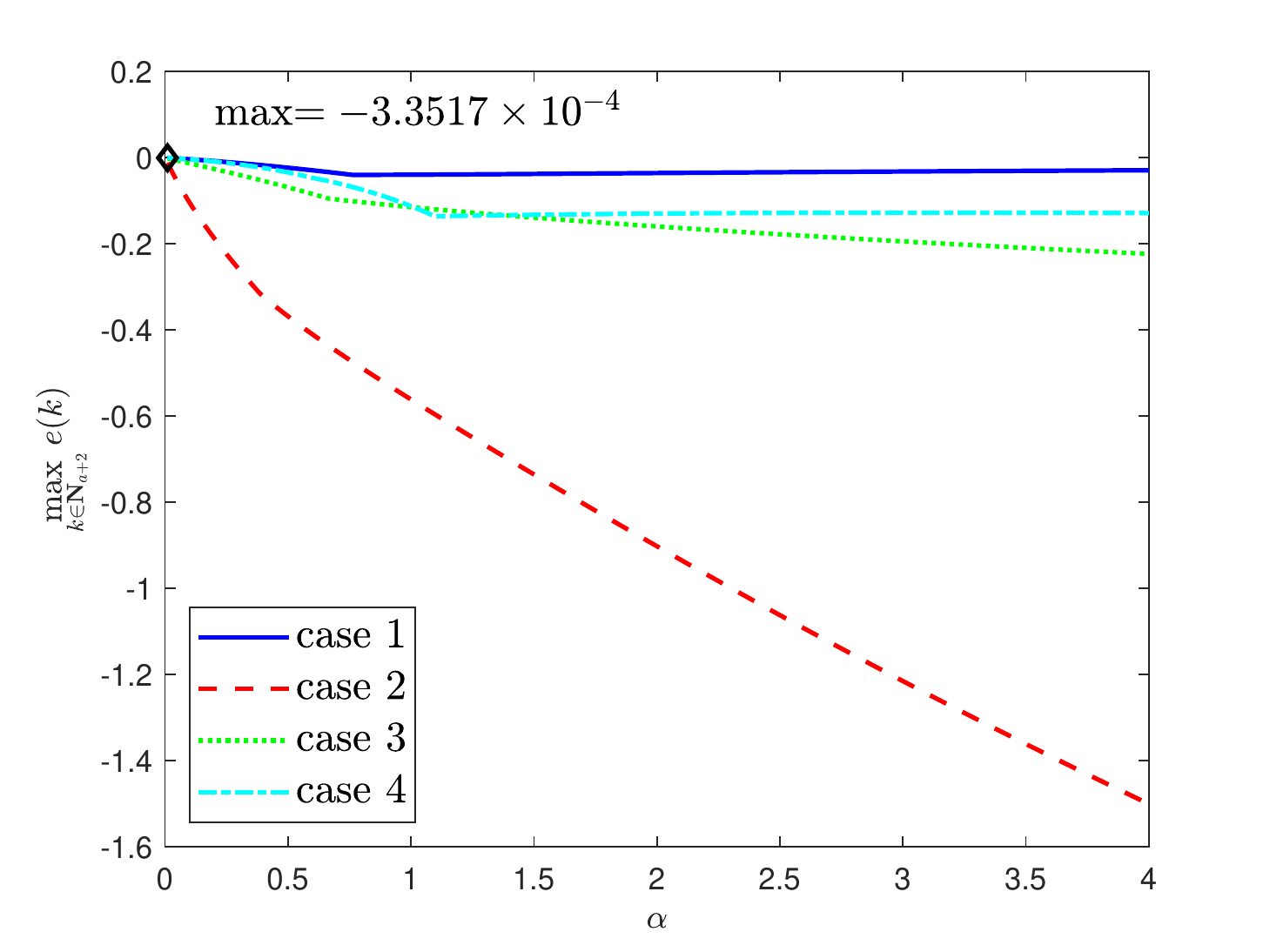}
	\label{Fig17c}
	\end{minipage}%
	}\hspace{-12pt}
	\subfigure[$w(k) = \sin (k - a) - 2$]{
	\begin{minipage}[t]{0.5\linewidth}
	\includegraphics[width=1\hsize]{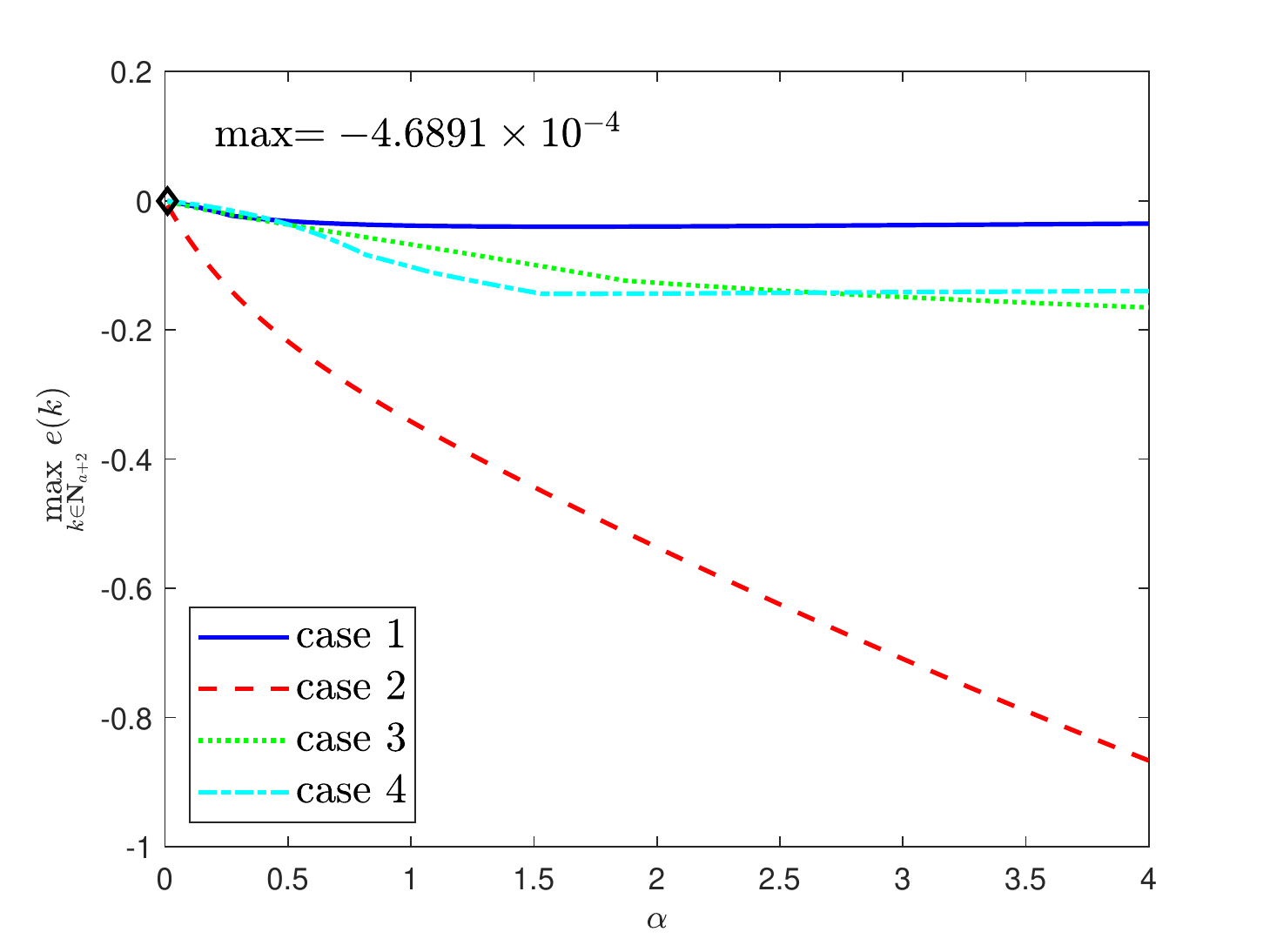}
	\label{Fig17d}
	\end{minipage}%
	}
	\centering
	\caption{The evolution of the error with respect to the order ($p=1.5$).}
	\label{Fig17}
\end{figure}
\begin{figure}[!htbp]
	\centering
	\setlength{\abovecaptionskip}{-2pt}
	\vspace{-10pt}
	\subfigtopskip=-2pt
	\subfigbottomskip=2pt
	\subfigcapskip=-2pt
	\subfigure[$w(k) = {( - 1)^{k - a}} + 2$]{
	\begin{minipage}[t]{0.5\linewidth}
	\includegraphics[width=1\hsize]{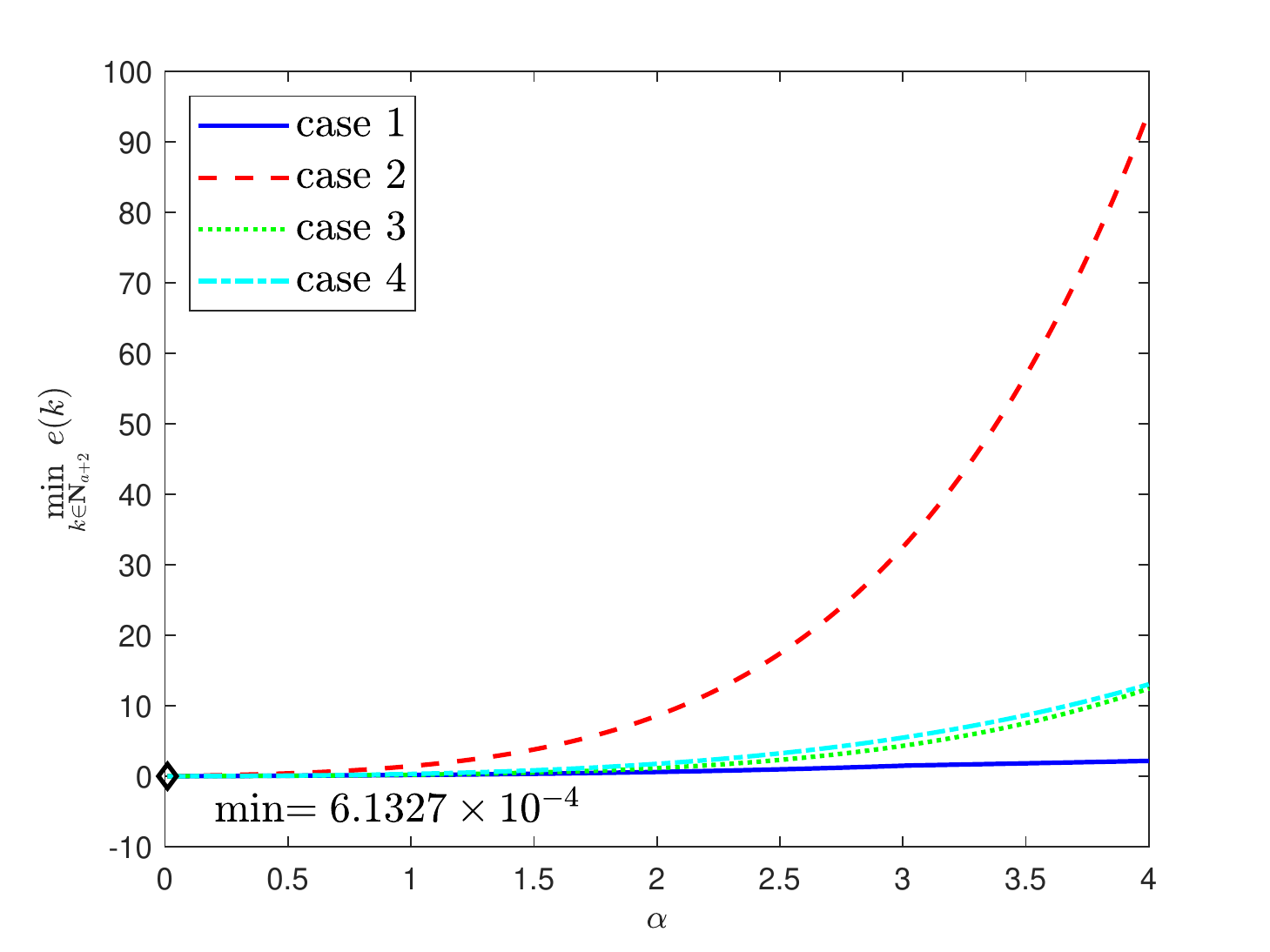}
	\label{Fig18a}
	\end{minipage}%
	}\hspace{-12pt}
	\subfigure[$w(k) = {( - 1)^{k - a}} - 2$]{
	\begin{minipage}[t]{0.5\linewidth}
	\includegraphics[width=1\hsize]{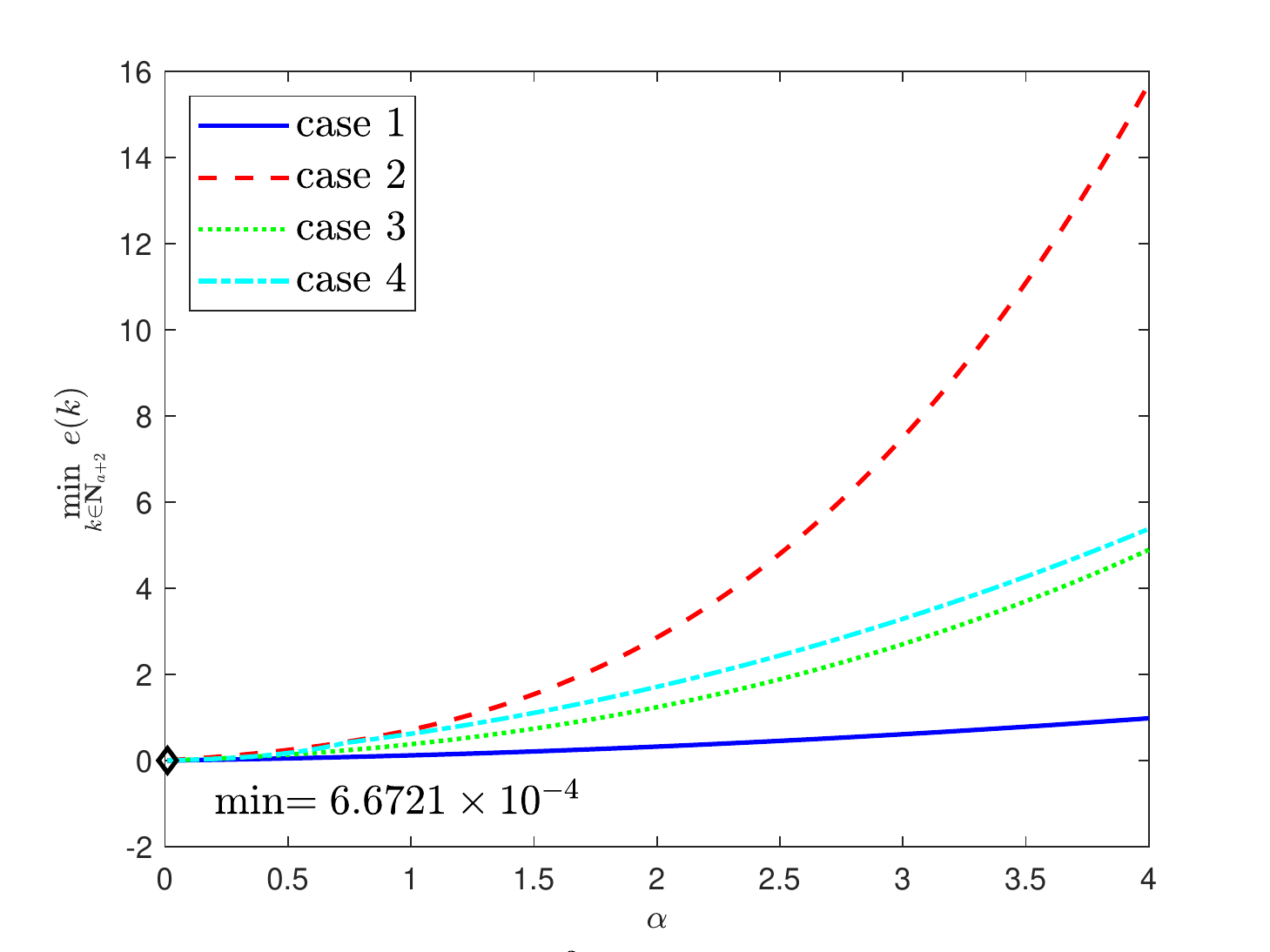}
	\label{Fig18b}
	\end{minipage}%
	}
    \subfigure[$w(k) = \sin (k - a) + 2$]{
	\begin{minipage}[t]{0.5\linewidth}
	\includegraphics[width=1\hsize]{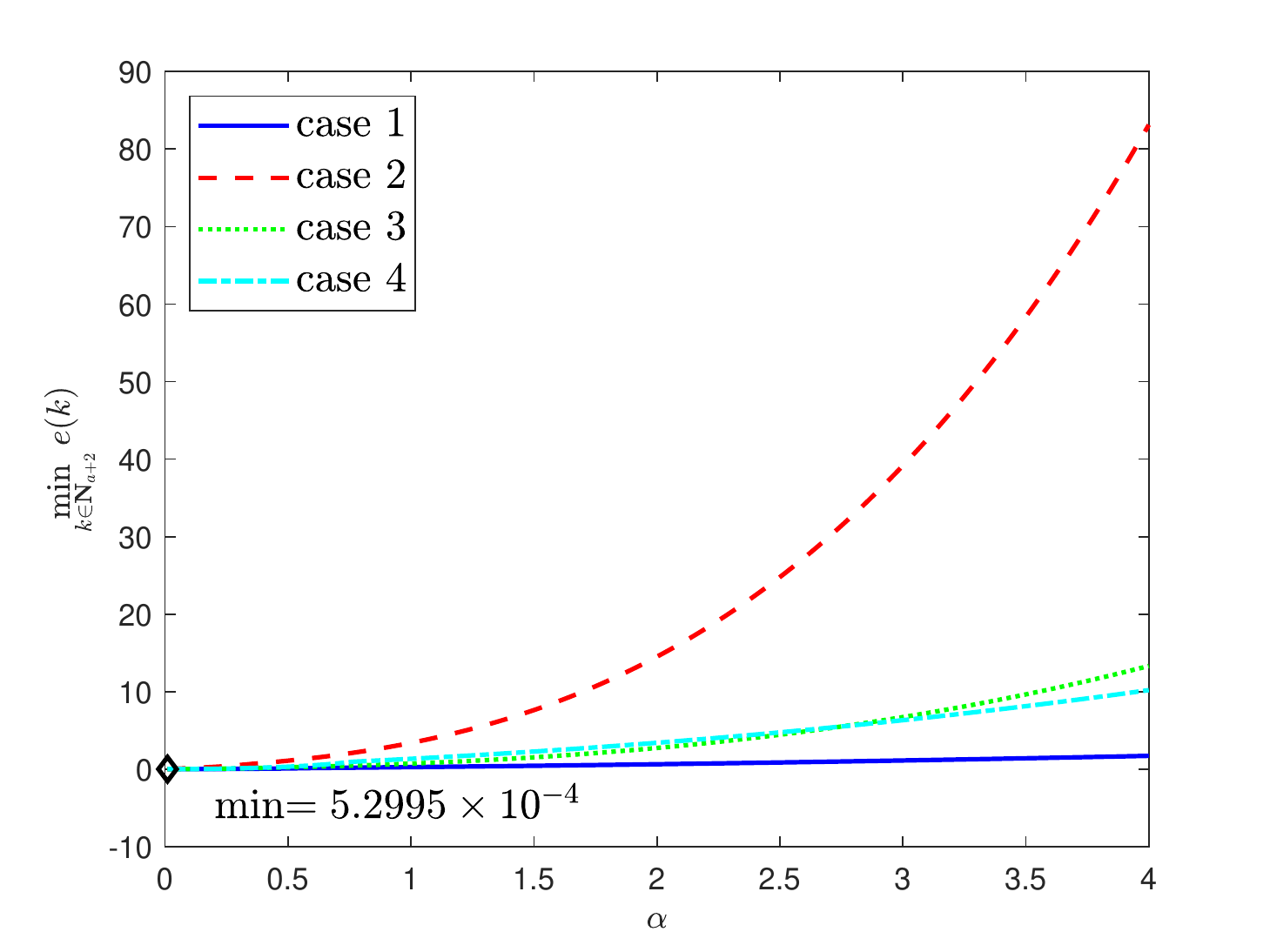}
	\label{Fig18c}
	\end{minipage}%
	}\hspace{-12pt}
	\subfigure[$w(k) = \sin (k - a) - 2$]{
	\begin{minipage}[t]{0.5\linewidth}
	\includegraphics[width=1\hsize]{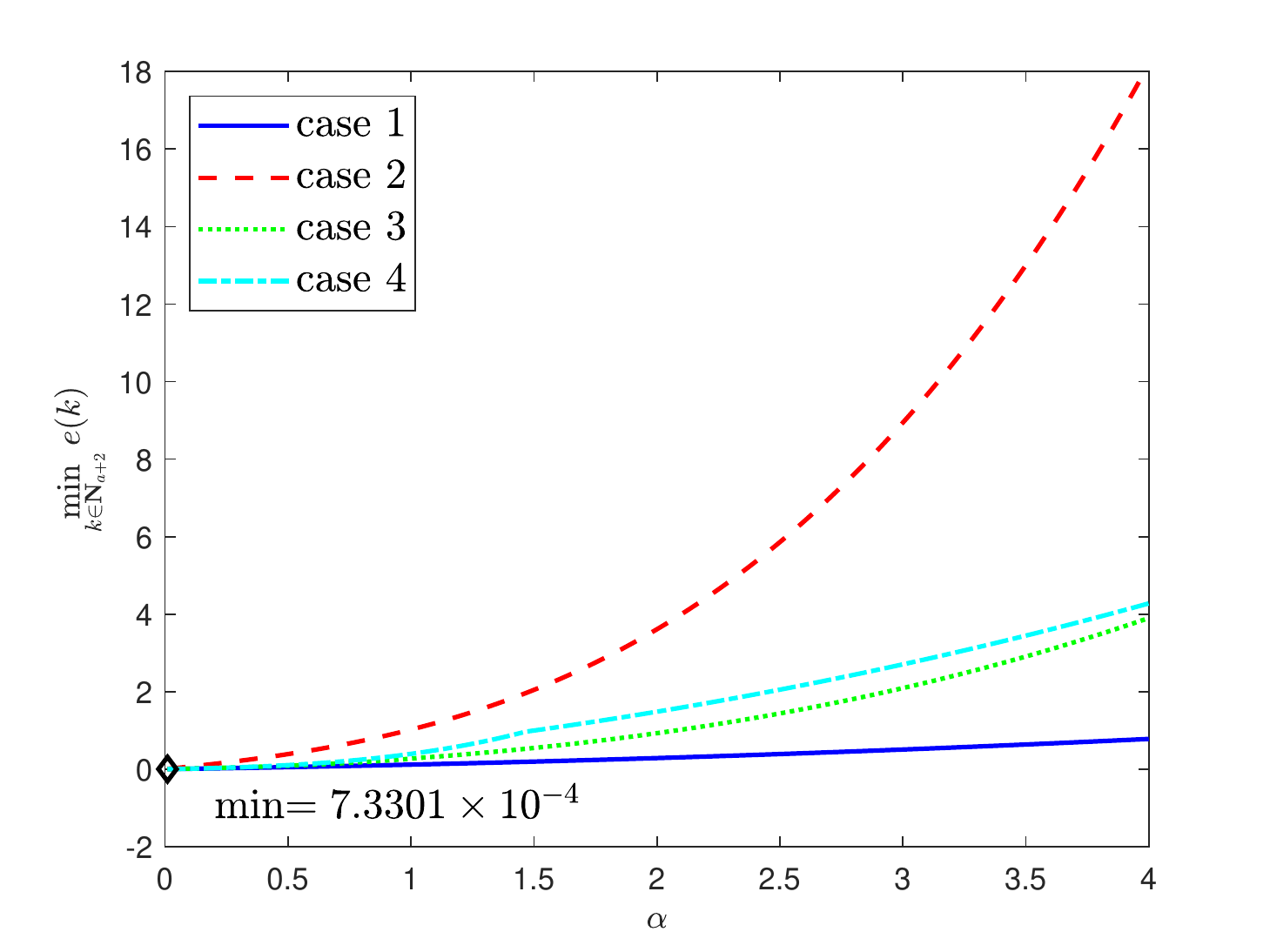}
	\label{Fig18d}
	\end{minipage}%
	}
	\centering
	\caption{The evolution of the error with respect to the order ($p=0.5$).}
	\label{Fig18}
\end{figure}

It can be also found that when $p=1.5$, the maximum of $e(k)$ is negative. When $p=0.5$, the minimum of $e(k)$ is positive. All of obtained simulated results illustrate the correctness of Theorem \ref{Theorem3.34}.
\end{example}

\section{Concluding remarks}\label{Section5}
In this paper, the nabla tempered fractional calculus has been investigated systemically. A series of interesting and promising inequalities are developed rigorously including the monotonicity, the comparison principle, the difference inequality, the sum inequality. These properties have huge potentials in stability analysis and controller design. The proposed properties could greatly enrich the comprehension of nabla tempered fractional calculus and facilitate its applications. In addition, it is hoped that the techniques and consequences of this work could inspire the researchers to explore more interesting sequel in this area.

\bibliographystyle{amsplain}
\bibliography{database}

\end{document}